\definecolor{black}{rgb}{0.0, 0.0, 0.0}
\definecolor{red}{rgb}{1.0, 0.5, 0.5}
\newcommand{\margnote}[1]{
\ifthenelse{\boolean{shownotes}}%
{\marginpar{\raggedright\tiny\texttt{#1}}}%
{}%
}
\newcommand{\hole}[1]{
\ifthenelse{\boolean{shownotes}}%
{\begin{center} \fbox{ \rule {.25cm}{0cm} \rule[-.1cm]{0cm}{.4cm}
\parbox{.85\textwidth}{\begin{center} \texttt{#1}\end{center}} \rule
{.25cm}{0cm}}\end{center}} {} }
\title[Hydrodynamic limit of Vlasov-type equations with nonlocal forces]{Quantifying the hydrodynamic limit of Vlasov-type equations with alignment and nonlocal forces}
\author[Carrillo]{Jos\'{e} A. Carrillo}
\address[Jos\'{e} A. Carrillo]{\newline Department of Mathematics
    \newline Mathematical Institute, University of Oxford, Oxford OX2 6GG, UK}
\email{carrillo@maths.ox.ac.uk}
\author[Choi]{Young-Pil Choi}
\address[Young-Pil Choi]{\newline Department of Mathematics\newline
Yonsei University, 50 Yonsei-Ro, Seodaemun-Gu, Seoul 03722, Republic of Korea}
\email{ypchoi@yonsei.ac.kr}
\author[Jung]{Jinwook Jung}
\address[Jinwook Jung]{\newline Research Institute of Basic Sciences \newline Seoul National University, Seoul  08826, Korea (Republic of)}
\email{warp100@snu.ac.kr}
\numberwithin{equation}{section}
\newtheorem{theorem}{Theorem}[section]
\newtheorem{lemma}{Lemma}[section]
\newtheorem{corollary}{Corollary}[section]
\newtheorem{proposition}{Proposition}[section]
\newtheorem{remark}{Remark}[section]
\newtheorem{definition}{Definition}[section]
\newcommand{\R}{\mathbb R}
\newcommand{\N}{\mathbb N}
\newcommand{\bbn}{\mathbb N}
\newcommand{\om}{\Omega}
\newcommand{\T}{\mathbb T}
\newcommand{\mc}{\mathcal C}
\newcommand{\bq}{\begin{equation}}
\newcommand{\eq}{\end{equation}}
\newcommand{\e}{\varepsilon}
\newcommand{\lt}{\left}
\newcommand{\rt}{\right}
\newcommand{\pa}{\partial}
\newcommand{\mh}{\mathcal{H}}
\newcommand{\me}{\mathcal{E}}
\newcommand{\mf}{\mathcal{F}}
\newcommand{\md}{\mathcal{D}}
\newcommand{\mw}{\mathcal{W}}
\newcommand{\into}{\int_\om}
\newcommand{\intoo}{\int_{\om \times \om}}
\newcommand{\intr}{\int_{\R^d}}
\newcommand{\intor}{\int_{\om \times \R^d}}
\newcommand{\intorr}{\int_{\om^2 \times \R^{2d}}}
\def\moverlay{\mathpalette\mov@rlay}
\def\mov@rlay#1#2{\leavevmode\vtop{%
   \baselineskip\z@skip \lineskiplimit-\maxdimen
   \ialign{\hfil$\m@th#1##$\hfil\cr#2\crcr}}}
\newcommand{\charfusion}[3][\mathord]{
    #1{\ifx#1\mathop\vphantom{#2}\fi
        \mathpalette\mov@rlay{#2\cr#3}
      }
    \ifx#1\mathop\expandafter\displaylimits\fi}
\begin{document}
\allowdisplaybreaks

\date{\today}

\subjclass[]{82C40,35B40}
\keywords{Hydrodynamic limit, Euler equations, Vlasov equation, relative entropy, bounded Lipschitz distance}

\begin{abstract} In this paper, we quantify the asymptotic limit of collective behavior kinetic equations arising in mathematical biology modeled by Vlasov-type equations with nonlocal interaction forces and alignment. More precisely, we investigate the hydrodynamic limit of a kinetic Cucker--Smale flocking model with confinement, nonlocal interaction, and local alignment forces, linear damping and diffusion in velocity. We first discuss the hydrodynamic limit of our main equation under strong local alignment and diffusion regime, and we rigorously derive the isothermal Euler equations with nonlocal forces. We also analyze the hydrodynamic limit corresponding to strong local alignment without diffusion. In this case, the limiting system is pressureless Euler-type equations. Our analysis includes the Coulombian interaction potential for both cases and explicit estimates on the distance towards the limiting hydrodynamic equations. The relative entropy method is the crucial technology in our main results, however, for the case without diffusion, we combine a modulated macroscopic kinetic energy with the bounded Lipschitz distance to deal with the nonlocality in the interaction forces. For the sake of completeness, the existence of weak and strong solutions to the kinetic and fluid equations are also established. 
\end{abstract}

\maketitle \centerline{\date}

\tableofcontents

%
%
%
%
\section{Introduction}

Collective self-organized motions of autonomous individuals, such as flocks of birds, crowd dynamics, and aggregation of bacteria, etc, appear in many applications in the field of engineering, biology, and sociology \cite{BG15,BPT12,BS12,CS07,KTIH11,LPL07,LLE10,MT11,TBL06}, we refer to \cite{CCP17,CHL17} and references therein for recent surveys. Mathematical modelling of such behaviors is based on Individual-Based Models (IBMs) which are microscopic descriptions, and it includes three basic effects, a short-range repulsion, a long-range attraction, and an alignment in certain spatial regions. These IBMs lead to continuum description by means of mean-field limit \cite{CCR11,CCH14,CCHS19,CS18,HL09,JW17}, and in particular a second-order $N$-particle system converges toward a kinetic equation as the number of particles $N$ goes to infinity. In this paper, we study a class of such kinetic-type models which are typically Vlasov-type equations with nonlocal forces. More precisely, let $f = f(x,v,t)$ be the one-particle distribution function at $(x,v) \in \Omega \times \R^d$ and at time $t > 0$, where $\Omega$ is either $\T^d$ or $\R^d$ with $d \geq 1$, then our main equation is given by 
\begin{align}\label{main_eq}
\begin{aligned}
&\pa_t f + v\cdot\nabla_x f -  \nabla_v \cdot \lt((\gamma v + \lambda\lt(\nabla_x V + \nabla_x W \star \rho\rt))f \rt)+ \alpha\nabla_v \cdot \lt(F[f]f\rt)  = \mathcal{N}_{FP}[f], 
\end{aligned}
\end{align}
with $(x,v, t) \in \om \times \R^d \times \R_+$ subject to the initial data:
\[
f(x,v,t)|_{t=0} =: f_0(x,v), \quad (x,v) \in \om \times \R^d,
\]
where $\rho = \rho(x,t)$ and $u = u(x,t)$ are the local particle density and velocity given by 
\[
\rho = \int_{\R^d} f\,dv \quad \mbox{and} \quad u = \frac{\int_{\R^d} vf\,dv}{\int_{\R^d} f\,dv},
\]
respectively, $V: \R^d \to \R$ and $W: \R^d \to \R$ are the confinement and the interaction potentials with a positive coefficient $\lambda$, respectively. Here $\mathcal{N}_{FP}$ denotes the nonlinear Fokker--Planck operator \cite{Vil02} given by 
\[
 \mathcal{N}_{FP}[f](x,v) := \nabla_v \cdot (\beta (v - u)f + \sigma \nabla_v f) = \sigma \nabla_v \cdot \lt(f \nabla_v \log \frac{f}{M_u} \rt) 
\]
with the local Maxwellian
\[
M_u := \frac{\beta^{d/2}}{(2\pi \sigma)^{d/2}} \exp\lt(-\frac{\beta |u-v|^2}{2\sigma} \rt) ,
\]
and positive constants $\beta$ and $\sigma$. $F$ represents the velocity alignment force fields, where the local average of relative velocities weighted by the function $\phi$, given by
\[
F[f](x,v) := \int_{\om \times \R^d} \phi(x-y) (w-v)f(y,w)\,dydw, 
\]
where $\phi: \R^d \to \R_+$ is called a communication weight. The confinement and interaction potentials are assumed to be symmetric in the sense $V(x) = V(-x)$ and $W(x) = W(-x)$ on $\R^d$ due to the action-reaction principle by  Newton's third law.  The weight function $\phi$ is usually assumed to be radially symmetric, i.e., $\phi(x) = \hat\phi(|x|)$ for some $\hat\phi : \R_+ \cup \{0\} \to \R_+$, and $\hat\phi$ is decreasing such that the closer particles have more stronger influence than the further ones. The right hand side of \eqref{main_eq} consists of the local alignment forces and the diffusion term in velocity. Throughout this paper, we also assume that $f$ is a probability density, i.e., $\|f(\cdot,\cdot,t)\|_{L^1} = 1$ for $t\geq 0$, since the total mass is preserved in time.

In the current work, we are interested in the asymptotic analysis of \eqref{main_eq} by considering singular parameters. More specifically, we deal with hydrodynamic limits to isothermal/pressureless Euler equations with nonlocal forces.

\subsection{Formal derivation from kinetic to isothermal/pressureless Euler equations} Taking into account the moments on the kinetic equation \eqref{main_eq}, we find that the local density $\rho $ and local velocity  $u$ satisfy
$$\begin{aligned}
&\pa_t \rho + \nabla_x \cdot (\rho u) = 0,\cr
&\pa_t (\rho u) + \nabla_x \cdot (\rho u \otimes u) + \nabla_x \cdot \lt( \int_{\R^d} (v-u)\otimes (v-u) f(x,v,t)\,dv\rt)\cr
&\hspace{1.5cm} = - \gamma \rho u - \lambda\rho(\nabla_x V + \nabla_x W \star \rho)- \alpha \rho \int_\om \phi(x-y)(u(x) - u(y))\rho(y)\,dy.
\end{aligned}$$
We notice that the above system is not closed in the sense that it cannot be written only in terms of $\rho$ and $u$. On the other hand, if we consider the singular parameters $\beta = \sigma = 1/\e$ in \eqref{main_eq}, i.e., the local alignment and diffusive forces are very strong and consider the limit $\e \to 0$, then at the formal level, we expect that $\mathcal{N}_{FP} \simeq 0$, and this leads that the particle density behaves like: 
\bq\label{max_as}
f^\e(x,v,t) \simeq \frac{\rho(x,t)}{(2\pi)^{d/2}}\exp\lt( - \frac{|v-u(x,t)|^2}{2} \rt) \quad \mbox{for} \quad \e \ll 1,
\eq
where $f^\e$ denotes the corresponding solution of \eqref{main_eq} with $\beta = \sigma = 1/\e$. This formal procedure gives the isothermal Euler equations with interaction forces:
\begin{align}\label{main_pE}
\begin{aligned}
&\pa_t \rho + \nabla_x \cdot (\rho u) = 0, \quad (x,t) \in \om \times \R_+,\cr
&\pa_t (\rho u) + \nabla_x \cdot (\rho u \otimes u) + \nabla_x \rho \cr
&\hspace{1.5cm} = - \gamma \rho u - \lambda\rho(\nabla_x V + \nabla_x W \star \rho) - \alpha \rho \int_\om \phi(x-y)(u(x) - u(y))\rho(y)\,dy.
\end{aligned}
\end{align}
Let us now take into account the hydrodynamic limit without diffusion, i.e., the equation \eqref{main_eq} with $\beta = 1/\e$ and $\sigma = 0$. Then, for the similar reason, we find that 
\bq\label{mono_as}
f^\e(x,v,t) \simeq \rho(x,t) \otimes \delta_{u(x,t)}(v) \quad \mbox{for} \quad \e \ll 1,
\eq
and this induces the following pressureless Euler equations with interaction forces:
\begin{align}\label{main_npE}
\begin{aligned}
&\pa_t \rho + \nabla_x \cdot (\rho u) = 0, \quad (x,t) \in \om \times \R_+,\cr
&\pa_t (\rho u) + \nabla_x \cdot (\rho u \otimes u) =  - \gamma \rho u - \lambda\rho(\nabla_x V + \nabla_x W \star \rho) - \alpha \rho \int_\om \phi(x-y)(u(x) - u(y))\rho(y)\,dy.
\end{aligned}
\end{align}

Some previous works closely related to the above asymptotic analysis can be summarized as follows. The asymptotic analysis for the kinetic Cucker--Smale model with a strong local alignment force and a strong diffusion, i.e., \eqref{main_eq} with $\gamma = 0$, $\lambda, \alpha > 0$, $V, W \equiv 0$, $\sigma = \beta = 1/\e$, is  investigated in \cite{KMT15}. In this regime, the isothermal Euler system  with the nonlocal velocity alignment forces, \eqref{main_pE} with $\gamma = \lambda = 0$ and $\alpha >0$ is rigorously derived, see also \cite{Choi19} for the global regularity of classical solutions of that system. In this work, the relative entropy method is employed, and the presence of the pressure term in the limiting system plays an important role in their strategy: it gives the convexity of the entropy with respect to the density $\rho$; see Section \ref{sec_pressure} for details. For the diffusionless case, in \cite{FK19}, the velocity alignment term $F[f]$ is taken into account in the hydrodynamic limit, i.e., the equation \eqref{main_eq} with $V, W \equiv 0$, $\gamma = \sigma = 0$, $\alpha >0$, and $\beta = 1/\e$ in the periodic spatial domain, and the pressureless Euler equations with the velocity alignment forces, \eqref{main_npE} with $\lambda=\gamma=0$ and $\alpha > 0$, which is also referred to {\it Euler alignment system} in \cite{CCTT16}, are rigorously derived. In that work, the modulated macroscopic energy combined with the second-order Wasserstein  distance is used. This strategy is improved in a recent work \cite{CCpre} where the whole space case is considered, see also \cite{Cpre} for the relation between modulated macroscopic kinetic energy and the $p$th order Wasserstein distance. It is worth noticing that the interaction potential $W$ is not taken into account in \cite{FK19, KMT15}, and it is not clear that the strategies used in that work can be applied to the case with the interaction potential $W$ when $W$ has a rather weak regularity, see \cite{CCpre} for the case with regular interaction potentials $W$. On the other hand, for the Coulombian interactions $W$, i.e., $-\Delta_x W \star \rho = \rho$, the hydrodynamic limit of Vlasov-Poisson equation with strong local alignment forces, which corresponds to \eqref{main_eq} with $\gamma = \alpha =\sigma = 0$, $V \equiv 0$, $\beta = 1/\e$, is discussed in \cite{Kang18}. 

The main purpose of this work is to consider the most general form of kinetic swarming models \eqref{main_eq} and identify regimes where the Euler-type equations \eqref{main_pE} or \eqref{main_npE} are well approximated by the kinetic equation \eqref{main_eq} in a quantifiable way. We first deal with the equation \eqref{main_eq} with strong local alignment and diffusive forces, that is, we consider a singular parameter in the nonlinear Fokker--Planck operator $\mathcal{N}_{FP}$. In this case, as mentioned above, the limiting system is expected to be the isothermal Euler-type system \eqref{main_pE}. We estimate the relative entropy functional together with the free energy to have the quantitative error estimate between solutions $f^\e$ of \eqref{main_eq} and $(\rho,u)$ of \eqref{main_pE}. In particular, we make the formal observation \eqref{max_as} completely rigorous with a quantitive bound in terms of $\e$, see Corollary \ref{cor_hydro1.5}. Due to the presence of pressure, $L^\infty$ bound assumptions for both the interaction potential $W$ and the communication weight function $\phi$ are sufficient to have that estimate of hydrodynamic limit. We are also able to deal with the Coulombian potential for $W$. 

In the case without diffusion, the limiting system is a pressureless Euler system \eqref{main_npE}, thus the corresponding macroscopic kinetic energy is not strictly convex with respect to $\rho$. In this respect, it is not clear to have the quantitative bound error estimate between solutions by means of the estimate of modulated kinetic energy only. For that reason, we combine the modulated kinetic energy estimate and the bounded Lipschitz distance between local particle density $\rho^\e$ and the fluid density $\rho$. Note that the bounded Lipschitz distance and the first order Wasserstein distance are equivalent in the set of probability measure with a bounded first moment. Thus our result improves the previous works  \cite{CCpre,FK19}, where the second-order Wasserstein distance is used as mentioned above. We show that the bounded Lipschitz distance between densities can be bounded from above by the modulated macroscopic kinetic energy, see Lemma \ref{prop_rho_wa}. Compared to the case with pressure, we need rather stronger assumptions for $W$ and $\phi$, bounded and Lipschitz continuity. Combining these observations, we close the modulated kinetic energy estimates and obtain the quantitative error estimates between solutions $f^\e$ of \eqref{main_eq} with $\beta=1/\e, \sigma = 0$ and $(\rho,u)$ of \eqref{main_npE}. As we expected from the formal derivation \eqref{mono_as}, the particle distribution function $f^\e$ converges to the monokinetic ansatz in the sense of distributions also quantified in terms of the bounded Lipschitz distance, see Corollary \ref{cor_hydro2} and the proofs in Subsection 2.3. Even in the pressureless case, we are also able to take into account the Coulombian interaction potential $W$ and establish the same convergence estimates with the regular interaction potential case. Our main mathematical tool is based on the weak-strong uniqueness principle \cite{Daf79}, and thus for the rigorous asymptotic analysis mentioned above, the existence of weak solutions of the kinetic equation \eqref{main_eq} and strong solutions to the limiting systems \eqref{main_pE} and \eqref{main_npE} should be obtained at least locally in time. We emphasize that it is important to have the global-in-time weak solutions of \eqref{main_eq} satisfying the free energy estimate. 

Here we introduce several notations used throughout the current work. For functions, $f(x,v)$ and $g(x)$, $\|f\|_{L^p}$ and $\|g\|_{L^p}$ represent the usual $L^p(\om \times \R^d)$- and $L^p(\om)$-norms, respectively. We denote by $C$ a generic positive constant which may differ from line to line. For simplicity, we often drop $x$-dependence of differential operators, that is, $\nabla f:= \nabla_x f$ and $\Delta f := \Delta_x f$. For any nonnegative integer $k$ and $p \in [1,\infty]$, $\mw^{k,p} = \mw^{k,p}(\om)$ stands for the $k$-th order $L^p$ Sobolev space. In particular, if $p=2$, we denote by $H^k=H^k(\om) = \mw^{k,2}(\om)$. $\mc^k([0,T];E)$ is the set of $k$-times continuously differentiable functions from an interval $[0,T] \subset \R$ into a Banach space $E$, and $L^p(0,T;E)$ is the set of functions from an interval $(0,T)$ to a Banach space $E$. $\nabla^k$ denotes any partial derivative $\pa^\alpha$ with multi-index $\alpha$, $|\alpha| = k$.

The rest of this paper is organized as follows. In the next section, we provide several a priori estimates of free energy inequalities. We also give precise statements of our main results on the asymptotic analysis of \eqref{main_eq}. In Section \ref{sec_pressure}, we consider our main equation \eqref{main_eq} in the regime of strong local alignment and diffusion, i.e., $\beta= \sigma = 1/\e$. We show that the weak solution to the kinetic equation \eqref{main_eq} strongly converges to the strong solution to the isothermal Euler equations with nonlocal interaction forces \eqref{main_pE}. Section \ref{sec_npE} is devoted to the asymptotic analysis for the diffusionless case, i.e., \eqref{main_eq} with $\sigma=0$. In this case, we consider the strong local alignment regime, $\beta = 1/\e$ and provide the rigorous convergence estimates of solutions $f^\e$ to the pressureless Euler system with nonlocal interactions forces \eqref{main_npE}.  Finally, in Sections \ref{sec_global_kin} and \ref{sec_local_pE} we provide the details on the global-in-time existence of weak solutions for the kinetic equation \eqref{main_eq} satisfying the free energy estimate and the local-in-time existence and uniqueness of classical solutions to the isothermal/pressureless Euler equations \eqref{main_pE} and \eqref{main_npE}. 

%
%
%
%
\section{Preliminaries and main results}

\subsection{Free energy estimates} In this part, we provide free energy estimates. For this, we introduce the free energy $\mf$ and the associated dissipations $\md_1$, $\md_2$, and $\md_3$ as follows: 
\[
\mf(f) := \int_{\om \times \R^d} \frac{\sigma}{\beta}f \log f\,dxdv + \frac12\intor |v|^2 f\,dxdv + \frac{\lambda}{2}\intoo W(x-y)\rho(x) \rho(y)\,dxdy + \lambda \into V \rho\,dx, 
\]
$$\begin{aligned}
\md_1(f) &:= \intor \frac{1}{f} \lt|\frac{\sigma}{\beta} \nabla_v f - f(u-v) \rt|^2 dxdv,\cr
\md_2(f) &:= \frac12\intorr \phi(x-y)|v-w|^2 f(x,v)f(y,w)\,dxdydvdw,
\end{aligned}$$
and
\[
\md_3(f):= \intor |v|^2 f\,dxdv,
\]
respectively.\newline
\vspace{0.3cm}

Then we have the following free energy estimate. 
\begin{lemma}\label{lem_energy}
Suppose that $f$ is a solution of \eqref{main_eq} with sufficient integrability. Then we have
\[
\frac{d}{dt}\mf(f) + \beta D_1(f) + \alpha D_2(f) +\gamma D_3(f) =  \frac{\sigma\gamma d}{\beta} + \frac{\sigma \alpha d}{\beta}\intoo \phi(x-y) \rho(x) \rho(y)\,dxdy.
\]
In particular, we have
\begin{align}\label{energy_zerosig}
\begin{aligned}
&\frac{d}{dt}\lt(\frac12\intor |v|^2 f\,dxdv + \frac{\lambda}{2}\intoo W(x-y)\rho(x) \rho(y)\,dxdy + \lambda \into V \rho\,dx \rt)\cr
&\qquad = - \beta \intor f|u-v|^2\,dxdv - \alpha D_2(f) - \gamma D_3(f),
\end{aligned}
\end{align}
when $\sigma = 0$. 
\end{lemma}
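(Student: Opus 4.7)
The plan is to differentiate $\mf(f)$ term by term using \eqref{main_eq}, together with the induced continuity equation $\pa_t\rho + \nabla_x\cdot(\rho u) = 0$ (obtained by integrating \eqref{main_eq} in $v$), and to show that the result equals $-\beta D_1(f) - \alpha D_2(f) - \gamma D_3(f)$ plus the stated right-hand side. Every manipulation is a careful integration by parts in $(x,v)$, exploiting the symmetries $W(x)=W(-x)$ and $\phi(x-y)=\phi(y-x)$ as well as the fact that $\nabla_x V + \nabla_x W\star\rho$ is independent of $v$.

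To handle the entropy contribution $\tfrac{\sigma}{\beta}\tfrac{d}{dt}\intor f\log f\,dxdv$, I would multiply \eqref{main_eq} by $\log f + 1$ and integrate. The transport $v\cdot\nabla_x f$ vanishes; the damping $\gamma v$ contributes $\gamma d$ via $\nabla_v\cdot v = d$; the potential drifts drop since they are $v$-divergence free; the alignment piece yields $\alpha d\intoo\phi(x-y)\rho(x)\rho(y)\,dxdy$ because $\nabla_v\cdot F[f] = -d(\phi\star\rho)$; and the Fokker--Planck operator produces $\beta d - \sigma\intor |\nabla_v f|^2/f\,dxdv$. Multiplying by $\sigma/\beta$ gives the corresponding entropy-derivative line. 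For the kinetic energy I would test against $|v|^2/2$: transport vanishes, damping gives $-\gamma D_3(f)$, the alignment term becomes $-\alpha D_2(f)$ after symmetrizing the double integral under $(x,v)\leftrightarrow(y,w)$, the local alignment $\beta(v-u)f$ in $\mathcal{N}_{FP}$ yields $-\beta\intor|v-u|^2 f\,dxdv$ (the cross contribution $\intor u\cdot(v-u)f\,dxdv$ vanishes by integrating in $v$ first), the diffusion gives $+\sigma d$, and the potential drifts produce $-\lambda\into\rho u\cdot(\nabla V + \nabla W\star\rho)\,dx$.

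These last potential-drift contributions are cancelled by the time derivatives of the confinement and interaction parts of $\mf$: using the continuity equation and the symmetry of $W$, one obtains $\tfrac{d}{dt}\lambda\into V\rho\,dx = \lambda\into\rho u\cdot\nabla V\,dx$ and $\tfrac{d}{dt}\tfrac{\lambda}{2}\intoo W(x-y)\rho(x)\rho(y)\,dxdy = \lambda\into\rho u\cdot\nabla(W\star\rho)\,dx$, with exactly the right sign for cancellation. After collecting, the remaining $(\beta,\sigma)$-contribution outside of the stated right-hand side is $2\sigma d - (\sigma^2/\beta)\intor|\nabla_v f|^2/f\,dxdv - \beta\intor |v-u|^2 f\,dxdv$; expanding the perfect square $\intor(1/f)|(\sigma/\beta)\nabla_v f - f(u-v)|^2\,dxdv$ and using $\intor\nabla_v f\cdot(u-v)\,dxdv = d$, one verifies this expression equals $-\beta D_1(f)$, yielding the claimed identity. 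The case $\sigma=0$ in \eqref{energy_zerosig} follows immediately by dropping the entropy line and the $\sigma$-contributions elsewhere. The main obstacle is essentially bookkeeping: making every sign and prefactor from the repeated integrations by parts line up so that the final rearrangement into the perfect square $D_1(f)$ succeeds.
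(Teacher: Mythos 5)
Your approach is correct and matches the paper's own proof: both differentiate $\mathcal F(f)$ term by term, test the kinetic equation against $\log f$ (equivalently $\log f + 1$, since mass conservation makes the difference vanish) and against $|v|^2/2$, use the continuity equation and the symmetry of $W$ to cancel the potential-drift contributions against the potential-energy derivatives, symmetrize in $(x,v)\leftrightarrow(y,w)$ to produce $\mathcal D_2$, and finally reassemble the $\beta,\sigma$ terms into the perfect square defining $\mathcal D_1$ using $\int \nabla_v f\cdot(u-v)\,dxdv = d$. Your bookkeeping is in fact cleaner than the printed proof, which contains harmless sign/parameter typos (a missing minus in $I_3$ after integration by parts, and $\lambda$ appearing where $\gamma$ is meant both in the kinetic-energy line and in the displayed conclusion).
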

\begin{proof}A straightforward computation gives
$$\begin{aligned}
\frac{d}{dt}\intor \frac\sigma\beta f \log f\,dxdv &= \frac{\sigma}{\beta}\intor \nabla_v \cdot \lt( (\gamma v + \lambda(\nabla V + \nabla W \star \rho))f \rt)\log f\,dxdv \cr
&\quad - \frac{\sigma \alpha}{\beta}\intor \nabla_v \cdot (F[f]f) \log f\,dxdv\cr
&\quad + \frac\sigma\beta\intor \nabla_v \cdot (\beta (v - u)f + \sigma \nabla_v f) \log f\,dxdv\cr
&=: \sum_{i=1}^3 I_i,
\end{aligned}$$
where $I_i,i=1,2,3$, can be estimated as follows:
$$\begin{aligned}
I_1 &= \frac{\sigma}{\beta}\intor \nabla_v \cdot (\gamma v + \lambda(\nabla V + \nabla W \star \rho))f\,dxdv = \frac{\sigma\gamma d}{\beta},\cr
I_2 &= - \frac{\sigma \alpha}{\beta} \intor \nabla_v \cdot (F[f]) f \,dxdv = \frac{\sigma \alpha d}{\beta}\intoo \phi(x-y) \rho(x) \rho(y)\,dxdy,\cr
I_3 &= \frac\sigma\beta\intor \lt( \beta(v-u)f + \sigma \nabla_v f\rt) \cdot \frac{\nabla_v f}{f} \,dxdv.
\end{aligned}$$ 
We also estimate the kinetic energy as 
$$\begin{aligned}
\frac12\frac{d}{dt}\intor |v|^2 f\,dxdv &= -\lambda\intor |v|^2 f\,dxdv - \frac{\lambda}{2}\frac{d}{dt}\intoo W(x-y)\rho(x) \rho(y)\,dxdy - \lambda\frac{d}{dt}\into V \rho\,dx\cr
&\quad -\frac{\alpha}{2}\intorr \phi(x-y)|v-w|^2 f(x,v)f(y,w)\,dxdydvdw \cr
&\quad -\beta\intor |v-u|^2 f\,dxdv - \sigma \intor v \cdot \nabla_v f\,dxdv.
\end{aligned}$$
Combining the above estimates yields
\[
\frac{d}{dt}\mf(f) + \beta D_1(f) + \alpha D_2(f) +\gamma D_3(f) =  \frac{\sigma\lambda d}{\beta} + \frac{\sigma \alpha d}{\beta}\intoo \phi(x-y) \rho(x) \rho(y)\,dxdy.
\]
\end{proof}
Lemma \ref{lem_energy} shows that the linear damping in velocity and nonlocal velocity generate the free energy increase. In the proposition below, we show that they are controlled by the dissipations and the free energy.
\begin{proposition}\label{prop_energy} Suppose that $f$ is a solution of \eqref{main_eq} with sufficient integrability. Then we have
$$\begin{aligned}
&\mf(f) + \int_0^t \lt(\frac\beta2\md_1(f) + \gamma \into \rho|u|^2\,dx + \frac\alpha2\intoo\phi(x-y)|u(x) - u(y)|^2\rho(x)\rho(y)\,dxdy\rt)ds \cr
&\qquad \leq \mf(f_0)\exp\lt(\frac{C}{\beta}(1+ \gamma^2)T\rt).
\end{aligned}$$
Furthermore, we obtain
\begin{align}\label{entro_1}
\begin{aligned}
&\mf(f) + \int_0^t \lt(\frac\beta2\md_1(f) + \gamma \into \rho|u|^2\,dx + \frac\alpha2\intoo \phi(x-y)|u(x) - u(y)|^2\rho(x)\rho(y)\,dxdy\rt)ds \cr
&\qquad \leq \mf(f_0) + \lt(\frac{C}{\beta}(1+ \gamma^2)\rt),
\end{aligned}
\end{align}
where $C > 0$ depends only $T$, $f_0$ and $\|\phi\|_{L^\infty}$.
\end{proposition}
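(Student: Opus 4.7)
The plan is to take the identity in Lemma \ref{lem_energy} as the starting point and (a) replace each of the dissipations $\alpha\md_2, \gamma\md_3$ by the weaker macroscopic quantities on the left of the proposition while keeping only half of $\beta\md_1$, and (b) control the source $\frac{\sigma\gamma d}{\beta} + \frac{\sigma\alpha d}{\beta}\intoo\phi(x-y)\rho(x)\rho(y)\,dxdy$ by $\frac{C}{\beta}(1+\gamma^2)$.

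Step (a) rests on two elementary facts. First, Jensen's inequality applied to the macroscopic velocity $u(x) = \rho(x)^{-1}\int v f(x,v)\,dv$ gives $\rho(x)|u(x)|^2 \leq \int |v|^2 f(x,v)\,dv$, hence $\gamma\md_3(f) \geq \gamma\into \rho|u|^2\,dx$. Second, writing $v-w = (v-u(x)) - (w-u(y)) + (u(x)-u(y))$ and expanding $|v-w|^2$, all cross terms vanish under integration against $f(x,v)f(y,w)$ because $\int (v-u(x))f(x,v)\,dv \equiv 0$; the surviving fluctuation pieces are non-negative, so
\[
\alpha\md_2(f) \geq \tfrac{\alpha}{2}\intoo\phi(x-y)|u(x)-u(y)|^2\rho(x)\rho(y)\,dxdy.
\]
Keeping $\tfrac{\beta}{2}\md_1$ and discarding the other half is trivial since $\md_1 \geq 0$. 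For step (b), the assumption that $f$ is a probability density gives $\into\rho\,dx = 1$, hence $\intoo\phi(x-y)\rho(x)\rho(y)\,dxdy \leq \|\phi\|_{L^\infty}$; combined with $\gamma \leq \tfrac12(1+\gamma^2)$, this yields the advertised bound on the right-hand side, with $C$ depending on $\sigma,\alpha,d,\|\phi\|_{L^\infty}$.

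Integrating the resulting differential inequality from $0$ to $t\leq T$ immediately produces \eqref{entro_1}, the linear-in-$T$ forcing being absorbed into the constant. The Gronwall-type form $\mf(f_0)\exp\bigl(\tfrac{CT}{\beta}(1+\gamma^2)\bigr)$ follows either from the elementary inequality $a + b \leq a\,e^{b/a}$ (valid once $\mf(f_0)$ is known to be bounded below and positive) or, equivalently, by recasting the constant source as a multiple of $\mf(f)$ before closing via Gronwall's lemma. The only genuine subtlety — and the main obstacle — is that the entropy contribution $\tfrac{\sigma}{\beta}\int f\log f\,dxdv$ inside $\mf$ can be negative, so the integrated dissipation bound is meaningful only after a lower bound on $\mf$ is secured. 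This is handled by the standard Gaussian comparison $\int f\log f\,dxdv \geq -C - \tfrac12\int |v|^2 f\,dxdv$, which reabsorbs an $O(\sigma/\beta)$ fraction of the kinetic energy into the entropy; the argument is vacuous when $\sigma = 0$ and harmless for $\sigma/\beta$ bounded, and with it both \eqref{entro_1} and its Gronwall version follow.
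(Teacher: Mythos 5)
Your proof is correct and takes a genuinely different route. The paper does not lower-bound $\md_2$ and $\md_3$ directly by the macroscopic dissipations; instead it invokes \cite[Prop.\ 2.1]{KMT15} (or \cite[Lem.\ 7.3]{KMT13}), which ties together $\frac{\alpha}{2}\intoo\phi(x-y)|u(x)-u(y)|^2\rho(x)\rho(y)\,dxdy$, $\alpha\md_2$, $\beta\md_1$, $\mf(f)$, and the $\frac{\sigma\alpha d}{\beta}\intoo\phi\rho\rho$ source in a single estimate, and it recasts the remaining source $\frac{\sigma\gamma d}{\beta}$ as $J_1+J_2$ with $J_2=\gamma\md_3(f)-\gamma\into\rho|u|^2\,dx$ and $J_1$ absorbed into $\mf$ and $\md_1$ via Young's inequality. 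The net result is a closed Gronwall inequality $\frac{d}{dt}\mf(f)+\text{dissipation}\leq\frac{C}{\beta}(1+\gamma^2)\mf(f)$, which produces the exponential bound first and the additive bound \eqref{entro_1} as a corollary. Your route --- the orthogonality decomposition of $\md_2$ (which is correct and more transparent than the black-box citation), Cauchy--Schwarz for $\md_3$, and a crude $L^\infty$ bound on the source --- yields the additive form by a single integration with no sign condition on $\mf$ at that step, which is if anything cleaner. The price is in the constant: bounding $\frac{\sigma\gamma d}{\beta}+\frac{\sigma\alpha d}{\beta}\intoo\phi\rho\rho$ pointwise by $\frac{C}{\beta}(1+\gamma^2)$ forces your $C$ to carry a factor of $\sigma$, whereas the paper's claimed $C$ depends only on $T$, $f_0$, $\|\phi\|_{L^\infty}$ --- that $\sigma$-independence is exactly what absorbing the source into $\mf$ buys. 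In the regimes used downstream ($\sigma=0$ or $\sigma=\beta$) the two are equivalent, but yours is formally weaker. You also rightly flag the lower-bound-on-$\mf$ subtlety needed to pass from the additive to the exponential form via $a+b\leq a\,e^{b/a}$; the paper's own proof silently relies on the same Gaussian comparison in the step $\intor|v|^2 f\,dxdv\leq C\mf(f)$ used to control $J_1$.
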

\begin{proof}It follows from \cite[Proposition 2.1]{KMT15} or \cite[Lemma 7.3]{KMT13} that 
$$\begin{aligned}
&\frac\alpha2\intoo \phi(x-y)|u(x) - u(y)|^2 \rho(x)\rho(y)\,dxdy - \frac\beta4 D_1(f)  \cr
&\quad \leq \frac{C}{\beta}\mathcal{F}(f)+ \alpha D_2(f) -\frac{\sigma \alpha d}{\beta}\intoo \phi(x-y) \rho(x) \rho(y)\,dxdy,
\end{aligned}$$
where $C$ depends only on $T$, $\|\phi\|_{L^\infty}$. On the other hand, a straightforward computation gives
$$\begin{aligned} 
\frac{\gamma\sigma d}{\beta} &= \gamma \intor v \cdot \lt( f(u-v) - \frac\sigma\beta \nabla_v f \rt)\,dxdv - \gamma \intor v \cdot f(u-v)\,dxdv\cr
&=: J_1 + J_2,
\end{aligned}$$
where $J_2$ can estimated as
\[
J_2 = \gamma \intor f|v|^2\,dxdv - \gamma\into \rho|u|^2\,dx.
\]
For the estimate of $J_1$, we use H\"older inequality to get
$$\begin{aligned}
J_1 &= \gamma \intor \sqrt{f} v \cdot \frac{1}{\sqrt{f}}\lt( f(u-v) - \frac\sigma\beta \nabla_v f  \rt)\,dxdv \leq \gamma\lt(\intor |v|^2 f\,dxdv \rt)^{1/2} D_1(f)^{1/2} \cr
&\leq \frac{\gamma^2}{\beta} \intor |v|^2 f\,dxdv + \frac\beta4D_1(f),
\end{aligned}$$
i.e.,
\[
J_1 \leq \frac{C\gamma^2}{\beta}\mathcal{F}(f) + \frac\beta4D_1(f).
\]
Thus we have
\[
\frac{\gamma\sigma d}{\beta} \leq \frac{C\gamma^2}{\beta}\mathcal{F}(f) + \frac\beta4D_1(f) + \gamma D_3(f)  - \gamma\into \rho|u|^2\,dx.
\]
Now we combine the above estimates together with Lemma \ref{lem_energy} to obtain
$$\begin{aligned}
&\frac{d}{dt}\mf(f) + \frac\beta2\md_1(f) + \gamma \into \rho|u|^2\,dx + \frac\alpha2\intoo \phi(x-y)|u(x) - u(y)|^2\rho(x)\rho(y)\,dxdy\cr
&\qquad \leq \frac{C}{\beta}(1 + \gamma^2) \mf(f).
\end{aligned}$$
Applying Gronwall's inequality to the above concludes the desired first result. The second inequality just follows from the first result and the above inequality.
\end{proof}

%
%
%
%
%
%
\subsection{Main results}

For the hydrodynamic limit to isothermal/pressureless Euler system with nonlocal forces, we use the relative entropy argument. For this, we need to establish the existence of weak solutions to the equation \eqref{main_eq} and the existence of the unique strong solution to the system \eqref{main_pE} and \eqref{main_npE} at least locally in time. Thus we first present a notion of weak solutions of the equation \eqref{main_eq}.
\begin{definition}\label{def_weak} For a given $T \in (0,\infty)$, we say that $f$ is a weak solution to the equation \eqref{main_eq} if the following conditions are satisfied:
\begin{itemize}
\item[(i)] $f \in L^\infty(0,T;(L^1_+ \cap L^\infty)(\om \times \R^d))$,
\item[(ii)] for any $\varphi \in \mc^\infty_c(\om \times \R^d \times [0,T])$, 
$$\begin{aligned}
&\int_0^t \intor f(\pa_s\varphi + v \cdot \nabla_x \varphi - \lt(\gamma v + \lambda(\nabla_x V + \nabla_x W \star \rho) \rt) \cdot \nabla_v \varphi)\,dxdvds\cr
&\quad + \int_0^t \intor f(\lt(\alpha F[f]  + \beta(u-v) \rt)\cdot \nabla_v \varphi + \sigma \Delta_v \varphi)\,dxdvds + \intor f_0 \varphi(x,v,0)\,dxdv= 0.
\end{aligned}$$
\end{itemize}
\end{definition}

We next state definitions of strong solutions to the systems \eqref{main_pE} and \eqref{main_npE} below.

\begin{definition}\label{def_strong1} For given $T\in(0,\infty)$, the pair $(\rho,u)$ is a strong solution of \eqref{main_pE} on the time interval $[0,T]$ if and only if the following conditions are satisfied:
\begin{itemize}
\item[(i)] $(\rho, u) \in \mc([0,T];L^1(\om)) \times \mc([0,T];\mathcal{W}^{1,\infty}(\om))$,
\item[(ii)] $(\rho, u)$ satisfies the following free energy estimate in the sense of distributions:
$$\begin{aligned}
&\frac{d}{dt}\lt(\frac12\into \rho|u|^2\,dx  + \into \rho \log \rho\,dx + \lambda \into \rho V\,dx + \frac\lambda2\into (W \star \rho)\rho\,dx \rt) \cr
&\quad = - \gamma \into \rho|u|^2\,dx -\frac\alpha2 \intoo \phi(x-y)|u(x) - u(y)|^2 \rho(x) \rho(y)\,dxdy,
\end{aligned}$$
\item[(iii)] $(\rho, u)$ satisfies the system \eqref{main_pE} in the sense of distributions.
\end{itemize}
\end{definition}

\begin{definition}\label{def_strong2}For given $T\in(0,\infty)$, the pair $(\rho,u)$ is a strong solution of \eqref{main_npE} on the time interval $[0,T]$ if and only if the following conditions are satisfied:
\begin{itemize}
\item[(i)] $(\rho, u) \in \mc([0,T];L^1(\om)) \times \mc([0,T];\mw^{1,\infty}(\om))$,
\item[(ii)] $(\rho, u)$ satisfies the following free energy estimate in the sense of distributions:
$$\begin{aligned}
&\frac{d}{dt}\lt(\frac12\into \rho|u|^2\,dx + \lambda \into \rho V\,dx + \frac\lambda2\into (W \star \rho)\rho\,dx \rt) \cr
&\quad = - \gamma \into \rho|u|^2\,dx -\frac\alpha2 \intoo \phi(x-y)|u(x) - u(y)|^2 \rho(x) \rho(y)\,dxdy,
\end{aligned}$$
\item[(iii)] $(\rho, u)$ satisfies the system \eqref{main_npE} in the sense of distributions.
\end{itemize}
\end{definition}

Before providing our results on the hydrodynamic limits, we list our main assumptions on the initial data below.
\begin{itemize}
\item[{\bf (H1)}] The initial data related to the entropy are well-prepared:
\[
\rho_0^\e\lt( \log \rho^\e_0 - \log \rho_0 \rt) + (\rho_0 - \rho_0^\e) = \mathcal{O}(\sqrt\e) \quad \mbox{and} \quad \into\lt(\intr f_0^\e \log f_0^\e\,dv - \rho_0 \log \rho_0 \rt)dx = \mathcal{O}(\sqrt\e).
\]
\item[{\bf (H2)}] The initial data related to the kinetic energy part in the entropy are well-prepared:
\[
\into \rho_0^\e|u_0 - u^\e_0|^2\,dx = \mathcal{O}(\sqrt\e) \quad \mbox{and} \quad \into \lt( \intr f_0^\e |v|^2\,dv  - \rho_0|u_0|^2\rt)dx  = \mathcal{O}(\sqrt\e).
\]
\item[{\bf (H3)}] The bounded Lipschitz distance between initial local densities satisfies
\[
d^2_{BL}(\rho^\e_0, \rho_0) = \mathcal{O}(\sqrt\e).
\]
\end{itemize}
\begin{remark} If we choose the initial data $f_0^\e$ as
\[
f_0^\e(x,v) = \frac{\rho_0(x)}{(2\pi)^{d/2}}\exp\lt(-\frac{|u_0(x) - v|^2}{2}\rt) \quad \mbox{for all} \quad \e > 0,
\]
then we obtain
\[
\rho^\e_0 = \intr f^\e_0\,dv = \rho_0 \quad \mbox{and} \quad \rho^\e_0 u^\e_0 = \intr v f^\e_0\,dv = \intr u_0 f^\e_0\,dv =\rho_0 u_0.
\]
\end{remark}

Let us define the classical relative entropy between two probability densities $\rho_1$ and $\rho_2$ as
\begin{equation}\label{relennew}
\mathcal{H}(\rho_1|\rho_2)=  \int_{\rho_2}^{\rho_1} \frac{\rho_1 - z}{z}\,dz =  \rho_1  \log \rho_1 - \rho_2  \log \rho_2 - (1+\log \rho_2) (\rho_1-\rho_2)\,,
\end{equation}
and analogously for two densities $f_1$ and $f_2$ in phase space as
$$
\mathcal{H}(f_1|f_2)=  \int_{f_2}^{f_1} \frac{f_1 - z}{z}\,dz =  f_1  \log f_1 - f_2  \log f_2 - (1+\log f_2) (f_1-f_2)\,,
$$
\begin{remark}The first assumptions in {\bf (H1)} and {\bf (H2)} imply that
\[
\into \frac{\rho_0^\e}{2}|u_0^\e - u_0|^2\,dx +  \into\mathcal{H}(\rho_0^\e|\rho_0)dx = \mathcal{O}(\sqrt\e).
\]
\end{remark}

\begin{theorem}\label{thm_hydro1} Let $f^\e$ be a weak solution to the equation \eqref{main_eq} with $\beta = \sigma = 1/\e$ in the sense of Definition \ref{def_weak} and $(\rho,u)$ be a strong solution to the system \eqref{main_pE} in the sense of Definition \ref{def_strong1} up to the time $T^* > 0$. Suppose that the assumptions {\bf (H1)}--{\bf (H2)} hold. Then we have the following inequalities for $0< \e \leq 1$ and $t \leq T^*$:
\begin{itemize}
\item[(i)] Coulombian case $\Delta W = -\delta_0$:
$$\begin{aligned}
&\frac12\into \rho^\e |u^\e - u|^2\,dx + \into\mathcal{H}(\rho^\e|\rho)dx + \frac\lambda 2 \into |\nabla W \star (\rho - \rho^\e)|^2\,dx  + \gamma\int_0^t \into \rho^\e(x)| u^\e(x) - u(x)|^2\,dxds\cr
&\quad + \frac\alpha2\int_0^t \intoo \rho^\e(x) \rho^\e(y)\phi(x-y)|( u^\e(x) - u(x)) - (u^\e(y) - u(y))|^2 dxdyds\cr
&\qquad \leq C\sqrt{\e} + C \into |\nabla W \star (\rho_0 - \rho^\e_0)|^2\,dx,
\end{aligned}$$
\item[(ii)] Weakly regular case $\nabla W \in L^\infty(\om)$: 
$$\begin{aligned}
&\frac12\into \rho^\e |u^\e - u|^2\,dx +  \into\mathcal{H}(\rho^\e|\rho)dx  + \gamma\int_0^t \into \rho^\e(x)| u^\e(x) - u(x)|^2\,dxds\cr
&\quad + \frac\alpha2\int_0^t \intoo \rho^\e(x) \rho^\e(y)\phi(x-y)|( u^\e(x) - u(x)) - (u^\e(y) - u(y))|^2 dxdyds \leq C\sqrt{\e}.
\end{aligned}$$
\end{itemize}
Here $C>0$ is a positive constant independent of $\e$. 
\end{theorem}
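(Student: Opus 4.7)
My plan is to employ the relative entropy / modulated free energy method, which is the standard tool for hydrodynamic limits of kinetic equations toward Euler-type systems with pressure. The key object is the modulated functional attached to the target strong solution $(\rho,u)$, namely
$$\mathcal{I}^\e(t) := \intor f^\e \log\frac{f^\e}{M_{\rho,u}}\,dxdv + \frac{\lambda}{2}\intoo W(x-y)(\rho^\e-\rho)(x)(\rho^\e-\rho)(y)\,dxdy,$$
where $M_{\rho,u}(x,v,t) := \frac{\rho(x,t)}{(2\pi)^{d/2}}\exp(-|v-u(x,t)|^2/2)$ is the local Maxwellian. A direct expansion using $\log M_{\rho,u} = \log\rho - \frac{d}{2}\log(2\pi) - |v-u|^2/2$ together with $\int v f^\e\,dv = \rho^\e u^\e$ gives
$$\intor f^\e \log \frac{f^\e}{M_{\rho,u}}\,dxdv = \frac{1}{2}\into\rho^\e|u^\e-u|^2\,dx + \into\mathcal{H}(\rho^\e|\rho)\,dx + \mathcal{H}(f^\e\,|\,M_{\rho^\e,u^\e}),$$
and the last Boltzmann-type entropy is nonnegative by Jensen. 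In case (i), integration by parts against $-\Delta W=\delta_0$ converts the modulated interaction into $\frac{\lambda}{2}\|\nabla W\star(\rho^\e-\rho)\|_{L^2}^2$, so $\mathcal{I}^\e(t)$ indeed dominates the quantities on the left-hand side of the theorem.

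The heart of the argument is a careful computation of $\frac{d}{dt}\mathcal{I}^\e(t)$. Using Proposition \ref{prop_energy} for the kinetic free-energy contribution and the strong form of \eqref{main_pE} for the target, the time derivative can be written as the sum of (a) the negative dissipations $\frac{1}{\e}\md_1(f^\e)$, $\alpha\md_2(f^\e)$, $\gamma\md_3(f^\e)$ on the kinetic side; (b) a contribution from $-\frac{d}{dt}\int f^\e\log M_{\rho,u}\,dxdv$, obtained by inserting $\partial_t(\log\rho - |v-u|^2/2)$ and using the continuity and momentum equations of \eqref{main_pE}; and (c) the modulated interaction contribution. The decisive algebraic identity is $\nabla_v\log M_{\rho,u} = -(v-u)$, which forces the singular $1/\e$-scaled Fokker-Planck pieces to recombine into the dissipation $\md_1(f^\e)$; all remaining cross terms are absorbed via Cauchy-Schwarz/Young into $C\mathcal{I}^\e(t)+C\e$. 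After symmetrizing the nonlocal alignment integrals and using Definition \ref{def_strong1} to bound $\|u\|_{\mw^{1,\infty}}$ and $\|\rho\|_{L^\infty}$, the resulting identity takes the form
$$\frac{d}{dt}\mathcal{I}^\e(t) + \gamma\into\rho^\e|u^\e-u|^2\,dx + \frac{\alpha}{2}\intoo\phi(x-y)|(u^\e(x)-u(x))-(u^\e(y)-u(y))|^2\rho^\e(x)\rho^\e(y)\,dxdy \leq C\mathcal{I}^\e(t) + C\e.$$
Gronwall's inequality, together with the well-prepared assumptions (H1)--(H2) (which yield $\mathcal{I}^\e(0)=\mathcal{O}(\sqrt\e)$ plus the explicit Coulombian initial contribution in case (i)), then closes the estimate at the rate $\mathcal{O}(\sqrt\e)$ for $t\leq T^*$.

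The principal obstacle is the treatment of the nonlocal interaction cross term $\lambda\int\rho^\e(u^\e-u)\cdot(\nabla W\star(\rho^\e-\rho))\,dx$ arising when one differentiates the modulated interaction energy and uses the momentum equations. In the weakly regular case (ii) this is immediate from $\|\nabla W\|_{L^\infty}$ and mass conservation. In the Coulombian case, however, $\nabla W$ is singular, and one must instead use both continuity equations to recognize this quantity as $-\frac{1}{2}\frac{d}{dt}\|\nabla W\star(\rho^\e-\rho)\|_{L^2}^2$, which is precisely why this norm appears on the left-hand side of (i); rigorous justification requires the $L^\infty$-bound of Definition \ref{def_weak}(i) on $\rho^\e$. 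A secondary but delicate issue is that $f^\e$ is only a weak solution, so using $\log M_{\rho,u}$ and $|v-u|^2/2$ as test functions against the weak form of \eqref{main_eq} requires a mollification/truncation argument, most conveniently bypassed by building the free-energy inequality of Proposition \ref{prop_energy} directly into the notion of weak solution.
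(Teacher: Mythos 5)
Your decomposition of the kinetic relative entropy
\[
\intor f^\e\log\frac{f^\e}{M_{\rho,u}}\,dxdv \;=\; \frac12\into\rho^\e|u^\e-u|^2\,dx \;+\; \into\mathcal H(\rho^\e|\rho)\,dx \;+\; \mathcal H\big(f^\e\,\big|\,M_{\rho^\e,u^\e}\big)
\]
is correct, and your treatment of the Coulombian versus weakly regular dichotomy (time-differentiating $\frac{\lambda}{2}\|\nabla W\star(\rho^\e-\rho)\|_{L^2}^2$ via the continuity equations in case (i); $\|\nabla W\|_{L^\infty}$ and mass conservation in case (ii)) matches Lemmas \ref{lem_wd} and \ref{lem_wr}. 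But the core of your argument diverges from the paper at a point where it matters, and the gap is real.

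The paper does \emph{not} differentiate the kinetic relative entropy $\mathcal H(f^\e|M_{\rho,u})$; it differentiates the \emph{macroscopic} relative entropy $\me(U^\e|U)=\tfrac{\rho^\e}{2}|u^\e-u|^2+\mathcal H(\rho^\e|\rho)$, which is strictly smaller than your $\mathcal I^\e$ by the nonnegative amount $\mathcal H(f^\e|M_{\rho^\e,u^\e})$. The reason is structural, not cosmetic. For $\me(U^\e|U)$, the weak-strong uniqueness algebra (Lemma \ref{lem_rel}, using $\into\nabla(DE(U)):A(U)\,dx=0$ from \cite{KMT15}) makes the flux cross term equal to $\int\nabla u:\rho^\e(u^\e-u)\otimes(u^\e-u)\,dx$, which is \emph{quadratic} in $u^\e-u$ and hence absorbs into $C\int\me(U^\e|U)$. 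For your $\mathcal I^\e$, the derivative of $-\intor f^\e\log M_{\rho,u}\,dxdv$ produces (exactly as in Appendix \ref{app_cor} of the paper) a \emph{linear} flux cross term of the type $\int\rho^\e\,u\otimes(u^\e-u):\nabla u\,dx$, which Cauchy--Schwarz only bounds by $C\sqrt{\int\rho^\e|u^\e-u|^2}\lesssim\sqrt{\mathcal I^\e}$. A term of size $\sqrt{\mathcal I^\e}$ does not Gronwall-close at rate $\sqrt\e$: Young's inequality leaves an $O(1)$ additive constant (or, if you try to absorb it into $\gamma\int\rho^\e|u^\e-u|^2$, a $C/\gamma$ constant), so the best you can extract is $\mathcal I^\e=O(1)$ or, with Theorem \ref{thm_hydro1} as input, $O(\e^{1/4})$ — which is exactly the rate Appendix \ref{app_cor} obtains and why the paper proves Corollary \ref{cor_hydro1.5} only \emph{after} Theorem \ref{thm_hydro1}, not the other way around. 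Your sentence ``all remaining cross terms are absorbed via Cauchy--Schwarz/Young into $C\mathcal I^\e(t)+C\e$'' is precisely where the claim fails.

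Two further imprecisions feed the same gap. First, the good dissipations $\gamma\int_0^t\into\rho^\e|u^\e-u|^2$ and $\tfrac{\alpha}{2}\int_0^t\intoo\phi\,\rho^\e\rho^\e|(u^\e-u)(x)-(u^\e-u)(y)|^2$ on the left-hand side of the theorem are \emph{not} the kinetic dissipations $\gamma\md_3(f^\e)$ and $\alpha\md_2(f^\e)$ from Lemma \ref{lem_energy}; they arise from the macroscopic relative-entropy algebra in Lemma \ref{lem_rel} (the $I_4^1+I_4^2$ and $I_4^3+I_4^4$ cancellations). You cite ``(a) the negative dissipations $\frac1\e\md_1,\alpha\md_2,\gamma\md_3$'' as producing these, which conflates two different objects. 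Second, Definition \ref{def_strong1} does not furnish $\|\rho\|_{L^\infty}$; the paper's proof is careful to use only $\|\nabla u\|_{L^\infty}$ (and improves on \cite{KMT15}, which required $\|\nabla\log\rho\|_{L^\infty}$). To repair your argument you would need to replace $\mathcal I^\e$ by $\into\me(U^\e|U)\,dx$ (plus the Coulombian part) and rerun the computation through the conservative-form identity of Lemma \ref{lem_rel} and Proposition \ref{prop_re}, which is what the paper does.
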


\begin{remark}Coulombian interaction potential on $\R^d$ is explicitly given by
\[
W(x) = \left\{ \begin{array}{ll}
 -\frac{|x|}{2} & \textrm{for $d=1$,}\\[2mm]
 -\frac{1}{2\pi} \log |x| & \textrm{for $d=2$,}\\[2mm]
\frac{1}{(d-2)|B(0,1)|}\frac{1}{|x|^{d-2}} & \textrm{for $d \geq 3$},
  \end{array} \right.
\]
where $|B(0,1)|$ denotes the volume of unit ball $B(0,1)$ in $\R^d$, i.e., $|B(0,1)| = \pi^{d/2}/\Gamma(d/2+1)$.
\end{remark}

\begin{corollary}\label{cor_hydro1} Suppose that all the assumptions in Theorem \ref{thm_hydro1} hold. Then we have the following convergences hold for the weakly regular case (ii):
\begin{align}\label{thm_h1_conv}
\begin{aligned}
\rho^\e &\to \rho \quad \mbox{a.e.} \quad \mbox{and} \quad L^\infty(0,T^*;L^1(\om)),\cr
\rho^\e u^\e &\to \rho u \quad \mbox{a.e.} \quad \mbox{and} \quad L^\infty(0,T^*;L^1(\om)), \cr
\rho^\e u^\e \otimes u^\e &\to \rho u\otimes u \quad \mbox{a.e.} \quad \mbox{and} \quad L^\infty(0,T^*;L^1(\om)), \quad \mbox{and}\cr
\intr f^\e v\otimes v\,dv &\to \rho u\otimes u + \rho \mathbb{I}_{d \times d} \quad \mbox{a.e.} \quad \mbox{and} \quad L^p(0,T^*;L^1(\om)) \quad \mbox{for} \quad 1 \leq p \leq 2
\end{aligned}
\end{align}
as $\e \to 0$. The same convergences for the Coulombian case (i) can be obtained if
\[
 \into |\nabla W \star (\rho_0 - \rho^\e_0)|^2\,dx \to 0 \quad \mbox{as} \quad \e \to 0.
\]
\end{corollary}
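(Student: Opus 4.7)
The plan is to derive all four convergences by combining the modulated-energy/relative-entropy bound of Theorem~\ref{thm_hydro1}(ii) with the dissipation estimate $\int_0^{T^*} \md_1(f^\e)\,dt = O(\e)$ extracted from Proposition~\ref{prop_energy} in the scaling $\beta = \sigma = 1/\e$. The first three convergences reduce to elementary manipulations plus the Csisz\'ar--Kullback--Pinsker inequality, while the identification of the pressure $\rho\,\mathbb{I}_{d\times d}$ in the last convergence is the only place where the Fisher-information-type dissipation genuinely enters.

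For the density, since $\rho^\e$ and $\rho$ are probability densities of equal mass, $\int_\om \mathcal{H}(\rho^\e|\rho)\,dx = O(\sqrt{\e})$ combined with Csisz\'ar--Kullback--Pinsker yields $\|\rho^\e - \rho\|_{L^\infty(0,T^*;L^1(\om))} = O(\e^{1/4})$; a.e.\ convergence follows after passing to a subsequence and invoking uniqueness of the limit. For the momentum, I split
\[
\rho^\e u^\e - \rho u = \rho^\e(u^\e - u) + (\rho^\e - \rho)\,u,
\]
controlling the first piece in $L^1_x$ by $\|\rho^\e\|_{L^1}^{1/2}\bigl(\int \rho^\e|u^\e - u|^2\,dx\bigr)^{1/2} = O(\e^{1/4})$ via Cauchy--Schwarz and the second by $\|u\|_{L^\infty}\|\rho^\e - \rho\|_{L^1} = O(\e^{1/4})$ using $u\in\mathcal{C}([0,T^*];\mw^{1,\infty})$. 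The tensor $\rho^\e u^\e\otimes u^\e$ is handled analogously by bilinear expansion; the extra factor $\int\rho^\e |u^\e|^2\,dx$ appearing in one of the cross terms is dominated by $\int |v|^2 f^\e\,dxdv$ via Jensen and is uniformly bounded by the free-energy estimate of Proposition~\ref{prop_energy}.

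The main content lies in the last convergence. Using $\int f^\e(v-u^\e)\,dv = 0$ together with $v\otimes v = (v-u^\e)\otimes(v-u^\e) + u^\e\otimes(v-u^\e) + (v-u^\e)\otimes u^\e + u^\e\otimes u^\e$, I decompose
\[
\int f^\e v\otimes v\,dv - \rho u\otimes u - \rho\,\mathbb{I}_{d\times d} = [\rho^\e u^\e\otimes u^\e - \rho u\otimes u] + (\rho^\e - \rho)\,\mathbb{I}_{d\times d} + [P(f^\e) - \rho^\e\,\mathbb{I}_{d\times d}],
\]
where $P_{ij}(f^\e)(x):=\int f^\e(v-u^\e)_i(v-u^\e)_j\,dv$. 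The first two brackets converge in $L^\infty_t L^1_x$ by the previous steps. For the third, integration by parts in $v$ produces the Stein-type identity
\[
P_{ij}(f^\e) - \rho^\e\delta_{ij} = \int \bigl(\pa_{v_i}f^\e + f^\e(v-u^\e)_i\bigr)(v-u^\e)_j\,dv,
\]
and Cauchy--Schwarz in $v$ yields the pointwise bound $|P(f^\e) - \rho^\e\mathbb{I}_{d\times d}|(x) \leq C\sqrt{\md_1(f^\e)(x)}\sqrt{K(x)}$, where $K(x):=\int f^\e |v - u^\e|^2\,dv$. Applying the same identity with $i=j$ summed shows $K$ is absorbed by $\rho^\e$ and $\md_1(f^\e)$, so that $\sup_{t\leq T^*}\int_\om K\,dx$ is finite; then Cauchy--Schwarz in $x$ and $t$, together with $\int_0^{T^*}\int_\om\md_1(f^\e)\,dxdt = O(\e)$, gives $\|P(f^\e) - \rho^\e\mathbb{I}_{d\times d}\|_{L^2(0,T^*;L^1(\om))} = O(\sqrt{\e})$, which interpolates with the uniform $L^\infty_t L^1_x$ bound to cover $1\leq p \leq 2$; a.e.\ convergence is again obtained subsequentially.

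The hard part will be extracting the Euler pressure $\rho\,\mathbb{I}_{d\times d}$ from the microscopic Fisher-information dissipation via the Stein identity, and, in the whole-space case, securing the uniform bound on $\int K\,dx$ from Proposition~\ref{prop_energy}. For the Coulombian case (i), the same chain of estimates applies verbatim under the additional hypothesis $\int |\nabla W\star(\rho_0 - \rho_0^\e)|^2\,dx \to 0$, which absorbs the extra data term appearing on the right-hand side of Theorem~\ref{thm_hydro1}(i).
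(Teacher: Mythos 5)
Your proposal follows essentially the same route as the paper: the first three convergences come from the relative-entropy/Pinsker bound on $\|\rho^\e-\rho\|_{L^1}$ together with Cauchy--Schwarz on the modulated kinetic energy, and for the fourth you isolate $P(f^\e)-\rho^\e\,\mathbb{I}_{d\times d}=\int f^\e v\otimes v\,dv - \rho^\e u^\e\otimes u^\e - \rho^\e\,\mathbb{I}_{d\times d}$ and control it by the Fisher-information dissipation $\md_1$. Your ``Stein identity'' $P_{ij}(f^\e)-\rho^\e\delta_{ij}=\int(\pa_{v_i}f^\e + f^\e(v-u^\e)_i)(v-u^\e)_j\,dv$ is algebraically the same as the paper's identity \eqref{est_conv0} after expanding $\nabla_v\sqrt{f^\e}=\nabla_v f^\e/(2\sqrt{f^\e})$, so the final $O(\sqrt\e)$ bound in $L^2(0,T^*;L^1(\om))$ is obtained identically.

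One small wrinkle: the way you propose to bound $\sup_{t\leq T^*}\int_\om K\,dx$ --- by taking $i=j$ summed in the Stein identity and ``absorbing $K$ into $\rho^\e$ and $\md_1$'' --- does not quite close, because it requires $\md_1(f^\e)(t)$ to be controlled at each fixed time, whereas Proposition~\ref{prop_energy} only gives $\int_0^{T^*}\md_1\,dt=O(\e)$. The intended bound is direct and needs no Stein identity: $\int_\om K\,dx=\intor f^\e|v-u^\e|^2\,dxdv=\intor|v|^2f^\e\,dxdv-\into\rho^\e|u^\e|^2\,dx\leq\intor|v|^2f^\e\,dxdv$, which is uniformly bounded in $t$ and $\e$ by the free energy estimate of Proposition~\ref{prop_energy}. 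With this replacement your argument is complete and coincides with the paper's.
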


In the corollary below, under suitable assumptions we provide the convergence of $f^\e$ towards the local Maxwellian $M_{\rho,u}$ given by
\[
M_{\rho, u}:= \frac{\rho}{(2\pi)^{d/2}}e^{-\frac{|u-v|^2}{2}},
\]
where $(\rho,u)$ is the strong solution to the system \eqref{main_pE}.

\begin{corollary}\label{cor_hydro1.5} Suppose that all the assumptions in Theorem \ref{thm_hydro1} hold. Moreover, we assume that the confinement potential $V$ satisfies $|\nabla V(x)|^2 \leq C|V(x)|$ for some $C>0$ and the solution $\rho$ to the limiting system is regular such that  $\nabla W \star \rho \in L^\infty(\om \times (0,T^*))$. Then for $t \leq T^*$, we have 
\[
\|f^\e - M_{\rho,u}\|_{L^1} \leq C  \lt(\intor  \mathcal{H}(f_0^\e|M_{\rho_0, u_0})\,dxdv\rt)^{1/2} + C\e^{1/8}
\]
for the weakly regular potential case (ii), and 
$$\begin{aligned}
\|f^\e - M_{\rho,u}\|_{L^1} &\leq C  \lt(\intor \mathcal{H}(f_0^\e|M_{\rho_0,u_0})\,dxdv\rt)^{1/2} + C\e^{1/8} + C\left( \min\left\{1,\int_\Omega |\nabla W\star(\rho_0^\e - \rho_0)|^2\,dx\right\}\right)^{1/4}
\end{aligned}$$
for the Coulombian potential case (i), where $C>0$ is independent of $\e>0$. In particular, if the right hand side of the above inequality convergences to zero, then we have
\[
f^\e \to M_{\rho, u}:= \frac{\rho}{(2\pi)^{d/2}}e^{-\frac{|u-v|^2}{2}} \quad \mbox{in }\ L^\infty(0,T^*; L^1(\Omega))
\]
as $\e \to 0$.
\end{corollary}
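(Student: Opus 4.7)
The plan is to reduce the $L^1$ bound to a kinetic relative entropy estimate via the Csisz\'ar--Kullback--Pinsker (CKP) inequality
\[
\|f^\e - M_{\rho,u}\|_{L^1}^2 \leq 2\intor\mathcal{H}(f^\e|M_{\rho,u})\,dxdv,
\]
which applies because $f^\e(t,\cdot,\cdot)$ and $M_{\rho,u}$ are both probability densities on $\om\times\R^d$. The next algebraic step is to use $\log M_{\rho,u}=\log\rho-\tfrac{d}{2}\log(2\pi)-\tfrac{1}{2}|v-u|^2$ together with the first-moment identity $\int_{\R^d} f^\e(v-u^\e)\,dv=0$ to derive the decomposition
\[
\intor\mathcal{H}(f^\e|M_{\rho,u})\,dxdv = \intor\mathcal{H}(f^\e|M_{\rho^\e,u^\e})\,dxdv + \into\mathcal{H}(\rho^\e|\rho)\,dx + \frac{1}{2}\into\rho^\e|u^\e-u|^2\,dx.
\]
The second and third terms on the right are exactly the macroscopic modulated entropy controlled by Theorem \ref{thm_hydro1}: they are bounded by $C\sqrt{\e}$ in case (ii) and by $C\sqrt{\e}+C\into|\nabla W\star(\rho_0-\rho_0^\e)|^2dx$ in case (i).

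For the remaining kinetic deviation I would invoke the free energy identity
\[
\intor\mathcal{H}(f^\e|M_{\rho^\e,u^\e})\,dxdv = \mathcal{F}(f^\e)-\mathcal{F}_{\rm E}(\rho^\e,u^\e)+\tfrac{d}{2}\log(2\pi),
\]
where $\mathcal{F}$ is the kinetic free energy of Lemma \ref{lem_energy} (with $\sigma/\beta=1$) and $\mathcal{F}_{\rm E}(\rho,u):=\into\rho\log\rho\,dx+\tfrac{1}{2}\into\rho|u|^2dx+\lambda\into V\rho\,dx+\tfrac{\lambda}{2}\into(W\star\rho)\rho\,dx$ is the corresponding Euler free energy from Definition \ref{def_strong1}. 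Proposition \ref{prop_energy} applied with $\beta=\sigma=1/\e$ gives $\mathcal{F}(f^\e)(t)\leq\mathcal{F}(f_0^\e)+C\e$, and the Euler dissipation relation of Definition \ref{def_strong1} yields $\mathcal{F}_{\rm E}(\rho,u)(t)\leq\mathcal{F}_{\rm E}(\rho_0,u_0)$. One then compares $\mathcal{F}_{\rm E}(\rho^\e,u^\e)(t)$ with $\mathcal{F}_{\rm E}(\rho,u)(t)$ term by term, using the pointwise bounds $\|\rho^\e-\rho\|_{L^1}\leq C\e^{1/4}$ and $\into\rho^\e|u^\e-u|\,dx\leq C\e^{1/4}$ (both consequences of CKP applied to the macroscopic modulated entropy of Theorem \ref{thm_hydro1}). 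The regularity assumptions $|\nabla V|^2\leq C|V|$ and $\nabla W\star\rho\in L^\infty$ provide uniform control of the modulated potential cross-terms; in case (i), a Cauchy--Schwarz argument produces an extra correction of size $(\into|\nabla W\star(\rho_0-\rho_0^\e)|^2dx)^{1/2}$. The order-one accumulated Euler dissipation is then cancelled against the analogous kinetic macroscopic dissipation $\gamma\into\rho^\e|u^\e|^2+\tfrac{\alpha}{2}\intoo\phi(x-y)|u^\e(x)-u^\e(y)|^2\rho^\e(x)\rho^\e(y)dxdy$ from Proposition \ref{prop_energy}, the two differing only by $O(\e^{1/4})$ terms again controlled by Theorem \ref{thm_hydro1}. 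Absorbing the initial discrepancies via the well-preparedness assumptions \textbf{(H1)}--\textbf{(H3)} then gives
\[
\intor\mathcal{H}(f^\e|M_{\rho^\e,u^\e})(t)\,dxdv \leq \intor\mathcal{H}(f_0^\e|M_{\rho_0,u_0})\,dxdv + C\e^{1/4}
\]
plus the square-root Coulombian correction in case (i).

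Adding this to the macroscopic bound and applying $\sqrt{a+b}\leq\sqrt{a}+\sqrt{b}$ in the CKP inequality above produces the advertised $\e^{1/8}$ and $(\min\{1,\cdot\})^{1/4}$ tails; the $L^\infty(0,T^*;L^1(\om))$ convergence is then immediate from the explicit estimate. The hardest step is the cancellation described in the preceding paragraph: the accumulated Euler dissipation is of order one rather than of order $\e$, so it cannot simply be dropped and must match the corresponding macroscopic kinetic dissipation of Proposition \ref{prop_energy} up to errors of size $O(\e^{1/4})$ controlled by Theorem \ref{thm_hydro1}. In that step the assumption $|\nabla V|^2\leq C|V|$ is what makes $\into|\nabla V|^2\rho^\e\,dx$ controllable by $\mathcal{F}(f^\e)$, while $\nabla W\star\rho\in L^\infty$ is what controls the interaction potential forcing uniformly in $\e$; both are essential for closing the estimate.
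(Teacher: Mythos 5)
Your decomposition
\[
\intor\mathcal{H}(f^\e|M_{\rho,u})\,dxdv = \intor\mathcal{H}(f^\e|M_{\rho^\e,u^\e})\,dxdv + \into\mathcal{H}(\rho^\e|\rho)\,dx + \frac{1}{2}\into\rho^\e|u^\e-u|^2\,dx
\]
is correct, and so is the identity $\intor\mathcal{H}(f^\e|M_{\rho^\e,u^\e})\,dxdv = \mathcal{F}(f^\e) - \mathcal{F}_{\rm E}(\rho^\e,u^\e) + \tfrac{d}{2}\log(2\pi)$, because the potential energy terms cancel between the two functionals. This is a genuinely different and structurally cleaner organization than the paper's Appendix~A, which instead differentiates $\intor\mathcal{H}(f^\e|M_{\rho,u})\,dxdv$ directly in time and estimates each of the resulting terms.

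However, the step you describe as ``compares $\mathcal{F}_{\rm E}(\rho^\e,u^\e)(t)$ with $\mathcal{F}_{\rm E}(\rho,u)(t)$ term by term, using the pointwise bounds $\|\rho^\e-\rho\|_{L^1}\lesssim\e^{1/4}$'' contains a gap. The difference $\mathcal{F}_{\rm E}(\rho^\e,u^\e)-\mathcal{F}_{\rm E}(\rho,u)$ contains the confinement potential energy difference $\lambda\into V(\rho^\e-\rho)\,dx$, and this is \emph{not} controlled by $\|\rho^\e-\rho\|_{L^1}$ when $V$ is unbounded — e.g.\ $V(x)=|x|^2/2$, which the paper explicitly allows (see the remark after Corollary~\ref{cor_hydro1.5}). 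The hypothesis $|\nabla V|^2\leq C|V|$ does not give a static bound on $\into V(\rho^\e-\rho)\,dx$; it only gives $\into|\nabla V|^2\rho^\e\,dx\leq C\into V\rho^\e\,dx\leq C\mathcal{F}(f^\e)\leq C$, which is useful only once $\nabla V$ (rather than $V$ itself) appears in the estimate. The paper's direct differential approach automatically produces only $\nabla V$ in the force terms (see the estimate of $J_5$ in Appendix~A), which is precisely why that hypothesis is formulated as it is. A parallel issue arises for the interaction cross-term: you have $\nabla W\star\rho\in L^\infty$, not $W\star\rho\in L^\infty$, so $\into (W\star\rho)(\rho^\e-\rho)\,dx$ is not controlled either. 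To close your argument you would need to further differentiate the potential energy differences in time — e.g.\ $\frac{d}{dt}\into V(\rho^\e-\rho)\,dx = \into\nabla V\cdot(\rho^\e u^\e-\rho u)\,dx$, which \emph{is} controllable under $|\nabla V|^2\leq C|V|$ — but that brings you back to essentially the paper's computation, and the proposal as written does not contain this step.
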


\begin{proof}Since this proof is lengthy and technical, we postpone it to Appendix \ref{app_cor}.
\end{proof}

\begin{remark}Note that the assumption on $V$ in Corollary \ref{cor_hydro1.5} holds for the quadratic confinement potential $V(x) = |x|^2/2$.
\end{remark}

\begin{theorem}\label{thm_hydro2} Let $f^\e$ be a weak solution to the equation \eqref{main_eq} with $\beta = 1/\e$ and $\sigma = 0$ in the sense of Definition \ref{def_weak} and $(\rho,u)$ be a strong solution to the system \eqref{main_npE} in the sense of Definition \ref{def_strong2} up to the time $T^* > 0$. Suppose that the assumptions {\bf (H2)}--{\bf (H3)} hold. Then we have the following inequalities for $0 < \e \leq 1$ and $t \leq T^*$:
\begin{itemize}
\item[(i)] Coulombian case $\Delta W = -\delta_0$:
$$\begin{aligned}
&\into \frac{\rho^\e}{2}|u^\e - u|^2\,dx + \frac\lambda 2 \into |\nabla W \star (\rho - \rho^\e)|^2\,dx +d_{BL}^2(\rho^\e, \rho) + \gamma\int_0^t \into \rho^\e(x)| u^\e(x) - u(x)|^2\,dxds\cr
&\quad + \frac\alpha2\int_0^t \intoo \rho^\e(x) \rho^\e(y)\phi(x-y)|( u^\e(x) - u(x)) - (u^\e(y) - u(y))|^2 dxdyds\cr
&\qquad \leq C\sqrt{\e} + C \into |\nabla W \star (\rho_0 - \rho^\e_0)|^2\,dx,
\end{aligned}$$
\item[(ii)] Strongly regular case $\nabla W \in \mw^{1,\infty}(\om)$: 
$$\begin{aligned}
&\into \frac{\rho^\e}{2}|u^\e - u|^2 \,dx  +d_{BL}^2(\rho^\e, \rho) + \gamma\int_0^t \into \rho^\e(x)| u^\e(x) - u(x)|^2\,dxds\cr
&\quad + \frac\alpha2\int_0^t \intoo \rho^\e(x) \rho^\e(y)\phi(x-y)|( u^\e(x) - u(x)) - (u^\e(y) - u(y))|^2 \,dxdyds\cr
&\qquad \leq C\sqrt{\e}.
\end{aligned}$$
\end{itemize}
Here $C>0$ is a positive constant independent of $\e$. 
\end{theorem}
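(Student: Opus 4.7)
The plan is to run a modulated macroscopic kinetic energy estimate supplemented by a bounded Lipschitz distance estimate on the densities, since the limit system \eqref{main_npE} is pressureless and so the modulated energy alone is not coercive in $\rho^\e-\rho$. I would introduce
\[
\mathcal Z(t) := \frac12\intor f^\e |v-u|^2\,dxdv + \frac{\lambda}{2}\mathcal I_W(t),
\]
with $\mathcal I_W(t) = \into|\nabla W\star(\rho^\e-\rho)|^2\,dx$ in the Coulombian case (i) and $\mathcal I_W \equiv 0$ in the strongly regular case (ii). The confinement contribution $\lambda\into V(\rho^\e-\rho)\,dx$ cancels between the kinetic free energy \eqref{energy_zerosig} and the limiting energy identity from Definition \ref{def_strong2}(ii), so it need not appear in $\mathcal Z$.

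\textbf{Modulated energy estimate.} Using the weak formulation of \eqref{main_eq} against $\tfrac12|v-u(x,t)|^2$, the strong form of \eqref{main_npE} for the limit, and the energy identity from Definition \ref{def_strong2}(ii), I compute $\tfrac{d}{dt}\mathcal Z$. The singular alignment dissipation $\tfrac{1}{\e}\intor f^\e|v-u^\e|^2\,dxdv$ has the right sign and is discarded. The remaining production terms organize into the damping term $\gamma\into \rho^\e|u^\e-u|^2\,dx$, the symmetrized alignment dissipation $\tfrac{\alpha}{2}\intoo\phi(x-y)|(u^\e-u)(x)-(u^\e-u)(y)|^2\rho^\e(x)\rho^\e(y)\,dxdy$, and a remainder controlled by $\|u\|_{\mw^{1,\infty}}\mathcal Z$, $\|\phi\|_{L^\infty}$, and interaction cross terms. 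For case (i), I use $-\Delta W=\delta_0$ together with the two continuity equations to rewrite the interaction cross terms as $\tfrac{d}{dt}\tfrac12\into|\nabla W\star(\rho^\e-\rho)|^2\,dx$ modulo a Lipschitz remainder $\|\nabla u\|_{L^\infty}\into|\nabla W\star(\rho^\e-\rho)|^2\,dx$. For case (ii), $\nabla W\in\mw^{1,\infty}$ yields the pointwise bound $|\nabla W\star(\rho^\e-\rho)(x)|\lesssim d_{BL}(\rho^\e,\rho)$, which handles the analogous cross term via Cauchy--Schwarz against $(\into \rho^\e|u^\e-u|^2\,dx)^{1/2}$.

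\textbf{Bounded Lipschitz estimate.} Testing both continuity equations against an arbitrary $1$-BL function $\varphi$, taking suprema, and decomposing $\rho^\e u^\e - \rho u = \rho^\e(u^\e-u) + (\rho^\e-\rho)u$, I obtain
\[
\frac{d}{dt}d_{BL}(\rho^\e,\rho) \leq \|u\|_{\mw^{1,\infty}}\,d_{BL}(\rho^\e,\rho) + \lt(\into \rho^\e|u^\e-u|^2\,dx\rt)^{1/2}.
\]
Squaring and combining with the modulated energy estimate, and invoking Lemma \ref{prop_rho_wa} where the comparison between $d_{BL}^2$ and the modulated macroscopic kinetic energy is needed, produces a Gronwall inequality for $\mathcal Z(t)+d_{BL}^2(\rho^\e,\rho)(t)$. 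The well-prepared-data hypotheses {\bf (H2)}--{\bf (H3)} give $\mathcal Z(0)+d_{BL}^2(\rho_0^\e,\rho_0) = \mathcal O(\sqrt\e)$, up to the initial Coulombian remainder kept explicit in (i); Gronwall on $[0,T^*]$ then yields the stated $\mathcal O(\sqrt\e)$ rate with the damping and alignment dissipations on the left.

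\textbf{Main obstacle.} The most delicate point is the Coulombian case: the singularity of $W$ forbids a direct Cauchy--Schwarz bound of the interaction cross terms, so one must integrate by parts carefully using $-\Delta W=\delta_0$ to identify $\tfrac{d}{dt}\into|\nabla W\star(\rho^\e-\rho)|^2\,dx$ modulo a commutator remainder absorbable into $\|\nabla u\|_{L^\infty}\mathcal Z$. A secondary technical difficulty is arranging the alignment and nonlocal cross terms so that only $\phi\in L^\infty$ is required while the full symmetric alignment dissipation stays on the left-hand side, and so that the interaction cross term in case (ii) is absorbed jointly by $\mathcal Z$ and $d_{BL}^2$ rather than by either one alone.
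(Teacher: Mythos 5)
Your overall plan follows the paper's strategy quite closely: a modulated kinetic energy estimate, a bounded Lipschitz distance estimate on the densities to compensate for the lack of strict convexity in $\rho$, the $\Delta W = -\delta_0$ integration-by-parts trick for the Coulombian case, the Lipschitz bound on $\nabla W\star(\rho-\rho^\e)$ for case (ii), and a Gr\"onwall closure. There are, however, two genuine problems in the intermediate steps.

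\smallskip
\textbf{The differential inequality for $d_{BL}$ does not hold as sketched.} You propose to derive
$\frac{d}{dt}d_{BL}(\rho^\e,\rho)\leq \|u\|_{\mw^{1,\infty}}d_{BL}(\rho^\e,\rho)+(\into\rho^\e|u^\e-u|^2\,dx)^{1/2}$
by testing the continuity equations against a $1$-BL test function $\varphi$ and decomposing $\rho^\e u^\e-\rho u = \rho^\e(u^\e-u)+(\rho^\e-\rho)u$. The first piece is fine, but the second piece produces $\into\nabla\varphi\cdot u\,(\rho^\e-\rho)\,dx$. A function $\varphi$ with $\|\varphi\|_\infty+\|\varphi\|_{Lip}\leq1$ has $\nabla\varphi\in L^\infty$ but $\nabla\varphi$ is in general \emph{not} Lipschitz, so $\nabla\varphi\cdot u$ is merely bounded, not bounded Lipschitz, and $\into g(\rho^\e-\rho)\,dx$ for $g\in L^\infty$ is not controlled by $d_{BL}(\rho^\e,\rho)$. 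Without convexity of the entropy in $\rho$ there is no $L^1$ control on $\rho^\e-\rho$ to fall back on, so this term is genuinely unbounded in your scheme. The paper avoids this entirely: Lemma \ref{prop_rho_wa} never differentiates $d_{BL}$. It builds the push-forward $\bar\rho^\e = X\#\rho_0^\e$ along the (Lipschitz) flow $X$ of $u$, invokes the Ambrosio--Gigli--Savar\'e superposition principle (Proposition \ref{prop_am}) to handle $\rho^\e$ transported by the merely $L^2(\rho^\e)$ field $u^\e$, and compares the two via a coupling of curve measures. This is the key technical input and is not interchangeable with the direct BL-testing argument you describe; citing Lemma \ref{prop_rho_wa} at the end doesn't repair the sketch, it replaces it.

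\smallskip
\textbf{The aspiration to run the argument with only $\phi\in L^\infty$ is not realizable here.} In the isothermal case (Theorem \ref{thm_hydro1}) the relative entropy $\mh(\rho^\e|\rho)$ controls $\|\rho^\e-\rho\|_{L^1}$, so the alignment cross term $\alpha\intoo\rho^\e(\rho-\rho^\e)\phi(x-y)(u^\e(x)-u(x))\cdot(u(y)-u(x))\,dxdy$ can indeed be closed using just $\|\phi\|_{L^\infty}$. In the pressureless case the modulated energy gives you no handle on $\|\rho^\e-\rho\|_{L^1}$; the only available control on $\rho^\e-\rho$ is through $d_{BL}$, and turning the cross term $I^\e = I_1^\e + I_2^\e$ in Proposition \ref{prop_re15} into something bounded by $d_{BL}(\rho^\e,\rho)\cdot(\into\rho^\e|u^\e-u|^2)^{1/2}$ requires that $y\mapsto\phi(x-y)u(y)$ and $y\mapsto\phi(x-y)$ be bounded Lipschitz, i.e.\ $\phi\in\mw^{1,\infty}(\om)$. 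This is exactly the hypothesis in Subsections 4.2--4.3, and the theorem as stated needs it.

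\smallskip
Beyond these two points the remaining ingredients match: your Coulombian reduction (Lemma \ref{lem_wd} plus the integration-by-parts commutator estimate producing $\|\nabla u\|_{L^\infty}\into|\nabla W\star(\rho-\rho^\e)|^2$) is the paper's; your strongly regular estimate via $|\nabla W\star(\rho-\rho^\e)(x)|\lesssim d_{BL}(\rho^\e,\rho)$ is the paper's; and your use of the free energy identity \eqref{energy_zerosig} to make the microscopic fluctuation $\int_0^t\intor f^\e|v-u^\e|^2$ of order $\e$ is implicit in the paper's passage from $\hat K(f^\e)$ to $\hat E(U^\e)$ via Lemma \ref{lem_energy}.
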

\begin{remark}Compared to Theorem \ref{thm_hydro1} (ii), the pressureless case requires higher regularity for $W$, like $\nabla W \in \mathcal{W}^{1,\infty}(\om)$ due to the lack of convexity of the entropy with respect to $\rho$.
\end{remark}

\begin{corollary}\label{cor_hydro2} Suppose that all the assumptions in Theorem \ref{thm_hydro2} hold. If 
\[
\into |\nabla W \star (\rho_0 - \rho^\e_0)|^2\,dx \to 0 \quad \mbox{as} \quad \e \to 0
\]
for Coulombian interaction case, then the following convergences hold:
$$\begin{aligned}
\rho^\e u^\e &\rightharpoonup \rho u \quad \mbox{weakly in } L^\infty(0,T^*;\mathcal{M}), \cr
\rho^\e u^\e \otimes u^\e &\rightharpoonup \rho u \otimes u \quad \mbox{weakly in } L^\infty(0,T^*;\mathcal{M}),\cr
\intr f^\e v \otimes v\,dv &\rightharpoonup \rho u \otimes u \quad \mbox{weakly in } L^1(0,T^*;\mathcal{M}), \quad \mbox{and} \cr
f^\e &\rightharpoonup \rho \otimes \delta_u  \quad \mbox{weakly in } L^p(0,T^*;\mathcal{M})
\end{aligned}$$
as $\e \to 0$, for $1 \leq p \leq 2$. Here $\mathcal{M}$ is the space of nonnegative Radon measures.
\end{corollary}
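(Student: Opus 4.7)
The strategy combines three quantitative ingredients already in hand. Theorem \ref{thm_hydro2} provides $\sup_{t \leq T^*}\int_\om \rho^\e |u^\e - u|^2\,dx = O(\sqrt{\e})$ and $\sup_{t \leq T^*} d_{BL}^2(\rho^\e, \rho) = O(\sqrt{\e})$, once the hypothesized vanishing of the initial Coulombian term is absorbed into the right-hand side. Mass conservation yields $\int_\om \rho^\e\,dx = 1$. Specializing \eqref{energy_zerosig} with $\beta = 1/\e$, $\sigma = 0$ and integrating in time supplies the velocity-fluctuation bound
\[
\frac{1}{\e}\int_0^{T^*} \int_{\om \times \R^d} f^\e|v - u^\e|^2\,dxdv\,ds \leq C.
\]
The $d_{BL}$ estimate already delivers $\rho^\e \rightharpoonup \rho$ weakly-$*$ in $L^\infty(0, T^*; \mathcal{M})$, since bounded Lipschitz test functions dualize $d_{BL}$.

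For the momentum convergence I would split, for any $\psi \in \mc^\infty_c(\om)$,
\[
\int_\om (\rho^\e u^\e - \rho u)\cdot\psi\,dx = \int_\om \rho^\e (u^\e - u)\cdot\psi\,dx + \int_\om (\rho^\e - \rho)\, u\cdot\psi\,dx,
\]
applying Cauchy--Schwarz and mass conservation to the first term (bound $O(\e^{1/4})$) and the $d_{BL}$ bound to the second, which is legitimate because $u\cdot\psi \in \mw^{1,\infty}$ by Definition \ref{def_strong2}(i). For $\rho^\e u^\e \otimes u^\e \rightharpoonup \rho u \otimes u$ I would expand $\rho^\e u^\e \otimes u^\e = \rho^\e(u^\e - u) \otimes (u^\e - u) + \rho^\e(u^\e - u) \otimes u + \rho^\e u \otimes (u^\e - u) + \rho^\e u \otimes u$ and treat each summand by the same dichotomy, with the cross terms absorbed by $\|u\|_{L^\infty}$ times the modulated energy.

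For the kinetic-variance limit I would invoke the identity
\[
\int_{\R^d} f^\e v \otimes v\,dv = \int_{\R^d} f^\e(v - u^\e) \otimes (v - u^\e)\,dv + \rho^\e u^\e \otimes u^\e,
\]
which holds because $\int v f^\e\,dv = \rho^\e u^\e$ by the definition of $u^\e$. The previous paragraph settles the second summand; the trace of the first summand equals the velocity-fluctuation integrand, hence is $O(\e)$ in $L^1(0, T^*; L^1(\om))$, yielding weak convergence to $0$ in $L^p(0, T^*; \mathcal{M})$ for $1 \leq p \leq 2$ after interpolating with the uniform-in-time mass control.

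Finally, for the monokinetic limit $f^\e \rightharpoonup \rho \otimes \delta_u$ I would test against $\psi \in \mc^\infty_c(\om \times \R^d)$ and split
\[
\int f^\e \psi\,dxdv - \int \rho\,\psi(x, u(x))\,dx = \int f^\e[\psi(x,v) - \psi(x, u^\e)]\,dxdv + \int \rho^\e[\psi(x, u^\e) - \psi(x, u)]\,dx + \int (\rho^\e - \rho)\,\psi(x, u(x))\,dx.
\]
The first two pieces are Lipschitz estimates against the fluctuation and modulated-energy bounds, and the third is a bounded Lipschitz test against $\rho^\e - \rho$ since $x \mapsto \psi(x, u(x))$ inherits Lipschitz regularity from $u \in \mw^{1,\infty}$. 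The main subtlety I anticipate is matching the time-integrability exponent $p$ across the four claims: the kinetic-variance residual only has $L^1(0, T^*)$-in-time control from the dissipation estimate, so the stated restriction $1 \leq p \leq 2$ arises precisely by interpolation with the uniform-in-time modulated-energy bound.
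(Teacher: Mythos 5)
Your proposal is correct and matches the paper's approach. The paper packages the decomposition estimates into Lemma \ref{lem_conv} (giving bounds on $d_{BL}(\rho^\e u^\e,\rho u)$, $d_{BL}(\rho^\e u^\e\otimes u^\e,\rho u\otimes u)$, and $d_{BL}(f^\e,\rho\otimes\delta_u)$ in terms of the modulated kinetic energy, the velocity fluctuation $\intor|v-u^\e|^2 f^\e\,dxdv$, and $d_{BL}(\rho^\e,\rho)$) and then plugs in Theorem \ref{thm_hydro2} together with \eqref{energy_zerosig}; you reproduce exactly those decompositions inline, use the identical three sources of smallness, and apply the same identity $\intr f^\e v\otimes v\,dv - \rho^\e u^\e\otimes u^\e = \intr f^\e(u^\e-v)\otimes(u^\e-v)\,dv$ for the kinetic variance. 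The only imprecision is your account of the $p\leq 2$ restriction: it is not an interpolation with the modulated-energy bound, but simply that $d_{BL}(f^\e,\rho\otimes\delta_u)\lesssim(\intor|v-u^\e|^2f^\e\,dxdv)^{1/2}+\text{(uniformly small)}$ together with $\int_0^{T^*}\intor|v-u^\e|^2f^\e\,dxdvds\leq C\e$ gives $L^2(0,T^*)$-smallness of $d_{BL}$, and the range $1\leq p<2$ follows by the finite-time inclusion $L^2(0,T^*)\hookrightarrow L^p(0,T^*)$; for $p>2$ there is no smallness. This does not affect the validity of your argument.
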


\begin{remark} The convergence of $d_{BL}(\rho^\e, \rho)$ directly gives 
\[
\rho^\e \rightharpoonup \rho \quad \mbox{weakly in } L^\infty(0,T^*;\mathcal{M}).
\]
\end{remark}

\begin{remark} Our results on the hydrodynamic limit also hold in a bounded domain with the specular reflection boundary condition. In this case, the limiting system has a kinematic boundary condition. Concerning this, we provide the existence theory in Section \ref{sec:ibv}. For the hydrodynamic limit estimate, we refer to \cite{CJpre2} where the hydrodynamic limit of nonlinear Vlasov--Fokker--Planck/Navier--Stokes equations in a bounded domain is discussed. 
\end{remark}

\subsection{Proofs of Corollaries \ref{cor_hydro1} and \ref{cor_hydro2}} Before proceeding, for the readers' convenience, we provide the details of proofs of convergences in Corollaries \ref{cor_hydro1} and \ref{cor_hydro2}. In fact, we provide quantitative bounds of convergences. 

\begin{lemma}\label{lem_conv} There exists a positive constant $C$ depending only on $\|u\|_{\mw^{1,\infty}}$ such that the following inequalities hold.
\begin{itemize}
\item[(i)] Error estimate between moments:
\[
\|\rho^\e u^\e - \rho u\|_{L^1} \leq \|\rho^\e\|_{L^1}^{1/2}\lt(\into \rho^\e |u^\e - u|^2\,dx \rt)^{1/2} + \|u\|_{L^\infty}\|\rho^\e - \rho\|_{L^1}
\]
and
\[
d_{BL}(\rho^\e u^\e, \rho u) \leq \|\rho^\e\|_{L^1}^{1/2}\lt(\into \rho^\e |u^\e - u|^2\,dx \rt)^{1/2} + Cd_{BL}(\rho^\e, \rho).
\]
\item[(ii)] Error estimate between convections:
$$\begin{aligned}
\|\rho^\e u^\e \otimes u^\e - \rho u \otimes u\|_{L^1} &\leq \into \rho^\e |u^\e - u|^2\,dx + 2\|u\|_{L^\infty}\|\rho^\e\|_{L^1}^{1/2}\lt(\into \rho^\e |u^\e - u|^2\,dx \rt)^{1/2}\cr
&\quad  + 3\|u\|_{L^\infty}^2\|\rho^\e - \rho\|_{L^1}
\end{aligned}$$
and
\[
d_{BL}(\rho^\e u^\e \otimes u^\e, \rho u \otimes u) \leq \into \rho^\e |u^\e - u|^2\,dx + C\|\rho^\e\|_{L^1}^{1/2}\lt(\into \rho^\e |u^\e - u|^2\,dx \rt)^{1/2} + Cd_{BL}(\rho^\e, \rho).
\]
\item[(iii)] Error estimate between particle distribution and mono-kinetic ansatz:
$$\begin{aligned}
d_{BL}(f^\e, \rho \otimes \delta_{u}) &\leq \|\rho^\e\|_{L^1}^{1/2}\lt( \lt(\intor |v - u^\e|^2 f^\e\,dxdv\rt)^{1/2} + \lt(\into  \rho^\e |u^\e - u|^2 \,dx\rt)^{1/2}\rt) \cr
&\quad +C  d_{BL}(\rho^\e, \rho).
\end{aligned}$$
\end{itemize}
\end{lemma}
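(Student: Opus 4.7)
The plan is to bound each quantity by elementary ``add and subtract'' decompositions followed by Cauchy--Schwarz for the moments weighted by $\rho^\e$, and by testing against bounded Lipschitz functions for the $d_{BL}$ statements. Throughout, I will use the dual characterization
\[
d_{BL}(\mu,\nu) = \sup\left\{\int \varphi\,d(\mu-\nu) \,:\, \|\varphi\|_{L^\infty}\le 1,\ \|\varphi\|_{\mathrm{Lip}}\le 1\right\},
\]
together with the fact that for $\varphi$ with $\|\varphi\|_{BL}\le 1$ and $u\in \mathcal{W}^{1,\infty}$, the function $x\mapsto \varphi(x,u(x))$ has bounded Lipschitz norm controlled by a constant multiple of $\|u\|_{\mathcal{W}^{1,\infty}}$.

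For \textbf{(i)}, I write $\rho^\e u^\e - \rho u = \rho^\e(u^\e - u) + (\rho^\e - \rho)u$. Taking $L^1$ norms and applying Cauchy--Schwarz with respect to the probability measure $\rho^\e\,dx$,
\[
\int_\Omega \rho^\e |u^\e - u|\,dx \le \|\rho^\e\|_{L^1}^{1/2}\Bigl(\int_\Omega \rho^\e |u^\e-u|^2\,dx\Bigr)^{1/2},
\]
and bounding $(\rho^\e - \rho)u$ in $L^1$ by $\|u\|_{L^\infty}\|\rho^\e - \rho\|_{L^1}$ yields the first inequality. For the $d_{BL}$ version, pairing with an admissible $\varphi$ gives the same bound on the first piece, while the second piece is $\int \varphi u \,d(\rho^\e - \rho)$, and since $\|\varphi u\|_{BL} \le C\|u\|_{\mathcal{W}^{1,\infty}}$ this yields $C\|u\|_{\mathcal{W}^{1,\infty}} d_{BL}(\rho^\e,\rho)$.

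For \textbf{(ii)}, I use the symmetric decomposition
\[
\rho^\e u^\e\otimes u^\e - \rho u\otimes u = \rho^\e(u^\e-u)\otimes(u^\e-u) + \rho^\e(u^\e-u)\otimes u + \rho^\e u\otimes(u^\e-u) + (\rho^\e-\rho)u\otimes u,
\]
and apply the same Cauchy--Schwarz estimates piece by piece (the quadratic piece is controlled directly by the modulated kinetic energy, the two mixed pieces produce the $\|u\|_{L^\infty}\|\rho^\e\|_{L^1}^{1/2}(\cdots)^{1/2}$ terms, and the last piece is controlled by $\|u\|_{L^\infty}^2\|\rho^\e - \rho\|_{L^1}$). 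The $d_{BL}$ analogue is obtained by pairing with $\varphi$ and using that $\varphi u$ and $\varphi u\otimes u$ still have bounded Lipschitz norm controlled by a constant times $\|u\|_{\mathcal{W}^{1,\infty}}$ (respectively its square, but $\|u\|_{\mathcal{W}^{1,\infty}}$ is absorbed into $C$).

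For \textbf{(iii)}, the key identity is that for any test function $\varphi(x,v)$,
\[
\int_{\Omega\times\R^d}\varphi\,d(f^\e - \rho\otimes\delta_u) = \int_{\Omega\times\R^d}\bigl(\varphi(x,v)-\varphi(x,u^\e(x))\bigr)f^\e\,dxdv + \int_\Omega\bigl(\varphi(x,u^\e(x))\rho^\e - \varphi(x,u(x))\rho\bigr)dx.
\]
Assuming $\|\varphi\|_{BL}\le 1$, the first integral is bounded using $|\varphi(x,v)-\varphi(x,u^\e(x))|\le |v - u^\e(x)|$ together with Cauchy--Schwarz to produce $\|\rho^\e\|_{L^1}^{1/2}(\int |v-u^\e|^2 f^\e)^{1/2}$. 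For the remainder, I add and subtract $\varphi(x,u^\e(x))\rho$ to split it into $\int(\varphi(x,u^\e) - \varphi(x,u))\rho^\e\,dx$, estimated by $\|\rho^\e\|_{L^1}^{1/2}(\int \rho^\e|u^\e-u|^2)^{1/2}$, and $\int \varphi(x,u(x))\,d(\rho^\e-\rho)$, estimated by $C\|u\|_{\mathcal{W}^{1,\infty}}d_{BL}(\rho^\e,\rho)$ thanks to the chain-rule bound on the bounded Lipschitz norm of $\varphi(\cdot, u(\cdot))$.

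The only nontrivial step is verifying the BL-norm of composed test functions such as $x\mapsto\varphi(x,u(x))$ and $\varphi u$, which is where the factor $\|u\|_{\mathcal{W}^{1,\infty}}$ enters the constant $C$; the remaining steps are mechanical applications of Cauchy--Schwarz.
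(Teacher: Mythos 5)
Your proof is correct and follows essentially the same route as the paper: elementary ``add and subtract'' tensor decompositions, Cauchy--Schwarz against $\rho^\e\,dx$, and duality against bounded Lipschitz test functions with the chain-rule bound on $\|\varphi(\cdot,u(\cdot))\|_{BL}$.

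Two small remarks. In (ii) your decomposition is genuinely different from the paper's: the paper writes $\rho^\e u^\e\otimes u^\e-\rho u\otimes u = \rho^\e(u^\e-u)\otimes(u^\e-u)+u\otimes(\rho^\e u^\e-\rho u)+(\rho^\e u^\e-\rho u)\otimes u-(\rho^\e-\rho)u\otimes u$ and then plugs in the result of (i), which is how the coefficient $3$ in front of $\|u\|_{L^\infty}^2\|\rho^\e-\rho\|_{L^1}$ arises. Your symmetric decomposition handles the mixed terms directly and produces only $1\cdot\|u\|_{L^\infty}^2\|\rho^\e-\rho\|_{L^1}$, i.e.\ a sharper constant; of course the stated inequality with $3$ follows a fortiori, so this is fine.

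In (iii) you have a slip in the bookkeeping: you say you ``add and subtract $\varphi(x,u^\e(x))\rho$,'' but that would produce the term $\int \varphi(x,u^\e(x))\,d(\rho^\e-\rho)$, which cannot be bounded by $d_{BL}(\rho^\e,\rho)$ because $u^\e$ is not Lipschitz (it is only an averaged quantity of the weak solution), so $\varphi(\cdot,u^\e(\cdot))$ has no usable BL norm. The pieces you actually display, $\int(\varphi(x,u^\e)-\varphi(x,u))\rho^\e\,dx$ and $\int\varphi(x,u(x))\,d(\rho^\e-\rho)$, correspond to adding and subtracting $\varphi(x,u(x))\rho^\e$; those pieces are correct and match the paper's proof. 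So the argument goes through — just fix the description of the intermediate term, because the distinction between conjugating in $u$ versus $u^\e$ is exactly where the needed regularity lives.
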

\begin{proof} For any $\psi \in (L^\infty \cap Lip)(\om)$, we get
$$\begin{aligned}
&\lt| \into \psi(x) \lt((\rho^\e u^\e)(x) - (\rho u)(x) \rt)dx \rt|\cr
&\quad =\lt| \into \psi(x) \lt(\rho^\e(x)(u^\e - u)(x) - (\rho^\e - \rho)(x) u(x) \rt)dx \rt|\cr
&\quad \leq \|\psi\|_{L^\infty}\into \rho^\e |u^\e - u|\,dx + \lt|\into (\rho^\e - \rho)(x) (\psi u)(x)\,dx \rt|\cr
&\quad \leq \|\psi\|_{L^\infty}\|\rho^\e\|_{L^1}^{1/2}\lt(\into \rho^\e |u^\e - u|^2\,dx \rt)^{1/2} + \|\psi u\|_{L^\infty \cap Lip} \,d_{BL}(\rho^\e, \rho).
\end{aligned}$$
This asserts the inequality (i). For the estimate of (ii), we notice that
$$\begin{aligned}
\rho^\e u^\e \otimes u^\e - \rho u \otimes u &= \rho^\e (u^\e - u) \otimes (u^\e - u) + u \otimes \lt( \rho^\e u^\e - \rho u \rt)\cr
&\quad + \lt( \rho^\e u^\e - \rho u \rt) \otimes u - (\rho^\e - \rho) u\otimes u.
\end{aligned}$$
Using this identity, we obtain
$$\begin{aligned}
&\lt| \into \psi(x) \lt((\rho^\e u^\e \otimes u^\e)(x) - (\rho u \otimes u)(x) \rt)dx \rt|\cr
&\quad \leq \lt| \into \psi(x) \rho^\e(x) (u^\e - u)(x) \otimes (u^\e - u)(x)\,  dx \rt|+ \lt|\into \psi(x)u(x) \otimes \lt( \rho^\e u^\e - \rho u \rt)(x) \,dx \rt|\cr
&\qquad + \lt|\into \psi(x) \lt( \rho^\e u^\e - \rho u \rt)(x) \otimes u(x) \,dx \rt| +  \lt|\into \psi(x) (\rho^\e - \rho)(x) u(x)\otimes u(x) \,dx \rt|\cr
&\quad \leq \|\psi\|_{L^\infty}\into \rho^\e |u^\e - u|^2\,dx + 2\|\psi u\|_{L^\infty \cap Lip}\,d_{BL}(\rho^\e u^\e, \rho u) + \|\psi u\otimes u\|_{L^\infty \cap Lip} \,d_{BL}(\rho^\e, \rho).
\end{aligned}$$
This yields
\[
d_{BL}(\rho^\e u^\e \otimes u^\e, \rho u \otimes u) \leq \into \rho^\e |u^\e - u|^2\,dx + C\|\rho^\e\|_{L^1}^{1/2}\lt(\into \rho^\e |u^\e - u|^2\,dx \rt)^{1/2} + Cd_{BL}(\rho^\e, \rho),
\]
where $C>0$ depends only on $\|u\|_{\mw^{1,\infty}}$. For (iii), we find for any $\varphi \in (L^\infty \cap Lip)(\om \times \R^d)$ that
$$\begin{aligned}
&\lt| \intor \varphi(x,v) (f^\e(x,v) - \rho(x) \otimes \delta_{u(x)}(v))\,dxdv \rt|\cr
&\quad = \lt| \intor \varphi(x,v) f^\e(x,v)\,dxdv - \into \varphi(x,u(x))\rho(x) \,dx \rt|\cr
&\quad \leq \intor |\varphi(x,v) - \varphi(x,u^\e(x))|f^\e\,dxdv + \into |\varphi(x,u^\e) - \varphi(x,u)|\rho^\e\,dx + \lt|\into \varphi(x,u(x)) (\rho^\e - \rho)\,dx\rt|\cr
&\quad \leq \|\varphi\|_{Lip} \intor |v - u^\e| f^\e\,dxdv + \|\varphi\|_{Lip} \into \rho^\e |u^\e - u| \,dx + \|\varphi\|_{Lip}\|u\|_{Lip} \, d_{BL}(\rho^\e, \rho)\cr
&\quad \leq \|\varphi\|_{Lip}\|\rho^\e\|_{L^1}^{1/2}\lt( \lt(\intor |v - u^\e|^2 f^\e\,dxdv\rt)^{1/2} + \lt(\into \rho^\e |u^\e - u|^2 \,dx\rt)^{1/2}\rt) \cr
&\qquad + \|\varphi\|_{Lip}\|u\|_{Lip} \, d_{BL}(\rho^\e, \rho).
\end{aligned}$$
This concludes the inequality (iii).
\end{proof}

\begin{proof}[Proof of Corollary \ref{cor_hydro1}] We first obtain
\[
\|\rho^\e - \rho\|_{L^1} \leq C \lt(\into\mathcal{H}(\rho|\rho^\e)\,dx\rt)^{1/2},
\]
where $C>0$ depends only on $\|\rho^\e\|_{L^1}$ and $\|\rho\|_{L^1}$, see \eqref{est_l1} for details. This together with Lemma \ref{lem_conv} yields
\begin{align}\label{est_conv2}
\begin{aligned}
&\|\rho^\e - \rho\|_{L^1} + \|\rho^\e u^\e - \rho u\|_{L^1} + \|\rho^\e u^\e \otimes u^\e - \rho u \otimes u\|_{L^1}
\leq \into \rho^\e |u^\e - u|^2\,dx + C\lt(\into\mathcal{H}(\rho|\rho^\e)\,dx\rt)^{1/2}\cr
&\quad \leq C\e^{1/4} + C \lt(\into |\nabla W \star (\rho_0 - \rho^\e_0)|^2\,dx\rt)^{1/2} \to 0
\end{aligned}
\end{align}
as $\e \to 0$, where $C>0$ is independent of $\e$. Note that
$$\begin{aligned}
&\intr f^\e v\otimes v\,dv - (\rho u\otimes u + \rho \mathbb{I}_{d \times d}) \cr
&\quad = \intr f^\e v\otimes v\,dv - (\rho^\e u^\e\otimes u^\e + \rho^\e \mathbb{I}_{d \times d} ) + \rho^\e u^\e \otimes u^\e - \rho u\otimes u + (\rho^\e - \rho)\mathbb{I}_{d \times d}.
\end{aligned}$$
On the other hand, we find from \cite[Lemma 4.8]{KMT15} or \cite[Section 3]{CJpre2} that
\begin{align}\label{est_conv0}
\begin{aligned}
&\intr (u^\e \otimes u^\e - v \otimes v + \mathbb{I}_{d \times d})f^\e\,dv\cr
&\quad = \intr u^\e \sqrt{f^\e} \otimes \lt( (u^\e - v)\sqrt{f^\e} - 2\nabla_v \sqrt{f^\e} \rt) dv + \intr \lt( (u^\e - v)\sqrt{f^\e} - 2\nabla_v \sqrt{f^\e} \rt) \otimes v\sqrt{f^\e}\,dv.
\end{aligned}
\end{align}
This yields 
$$\begin{aligned}
&\lt\|\intr f^\e v\otimes v\,dv - (\rho^\e u^\e\otimes u^\e + \rho^\e \mathbb{I}_{d \times d} )\rt\|_{L^1}\cr
&\quad \leq \lt(\intor f^\e |u^\e|^2 + f^\e |v|^2\,dxdv \rt)^{1/2}\lt(\intor \frac{1}{f^\e}|\nabla_v f^\e - (u^\e - v)f^\e|^2\,dxdv \rt)^{1/2}\cr
&\quad \leq C\sqrt\e \sup_{0 \leq t \leq T} \lt(\intor  f^\e |v|^2\,dxdv \rt)^{1/2}\lt(\frac{1}{2\e} \intor \frac{1}{f^\e}|\nabla_v f^\e - (u^\e - v)f^\e|^2\,dxdv  \rt)^{1/2}\cr
&\quad \leq C\sqrt\e\lt(\frac{1}{2\e} \intor \frac{1}{f^\e}|\nabla_v f^\e - (u^\e - v)f^\e|^2\,dxdv  \rt)^{1/2}.
\end{aligned}$$
Combining this, \eqref{est_conv2}, and Proposition \ref{prop_energy} with $\beta = \sigma = 1/\e$, we have
$$\begin{aligned}
&\lt\|\intr f^\e v\otimes v\,dv - (\rho u\otimes u + \rho \mathbb{I}_{d \times d})\rt\|_{L^2(0,T^*;L^1(\om))}\cr
&\quad \leq \lt\|\intr f^\e v\otimes v\,dv - (\rho^\e u^\e\otimes u^\e + \rho^\e \mathbb{I}_{d \times d} )\rt\|_{L^2(0,T^;L^1(\om))} + \|\rho^\e u^\e \otimes u^\e - \rho u\otimes u\|_{L^2(0,T^;L^1(\om))}\cr
&\qquad  + \|\rho^\e - \rho\|_{L^2(0,T^;L^1(\om))}\cr
&\quad \leq C\e^{1/4} + C \lt(\into |\nabla W \star (\rho_0 - \rho^\e_0)|^2\,dx\rt)^{1/2} \to 0.
\end{aligned}$$
This completes the proof.
\end{proof}

\begin{proof}[Proof of Corollary \ref{cor_hydro2}] A simple combination of inequalities in Lemma \ref{lem_conv} together with Theorem \ref{thm_hydro2} gives
$$\begin{aligned}
&d_{BL}(\rho^\e u^\e, \rho u) + d_{BL}(\rho^\e u^\e \otimes u^\e, \rho u \otimes u)\cr
&\quad \leq C\into \rho^\e |u^\e - u|^2\,dx + C\lt(\into \rho^\e |u^\e - u|^2\,dx \rt)^{1/2} + Cd_{BL}(\rho^\e, \rho)\cr
&\quad \leq C\e^{1/4} + C \lt(\into |\nabla W \star (\rho_0 - \rho^\e_0)|^2\,dx\rt)^{1/2}.
\end{aligned}$$
This asserts the first two convergences. Note that 
\[
\intr f^\e v \otimes v\,dv - \rho^\e u^\e \otimes u^\e = \intr f^\e (u^\e - v) \otimes (u^\e - v)\,dv,
\]
thus we get
\[
\intr f^\e v \otimes v\,dv - \rho u\otimes u = \intr f^\e v \otimes v\,dv - \rho^\e u^\e \otimes u^\e + \rho^\e u^\e \otimes u^\e - \rho u\otimes u.
\]
This yields
\bq\label{est_conv1}
d_{BL} \lt(\intr f^\e v \otimes v\,dv, \rho u\otimes u \rt) \leq \intor f^\e |u^\e - v|^2\,dxdv + d_{BL}(\rho^\e u^\e \otimes u^\e, \rho u \otimes u).
\eq
On the other hand, it follows from  \eqref{energy_zerosig} with $\beta = 1/\e$ that 
\[
\int_0^t \intor f^\e |u^\e - v|^2\,dxdvds \leq C\e.
\]
This together with \eqref{est_conv1} implies the third assertion. We also use Lemma \ref{lem_conv} and \eqref{energy_zerosig} with $\beta = 1/\e$ to conclude that  for $1 \leq p \leq 2$
$$\begin{aligned}
&\int_0^t d^p_{BL}(f^\e(\cdot,\cdot,s), \rho(\cdot,s) \otimes \delta_{u(\cdot,s)})\,ds \cr
&\quad \leq C\e^{1/4} +  C\lt(\int_0^t\intor |v - u^\e|^2 f^\e\,dxdvds\rt)^{1/2} + C \lt(\into |\nabla W \star (\rho_0 - \rho^\e_0)|^2\,dx\rt)^{1/2} \to 0
\end{aligned}$$
as $\e \to 0$.
\end{proof}

%
%
%
%
\section{Hydrodynamic limit from kinetic to isothermal Euler equations}\label{sec_pressure}
In this section, we study the rigorous derivation of the isothermal Euler equations \eqref{main_pE} from the kinetic equation \eqref{main_eq} with $\beta = \sigma = 1/\e$ as $\e \to 0$. As mentioned before, we use the relative entropy argument based on the weak-strong uniqueness principle to have the quantitative error estimates between the kinetic equation and the limiting system. 
\subsection{Relative entropy inequality}
We rewrite the equations as a conservative form:
\bq\label{sys_cons}
\pa_t U + \nabla \cdot A(U) = F(U),
\eq
where 
\[
 U := \begin{pmatrix}
\rho \\
m 
\end{pmatrix} 
\quad \mbox{with} \quad m = \rho u, \quad
A(U) := \begin{pmatrix}
m  & 0 \\
\displaystyle \frac{m \otimes m}{\rho} & \rho \mathbb{I}_{d \times d}
\end{pmatrix},
\]
and
\[
F(U) := \begin{pmatrix}
0 \\
\displaystyle \alpha \rho \into \phi(x-y)(u(y) - u(x))\rho(y)\,dy  -\gamma \rho u - \lambda \rho\lt(\nabla V + \nabla W \star \rho \rt)
\end{pmatrix}.
\]
Here $\mathbb{I}_{d \times d}$ denotes the $d \times d$ identity matrix. The free energy of the above system is given by
\bq\label{ent_mac}
E(U) := \frac{|m|^2}{2\rho} + \rho \log \rho.
\eq
We now define the relative entropy functional $\me$ between two states of the system $U$ and $\bar U$ as follows.
\bq\label{def_rel}
\me(\bar U|U) := E(\bar U) - E(U) - DE(U)(\bar U-U) \quad \mbox{with} \quad \bar U := \begin{pmatrix}
        \bar\rho \\
        \bar m \\
    \end{pmatrix}, \quad \bar m = \bar\rho \bar u,
\eq
where $D E(U)$ denotes the derivation of $E$ with respect to $\rho, m$, i.e.,
$$\begin{aligned}
-DE(U)(\bar U - U) &= -\begin{pmatrix}
\displaystyle        -\frac{|m|^2}{2\rho^2} & \log \rho + 1\\[3mm]
\displaystyle        \frac{m}{\rho} & 0
    \end{pmatrix}
    \begin{pmatrix}
    \bar\rho - \rho \\
    \bar m - m
    \end{pmatrix}\\
    &= \frac{\bar\rho |u|^2}{2} - \frac{\rho|u|^2}{2} + (\rho - \bar\rho)(\log \rho + 1) + \rho |u|^2 - \bar\rho u \cdot \bar u.
\end{aligned}$$
This yields 
$$\begin{aligned}
\me(\bar U|U) &= \frac{\bar\rho|\bar u|^2}{2} - \frac{\rho|u|^2}{2} +\bar\rho \log \bar\rho - \rho \log \rho + \frac{\bar\rho |u|^2}{2} - \frac{\rho|u|^2}{2} + (\rho - \bar\rho)(\log \rho + 1) + \rho |u|^2 - \bar\rho u \cdot \bar u\cr
&= \frac{\bar\rho}{2}|\bar u - u|^2 + \mh(\bar\rho| \rho),
\end{aligned}$$
where $\mh(\bar\rho | \rho)$ is the relative entropy between densities given by \eqref{relennew}.
By Taylor's theorem, we readily see
\[
\mh(\bar\rho| \rho) \geq \frac12 \min\lt\{\frac{1}{\bar\rho}, \frac{1}{\rho} \rt\}(\rho - \bar\rho)^2,
\]
and moreover, we get
$$\begin{aligned}
\|\bar\rho - \rho\|_{L^1} &= \into \min\lt\{ (\sqrt{\bar\rho})^{-1}, (\sqrt{\rho})^{-1}  \rt\} \max\lt\{ \sqrt{\bar\rho}, \sqrt\rho \rt\} | \bar\rho - \rho|\,dx\cr
&\leq \lt(\frac12\into \min\lt\{ (\bar\rho)^{-1}, \rho^{-1}  \rt\}(\rho - \bar\rho)^2\,dx \rt)^{1/2}\lt(2\into\max\{ \bar\rho, \rho\}\,dx  \rt)^{1/2}\cr
&\leq \lt(\into \mh(\bar\rho| \rho)\,dx\rt)^{1/2}\lt(2(\|\bar\rho\|_{L^1} + \|\rho\|_{L^1}) \rt)^{1/2}.
\end{aligned}$$
Thus we obtain
\bq\label{est_l1}
\|\bar\rho - \rho\|_{L^1}^2 \leq C\into \mh(\bar\rho| \rho)\,dx \leq C\into \me(\bar U|U)\,dx,
\eq
where $C>0$ only depends on $\|\bar\rho\|_{L^1}$ and $\|\rho\|_{L^1}$.

\begin{remark}The free energy of the system \eqref{sys_cons} is given by
\[
\tilde E(U) = \frac{|m|^2}{2\rho} + \rho \log \rho + \lambda \rho V + \frac{\lambda}{2} \rho W \star \rho,
\]
and we can also define its modulated energy, also often called the relative entropy, as
\[
\tilde \me(\bar U|U) := \tilde E(\bar U) - \tilde E(U) - D\tilde E(U)(\bar U-U).
\]
A straightforward computation shows
\[
\tilde \me(\bar U|U) = \frac{\bar\rho}{2}|\bar u - u|^2 + \mh(\bar\rho| \rho) + \frac\lambda2 (\rho - \bar\rho) W\star\rho + \frac\lambda2 \bar\rho W \star(\bar\rho - \rho),
\]
and by symmetry of $W$, we obtain
\[
\into \tilde \me(\bar U|U)\,dx = \into \frac{\bar\rho}{2}|\bar u - u|^2\,dx + \into \mh(\bar\rho| \rho)\,dx + \frac\lambda2\into (\rho - \bar\rho) W\star(\rho - \bar\rho)\,dx.
\]
This functional $\tilde \me$ is used in the study of large friction limit of Euler equations with nonlocal forces \cite{CPWpre,LT13,LT17}, see also \cite{Cpre} for the pressureless case. However, we employ the form \eqref{ent_mac} to use the estimates in \cite{KMT15} providing the relation between $E(U)$ and the flux $A(U)$, see the estimate of $I_3$ in the proof of Lemma \ref{lem_rel} below.
\end{remark}

\begin{lemma}\label{lem_rel}The relative entropy $\me$ defined in \eqref{def_rel} satisfies the following equality:
$$\begin{aligned}
\begin{aligned}
&\frac{d}{dt}\into \me(\bar U|U)\,dx + \frac\alpha2\intoo \bar\rho(x) \bar\rho(y)\phi(x-y)|(\bar u(x) - u(x)) - (\bar u(y) - u(y))|^2 dxdy\cr
&\quad = \into \partial_t E(\bar U)\,dx - \into \nabla (DE(U)):A(\bar U|U)\,dx - \into DE(U)\left[ \pa_t \bar U + \nabla \cdot A(\bar U) - F(\bar U)\right]dx\cr
&\qquad +\frac\alpha2\intoo \bar \rho(x) \bar \rho(y)\phi(x-y)|\bar u(x) - \bar u(y)|^2\,dxdy\cr
&\qquad - \alpha\intoo \bar\rho(x) (\rho(y) - \bar \rho(y))\phi(x-y) (\bar u(x) - u(x))\cdot (u(y) - u(x))\,dxdy\cr
&\qquad -\gamma \into \bar \rho| \bar u - u|^2 - \bar \rho |\bar u|^2\,dx + \lambda \into \nabla V \cdot \bar \rho \bar u\,dx\cr
&\qquad + \lambda \into \bar \rho (\bar u - u) \cdot \nabla W \star (\rho - \bar\rho) + \bar \rho \bar u \cdot \nabla W \star \bar\rho\,dx,
\end{aligned}
\end{aligned}$$
where $A(\bar U|U)$ is the relative flux functional given by
\[
A(\bar U|U) := A(\bar U) - A(U) - DA(U)(\bar U-U).
\]
\end{lemma}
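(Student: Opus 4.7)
The plan is to carry out a standard relative entropy computation for the conservative system \eqref{sys_cons}, and then to carefully symmetrize the nonlocal alignment and interaction contributions. First, applying the chain rule to $\me(\bar U|U) = E(\bar U) - E(U) - DE(U)(\bar U - U)$ and using the pointwise identity $\pa_t E(U) = DE(U)\,\pa_t U$, the time derivative reduces to
\[
\frac{d}{dt}\into \me(\bar U|U)\,dx = \into \pa_t E(\bar U)\,dx - \into DE(U)\,\pa_t \bar U\,dx - \into \pa_t[DE(U)]\,(\bar U - U)\,dx.
\]
Into the second integral I would insert $\pm(\nabla\cdot A(\bar U) - F(\bar U))$ to isolate the consistency error $-\into DE(U)\,[\pa_t \bar U + \nabla\cdot A(\bar U) - F(\bar U)]\,dx$, which is one of the structural terms on the right-hand side. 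For the third integral, \eqref{sys_cons} gives $\pa_t[DE(U)] = D^2E(U)\pa_t U = -D^2E(U)\nabla\cdot A(U) + D^2E(U)F(U)$.

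Next, I would integrate by parts the flux pieces $\into DE(U)\,\nabla\cdot A(\bar U)\,dx$ and $\into D^2E(U)(\bar U - U)\,\nabla\cdot A(U)\,dx$, and invoke the Taylor expansion $A(\bar U) = A(U) + DA(U)(\bar U - U) + A(\bar U|U)$ together with the entropy-flux compatibility $DE(U)\,DA(U) = DG(U)$ for the entropy flux $G$ associated with the pressure $p(\rho) = \rho$. The contributions that are linear in $\bar U - U$ cancel in pairs, leaving the clean relative-flux term $-\into \nabla(DE(U)):A(\bar U|U)\,dx$.

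It remains to rearrange the source contributions $\into DE(U)F(\bar U)\,dx - \into D^2E(U)(\bar U - U)F(U)\,dx$ (with the signs coming from the steps above) into the damping, confinement, interaction, and alignment terms appearing in the statement. The damping and confinement pieces are local and follow from elementary algebra once $DE(U)$ is read off from \eqref{ent_mac}. The interaction $-\lambda\rho\nabla W\star\rho$ is handled by the symmetry $W(x)=W(-x)$ to swap convolution variables. \emph{The main obstacle is the alignment term}: expanding $F$ for both $U$ and $\bar U$ produces four double integrals; I would use $\phi(x-y)=\phi(y-x)$ to symmetrize, then complete the square around $(\bar u(x)-u(x)) - (\bar u(y)-u(y))$ in order to extract the dissipative quadratic $\frac{\alpha}{2}\intoo\bar\rho(x)\bar\rho(y)\phi(x-y)|(\bar u(x)-u(x))-(\bar u(y)-u(y))|^2\,dxdy$ on the left-hand side. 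The remaining residual reproduces exactly the $|\bar u(x)-\bar u(y)|^2$ term and the cross term involving $\rho(y)-\bar\rho(y)$ of the statement, completing the equality.
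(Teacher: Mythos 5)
Your outline reproduces the paper's proof step for step: the chain-rule expansion of $\frac{d}{dt}\into\me(\bar U|U)\,dx$ with the $\pm(\nabla\cdot A(\bar U)-F(\bar U))$ insertion to isolate the consistency error, the reduction of the flux contribution to $-\into\nabla(DE(U)):A(\bar U|U)\,dx$ via the entropy/entropy-flux structure (the paper imports the equivalent identities $\into D^2E(U)\nabla\cdot A(U)(\bar U-U)\,dx=\into\nabla DE(U):DA(U)(\bar U-U)\,dx$ and $\into\nabla DE(U):A(U)\,dx=0$ from KMT15), and the symmetrize-and-complete-the-square treatment of the alignment source. One small correction: the evenness $W(x)=W(-x)$ is not actually used in this lemma; the confinement/interaction terms rearrange into $\lambda\into\nabla V\cdot\bar\rho\bar u\,dx$ and $\lambda\into\bar\rho(\bar u-u)\cdot\nabla W\star(\rho-\bar\rho)+\bar\rho\bar u\cdot\nabla W\star\bar\rho\,dx$ by the purely algebraic split $\nabla W\star\rho=\nabla W\star(\rho-\bar\rho)+\nabla W\star\bar\rho$, and the symmetry of $W$ only enters later when those remainder terms are estimated (e.g.\ Lemma \ref{lem_wd}).
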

\begin{proof}It follows from \eqref{def_rel} that
$$\begin{aligned}
\begin{aligned}
\frac{d}{dt}\into \me(\bar U|U)\,dx & = \into \partial_t E(\bar U)\,dx - \into DE(U)(\pa_t \bar U + \nabla \cdot A(\bar U)- F(\bar U))\,dx \cr
&\quad +\into D^2E(U) \nabla \cdot A(U)(\bar U-U) + DE(U) \nabla \cdot A(\bar U)\,dx\cr
&\quad -\into D^2E(U)F(U)(\bar U-U) + DE(U)F(\bar U)\,dx\cr
&=: \sum_{i=1}^4 I_i.
\end{aligned}
\end{aligned}$$
We first use the integration by parts \cite[Lemma 4.1]{KMT15} to get
\[
\into D^2E(U) \nabla \cdot A(U)(\bar U-U)\,dx = \into \nabla DE(U): DA(U) (\bar U-U)\,dx.
\]
Furthermore, we use the following identity \cite[Proof of Proposition 4.2]{KMT15} 
\[
\into \nabla DE(U): A(U)\,dx = 0
\]
to yield
$$\begin{aligned}
\begin{aligned}
I_3 &= \into \left(\nabla DE(U)\right):\left( DA(U)(\bar U-U) - A(\bar U)\right)dx  \cr
&= -\into \left(\nabla DE(U)\right):\left(A(\bar U|U) + A(U)\right)dx\cr
&=-\into \left(\nabla DE(U)\right):A(\bar U|U)\,dx.
\end{aligned}
\end{aligned}$$
For the estimate $I_4$, we notice that
\begin{equation*}
    DE(U) = \begin{pmatrix}
\displaystyle       -\frac{|m|^2}{2\rho^2} + \log \rho + 1 \\[4mm]
\displaystyle        \frac{m}{\rho}
    \end{pmatrix}
    \quad \mbox{and} \quad
    D^2E(U) = \begin{pmatrix}
    * & \displaystyle - \frac{m}{\rho^2}  \\[4mm]
        * & \displaystyle \frac{1}{\rho} 
    \end{pmatrix}.
\end{equation*}
Then, by direct calculation, we find
$$\begin{aligned}
&D^2E(U)F(U)(\bar U-U) \cr
&\quad = \bar \rho(x) (\bar u(x) - u(x))\cdot\lt(\alpha \into \phi(x-y)(u(y) - u(x)) \rho(y)\,dy - \lambda\lt(u + \nabla V + \nabla W \star \rho \rt) \rt) 
\end{aligned}$$
and
\[
DE(U)F(\bar U) = \bar \rho u \cdot \lt( \alpha \into \phi(x-y)(\bar u(y) - \bar u(x)) \bar\rho(y)\,dy - \lambda\lt(\bar u + \nabla V + \nabla W \star \bar \rho \rt) \rt).
\]
Thus we obtain
$$\begin{aligned}
-I_4 &= \alpha\intoo \bar \rho(x)\phi(x-y) (\bar u(x) - u(x))\cdot (u(y) - u(x)) \rho(y)\,dydx\cr
&\quad + \alpha \intoo \bar \rho(x) \phi(x-y) u(x) \cdot (\bar u(y) - \bar u(x)) \bar\rho(y)\,dydx\cr
&\quad -\into  \bar \rho(x) (\bar u(x) - u(x))\cdot\lt(\gamma u(x) + \lambda(\nabla V(x) + (\nabla W \star \rho)(x)) \rt) dx \cr
&\quad -\lambda \into  \bar \rho(x) u(x)\cdot\lt(\bar u(x) + \nabla V(x) + (\nabla W \star \bar \rho)(x) \rt) dx \cr
&=: \sum_{i=1}^4 I_4^i.
\end{aligned}$$
Here we follow the same argument as in \cite{KMT15} to get
$$\begin{aligned}
I_4^1 + I_4^2 &= \frac\alpha2\intoo \bar\rho(x) \bar\rho(y)\phi(x-y)|(\bar u(x) - u(x)) - (\bar u(y) - u(y))|^2 \,dxdy\cr
&\quad -\frac\alpha2\intoo \bar \rho(x) \bar \rho(y)\phi(x-y)|\bar u(x) - \bar u(y)|^2\,dxdy \cr
&\quad + \alpha\intoo \bar\rho(x) (\rho(y) - \bar \rho(y))\phi(x-y) (\bar u(x) - u(x))\cdot (u(y) - u(x))\,dxdy.
\end{aligned}$$
We next estimate $I_4^3 + I_4^4$ as 
$$\begin{aligned}
I_4^3 + I_4^4 &= \gamma \into \bar \rho| \bar u - u|^2 - \bar \rho |\bar u|^2\,dx - \lambda \into \nabla V \cdot \bar \rho \bar u\,dx\cr
&\quad - \lambda \into \bar \rho (\bar u - u) \cdot \nabla W \star (\rho - \bar\rho) + \bar \rho \bar u \cdot \nabla W \star \bar\rho\,dx.
\end{aligned}$$
Combining the above estimates yields
$$\begin{aligned}
I_4 &= -\frac\alpha2\intoo \bar\rho(x) \bar\rho(y)\phi(x-y)|(\bar u(x) - u(x)) - (\bar u(y) - u(y))|^2 dxdy\cr
&\quad +\frac\alpha2\intoo \bar \rho(x) \bar \rho(y)\phi(x-y)|\bar u(x) - \bar u(y)|^2\,dxdy \cr
&\quad - \alpha\intoo \bar\rho(x) (\rho(y) - \bar \rho(y))\phi(x-y) (\bar u(x) - u(x))\cdot (u(y) - u(x))\,dxdy\cr
&\quad -\gamma \into \bar \rho| \bar u - u|^2 - \bar \rho |\bar u|^2\,dx + \lambda \into \nabla V \cdot \bar \rho \bar u\,dx\cr
&\quad + \lambda \into \bar \rho (\bar u - u) \cdot \nabla W \star (\rho - \bar\rho) + \bar \rho \bar u \cdot \nabla W \star \bar\rho\,dx.
\end{aligned}$$
This completes the proof.
\end{proof}
We now set 
\[
m^\e = \rho^\e u^\e \quad \mbox{and} \quad U^\e = \begin{pmatrix} \rho^\e \\ m^\e  \end{pmatrix} \quad \mbox{with} \quad \rho^\e = \intr f^\e\,dv, \quad m^\e = \intr v f^\e\,dv,
\]
where $f^\e$ is a weak solution to the equation \eqref{main_eq}.
\begin{proposition}\label{prop_re}Let $f^\e$ be a global weak solution to the equation \eqref{main_eq} and $(\rho,u)$ be a strong solution to the system \eqref{main_pE} on the time interval $[0,T]$. Then we have
$$\begin{aligned}
&\into \me(U^\e|U)\,dx + \gamma\int_0^t \into \rho^\e(x)| u^\e(x) - u(x)|^2\,dxds\cr
&\quad + \frac\alpha2\int_0^t \intoo \rho^\e(x) \rho^\e(y)\phi(x-y)|( u^\e(x) - u(x)) - (u^\e(y) - u(y))|^2 dxdyds\cr
&\qquad \leq C\sqrt{\e} + C(1 + \alpha)\int_0^t \into \me(U^\e|U)\,dxds  \cr
&\qquad \quad + \lambda\int_0^t \into \rho^\e(x) ( u^\e(x) - u(x)) \cdot (\nabla W \star (\rho - \rho^\e))(x)\,dxds
\end{aligned}$$
for $0 < \e \leq 1$, where $C>0$ is independent of $\e$.
\end{proposition}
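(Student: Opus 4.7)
The plan is to apply Lemma \ref{lem_rel} with $\bar U = U^\e$ and $U = (\rho, \rho u)$, the strong solution of \eqref{main_pE}, and then absorb all remaining terms either into the $\mathcal{O}(\sqrt\e)$ remainder using the Fokker--Planck dissipation, or into a Gronwall-type quantity of the form $\int \me(U^\e|U)\,dx$. The key point is that $U^\e$, although constructed from the kinetic solution $f^\e$, does not satisfy the closed isothermal Euler system \eqref{sys_cons}: its flux is the full second moment rather than $\rho^\e u^\e\otimes u^\e + \rho^\e\mathbb{I}_{d\times d}$, and the resulting mismatch is the only quantity that has to be shown small.

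The first step is to derive the macroscopic equations for $U^\e$. Testing the weak formulation of \eqref{main_eq} against $\varphi = 1$ and $\varphi = v$ produces the continuity equation for $\rho^\e$ and a momentum equation for $m^\e$ in which the nonlinear Fokker--Planck operator makes no net contribution. Splitting $\int v\otimes v\,f^\e\,dv = \rho^\e u^\e\otimes u^\e + \int (v-u^\e)\otimes(v-u^\e)f^\e\,dv$ yields
$$
\pa_t U^\e + \nabla\cdot A(U^\e) - F(U^\e) = -\nabla\cdot \begin{pmatrix} 0 \\ P^\e \end{pmatrix}, \qquad P^\e := \intr (v-u^\e)\otimes(v-u^\e)f^\e\,dv - \rho^\e\,\mathbb{I}_{d\times d}.
$$
The tensor $P^\e$ encodes the deviation between the kinetic stress and the Maxwellian pressure suggested by \eqref{max_as}; its smallness in $\e$ is the entire substance of the limit.

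Inserting this identity into Lemma \ref{lem_rel}, the term $-\int DE(U)[\pa_t \bar U + \nabla\cdot A(\bar U) - F(\bar U)]\,dx$ reduces, after integration by parts, to $-\into \nabla u : P^\e\,dx$, because the momentum component of $DE(U)$ is exactly $u$. To control this, I would use the identity \eqref{est_conv0} to rewrite $P^\e$ as a sum of products involving $(u^\e - v)\sqrt{f^\e} - 2\nabla_v\sqrt{f^\e}$; Cauchy--Schwarz in $v$ and $x$ then gives
$$
\Big|\into \nabla u : P^\e\,dx\Big| \leq C\|\nabla u\|_{L^\infty}\Big(\intor (|v|^2 + |u^\e|^2)f^\e\,dxdv\Big)^{1/2} \md_1(f^\e)^{1/2}.
$$
Integrating in time and invoking Proposition \ref{prop_energy} with $\beta = \sigma = 1/\e$ --- which delivers $\int_0^t \md_1(f^\e)\,ds \leq C\e$ together with a uniform bound on $\intor |v|^2 f^\e\,dxdv$ --- yields the desired $C\sqrt\e$ contribution after one more Cauchy--Schwarz in time.

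The remaining algebraic terms of Lemma \ref{lem_rel} are organised as follows. The alignment dissipation sits on the left-hand side by construction, and the $\gamma$-damping $\gamma\int \rho^\e|u^\e-u|^2\,dx$ is recovered by combining the piece $-\gamma\int(\bar\rho|\bar u - u|^2 - \bar\rho|\bar u|^2)\,dx$ with the $u$-cross term coming from $DE(U)F(\bar U)$ and with $\int \pa_t E(\bar U)\,dx$. The flux contribution $-\int \nabla DE(U) : A(\bar U|U)\,dx$ simplifies to $-\int \nabla u : \rho^\e (u^\e-u)^{\otimes 2}\,dx$, controlled pointwise by $\|\nabla u\|_{L^\infty}\int \me(U^\e|U)\,dx$. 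The alignment cross term $-\alpha\intoo \bar\rho(\rho-\bar\rho)\phi(\bar u - u)\cdot(u(y)-u(x))\,dxdy$ is absorbed by Young's inequality into a fraction of the alignment dissipation on the left plus $C\alpha\|u\|_{\mw^{1,\infty}}\int \me(U^\e|U)\,dx$, where \eqref{est_l1} is used to bound $\|\rho-\rho^\e\|_{L^1}^2$. Finally, the confinement piece $\lambda\int \nabla V\cdot\bar\rho\bar u\,dx$ and the second interaction piece $\lambda\int \bar\rho\bar u\cdot \nabla W\star\bar\rho\,dx$ are paired with the corresponding terms coming from the free-energy identity of the strong solution recorded in Definition \ref{def_strong1}(ii), so that the confinement and self-interaction potential contributions cancel and the residue is precisely $\lambda\int \rho^\e(u^\e-u)\cdot \nabla W\star(\rho-\rho^\e)\,dx$, which is the last term on the right-hand side of the claimed inequality and is deliberately left unestimated for separate Coulombian/regular treatment in Theorem \ref{thm_hydro1}.

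The main obstacle is the second step --- extracting the $\sqrt\e$ factor from the stress-tensor mismatch $P^\e$ using the subtle identity \eqref{est_conv0} and the $1/\e$ prefactor in $\beta \md_1(f^\e)$. The algebraic reorganisation in the third step is routine, but must be executed carefully so that every sign is correct and nothing is left unaccounted for; the free energy estimate of Proposition \ref{prop_energy} provides all the integrability necessary to justify each identity at the regularity level of a weak solution.
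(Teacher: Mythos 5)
Your overall route is the same as the paper's: apply Lemma \ref{lem_rel} with $\bar U=U^\e$, identify the stress mismatch $P^\e$ via the moment equations, control $\into \nabla u:P^\e\,dx$ with the identity \eqref{est_conv0} together with the $1/\e$-scaled dissipation $\md_1$, bound the relative flux by $\|\nabla u\|_{L^\infty}\into\me(U^\e|U)\,dx$, and absorb the alignment cross term via \eqref{est_l1}. Those steps are all correct and match the paper's proof.

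The genuine gap is in your last paragraph. You propose to cancel $\lambda\into\nabla V\cdot\rho^\e u^\e\,dx$ and $\lambda\into\rho^\e u^\e\cdot\nabla W\star\rho^\e\,dx$ against ``the corresponding terms coming from the free-energy identity of the strong solution recorded in Definition \ref{def_strong1}(ii).'' This cannot work: Definition \ref{def_strong1}(ii) is an identity for the limiting pair $(\rho,u)$, while the terms you need to absorb involve the kinetic macroscopic quantities $\rho^\e,u^\e$. There is no pairing between them. What actually controls these terms is an integration by parts in time against the \emph{kinetic} continuity equation $\pa_t\rho^\e+\nabla\cdot(\rho^\e u^\e)=0$, which turns $\lambda\int_0^t\into\nabla V\cdot\rho^\e u^\e\,dxds$ into $\lambda\into V\rho^\e(t)\,dx-\lambda\into V\rho^\e_0\,dx$, and likewise $\lambda\int_0^t\into\rho^\e u^\e\cdot\nabla W\star\rho^\e\,dxds$ into $\tfrac{\lambda}{2}\intoo W\rho^\e(t)\rho^\e(t)-\tfrac{\lambda}{2}\intoo W\rho^\e_0\rho^\e_0$. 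These boundary-in-time contributions are then combined with $\into E(U^\e)\,dx - \into E(U^\e_0)\,dx\leq K(f^\e)-K(f^\e_0)+\mathcal{O}(\sqrt\e)$ and the alignment-dissipation piece to form exactly $\mf(f^\e)(t)-\mf(f^\e_0)+\alpha\int_0^t\md_2\,ds+\gamma\int_0^t\into\rho^\e|u^\e|^2\,dxds$, which Proposition \ref{prop_energy} (the kinetic free-energy estimate, not the strong-solution identity) bounds by $C(1+\gamma^2)\e$. The residue $\lambda\int_0^t\into\rho^\e(u^\e-u)\cdot\nabla W\star(\rho-\rho^\e)\,dxds$ is not the result of a cancellation; it already appears verbatim in the right-hand side of Lemma \ref{lem_rel} and is simply carried forward.

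One smaller point: you state the residue and boundary pieces ``cancel'' the confinement/interaction contributions; in fact nothing cancels identically. The terms combine into the kinetic free energy $\mf(f^\e)$, whose decay is what produces the $O(\e)$ bound. This distinction matters because it is the reason the paper uses the small entropy $E(U)$ of \eqref{ent_mac} (without $V$ or $W$) rather than the augmented $\tilde E$ of the remark after \eqref{def_rel}: with $E(U)$ the relative flux estimate is clean, at the price that the $V$ and $W$ terms must be routed through the kinetic free-energy balance as above.
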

\begin{proof}It follows from Lemma \ref{lem_rel} that
\begin{align*}
&\into \me(U^\e|U)\,dx + \gamma\int_0^t \into \rho^\e(x)| u^\e(x) - u(x)|^2\,dxds\cr 
&\quad + \frac\alpha2\int_0^t \intoo \rho^\e(x) \rho^\e(y)\phi(x-y)|( u^\e(x) - u(x)) - (u^\e(y) - u(y))|^2 dxdyds\cr
&\qquad = \into \me(U_0^\e|U_0)\,dx + \into E(U^\e) - E(U^\e_0)\,dx - \int_0^t \into \nabla (DE(U)):A(U^\e|U)\,dxds \cr
&\quad \qquad - \int_0^t \into DE(U)\left[ \pa_s U^\e + \nabla \cdot A(U^\e) - F(U^\e)\right]dxds \cr
&\quad \qquad +\frac\alpha2\int_0^t \intoo \rho^\e(x) \rho^\e(y)\phi(x-y)|u^\e(x) - u^\e(y)|^2\,dxdyds\cr
&\quad \qquad - \alpha\int_0^t \intoo \rho^\e(x) (\rho(y) - \rho^\e(y))\phi(x-y) (u^\e(x) - u(x))\cdot (u(y) - u(x))\,dxdyds\cr
&\quad \qquad + \gamma\int_0^t \into \rho^\e(x) |u^\e(x)|^2\,dxds + \lambda \int_0^t \into \nabla V(x) \cdot \rho^\e(x) u^\e(x)\,dxds\cr
&\quad \qquad + \lambda\int_0^t \into \rho^\e(x) ( u^\e(x) - u(x)) \cdot (\nabla W \star (\rho - \rho^\e))(x) +  \rho^\e(x) u^\e(x) \cdot (\nabla W \star \rho^\e)(x)\,dxds\cr
&\qquad =: \sum_{i=1}^8 J_i^\e.
\end{align*}
Here $J_i^\e, i =1,\cdots,8$ can be estimated as follows. \newline

\noindent {\bf Estimate of $J_1^\e$}: By the assumption, we get
\[
J_1^\e = \mathcal{O}(\sqrt\e).
\]
\noindent {\bf Estimate of $J_2^\e$}: Note that 
\[
E(U^\e) = \frac12\into \rho^\e|u^\e|^2\,dx + \into \rho \log \rho\,dx \leq \frac12\intor |v|^2 f^\e\,dxdv + \intor f^\e \log f^\e\,dxdv =: K(f^\e).
\]
Thus, by adding and subtracting the functional $K(f^\e)$, we find
$$\begin{aligned}
J_2^\e &= \into E(U^\e)\,dx - K(f^\e) + K(f^\e) - K(f^\e_0) + K(f^\e_0) - \into E(U^\e_0)\,dx\cr
&\leq 0 + K(f^\e) - K(f^\e_0) + \mathcal{O}(\sqrt\e).
\end{aligned}$$
\noindent {\bf Estimate of $J_3^\e$}: It follows from \cite[Lemma 4.3]{KMT15} that
\[
A(U^\e|U) = \begin{pmatrix}
    0 & 0  \\[4mm]
        \rho^\e(u^\e - u) \otimes (u^\e - u) & 0
    \end{pmatrix}.
\]
This yields
$$\begin{aligned}
|J_3^\e | &= \lt|\int_0^t \into \nabla u : \rho^\e(u^\e - u) \otimes (u^\e - u)\,dxds \rt|\cr
&\leq \|\nabla u\|_{L^\infty}\int_0^t \into \rho^\e |u^\e - u|^2\,dxds\cr
&\leq  \|\nabla u\|_{L^\infty}\int_0^t \into \me(U^\e|U)\,dxds.
\end{aligned}$$

\noindent {\bf Estimate of $J_4^\e$}: Note that $U^\e$ satisfies
\[
\pa_t U^\e + \nabla \cdot A(U^\e) - F(U^\e) = \begin{pmatrix} 0 \\ \nabla \cdot \lt( \displaystyle \intr (u^\e \otimes u^\e - v \otimes v + \mathbb{I}_{d \times d})f^\e\,dv \rt)  \end{pmatrix}.
\]
This implies
$$\begin{aligned}
J_4^\e &= -\int_0^t \into u \cdot \lt(\nabla \cdot \lt(  \intr (u^\e \otimes u^\e - v \otimes v + \mathbb{I}_{d \times d})f^\e\,dv \rt)\rt) dx ds\cr
&= \int_0^t \into \nabla u :  \lt(  \intr (u^\e \otimes u^\e - v \otimes v + \mathbb{I}_{d \times d})f^\e\,dv \rt) dx ds\cr
&\leq \|\nabla u\|_{L^\infty}\int_0^t \into \lt| \intr (u^\e \otimes u^\e - v \otimes v + \mathbb{I}_{d \times d})f^\e\,dv\rt| dx ds.
\end{aligned}$$
On the other hand, we recall \eqref{est_conv0} that
$$\begin{aligned}
&\intr (u^\e \otimes u^\e - v \otimes v + \mathbb{I}_{d \times d})f^\e\,dv\cr
&\quad = \intr u^\e \sqrt{f^\e} \otimes \lt( (u^\e - v)\sqrt{f^\e} - 2\nabla_v \sqrt{f^\e} \rt) dv + \intr \lt( (u^\e - v)\sqrt{f^\e} - 2\nabla_v \sqrt{f^\e} \rt) \otimes v\sqrt{f^\e}\,dv.
\end{aligned}$$
By this and Proposition \ref{prop_energy}, we obtain
$$\begin{aligned}
J_4^\e &\leq \|\nabla u\|_{L^\infty} \int_0^t \lt(\intor f^\e |u^\e|^2 + f^\e |v|^2\,dxdv \rt)^{1/2}\lt(\intor \frac{1}{f^\e}|\nabla_v f^\e - (u^\e - v)f^\e|^2\,dxdv \rt)^{1/2}\,ds\cr
&\leq 2\|\nabla u\|_{L^\infty}\sqrt\e\int_0^t \lt(\intor  f^\e |v|^2\,dxdv \rt)^{1/2}\lt(\frac{1}{2\e} D_1(f)(s) \rt)^{1/2}\,ds\cr
&\leq C\sqrt\e \sup_{0 \leq t \leq T} \lt(\intor  f^\e |v|^2\,dxdv \rt)^{1/2}\lt(\int_0^t \frac{1}{2\e} D_1(f)(s)\,ds \rt)^{1/2}\cr
&\leq C\sqrt\e,
\end{aligned}$$
where we used H\"older inequality to find
\bq\label{est_ef}
|u^\e|^2 =  \lt|\frac{\displaystyle \intr vf^\e\,dv}{\displaystyle\intr f^\e\,dv} \rt|^2 \leq \frac{\displaystyle \intr |v|^2 f^\e\,dv}{\rho^\e}, \quad \mbox{i.e.,} \quad \rho^\e|u^\e|^2 \leq \intr |v|^2 f^\e\,dv.
\eq
Here $C > 0$ depends on $T$, $\|\nabla u\|_{L^\infty}$. It is worth emphasizing that our estimate gives that the constant $C$ depends on $\|\nabla u\|_{L^\infty}$, while \cite[Lemma 4.8]{KMT15} provides that it also depends on $\|\nabla \log \rho\|_{L^\infty}$. \newline

\noindent {\bf Estimate of $J_5^\e$}: We again use \eqref{est_ef} to get
$$\begin{aligned}
&\intoo \phi(x-y)|u^\e(x) - u^\e(y)|^2 \rho^\e(x)\rho^\e(y)\,dxdy \cr
&\quad = \intoo \phi(x-y) \lt(|u^\e(x)|^2 - 2 u^\e(x)\cdot u^\e(y) + |u^\e(y)|^2 \rt)\rho^\e(x) \rho^\e(y)\,dxdy\cr
&\quad \leq \intorr \phi(x-y)|w - v|^2 f^\e(x,v)f^\e(y,w)\,dxdvdydw.
\end{aligned}$$
Thus we have
\[
J_5^\e \leq \frac\alpha2\int_0^t \intorr \phi(x-y)|w - v|^2 f^\e(x,v)f^\e(y,w)\,dxdvdydwds.
\]
\noindent {\bf Estimate of $J_6^\e$}: A straightforward computation gives
$$\begin{aligned} 
J_6^\e &\leq 2\alpha \|u\|_{L^\infty}\|\phi\|_{L^\infty} \int_0^t \intoo\rho^\e(x) |\rho(y) - \rho^\e(y)| |u^\e(x) - u(x)|\,dxdyds \cr
&=2\alpha \|u\|_{L^\infty}\|\phi\|_{L^\infty} \int_0^t \|\rho - \rho^\e\|_{L^1} \into\rho^\e(x)  |u^\e(x) - u(x)|\,dxds \cr
&\leq 2\alpha \|u\|_{L^\infty}\|\phi\|_{L^\infty} \lt(\int_0^t \|(\rho - \rho^\e)(\cdot,s)\|_{L^1}^2\,ds \rt)^{1/2} \lt(\int_0^t \into \rho^\e(x,s) |(u^\e - u)(x,s)|^2\,dxds \rt)^{1/2}.
\end{aligned}$$
We then use \eqref{est_l1} to have
\[
J_6^\e \leq C\alpha\int_0^t \into \me(U^\e|U)\,dxds,
\]
where $C > 0$ depends on $\|u\|_{L^\infty}$, $\|\rho^\e\|_{L^1}$, and $\|\rho\|_{L^1}$. \newline

\noindent {\bf Estimate of $J_7^\e$}: Integrating by parts gives
$$\begin{aligned}
\lambda \int_0^t \into \nabla V(x) \cdot \rho^\e(x) u^\e(x)\,dxds &= -\lambda\int_0^t \into V(x) \nabla \cdot (\rho^\e(x,s) u^\e(x,s))\,dxds \cr
&= \lambda\int_0^t \into V(x) \pa_s \rho^\e(x,s)\,dxds \cr
&= \lambda\into V(x)\rho^\e(x,t)\,dx - \lambda\into V(x)\rho^\e_0(x)\,dx.
\end{aligned}$$
Thus we get
\[
J_7^\e = \gamma\int_0^t \into \rho^\e(x) |u^\e(x)|^2\,dxds + \lambda\into V(x)\rho^\e(x,t)\,dx - \lambda\into V(x)\rho^\e_0(x)\,dx.
\]
\noindent {\bf Estimate of $J_8^\e$}: Note that
$$\begin{aligned}
 &\lambda\int_0^t \into \rho^\e(x,s) u^\e(x,s) \cdot (\nabla W \star \rho^\e)(x,s)\,dxds \cr
 &\quad =  \lambda\int_0^t \into \pa_s (\rho^\e(x,s) )  (W \star \rho^\e)(x,s)\,dxds\cr
 &\quad =\frac\lambda2 \int_0^t \frac{\pa}{\pa s}\lt(\intoo  W(x-y) \rho^\e(x,s)   \rho^\e(y,s)\,dxdy\rt)ds\cr
 &\quad =\frac\lambda2\lt(\intoo  W(x-y) \rho^\e(x,t)\rho^\e(y,t)\,dxdy - \intoo W(x-y) \rho^\e_0(x)   \rho^\e_0(y)\,dxdy \rt).
\end{aligned}$$
This yields
$$\begin{aligned}
J_8^\e &= \lambda\int_0^t \into \rho^\e(x) ( u^\e(x) - u(x)) \cdot (\nabla W \star (\rho - \rho^\e))(x)\,dxds \cr
&\quad + \frac\lambda2\lt(\intoo  W(x-y) \rho^\e(x,t)\rho^\e(y,t)\,dxdy - \intoo W(x-y) \rho^\e_0(x)   \rho^\e_0(y)\,dxdy \rt).
\end{aligned}$$
We now combine the estimates $J_i^\e, i=2, 5, 7, 8$ to get
$$\begin{aligned}
\sum_{i \in \{2,5,7,8\}} J_i^\e &= \mathcal{O}(\sqrt\e) + \mf(f) - \mf(f_0) \cr
&\quad + \frac\alpha2\int_0^t \intoo \phi(x-y)|w - v|^2 f^\e(x,v)f^\e(y,w)\,dxdvdydwds\cr
&\quad + \lambda\int_0^t \into \rho^\e(x) ( u^\e(x) - u(x)) \cdot (\nabla W \star (\rho - \rho^\e))(x)\,dxds\cr
&\quad + \gamma\int_0^t \into \rho^\e(x) |u^\e(x)|^2\,dxds.
\end{aligned}$$
We then use Proposition \ref{prop_energy} to find
$$\begin{aligned}
\sum_{i \in \{2,5,7,8\}} J_i^\e &\leq \mathcal{O}(\sqrt\e) + C(1 + \gamma^2)\e\cr
&\quad + \lambda\int_0^t \into \rho^\e(x) ( u^\e(x) - u(x)) \cdot (\nabla W \star (\rho - \rho^\e))(x)\,dxds.
\end{aligned}$$
We finally combine all the above estimates to conclude the proof.
\end{proof}
%
%
%
%
\subsection{Singular/weakly regular interactions: $\Delta W = - \delta_0$ \& $\phi \in L^\infty(\om)$}
In this part, we consider the Coulombian interactions $W$ satisfying $\Delta W = - \delta_0$. Motivated from \cite{CFGS17,LT13,LT17}, we use a particular structure of the Poisson equation. 
 
\begin{lemma}\label{lem_wd} Suppose that the interaction potential $W$ satisfies $\Delta W = - \delta_0$. Then we have
\[
\frac\lambda 2\frac{d}{dt}\into |\nabla W \star (\rho - \rho^\e)|^2\,dx = \lambda \into \nabla W\star(\rho - \rho^\e)  \cdot \lt((\rho u) - (\rho^\e u^\e)\rt) dx
\]
for $t \in [0,T]$.
\end{lemma}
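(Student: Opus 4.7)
The plan is to rewrite the quadratic functional $\into |\nabla W \star (\rho - \rho^\e)|^2\,dx$ using the distributional identity $-\Delta W = \delta_0$, and then differentiate in time using the continuity equations satisfied by $\rho$ and $\rho^\e$. Set $G := \rho - \rho^\e$ and $m := \rho u - \rho^\e u^\e$. Since $\rho$ is the density of the strong solution of \eqref{main_pE} and $\rho^\e = \intr f^\e\,dv$ is the zeroth moment of a weak solution $f^\e$ of \eqref{main_eq} (so it satisfies a continuity equation with flux $\rho^\e u^\e$), subtracting the two continuity equations yields
\[
\pa_t G + \nabla \cdot m = 0
\]
in the sense of distributions on $\om \times (0,T)$.

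Using $-\Delta W \star G = G$ together with integration by parts, I would first convert the squared $L^2$ norm of $\nabla W \star G$ into a bilinear pairing against $W \star G$:
\[
\into |\nabla W \star G|^2\,dx = -\into (\Delta W \star G)(W \star G)\,dx = \into G\,(W \star G)\,dx.
\]
The symmetry $W(x) = W(-x)$ makes $(G_1,G_2) \mapsto \into G_1(W \star G_2)\,dx$ a symmetric bilinear form, so differentiating in time and inserting the continuity equation gives
\[
\frac{d}{dt}\into G\,(W \star G)\,dx = 2\into (\pa_t G)(W \star G)\,dx = -2\into (\nabla \cdot m)(W \star G)\,dx = 2\into m \cdot (\nabla W \star G)\,dx.
\]
Multiplying by $\lambda/2$ and reinserting $m = \rho u - \rho^\e u^\e$ yields the claimed identity.

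The main obstacle is a rigorous justification of the two integrations by parts above, since the Coulomb kernel $W$ is singular at the origin and decays slowly at infinity (especially in dimensions $d=1,2$), while $\rho,\rho^\e$ are only guaranteed to be integrable. A natural remedy is to work first with a mollified kernel $W_\delta := W \star \eta_\delta$ for a standard mollifier $\eta_\delta$, carry out the calculation in this regular setting where every integral is absolutely convergent, and then pass to the limit $\delta \to 0$; the uniform control on $\|\nabla W \star (\rho - \rho^\e)\|_{L^2}$ coming from the free energy bound in Proposition \ref{prop_energy} is precisely what makes this limiting procedure legitimate, and the continuity equation for $G$ allows the time derivative to be taken in the distributional sense in $t$ before being identified with a pointwise derivative of the resulting absolutely continuous functional.
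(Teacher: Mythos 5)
Your argument is correct and rests on exactly the same three ingredients as the paper's proof --- the Poisson identity $-\Delta W = \delta_0$, the continuity equations for $\rho$ and $\rho^\e$, and the symmetry of $W$ --- merely applied in the opposite order: you first rewrite $\into |\nabla W\star G|^2\,dx = \into G\,(W\star G)\,dx$ and then differentiate in time, whereas the paper differentiates first and then applies the Poisson identity to $\into (\nabla W\star G)\cdot(\nabla W\star\partial_t G)\,dx$. The paper's computation is purely formal, so your closing discussion of mollification and passage to the limit is an extra layer of rigor beyond what the paper supplies, but the core chain of manipulations is identical.
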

\begin{proof}Using the continuity equations of $\rho$ and $\rho^\e$, we find
\begin{align}\label{est_w1}
\begin{aligned}
\frac\lambda 2\frac{d}{dt}\into |\nabla W \star (\rho - \rho^\e)|^2\,dx &= \lambda\into (\nabla W \star (\rho - \rho^\e))\cdot (\nabla W \star (\pa_t \rho - \pa_t \rho^\e))\,dx\cr
&=-\lambda \into (\Delta W \star (\rho - \rho^\e)) \lt(W\star (\pa_t \rho - \pa_t \rho^\e)\rt)dx\cr
&= \lambda \into   (\rho - \rho^\e) \lt(W\star (\pa_t \rho - \pa_t \rho^\e)\rt)dx.
\end{aligned}
\end{align}
We then use the symmetry of $W$ to get
\begin{align}\label{est_w2}
\begin{aligned}
&\into  (\rho - \rho^\e) \lt(W\star (\pa_t \rho - \pa_t \rho^\e)\rt)dx\cr
&\quad =-\into (\rho - \rho^\e)(x) W(x-y) \lt( \nabla_y \cdot (\rho u)(y) - \nabla_y \cdot (\rho^\e u^\e)(y) \rt) dxdy\cr
&\quad = \into (\rho - \rho^\e)(x) \nabla_y \lt( W(x-y)\rt) \cdot \lt( (\rho u)(y) - (\rho^\e u^\e)(y) \rt) dxdy\cr
&\quad =-\into (\rho - \rho^\e)(x) \nabla_x W(x-y) \cdot \lt( (\rho u)(y) - (\rho^\e u^\e)(y) \rt) dxdy\cr
&\quad =\into (\rho - \rho^\e)(y) \nabla_x W(x-y) \cdot \lt( (\rho u)(x) - (\rho^\e u^\e)(x) \rt) dxdy\cr
&\quad = \into \nabla W\star(\rho - \rho^\e)  \cdot \lt((\rho u) - (\rho^\e u^\e)\rt) dx.
\end{aligned}
\end{align}
We finally combine \eqref{est_w1} and \eqref{est_w2} to conclude the proof.
\end{proof}

Then we are now ready to provide the details of the proof of Theorem \ref{thm_hydro1} for the singular interactions case. Since the strong convergences \eqref{thm_h1_conv} can be obtained from the inequalities in Theorem \ref{thm_hydro1}  and it is also already discussed in \cite{KMT15}, we skip the details of that proof.

\begin{proof}[Proof of Theorem \ref{thm_hydro1} (i)] Using the integration by parts, we estimate 
$$\begin{aligned}
&\into \rho^\e ( u^\e - u) \cdot (\nabla W \star (\rho - \rho^\e))\,dx + \into \nabla W\star(\rho - \rho^\e)  \cdot \lt((\rho u) - (\rho^\e u^\e)\rt) dx \cr
& \quad = \into \nabla W \star (\rho - \rho^\e) \cdot u (\rho - \rho^\e)\,dx \cr
& \quad = - \into \nabla W \star (\rho - \rho^\e) \cdot u \lt(\Delta W \star (\rho - \rho^\e)\rt)\,dx\cr
& \quad = -\frac\lambda2\into |\nabla W \star (\rho - \rho^\e)|^2 \nabla \cdot u\,dx + \lambda\into \nabla W \star (\rho - \rho^\e)\otimes \nabla W \star (\rho - \rho^\e) : \nabla u\,dx,
\end{aligned}$$
i.e.,
$$\begin{aligned}
&\lt| \into \rho^\e ( u^\e - u) \cdot (\nabla W \star (\rho - \rho^\e))\,dx + \into \nabla W\star(\rho - \rho^\e)  \cdot \lt((\rho u) - (\rho^\e u^\e)\rt) dx\rt|\cr
&\quad \leq \frac{3\lambda}{2}\|\nabla u\|_{L^\infty}\into |\nabla W \star (\rho - \rho^\e)|^2\,dx.
\end{aligned}$$
This together with Lemma \ref{lem_wd} and Proposition \ref{prop_re} yields
$$\begin{aligned}
&\into \me(U^\e|U)\,dx + \frac\lambda 2 \into |\nabla W \star (\rho - \rho^\e)|^2\,dx + \gamma\int_0^t \into \rho^\e(x)| u^\e(x) - u(x)|^2\,dxds\cr
&\quad + \frac\alpha2\int_0^t \intoo \rho^\e(x) \rho^\e(y)\phi(x-y)|( u^\e(x) - u(x)) - (u^\e(y) - u(y))|^2 dxdyds\cr
&\qquad \leq C\sqrt{\e}  +  \frac\lambda 2 \into |\nabla W \star (\rho_0 - \rho^\e_0)|^2\,dx\cr
&\qquad \quad + C(1 + \alpha)\int_0^t \into \me(U^\e|U)\,dxds  + C\lambda \int_0^t\into |\nabla W \star (\rho - \rho^\e)|^2\,dxds.
\end{aligned}$$
We finally apply Gr\"onwall's lemma to the above to conclude the desired result. This completes the proof.
\end{proof}

\begin{remark} The convergence 
\[
\into |\nabla W \star (\rho - \rho^\e)|^2\,dx \to 0 \quad \mbox{as} \quad \e \to 0
\]
implies
\[
\rho^\e \to \rho \quad \mbox{in} \quad L^\infty(0,T;H^{-1}(\om)).
\]
Indeed, we can easily find 
\[
\|\rho^\e - \rho\|_{H^{-1}} \leq \|\nabla W \star (\rho - \rho^\e)\|_{L^2}.
\]
\end{remark}


%
%
%
%
\subsection{Weakly regular interactions: $\nabla W \in L^\infty(\om)$  \& $\phi \in L^\infty(\om)$}
In this part, we deal with the weakly singular interactions case.

\begin{lemma}\label{lem_wr} Suppose that the interaction potential $W$ satisfies $\nabla W \in L^\infty(\om)$. Then we have
\[
\lt|\into \rho^\e(x) ( u^\e(x) - u(x)) \cdot (\nabla W \star (\rho - \rho^\e))(x)\,dx \rt| \leq 2\|\nabla W\|_{L^\infty}\into \me(U^\e|U)\,dx.
\]
\end{lemma}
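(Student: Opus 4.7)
The plan is to estimate the left-hand side pointwise first and then exploit the key identifications for $\mathcal{E}(U^\e|U)$ already developed in this section, namely $\mathcal{E}(U^\e|U) = \tfrac{\rho^\e}{2}|u^\e-u|^2 + \mathcal{H}(\rho^\e|\rho)$ together with the bound \eqref{est_l1} controlling $\|\rho^\e-\rho\|_{L^1}^2$ by $\int_\Omega \mathcal{H}(\rho^\e|\rho)\,dx$.

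First I would pull $\nabla W$ out of the convolution via the pointwise estimate
\[
|(\nabla W \star (\rho - \rho^\e))(x)| \leq \|\nabla W\|_{L^\infty}\,\|\rho - \rho^\e\|_{L^1},
\]
which reduces the integrand to a product where the convolution factor is a constant depending only on time. This turns the quantity to bound into
\[
\|\nabla W\|_{L^\infty}\,\|\rho - \rho^\e\|_{L^1}\int_\Omega \rho^\e(x)\,|u^\e(x)-u(x)|\,dx.
\]

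Next I would apply the Cauchy--Schwarz inequality to the remaining spatial integral, using that $\|\rho^\e\|_{L^1} = 1$ since $f^\e$ is a probability density:
\[
\int_\Omega \rho^\e |u^\e-u|\,dx \leq \|\rho^\e\|_{L^1}^{1/2}\left(\int_\Omega \rho^\e |u^\e-u|^2\,dx\right)^{1/2} = \left(\int_\Omega \rho^\e |u^\e-u|^2\,dx\right)^{1/2}.
\]
Combined with \eqref{est_l1} this yields, for the full expression, a product of $(\int_\Omega \mathcal{H}(\rho^\e|\rho)\,dx)^{1/2}$ and $(\int_\Omega \rho^\e|u^\e-u|^2\,dx)^{1/2}$ up to a constant multiple of $\|\nabla W\|_{L^\infty}$.

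Finally, I would apply Young's inequality $ab \leq \tfrac{1}{2}(a^2+b^2)$ to convert the product of square roots into a sum, recognizing the resulting two terms as precisely the two constituents $\mathcal{H}(\rho^\e|\rho)$ and $\tfrac{\rho^\e}{2}|u^\e-u|^2$ whose sum is $\mathcal{E}(U^\e|U)$. Keeping careful track of the constant from \eqref{est_l1} (which equals $2(\|\rho^\e\|_{L^1}+\|\rho\|_{L^1})=4$ here) gives the claimed bound with constant $2\|\nabla W\|_{L^\infty}$. There is no real obstacle in this argument; the only care needed is bookkeeping the numerical constants so that the final coefficient is exactly $2\|\nabla W\|_{L^\infty}$ as stated.
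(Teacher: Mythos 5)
Your argument is correct and uses the same chain of estimates as the paper's proof: the $L^\infty$--$L^1$ bound on the convolution, Cauchy--Schwarz against $\rho^\e$ with $\|\rho^\e\|_{L^1}=1$, and \eqref{est_l1} to control $\|\rho-\rho^\e\|_{L^1}$. The only (cosmetic) difference is the final algebraic step---you split the product of square roots via Young's inequality $ab\le\tfrac12(a^2+b^2)$, whereas the paper bounds each factor by $(\into\me(U^\e|U)\,dx)^{1/2}$ and multiplies---and your variant in fact tracks the constant a touch more cleanly, since $\tfrac12\rho^\e|u^\e-u|^2$ and $\tfrac12\mh(\rho^\e|\rho)$ are directly dominated by the two summands of $\me(U^\e|U)$ without absorbing a hidden $\sqrt{2}$.
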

\begin{proof} We use H\"older inequality to get
$$\begin{aligned}
&\into \rho^\e(x) ( u^\e(x) - u(x)) \cdot (\nabla W \star (\rho - \rho^\e))(x)\,dx\cr
&\quad \leq \|\nabla W \star (\rho - \rho^\e)\|_{L^\infty}\lt(\into \rho^\e |u^\e - u|^2\,dx\rt)^{1/2}\cr
&\quad \leq \|\nabla W\|_{L^\infty}\|\rho - \rho^\e\|_{L^1}\lt(\into \me(U^\e|U)\,dx\rt)^{1/2}.
\end{aligned}$$
On the other hand, $L^1$-norm of $\rho - \rho^\e$ can be estimated as 
$$\begin{aligned}
\into |\rho - \rho^\e|\,dx &= \into \min\lt\{\frac1\rho, \frac{1}{\rho^\e} \rt\}^{1/2} \max\{ \rho,\, \rho^\e\}^{1/2}|\rho - \rho^\e|\,dx\cr
&\leq \lt(\into \min\lt\{\frac1\rho, \frac{1}{\rho^\e} \rt\} (\rho - \rho^\e)^2\,dx \rt)^{1/2} \lt(\into \max\{ \rho,\, \rho^\e\}\,dx \rt)^{1/2}\cr
&\leq 2\lt(\into \me(U^\e|U)\,dx\rt)^{1/2},
\end{aligned}$$
due to \eqref{est_l1}. Thus we have
\[
\lt|\into \rho^\e(x) ( u^\e(x) - u(x)) \cdot (\nabla W \star (\rho - \rho^\e))(x)\,dx \rt| \leq 2\|\nabla W\|_{L^\infty}\into \me(U^\e|U)\,dx.
\]
\end{proof}

\begin{proof}[Proof of Theorem \ref{thm_hydro1} (ii)] By combining Lemma \ref{lem_wr} and Proposition \ref{prop_re}, we find
$$\begin{aligned}
&\into \me(U^\e|U)\,dx + \gamma\int_0^t \into \rho^\e(x)| u^\e(x) - u(x)|^2\,dxds\cr
&\quad + \frac\alpha2\int_0^t \intoo \rho^\e(x) \rho^\e(y)\phi(x-y)|( u^\e(x) - u(x)) - (u^\e(y) - u(y))|^2 dxdyds\cr
&\qquad \leq  C\sqrt{\e}   + C(1 + \gamma + \alpha)\int_0^t \into \me(U^\e|U)\,dxds. 
\end{aligned}$$
We complete the proof by using the Gronwall inequality to the above.
\end{proof}

%
%
%
%
\section{Hydrodynamic limit from kinetic to pressureless Euler equations}\label{sec_npE}

In this section, we consider the hydrodynamic limit from \eqref{main_eq} with $\sigma = 0$ to the pressureless Euler equations with nonlocal interaction forces \eqref{main_npE}. Similarly as before, we rewrite the limiting system \eqref{main_npE} as the following conservative form:
\[
\pa_t U + \nabla \cdot \hat A(U) = F(U),
\]
where 
\[
m = \rho u, \quad U := \begin{pmatrix}
\rho \\
m 
\end{pmatrix},
\quad
\hat A(U) := \begin{pmatrix}
m  \\
\frac{m \otimes m}{\rho} 
\end{pmatrix},
\]
and
\[
F(U) := \begin{pmatrix}
0 \\
\displaystyle \alpha\rho\into \phi(x-y)(u(y) - u(x))\rho(y)\,dy -\gamma \rho u - \lambda \rho\lt(\nabla V + \nabla W \star \rho \rt)
\end{pmatrix}.
\]
We then consider the kinetic energy of the above system:
\[
\hat E(U) := \frac{|m|^2}{2\rho}.
\]
Note that the entropy defined above is not strictly convex with respect to $\rho$. We also define the modulated kinetic energy as
$$\begin{aligned}
\hat \me(\bar U|U) :=& \hat E(\bar U) - \hat E(U) - D\hat E(U)(\bar U-U)\cr
=& \frac{\bar\rho|\bar u|^2}{2} - \frac{\rho|u|^2}{2} - \frac{|u|^2}{2}(\rho - \bar \rho) - u\cdot (\bar \rho \bar u - \rho u) \cr
=& \frac{\bar\rho}{2}|\bar u - u|^2 \quad \mbox{with} \quad \bar U := \begin{pmatrix}
        \bar\rho \\
        \bar m \\
    \end{pmatrix}.
\end{aligned}$$
Compared to the previous diffusive case, our functional $\hat\me$ does not include the relative pressure, and as a consequence we cannot deal with the $L^1$-norm of the $\bar\rho - \rho$. Thus we need to estimate the nonlocal interaction forces in a different way. For this, we will use a bounded Lipschitz distance for local densities, and this requires a higher regularity for the communication weight $\phi$. 

\subsection{Modulated kinetic energy inequality}
In the proposition below, we provide the modulated kinetic energy estimate. 
\begin{proposition}\label{prop_re15}Let $T>0$, $f^\e$ be a global weak solution to the equation \eqref{main_eq} with $\sigma = 0$, and let $(\rho,u)$ be a strong solution to the system \eqref{main_pE} on the time interval $[0,T]$. Then we have
\begin{align}\label{re15}
\begin{aligned}
&\into \hat\me(U^\e|U)\,dx + \gamma\int_0^t\into \hat\me(U^\e|U)\,dxds\cr
&\quad + \frac\alpha2\int_0^t \intoo \rho^\e(x) \rho^\e(y)\phi(x-y)|( u^\e(x) - u(x)) - (u^\e(y) - u(y))|^2 dxdyds\cr
&\qquad \leq \into \hat\me(U_0^\e|U_0)\,dx + \hat K(f^\e_0) - \into \hat E(U^\e_0)\,dx \cr
&\quad \qquad + (\|\nabla u\|_{L^\infty} + C\alpha^2)\int_0^t \into \hat\me(U^\e|U)\,dxds + \|\nabla u\|_{L^\infty} \int_0^t \intor |u^\e - v|^2 f^\e\,dxdvds \cr
&\quad \qquad +  C\int_0^t d_{BL}^2(\rho^\e,\rho) \,ds + \lambda\int_0^t \into \rho^\e(x) ( u^\e(x) - u(x)) \cdot (\nabla W \star (\rho - \rho^\e))(x)\,dxds
\end{aligned}
\end{align}
for $t \in [0,T]$.
\end{proposition}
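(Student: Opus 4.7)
The plan is to mirror the proof of Proposition~\ref{prop_re}, adapting it to the pressureless macroscopic entropy $\hat E(U) = |m|^2/(2\rho)$ and flux $\hat A(U)$. The key structural difference is that $\hat\me(\bar U|U) = \bar\rho|\bar u - u|^2/2$ no longer contains the entropic piece $\mh(\bar\rho|\rho)$, so the $L^1$-estimate \eqref{est_l1} used in the pressure case is unavailable, and the nonlocal interaction terms must be controlled by the bounded Lipschitz distance instead; this is precisely why an extra bounded-Lipschitz regularity assumption on $\phi$ (and on $u$, coming from the strong-solution theory) enters the analysis.

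First I would derive the pressureless analog of Lemma~\ref{lem_rel}, producing an identity of the form
\[
\frac{d}{dt}\into \hat\me(U^\e|U)\,dx + \frac{\alpha}{2}\intoo \rho^\e(x)\rho^\e(y)\phi(x-y)|(u^\e - u)(x) - (u^\e - u)(y)|^2\,dxdy = \sum_i I_i,
\]
with the same block decomposition as in Lemma~\ref{lem_rel}, but with $D\hat E(U)$ having components $-|u|^2/2$ and $u$, and the relative flux $\hat A(U^\e|U)$ vanishing everywhere except for the momentum block $\rho^\e(u^\e - u)\otimes(u^\e - u)$ (the diagonal pressure block $\rho\,\mathbb{I}_{d\times d}$ present in the isothermal case is absent here). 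Integrating in time then splits the right-hand side into the initial-datum contribution and eight error terms $J_i^\e$, $i=1,\dots,8$, structurally mirroring those in the proof of Proposition~\ref{prop_re}.

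Next I would estimate the $J_i^\e$ in turn. The flux term yields $|J_3^\e| \leq \|\nabla u\|_{L^\infty}\int_0^t\into \hat\me\,dxds$ as before. The consistency defect follows from
\[
\pa_t U^\e + \nabla\cdot\hat A(U^\e) - F(U^\e) = \begin{pmatrix} 0 \\ -\nabla\cdot \intr f^\e(v - u^\e)\otimes(v - u^\e)\,dv \end{pmatrix},
\]
where the identity matrix $\mathbb{I}_{d\times d}$ present in the pressure case does not appear, reflecting the absence of pressure in the limit; an integration by parts then gives $|J_4^\e| \leq \|\nabla u\|_{L^\infty}\int_0^t\intor f^\e|v-u^\e|^2\,dxdvds$. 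The alignment positive part $J_5^\e$ is bounded by $\alpha\int_0^t D_2(f^\e)\,ds$ through \eqref{est_ef}, and combined with $J_2^\e$, $J_7^\e$, and $J_8^\e$ via the $\sigma = 0$ free-energy identity \eqref{energy_zerosig} it produces the initial Jensen gap $\hat K(f_0^\e) - \into \hat E(U_0^\e)\,dx$ together with the unresolved integral $\lambda\int_0^t\into \rho^\e(u^\e - u)\cdot\nabla W\star(\rho - \rho^\e)\,dxds$. The $\gamma$-damping contribution in $I_4$ supplies the dissipation $\gamma\int_0^t\into \hat\me\,dxds$ on the left.

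The main obstacle is the cross alignment term
\[
J_6^\e = -\alpha\int_0^t\intoo \rho^\e(x)(\rho(y) - \rho^\e(y))\phi(x-y)(u^\e(x) - u(x))\cdot(u(y) - u(x))\,dxdyds,
\]
which in the pressure case was controlled through $\|\rho - \rho^\e\|_{L^1}\leq C(\into \hat\me\,dx)^{1/2}$, an inequality no longer at our disposal. Here I would observe that for each fixed $x$ the kernel $y \mapsto \phi(x-y)(u(y) - u(x))$ is bounded Lipschitz in $y$ with norm at most $C\|\phi\|_{\mw^{1,\infty}}(1 + \|u\|_{\mw^{1,\infty}})$ independently of $x$, so by the definition of the bounded Lipschitz distance,
\[
\left|\into (\rho(y) - \rho^\e(y))\phi(x-y)(u(y) - u(x))\,dy\right| \leq C\,d_{BL}(\rho^\e,\rho).
\]
Inserting this bound and applying Cauchy--Schwarz in $x$ against the factorization $\rho^\e(x)^{1/2}\cdot\rho^\e(x)^{1/2}|u^\e(x) - u(x)|$, followed by Young's inequality with weight $\alpha$, produces
\[
|J_6^\e|\leq C\alpha^2\int_0^t\into \hat\me\,dxds + C\int_0^t d_{BL}^2(\rho^\e,\rho)\,ds,
\]
matching exactly the corresponding terms on the right-hand side of \eqref{re15}. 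Collecting all contributions and absorbing the initial modulated entropy into $\into \hat\me(U_0^\e|U_0)\,dx$ completes the argument.
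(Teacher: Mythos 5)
Your proposal is correct and follows essentially the same route as the paper. You mirror Lemma~\ref{lem_rel} and Proposition~\ref{prop_re} for the pressureless entropy $\hat E$, identify the correct relative flux (momentum block $\rho^\e(u^\e - u)\otimes(u^\e - u)$ only, no pressure $\rho\,\mathbb{I}_{d\times d}$ block), obtain the consistency defect $\nabla\cdot\intr (u^\e - v)\otimes(v - u^\e)f^\e\,dv$, cancel the alignment dissipation $J_5^\e$ against the kinetic free-energy inequality \eqref{energy_zerosig}, and control the cross alignment term via the bounded Lipschitz distance — all exactly as the paper does.

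The one place where you deviate cosmetically is in the cross alignment term. The paper splits $u(y) - u(x)$ into two pieces and bounds $I_1^\e$ (involving $y\mapsto\phi(x-y)u(y)$) and $I_2^\e$ (involving $y\mapsto\phi(x-y)u(x)$) separately, each against $d_{BL}(\rho^\e,\rho)$. You instead observe directly that $y\mapsto\phi(x-y)\bigl(u(y)-u(x)\bigr)$ is bounded Lipschitz in $y$ with norm $\lesssim \|\phi\|_{\mw^{1,\infty}}\bigl(1+\|u\|_{\mw^{1,\infty}}\bigr)$ uniformly in $x$ (boundedness from $\|\phi\|_{L^\infty}\cdot 2\|u\|_{L^\infty}$, Lipschitz constant from $\|\nabla\phi\|_{L^\infty}\|u\|_{L^\infty}+\|\phi\|_{L^\infty}\|\nabla u\|_{L^\infty}$), yielding the $d_{BL}$ bound in one step. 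Both versions rest on the same regularity hypotheses $\phi\in\mw^{1,\infty}$ and $u\in\mw^{1,\infty}$ and land on the same right-hand side $C\int_0^t d_{BL}^2(\rho^\e,\rho)\,ds + C\alpha^2\int_0^t\into\hat\me\,dxds$; your variant is marginally tidier but not materially different.
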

\begin{proof} Employing almost the same arguments as in Lemma \ref{lem_rel} and Proposition \ref{prop_re}, we find
$$\begin{aligned}
\begin{aligned}
&\into \hat\me(U^\e|U)\,dx + \gamma\int_0^t \into \hat\me(U^\e|U)\,dxds\cr 
&\quad + \frac\alpha2\int_0^t \intoo \rho^\e(x) \rho^\e(y)\phi(x-y)|( u^\e(x) - u(x)) - (u^\e(y) - u(y))|^2 dxdyds\cr
&\qquad = \into \hat\me(U_0^\e|U_0)\,dx + \into \hat E(U^\e) - \hat E(U_0)\,dx - \int_0^t \into \nabla (D\hat E(U)):A(U^\e|U)\,dxds \cr
&\quad \qquad - \int_0^t \into D\hat E(U)\left[ \pa_s U^\e + \nabla \cdot \hat A(U^\e) - F(U^\e)\right]dxds \cr
&\quad \qquad +\frac\alpha2\int_0^t \intoo \rho^\e(x) \rho^\e(y)\phi(x-y)|u^\e(x) - u^\e(y)|^2\,dxdyds\cr
&\quad \qquad - \alpha\int_0^t \intoo \rho^\e(x) (\rho(y) - \rho^\e(y))\phi(x-y) (u^\e(x) - u(x))\cdot (u(y) - u(x))\,dxdyds\cr
&\quad \qquad + \gamma\int_0^t \into \rho^\e(x) |u^\e(x)|^2\,dxds + \lambda \int_0^t \into \nabla V(x) \cdot \rho^\e(x) u^\e(x)\,dxds\cr
&\quad \qquad + \lambda\int_0^t \into \rho^\e(x) ( u^\e(x) - u(x)) \cdot (\nabla W \star (\rho - \rho^\e))(x) +  \rho^\e(x) u^\e(x) \cdot (\nabla W \star \rho^\e)(x)\,dxds\cr
&\qquad \leq \into \hat\me(U_0^\e|U_0)\,dx + \hat K(f^\e_0) - \into \hat E(U^\e_0)\,dx + \|\nabla u\|_{L^\infty}\int_0^t \into \hat\me(U^\e|U)\,dxds\cr
&\quad \qquad - \int_0^t \into D\hat E(U)\left[ \pa_s U^\e + \nabla \cdot \hat A(U^\e) - F(U^\e)\right]dxds \cr
&\quad \qquad - \alpha\int_0^t \intoo \rho^\e(x) (\rho(y) - \rho^\e(y))\phi(x-y) (u^\e(x) - u(x))\cdot (u(y) - u(x))\,dxdyds\cr
&\quad \qquad + \lambda\int_0^t \into \rho^\e(x) ( u^\e(x) - u(x)) \cdot (\nabla W \star (\rho - \rho^\e))(x)\,dxds,
\end{aligned}
\end{aligned}$$
where $\hat K(f)$ denotes the kinetic energy for the kinetic equation, i.e.,
\[
\hat K(f) := \frac12\intor |v|^2 f\,dxdv.
\]
On the other hand, we notice that
$$\begin{aligned}
\pa_t U^\e + \nabla \cdot \hat A(U^\e) - F(U^\e) &= \begin{pmatrix} 0 \\ \nabla \cdot \lt( \displaystyle \intr (u^\e \otimes u^\e - v \otimes v)f^\e\,dv \rt)  \end{pmatrix}\cr
&= \begin{pmatrix} 0 \\ \nabla \cdot \lt( \displaystyle \intr (u^\e - v) \otimes (v-u^\e)f^\e\,dv \rt)  \end{pmatrix},
\end{aligned}$$
and this yields
\[
\lt|\int_0^t \into D\hat E(U)\left[ \pa_s U^\e + \nabla \cdot \hat A(U^\e) - F(U^\e)\right]dxds\rt| \leq \|\nabla u\|_{L^\infty} \int_0^t \intor |u^\e - v|^2 f^\e\,dxdvds.
\]
For the term with the communication weight function $\phi$, we denoted it by $I^\e$ and split into two terms:
$$\begin{aligned}
I^\e &= - \alpha\int_0^t\intoo \rho^\e(x) (\rho(y) - \rho^\e(y))\phi(x-y) (u^\e(x) - u(x))\cdot u(y)\,dxdyds \cr
&\quad + \alpha\int_0^t\intoo \rho^\e(x) (\rho(y) - \rho^\e(y))\phi(x-y) (u^\e(x) - u(x))\cdot u(x)\,dxdyds\cr
&=: I_1^{\e} + I_2^{\e},
\end{aligned}$$
where $I_1^{\e}$ can be estimated as
$$\begin{aligned}
\lt|I_1^{\e}\rt| &= \alpha\lt|\int_0^t \into \lt(\into (\rho(y) - \rho^{\e}(y))\phi(x-y)u(y)\,dy \rt) \cdot \rho^\e(x)(u^\e(x) - u(x))\,dxds\rt|\cr
&\leq C \alpha\int_0^t d_{BL}(\rho^{\e},\rho)\into \rho^\e(x)|u^\e(x) - u(x)|\,dx dt\cr
&\leq C\int_0^t d_{BL}^2(\rho^\e,\rho) \,ds + C \alpha^2 \int_0^t \into \hat\me(U^\e|U)\,dxds.
\end{aligned}$$
Here we used the fact that $y \mapsto \phi(\cdot,y)u(y)$ is bounded and Lipschitz continuous. Similarly, we can also show that
\[
\lt|I_2^{\e}\rt| \leq C\int_0^t d_{BL}^2(\rho^\e,\rho) \,ds + C\alpha^2 \int_0^t\into \hat\me(U^\e|U)\,dxds,
\]
and this yields
\[
\lt|I^\e\rt| \leq C\int_0^t d_{BL}^2(\rho^\e,\rho) \,ds + C \alpha^2 \int_0^t\into \hat\me(U^\e|U)\,dxds,
\]
where $C>0$ is independent of $\e>0$. This completes the proof.
\end{proof}

\subsection{Singular/strongly regular interactions: $\Delta W = - \delta_0$  \& $\phi \in \mw^{1,\infty}(\om)$}
In this subsection, we consider the Coulombian interaction potential $W$, i.e., $W$ satisfies $\Delta W = - \delta_0$. 

We first notice from Lemma \ref{lem_wd}, see also proof of Theorem \ref{thm_hydro1} (i), that the last term on the right hand side of \eqref{re15} can be bounded from above by
\[
-\frac\lambda 2 \into |\nabla W \star (\rho - \rho^\e)|^2\,dx + \frac\lambda 2 \into |\nabla W \star (\rho_0 - \rho^\e_0)|^2\,dx + \frac{3\lambda}{2}\|\nabla u\|_{L^\infty}\int_0^t \into |\nabla W \star (\rho - \rho^\e)|^2\,dxds.
\]
We next use the free energy estimate \eqref{energy_zerosig} to show 
\[
\int_0^t \intor |u^\e - v|^2 f^\e\,dxdvds \leq C\e,
\]
where $C > 0$ is independent of $\e$. Combining those observations with Proposition \ref{prop_re15} yields
the following proposition.
\begin{proposition}\label{prop_re20}Let $T>0$, $f^\e$ be a global weak solution to the equation \eqref{main_eq} with $\sigma = 0$, and let $(\rho,u)$ be a strong solution to the system \eqref{main_pE} on the time interval $[0,T]$. Then we have
$$\begin{aligned}
&\into \hat\me(U^\e|U)\,dx + \frac\lambda 2 \into |\nabla W \star (\rho - \rho^\e)|^2\,dx + \gamma\int_0^t\into \hat\me(U^\e|U)\,dxds\cr
&\quad + \frac\alpha2\int_0^t \intoo \rho^\e(x) \rho^\e(y)\phi(x-y)|( u^\e(x) - u(x)) - (u^\e(y) - u(y))|^2 dxdyds\cr
&\qquad \leq \into \hat\me(U_0^\e|U_0)\,dx + \hat K(f^\e_0) - \into \hat E(U^\e_0)\,dx + \frac\lambda 2 \into |\nabla W \star (\rho_0 - \rho^\e_0)|^2\,dx + C\e \cr
&\quad \qquad + C \int_0^t \into \hat\me(U^\e|U)\,dxds  + C\int_0^t \into |\nabla W \star (\rho - \rho^\e)|^2\,dxds +  C\int_0^t d_{BL}^2(\rho^\e,\rho) \,ds
\end{aligned}$$
for $t \in [0,T]$, where $C>0$ is independent of $\e>0$.
\end{proposition}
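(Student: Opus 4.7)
The plan is to pick up Proposition \ref{prop_re15} and dispose of the two remaining dangerous terms on the right-hand side of \eqref{re15}, namely the velocity defect $\|\nabla u\|_{L^\infty}\int_0^t \intor |u^\e - v|^2 f^\e\,dxdvds$ and the Coulombian interaction integral $\lambda\int_0^t\into \rho^\e(u^\e-u)\cdot\nabla W\star(\rho-\rho^\e)\,dxds$. For the velocity defect, I would invoke the free-energy identity \eqref{energy_zerosig} with $\sigma=0$ and $\beta=1/\e$: integrating in time and discarding the nonnegative alignment and damping dissipations on the right yields
\[
\frac{1}{\e}\int_0^t \intor f^\e|u^\e-v|^2\,dxdvds \leq \mf(f_0^\e)+C,
\]
so $\int_0^t \intor f^\e|u^\e-v|^2\,dxdvds = \mathcal{O}(\e)$, and this contribution is absorbed into the stated $C\e$.

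For the interaction term, following the template of Theorem \ref{thm_hydro1}(i), I would add $\tfrac{\lambda}{2}\into|\nabla W\star(\rho-\rho^\e)|^2\,dx$ to the left of \eqref{re15}. Lemma \ref{lem_wd} expresses its time derivative as $\lambda\into\nabla W\star(\rho-\rho^\e)\cdot(\rho u-\rho^\e u^\e)\,dx$. Combining this with the leftover interaction term from \eqref{re15} and using the elementary identity $\rho^\e(u^\e-u)+(\rho u-\rho^\e u^\e)=u(\rho-\rho^\e)$, the sum collapses to $\lambda\into u(\rho-\rho^\e)\cdot\nabla W\star(\rho-\rho^\e)\,dx$. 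Substituting $\rho-\rho^\e=-\Delta W\star(\rho-\rho^\e)$ and integrating by parts reduces this to a quadratic form in $\nabla W\star(\rho-\rho^\e)$ controlled by $\tfrac{3\lambda}{2}\|\nabla u\|_{L^\infty}\into |\nabla W\star(\rho-\rho^\e)|^2\,dx$, exactly as already computed in Section \ref{sec_pressure}.

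After these two substitutions, the right-hand side of \eqref{re15} reduces to the initial-data contribution (augmented by $\tfrac{\lambda}{2}\into|\nabla W\star(\rho_0-\rho^\e_0)|^2\,dx$), an $\mathcal{O}(\e)$ term, and time integrals of $\hat\me(U^\e|U)$, $\into|\nabla W\star(\rho-\rho^\e)|^2\,dx$, and $d_{BL}^2(\rho^\e,\rho)$ with universal constants, which is precisely the claimed inequality. The only nontrivial ingredient is the algebraic cancellation in the Coulombian pairing; it is the same cancellation used in the isothermal Coulombian case, and once one observes that this cancellation is insensitive to whether a pressure term is present, the rest is bookkeeping on estimates already in hand.
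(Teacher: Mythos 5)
Your proposal is correct and follows the same route as the paper: the authors likewise apply the free-energy identity \eqref{energy_zerosig} with $\beta=1/\e$, $\sigma=0$ to bound $\int_0^t\intor|u^\e-v|^2f^\e\,dxdvds$ by $C\e$, and then invoke Lemma \ref{lem_wd} together with the cancellation $\rho^\e(u^\e-u)+(\rho u-\rho^\e u^\e)=u(\rho-\rho^\e)$ already carried out in the proof of Theorem \ref{thm_hydro1}(i) to absorb the Coulombian pairing into $\frac{3\lambda}{2}\|\nabla u\|_{L^\infty}\int_0^t\into|\nabla W\star(\rho-\rho^\e)|^2\,dxds$, before combining with Proposition \ref{prop_re15}. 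The only implicit point worth flagging is that discarding the terminal energy $\mathcal F(f^\e(t))$ requires a lower bound on the potential energy (immediate for $d\ge3$ where $W\ge0$, and handled separately in the paper for $d=1,2$), but this is exactly the level of detail the paper itself leaves unstated here.
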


In order to close the modulated kinetic energy inequality, we show that the bounded and Lipschitz distance $d_{BL}$ between local densities can be bounded from above by the modulated kinetic energy, which directly gives the quantitative error estimate between $\rho$ and $\rho^\e$. 

\begin{lemma}\label{prop_rho_wa} Let $f^\e$ be a global weak solution to the equation \eqref{main_eq} with $\sigma = 0$ and $(\rho,u)$ be a strong solution to the system \eqref{main_npE} on the time interval $[0,T]$. Then we have
\[
d_{BL}(\rho(t), \rho^\e(t)) \leq Cd_{BL}(\rho_0, \rho_0^\e) + C\lt(\int_0^t  \into \hat\me(U^\e|U)\,dx ds\rt)^{1/2}
\]
for $0 \leq t \leq T$, where $C > 0$ is independent of $\e>0$.
\end{lemma}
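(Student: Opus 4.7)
The plan is to invoke the Kantorovich--Rubinstein duality characterization of $d_{BL}$ and to transport each admissible test function backwards in time along the characteristics of the regular velocity $u$. The resulting time-dependent test function is tailored to annihilate the ``transport by $u$'' contribution in the continuity equations satisfied by $\rho$ and $\rho^\e$ simultaneously, so that the only residual involves $u^\e - u$ weighted by $\rho^\e$, which is precisely the quantity controlled by the modulated kinetic energy $\hat\me(U^\e|U) = \frac12 \rho^\e |u^\e-u|^2$.

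Concretely, since $u \in \mc([0,T];\mw^{1,\infty}(\om))$ by Definition \ref{def_strong2}, the forward flow $X(\tau;s,x)$ of $u$ (starting at $x$ at time $s$) is well defined and bi-Lipschitz in $x$ with $\mathrm{Lip}_x X(t;s,\cdot) \leq e^{(t-s)\|\nabla u\|_{L^\infty}} \leq e^{TC_u}$, where $C_u := \|\nabla u\|_{L^\infty(0,T;L^\infty)}$. For any $\psi$ with $\|\psi\|_{L^\infty}, \|\psi\|_{Lip}\leq 1$ I set $\psi_s(x) := \psi(X(t;s,x))$, so that $\psi_s$ solves the backward transport equation $\pa_s\psi_s + u\cdot\nabla\psi_s = 0$ with terminal condition $\psi_t = \psi$, and satisfies the uniform bounds $\|\psi_s\|_{L^\infty}\leq 1$ and $\|\nabla\psi_s\|_{L^\infty}\leq e^{TC_u}$ on $[0,t]$.

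Now $\rho$ satisfies the continuity equation from \eqref{main_npE} in the distributional sense, while $\rho^\e = \intr f^\e\,dv$ satisfies $\pa_s\rho^\e + \nabla\cdot(\rho^\e u^\e)=0$ by testing the weak formulation in Definition \ref{def_weak} against functions that are independent of $v$. Pairing both continuity equations with $\psi_s$ and using the transport identity for $\psi_s$, the $u$-contributions cancel identically, leaving
$$
\frac{d}{ds}\into \psi_s\rho_s\,dx = 0, \qquad \frac{d}{ds}\into \psi_s\rho^\e_s\,dx = \into \nabla\psi_s\cdot(u^\e-u)\rho^\e\,dx.
$$
Subtracting and integrating from $0$ to $t$ yields the master identity
$$
\into \psi(\rho^\e_t-\rho_t)\,dx = \into \psi_0(\rho^\e_0-\rho_0)\,dx + \int_0^t\into \nabla\psi_s\cdot(u^\e-u)\rho^\e\,dx\,ds.
$$
The first term on the right is bounded by $e^{TC_u}\,d_{BL}(\rho_0,\rho^\e_0)$ thanks to the $L^\infty$ and Lipschitz bounds on $\psi_0$. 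For the source term, Cauchy--Schwarz in $x$ combined with $\|\rho^\e\|_{L^1}=1$ gives $\into\rho^\e|u^\e-u|\,dx \leq \sqrt{2}\lt(\into\hat\me(U^\e|U)\,dx\rt)^{1/2}$, and one further Cauchy--Schwarz in time absorbs the $ds$ integral under a global square root. Taking the supremum over admissible $\psi$ produces the claimed inequality with $C$ depending only on $T$ and $\|\nabla u\|_{L^\infty}$.

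The one delicate point is justifying that $\psi_s$ is admissible in the distributional identity for $\rho^\e$, since Definition \ref{def_weak} only supplies test functions in $\mc^\infty_c(\om\times\R^d\times[0,T])$. Because $\psi_s$ is Lipschitz in $x$ and $\mc^1$ in $s$ with uniform bounds, a standard spatial mollification together with a smooth temporal cutoff approximating $\mathbf 1_{[0,t]}$ reduces the identity to the admissible class, and all limits pass by dominated convergence using the $L^\infty_t L^1_x$ control on $\rho^\e$ and $\rho^\e u^\e$ that follows from Proposition \ref{prop_energy}. Everything else reduces to elementary Cauchy--Schwarz, so this density step is the only technical item requiring care.
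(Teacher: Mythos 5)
Your argument is correct and differs at the technical level from the paper's proof, even though both hinge on the same core observation that transporting by the smooth field $u$ reduces everything to a residual controlled by $\into \rho^\e |u^\e-u|^2\,dx$. The paper proceeds in a Lagrangian fashion: because $u^\e$ is too rough to define characteristics classically, it invokes the Ambrosio--Gigli--Savar\'e superposition principle (Proposition~\ref{prop_am}) to represent $\rho^\e$ as the time-marginal of a probability measure $\eta^\e$ on absolutely continuous curves solving $\dot\gamma = u^\e(\gamma,\cdot)$; it then introduces the intermediate push-forward $\bar\rho^\e = X(t;0,\cdot)\#\rho^\e_0$ along the flow of $u$, couples the two trajectory ensembles, applies Gr\"onwall at the curve level, and finishes with the triangle inequality for $d_{BL}$. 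You instead stay entirely Eulerian: you test the two continuity equations against the backward-transported observable $\psi_s = \psi(X(t;s,\cdot))$ so that the $u$-transport terms cancel pointwise, leaving directly the identity $\into\psi(\rho^\e_t-\rho_t)\,dx = \into\psi_0(\rho^\e_0-\rho_0)\,dx + \int_0^t\into\nabla\psi_s\cdot(u^\e-u)\,\rho^\e\,dx\,ds$, after which the claim follows from the Lipschitz bound $\|\nabla\psi_s\|_{L^\infty}\le e^{T\|\nabla u\|_{L^\infty}}$ and two applications of Cauchy--Schwarz. Your route avoids the superposition machinery and the auxiliary measure $\bar\rho^\e$ entirely, which makes the proof shorter and more self-contained; the price is that you must explicitly justify plugging the merely Lipschitz-in-$x$, $\mc^1$-in-$t$ test function $\psi_s$ (cut off at time $t$) into the weak formulation of Definition~\ref{def_weak}, whereas the paper offloads the analogous regularity issue onto the hypotheses of the AGS theorem (in particular the $p$-th moment bound \eqref{est_p1}, which the paper checks from the energy estimate). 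Both verifications ultimately rest on the same finite kinetic energy bound, so the approaches are essentially equivalent in strength; yours is arguably more elementary in the tools it cites, while the paper's generalizes more readily to situations where one genuinely needs a Lagrangian description of $\rho^\e$.
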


Although it has been already studied in \cite{Cpre}, see also \cite{CCpre, FK19}, we give the details of proof for the completeness of our work. Let us define forward characteristics $X(t) := X(t;0,x)$ which solves 
\bq\label{eq_char}
\pa_t X(t) = u(X(t),t) \quad \mbox{with} \quad X(0) = x \in \om.
\eq
Then $X(t)$ uniquely exists on the time interval $[0,T]$ since $u$ is bounded and Lipschitz continuous. Moreover, the solution $\rho$ can be determined as the push-forward of the its initial densities through the flow maps $X$, i.e.,  $\rho(t) = X(t;0,\cdot) \# \rho_0$. On the other hand, we cannot consider the characteristic for the continuity equation of $\rho^\e$ due to the lack of regularity of $u^\e$. Regarding this problem, we recall the following proposition from \cite[Theorem 8.2.1]{AGS08}, see also \cite[Proposition 3.3]{FK19}. 
\begin{proposition}\label{prop_am}Let $T>0$ and $\rho : [0,T] \to \mathcal{P}_p(\om)$ be a narrowly continuous solution of \eqref{eq_char}, that is, $\rho$ is continuous in the duality with continuous bounded functions, for a Borel vector field $u$ satisfying
\bq\label{est_p1}
\int_0^T\into |u(x,t)|^p\rho(x,t)\,dx dt < \infty
\eq
for some $p > 1$. Let $\Gamma_T: [0,T] \to \om$ denote the space of continuous curves. Then there exists a probability measure $\eta$ on $\Gamma_T \times \om$ satisfying the following properties:
\begin{itemize}
\item[(i)] $\eta$ is concentrated on the set of pairs $(\gamma,x)$ such that $\gamma$ is an absolutely continuous curve satisfying
\[
\dot\gamma(t) = u(\gamma(t),t)
\]
for almost everywhere $t \in (0,T)$ with $\gamma(0) = x \in \om$.
\item[(ii)] $\rho$ satisfies
\[
\int \varphi(x)\rho\,dx = \int_{\Gamma_T \times \om}\varphi(\gamma(t))\,d\eta(\gamma,x)
\]
for all $\varphi \in \mc_b(\om)$, $t \in [0,T]$.
\end{itemize}
\end{proposition}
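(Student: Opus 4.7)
The plan is to follow Ambrosio's superposition principle: regularize the pair $(\rho, \rho u)$ to produce smooth continuity-equation pairs, build classical flows of the regularized fields, package trajectories as measures on path space, and pass to a narrow limit while verifying that almost every limiting curve is a genuine integral curve of $u$.

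First I would fix a standard spatial mollifier $\chi_\delta$ and set $\rho^\delta := \rho \star \chi_\delta$ and $(\rho u)^\delta := (\rho u) \star \chi_\delta$, noting that by linearity the pair still satisfies the continuity equation $\partial_t \rho^\delta + \nabla \cdot (\rho u)^\delta = 0$. On $\{\rho^\delta > 0\}$, define the regularized velocity $u^\delta := (\rho u)^\delta / \rho^\delta$, which is smooth in $x$, and extend by $0$ elsewhere. The ODE $\dot X^\delta = u^\delta(X^\delta,t)$ admits a classical flow $X^\delta(\cdot; 0, x) \in \Gamma_T$. Setting $\Phi^\delta(x) := X^\delta(\cdot; 0, x)$ and $\eta^\delta := (\Phi^\delta, \mathrm{id})_\# \rho^\delta_0$ where $\rho_0^\delta := \rho_0 \star \chi_\delta$, the time-$t$ evaluation pushes $\eta^\delta$ forward to $\rho^\delta(t)$ (by uniqueness for the smooth continuity equation), which converges narrowly to $\rho(t)$ as $\delta \to 0$.

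Next I would establish tightness of $\{\eta^\delta\}$ using an energy estimate. By a change of variables and Jensen's inequality applied to the convex combination defining $u^\delta$,
$$
\int_{\Gamma_T \times \om} \int_0^T |\dot\gamma(t)|^p \,dt\,d\eta^\delta(\gamma,x) = \int_0^T \into |u^\delta(x,t)|^p \rho^\delta(x,t)\,dx\,dt \leq \int_0^T \into |u(x,t)|^p \rho(x,t)\,dx\,dt,
$$
uniformly in $\delta$. Combined with the Hölder equicontinuity $|\gamma(t) - \gamma(s)| \leq |t-s|^{1-1/p}\bigl(\int_0^T |\dot\gamma|^p\,dt\bigr)^{1/p}$ and tightness of $\rho_0^\delta$, an Arzelà--Ascoli argument yields tightness of $\{\eta^\delta\}$ on $\Gamma_T \times \om$, so by Prokhorov a subsequence converges narrowly to some $\eta$. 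Property (ii) then follows from narrow convergence of the time-$t$ marginals $\rho^\delta(t) \rightharpoonup \rho(t)$ upon testing against $\varphi \in C_b(\om)$.

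The main obstacle is verifying property (i): since $u$ is only Borel, the functional testing $\dot\gamma = u(\gamma,\cdot)$ is not automatically continuous under narrow convergence. Following AGS, I would work with the truncated integrated functional
$$
\Psi(\gamma, x) := \int_0^T \Bigl( \Bigl| \gamma(t) - x - \int_0^t u(\gamma(s), s)\,ds \Bigr| \wedge 1 \Bigr)\,dt,
$$
and use Lusin's theorem to approximate $u$ on compact sets by continuous fields in $L^p(\rho\,dx\,dt)$, making $\Psi$ narrowly lower semicontinuous up to an error controlled by the uniform energy bound together with a maximal function estimate. Along the regularized flow one has $\int \Psi\,d\eta^\delta \to 0$ because $X^\delta$ exactly integrates $u^\delta$ and $u^\delta \to u$ in $L^p(\rho\,dx\,dt)$ at Lebesgue points. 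Lower semicontinuity under narrow convergence then forces $\int \Psi\,d\eta = 0$, which together with the uniform energy bound yields absolute continuity of $\eta$-almost every curve together with $\dot\gamma(t) = u(\gamma(t), t)$ for a.e.\ $t$ and $\gamma(0) = x$, completing the proof.
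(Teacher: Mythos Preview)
The paper does not prove this proposition at all: it is explicitly \emph{recalled} from \cite[Theorem 8.2.1]{AGS08} (with a pointer also to \cite[Proposition 3.3]{FK19}) and stated without proof, as a known black-box input to the proof of Lemma~\ref{prop_rho_wa}. So there is no ``paper's own proof'' to compare against.

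That said, your outline is a faithful sketch of the Ambrosio--Gigli--Savar\'e argument: the mollification $(\rho^\delta,(\rho u)^\delta)$, the Jensen trick giving the uniform $L^p$-in-time energy bound $\int_0^T\!\!\int |u^\delta|^p\rho^\delta\,dx\,dt \le \int_0^T\!\!\int |u|^p\rho\,dx\,dt$, tightness on $\Gamma_T\times\Omega$ via Arzel\`a--Ascoli, and narrow convergence of the time-$t$ marginals for (ii) are all correct and standard. The genuinely delicate step is your verification of (i): the functional $\Psi$ you write is not lower semicontinuous for merely Borel $u$, and the Lusin/maximal-function patching you allude to is exactly where the AGS proof does the real work (Chapter~8 of \cite{AGS08}); your one-sentence treatment of that step would not stand as a proof on its own, but as a roadmap it is accurate. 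If you want a self-contained argument, that is the place to expand.
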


\begin{proof}[Proof of Lemma \ref{prop_rho_wa}] It follows from Lemma \ref{lem_energy} that
$$\begin{aligned}
\into |u^\e|^2 \rho^\e\,dx &\leq \intor |v|^2 f^\e\,dxdv \cr
&\leq \intor |v|^2 f^\e_0\,dxdv + \lambda\intoo W(x-y)\rho^\e_0(x) \rho^\e(y)\,dxdy + 2\lambda \into V\rho^\e_0(x)\,dx < \infty,
\end{aligned}$$
thus, the integrability condition \eqref{est_p1} holds for $p=2$, and thus by Proposition \ref{prop_am}, we obtain a probability measure $\eta^\e$ in $\Gamma_T \times \R$ concentrated on the set of pairs $(\gamma,x)$ such that $\gamma$ is a solution of
\bq\label{eq_gam}
\dot{\gamma}(t) = u^\e(\gamma(t),t)
\eq
with $\gamma(0) = x \in \om$. Moreover, $\rho^\e$ satisfies Proposition \ref{prop_am} (ii), i.e.,
\bq\label{eq_gam2}
\into \varphi(x) \rho^\e(x,t)\,dx = \int_{\Gamma_T \times \om}\varphi(\gamma(t))\,d\eta^\e(\gamma,x)
\eq
for all $\varphi \in \mc_b(\om)$, $t \in [0,T]$. We now consider the push-forward of $\rho_0^\e$ through the flow map $X$ and denote it by $\bar\rho^\e$, i.e., $\bar\rho^\e = X \# \rho_0^\e$. 

We first estimate the error between $\bar\rho^\e$ and $\rho^\e$ in bounded Lipschitz distance. By the disintegration theorem of measures, see \cite{AGS08}, we can write
\[
d\eta^\e(\gamma,x) = \eta^\e_x(d\gamma) \otimes \rho^\e_0(x)\,dx,
\]
where $\{\eta^\e_x\}_{x \in \om}$ is a family of probability measures on $\Gamma_T$ concentrated on solutions of \eqref{eq_gam}. By using this newly introduced measure $\eta^\e$, we define a measure $\nu^\e$ on $\Gamma_T \times \Gamma_T \times \Omega$ by
\[
d\nu^\e(\gamma, \sigma, x) := \eta^\e_x(d\gamma) \otimes \delta_{X(\cdot;0,x)}(d\sigma) \otimes \rho^\e_0(x)\,dx.
\]
We further take into account an evaluation map $E_t : \Gamma_T  \times \Gamma_T \times \om \to \om \times \om$ defined as $E_t(\gamma, \sigma, x) = (\gamma(t), \sigma(t))$. Then we find that measure $\pi^\e_t:= (E_t)\# \nu^\e$ on $\om \times \om$ has marginals $\rho^\e(x,t)\,dx$ and $\bar\rho^\e(y,t)\,dy$ for $t \in [0,T]$, see \eqref{eq_gam2}. This implies
\begin{align}\label{est_rho2}
\begin{aligned}
d_{BL}(\rho^\e(t), \bar\rho^\e(t)) &\leq \int_{\om \times \om} |x-y|\,d\pi^\e_t(x,y)\cr
&=\int_{\Gamma_T \times \Gamma_T \times \om} |\sigma(t) - \gamma(t) | \,d\nu^\e(\gamma, \sigma, x) \cr
&= \int_{\Gamma_T \times \om} |X(t;0,x) - \gamma(t)| \,d\eta^\e(\gamma,x).
\end{aligned}
\end{align}
We notice from \eqref{eq_char} and \eqref{eq_gam} that
$$\begin{aligned}
\lt|X(t;0,x) -\gamma(t)\rt| &= \lt|\int_0^t u(X(s;0,x)) - u^\e(\gamma(s),s)\,ds\rt|\cr
&\leq \int_0^t \lt|u(X(s;0,x)) - u(\gamma(s),s)\rt|ds + \int_0^t \lt|u(\gamma(s),s) - u^\e(\gamma(s),s)\rt|ds\cr
&\leq \|\nabla_x u\|_{L^\infty}\int_0^t  \lt|X(s;0,x) - \gamma(s)\rt|ds + \int_0^t \lt|u(\gamma(s),s) - u^\e(\gamma(s),s)\rt|ds.
\end{aligned}$$
We then apply Gr\"onwall's lemma to the above to yield
\[
\lt|X(t;0,x) -\gamma(t)\rt| \leq C\int_0^t \lt|u(\gamma(s),s) - u^\e(\gamma(s),s)\rt|ds,
\]
where $C>0$ is independent of $\e>0$. Putting this into \eqref{est_rho2} entails
\begin{align}\label{est_rho22}
\begin{aligned}
d_{BL}(\rho^\e(t), \bar\rho^\e(t)) &\leq C\int_0^t \int_{\Gamma_T \times \om} \lt|u(\gamma(s),s) - u^\e(\gamma(s),s)\rt|d\eta^\e(\gamma,x)\,ds\cr
&\leq C\int_0^t \into |u(x,s) - u^\e(x,s)| \rho^\e(x,s)\,dxds\cr
&\leq C\sqrt T\lt( \int_0^t \into |u^\e(x,s) - u(x,s)|^2 \rho^\e(x,s) \,dx ds\rt)^{1/2}\cr
&=C\lt(\int_0^t  \into \hat\me(U^\e|U)\,dx ds\rt)^{1/2},
\end{aligned}
\end{align}
where $C>0$ is independent of $\e > 0$, and we used \eqref{eq_gam2}. 

We next estimate the bounded Lipschitz distance between $\bar\rho^\e$ and $\rho$. For bounded Lipschitz function $\phi$, we find
\bq\label{est_rho1}
\lt|\into \phi(x) (\rho(x) - \bar\rho^\e(x))\,dx\rt| = \lt|\into \phi(X(t))(\rho_0(x) - \rho_0^\e(x))\,dx  \rt| \leq Cd_{BL}(\rho_0, \rho_0^\e),
\eq
where $C>0$ is independent of $\e$, and we used the bounded Lipschitz continuity of $\phi(X(t;0,\cdot))$. More precisely, we have
$$\begin{aligned}
|X(t;0,x) - X(t;0,y)| &\leq |x-y| + \int_0^t |u(X(s;0,x)) - |u(X(s;0,y))|\,ds\cr
&\leq |x-y| + \|\nabla_x u\|_{L^\infty}\int_0^t |X(s;0,x) - X(s;0,y)|\,ds,
\end{aligned}$$
and applying Gr\"onwall's lemma to the above yields the Lipschitz continuity of the characteristic flow $X(t;0,x)$ in $x$. Furthermore, we have
\[
|\phi(X(t;0,x)) - \phi(X(t;0,y))| \leq \|\phi\|_{Lip}|X(t;0,x) - X(t;0,y)| \leq \|\phi\|_{Lip} \|X\|_{Lip}|x-y|,
\]
where $\|\cdot\|_{Lip}$ denotes the Lipschitz constant given by
\[
\|\phi\|_{Lip} := \sup_{x \neq y \in \R^d} \frac{|\phi(x) - \phi(y)|}{|x-y|}.
\]
This together with \eqref{est_rho1} implies
\[
d_{BL}(\rho(t), \bar\rho^\e(t)) \leq Cd_{BL}(\rho_0, \rho_0^\e)
\]
for $t \in [0,T]$, where $C > 0$ is independent of $\e > 0$.  Finally, we combine this with \eqref{est_rho22} to conclude
$$\begin{aligned}
d_{BL}(\rho(t), \rho^\e(t)) &\leq d_{BL}(\rho(t), \bar\rho^\e(t)) + d_{BL}(\rho^\e(t), \bar\rho^\e(t))\cr
&\leq Cd_{BL}(\rho_0, \rho_0^\e) + C\lt(\int_0^t  \into \hat\me(U^\e|U)\,dx ds\rt)^{1/2},
\end{aligned}$$
where $C>0$ is independent of $\e > 0$. 
\end{proof} 

\begin{proof}[Proof of Theorem \ref{thm_hydro2} (i)] Applying Lemma \ref{prop_rho_wa} to Proposition \ref{prop_re20} yields
$$\begin{aligned}
&\into \hat\me(U^\e|U)\,dx + \frac\lambda 2 \into |\nabla W \star (\rho - \rho^\e)|^2\,dx +d_{BL}^2(\rho^\e, \rho) + \gamma\int_0^t\into \hat\me(U^\e|U)\,dxds\cr
&\quad + \frac\alpha2\int_0^t \intoo \rho^\e(x) \rho^\e(y)\phi(x-y)|( u^\e(x) - u(x)) - (u^\e(y) - u(y))|^2 dxdyds\cr
&\qquad \leq \into \hat\me(U_0^\e|U_0)\,dx + \hat K(f^\e_0) - \into \hat E(U^\e_0)\,dx + \frac\lambda 2 \into |\nabla W \star (\rho_0 - \rho^\e_0)|^2\,dx + Cd_{BL}^2(\rho_0^\e, \rho_0) + C\sqrt\e \cr
&\quad \qquad + C \int_0^t \into \hat\me(U^\e|U)\,dxds  + C\int_0^t \into |\nabla W \star (\rho - \rho^\e)|^2\,dxds +  C\int_0^t d_{BL}^2(\rho^\e,\rho) \,ds.
\end{aligned}$$
We then use Gr\"onwall's lemma to the above and the assumptions {\bf (H2)}--{\bf (H3)} to conclude the desired result.
\end{proof}

\subsection{Strongly regular interactions: $\nabla W \in \mw^{1,\infty}(\om)$ \& $\phi \in \mw^{1,\infty}(\om)$}

As a direct consequence of Proposition \ref{prop_re15} together with the assumptions {\bf (H2)}--{\bf (H3)}, we have the following proposition.

\begin{proposition}\label{prop_re2}Let $f^\e$ be a global weak solution to the equation \eqref{main_eq} with $\sigma = 0$ and $(\rho,u)$ be a strong solution to the system \eqref{main_npE} on the time interval $[0,T]$. Then we have
$$\begin{aligned}
&\into \hat\me(U^\e|U)\,dx + \gamma\int_0^t \into \hat\me(U^\e|U)\,dxds\cr
&\quad + \frac\alpha2\int_0^t \intoo \rho^\e(x) \rho^\e(y)\phi(x-y)|( u^\e(x) - u(x)) - (u^\e(y) - u(y))|^2 dxdyds\cr
&\qquad \leq  C\sqrt{\e} + C\int_0^t d_{BL}^2(\rho^\e,\rho) \,ds + C\int_0^t \into \hat\me(U^\e|U)\,dxds  \cr
&\qquad \quad + \lambda\int_0^t \into \rho^\e(x) ( u^\e(x) - u(x)) \cdot (\nabla W \star (\rho - \rho^\e))(x)\,dxds,
\end{aligned}$$
where $C>0$ is independent of $\e$. 
\end{proposition}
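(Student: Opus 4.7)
My plan is to read the asserted inequality directly off the modulated kinetic energy estimate \eqref{re15} of Proposition \ref{prop_re15} by substituting the well-preparedness hypotheses {\bf (H2)}--{\bf (H3)} and the diffusionless free energy balance \eqref{energy_zerosig}. Apart from a short variance-type computation at $t=0$, this is pure bookkeeping, so the question is simply in what order to tidy up the right-hand side of \eqref{re15}.

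The first step is to dispose of the two initial-data terms that appear in \eqref{re15}. The modulated kinetic energy $\into \hat\me(U_0^\e|U_0)\,dx = \frac12\into \rho_0^\e|u_0^\e - u_0|^2\,dx$ is $\mathcal{O}(\sqrt\e)$ directly from the first clause of (H2). For the variance piece $\hat K(f_0^\e) - \into \hat E(U_0^\e)\,dx = \frac12\into \int_{\R^d}|v - u_0^\e|^2 f_0^\e\,dv\,dx$, I would use the identity $\rho_0^\e u_0^\e = \int_{\R^d} v f_0^\e\,dv$ to rewrite the integrand as $\int_{\R^d} |v|^2 f_0^\e\,dv - \rho_0^\e|u_0^\e|^2$, then convert $\rho_0^\e|u_0^\e|^2$ to $\rho_0|u_0|^2$ up to errors controlled by the first clause of (H2) via Cauchy--Schwarz on the cross terms in $u_0^\e - u_0$; the remaining difference of second moments $\into(\int_{\R^d}|v|^2 f_0^\e\,dv - \rho_0|u_0|^2)\,dx = \mathcal{O}(\sqrt\e)$ is precisely the second clause of (H2).

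Next I would handle the velocity-deviation integral and the time-independent prefactors. Integrating \eqref{energy_zerosig} with $\beta = 1/\e$ from $0$ to $t$ and discarding the nonnegative dissipations $\md_2, \md_3$ gives
\[
\int_0^t \intor |u^\e - v|^2 f^\e\,dxdv\,ds \leq C\e,
\]
with $C$ depending on the initial free energy but not on $\e$. Multiplying by $\|\nabla u\|_{L^\infty}$, which is finite on $[0,T]$ by Definition \ref{def_strong2}, this contribution is bounded by $C\e \leq C\sqrt\e$ on $\e\in(0,1]$. The prefactors $\|\nabla u\|_{L^\infty} + C\alpha^2$ multiplying the modulated-energy integral in \eqref{re15} are likewise uniformly bounded on $[0,T]$ and may be folded into a single generic $C$. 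After these substitutions the $d_{BL}^2$ term and the nonlocal $\nabla W$ term survive unchanged and the right-hand side matches the statement verbatim. The only step that requires any real computation beyond rearrangement is the initial variance estimate in paragraph two; (H3) plays no direct role at this stage but is needed downstream when Lemma \ref{prop_rho_wa} is invoked to close the $d_{BL}^2(\rho^\e,\rho)$ contribution in Theorem \ref{thm_hydro2}.
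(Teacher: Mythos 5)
Your overall strategy — substitute {\bf (H2)}--{\bf (H3)} and the $\sigma=0$ free energy balance \eqref{energy_zerosig} directly into the modulated kinetic energy estimate \eqref{re15} — is exactly what the paper intends; its own proof of this proposition is a single sentence. Your treatment of the first initial-data term, the dissipation term $\int_0^t\intor |u^\e - v|^2 f^\e\,dxdvds \leq C\e$, and the absorption of the prefactors into a generic $C$ are all correct and match the paper's (implicit) argument.

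However, your claim that {\bf (H3)} plays no direct role at this stage is wrong, and the gap sits precisely in the variance estimate you single out as the only nontrivial computation. Expanding $\rho_0^\e |u_0^\e|^2$ around $u_0$ and around $\rho_0$ gives
\[
\hat K(f^\e_0) - \into \hat E(U^\e_0)\,dx
= \frac12\into\Bigl(\intr |v|^2 f_0^\e\,dv - \rho_0|u_0|^2\Bigr)dx
- \frac12\into \rho_0^\e|u_0^\e - u_0|^2\,dx
- \into \rho_0^\e\,(u_0^\e-u_0)\cdot u_0\,dx
+ \frac12\into (\rho_0 - \rho_0^\e)\,|u_0|^2\,dx.
\]
The first two pieces are covered by {\bf (H2)}, and the third is the Cauchy--Schwarz cross term you mention. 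But the fourth piece, $\frac12\into(\rho_0-\rho_0^\e)|u_0|^2\,dx$, is \emph{not} a cross term in $u_0^\e - u_0$ and is untouched by {\bf (H2)}: it measures the deviation of the initial densities. Since Definition~\ref{def_strong2} gives $u_0 \in \mw^{1,\infty}(\om)$, the function $|u_0|^2$ is bounded and Lipschitz, so this term is bounded by $C\,d_{BL}(\rho_0^\e,\rho_0)$; this is exactly where {\bf (H3)} enters Proposition~\ref{prop_re2} directly, in addition to its downstream use in Lemma~\ref{prop_rho_wa}. You must include this step to make the reduction complete. (A secondary remark, inherited from the paper's phrasing rather than a defect of your argument alone: both the Cauchy--Schwarz cross term and this bounded--Lipschitz term are in fact $\mathcal{O}(\e^{1/4})$ rather than $\mathcal{O}(\sqrt\e)$ under the stated hypotheses, so the initial-data contribution to the right-hand side is really $C\e^{1/4}$; this does not affect the convergence conclusion but is worth keeping in mind when quoting rates.)
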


\begin{proof}[Proof of Theorem \ref{thm_hydro2} (ii)] We first claim that  
\[
\lambda \lt|\into \rho^\e(x) ( u^\e(x) - u(x)) \cdot (\nabla W \star (\rho - \rho^\e))(x)\,dx \rt| \leq Cd_{BL}^2(\rho^\e,\rho) + C\lambda^2 \into \rho^\e|u^\e - u|^2\,dx.
\]
Indeed, since $\nabla W \in \mathcal{W}^{1,\infty}(\om)$, we get
\[
\lt|\into \nabla W(x-y)(\rho(y) - \rho^\e(y))\,dy \rt| \leq Cd_{BL}(\rho^\e,\rho).
\]
This yields
$$\begin{aligned}
&\lambda\lt|\into \rho^\e(x) ( u^\e(x) - u(x)) \cdot (\nabla W \star (\rho - \rho^\e))(x)\,dx \rt| \cr
&\quad \leq C\lambda d_{BL}(\rho^\e,\rho)\lt(\into \rho^\e|u^\e - u|^2\,dx \rt)^{1/2} \leq Cd_{BL}^2 (\rho^\e,\rho) + C\lambda^2 \into \rho^\e|u^\e - u|^2\,dx.
\end{aligned}$$
This together with Proposition \ref{prop_re2} provides 
$$\begin{aligned}
&\into \hat\me(U^\e|U)\,dx +d_{BL}^2(\rho^\e, \rho) + \gamma\int_0^t\into \hat\me(U^\e|U)\,dxds\cr
&\quad + \frac\alpha2\int_0^t \intoo \rho^\e(x) \rho^\e(y)\phi(x-y)|( u^\e(x) - u(x)) - (u^\e(y) - u(y))|^2 \,dxdyds\cr
&\qquad \leq C\sqrt{\e} + C\int_0^t d_{BL}^2(\rho^\e,\rho) \,ds + C\int_0^t \into \hat\me(U^\e|U)\,dxds. 
\end{aligned}$$
Hence, by applying Gronwall's lemma to the above, we complete the proof.
\end{proof}

%
%
%
%
\section{Global existence of weak solutions to the kinetic equation \eqref{main_eq}}\label{sec_global_kin}
In this section, we provide the global-in-time existence of weak solutions to the system \eqref{main_eq}. For notational simplicity, we set $\gamma = \lambda = \alpha = \beta = 1$. We also only consider the Coulombian case since the weakly singular case $\nabla_x W \in L^\infty(\om)$ can be easily obtained similarly as in \cite{KMT13}. Here, the domain of our interest is $\Omega = \T^d$ or $\R^d$ with $d\ge 3$. Since the analysis on $\T^d$ is almost similar to the $\R^d$ case, we mostly consider the case $\Omega = \R^d$. 

\begin{theorem}\label{T2.1} Let $T>0$. Suppose that $f_0$ satisfies 
\[
f_0 \in (L^1_+ \cap L^\infty)(\om \times \R^d) \quad \mbox{and} \quad (|v|^2 + V + W\star \rho_0)f_0 \in L^1(\om \times \R^d).
\]
Then there exists a weak solution of the equation \eqref{main_eq} in the sense of Definition \ref{def_weak} satisfying
\[
f \in \mc([0,T];L^1(\om \times \R^d)) \cap L^\infty(\om \times \R^d \times (0,T)) \quad \mbox{and} \quad (|v|^2 + V + W\star \rho)f \in L^\infty(0,T;L^1(\om \times \R^d)).
\]
Furthermore, $f$ satisfies the entropy inequality \eqref{entro_1}.
\end{theorem}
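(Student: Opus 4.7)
The plan is to construct weak solutions by a vanishing regularization of the Coulombian kernel, solve the regularized problem by an iteration/fixed-point argument, and pass to the limit using the a priori estimates of Section 2 together with velocity-averaging compactness.

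First, I would introduce a standard mollifier $\eta^\delta$ and replace $W$ by $W^\delta := W \star \eta^\delta \star \eta^\delta$, so that $\nabla_x W^\delta \in L^\infty(\om)$ uniformly in $x$ for each fixed $\delta>0$ (and $W^\delta$ remains symmetric and nonnegative up to a constant). I would likewise approximate the initial data by a sequence $f_0^\delta \in (L^1_+ \cap L^\infty)(\om \times \R^d)$ with finite kinetic and potential energies, converging to $f_0$ in $L^1 \cap L^\infty$-weak-$\star$ and in energy. For this regularized problem, the interaction potential is weakly regular, so the construction in \cite{KMT13} applies essentially verbatim: freezing $(\rho, u)$ in the nonlinear Fokker--Planck operator, the local alignment term $F[f]f$, and the convolution $\nabla W^\delta \star \rho$, one solves the resulting linear Vlasov--Fokker--Planck equation by semigroup or duality methods and closes a Banach fixed-point argument on $L^\infty(0,T; L^1 \cap L^\infty)$ (with $T$ small, then extended by the uniform a priori estimates). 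This produces a weak solution $f^\delta$ of the regularized equation.

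Second, I would establish the following uniform-in-$\delta$ estimates for $f^\delta$:
\begin{itemize}
\item[(i)] $\|f^\delta(\cdot,\cdot,t)\|_{L^1} = 1$ for all $t \in [0,T]$;
\item[(ii)] $\|f^\delta\|_{L^\infty(\om \times \R^d \times (0,T))} \leq C(T, \|f_0\|_{L^\infty})$, via a maximum-principle-type bound exploiting the nonnegative divergence structure of the drift in the Fokker--Planck operator;
\item[(iii)] the free energy inequality of Proposition \ref{prop_energy} applied to $f^\delta$, giving uniform control of $\intor f^\delta \log f^\delta\,dxdv$, the kinetic energy $\intor |v|^2 f^\delta\,dxdv$, the confinement energy $\into V\rho^\delta\,dx$, and (using $-\Delta W = \delta_0$ so that $\intoo W^\delta(x-y)\rho^\delta(x)\rho^\delta(y)\,dxdy = \|\nabla W^\delta \star \rho^\delta\|_{L^2}^2$ up to a constant on $\R^d$, $d \geq 3$) a uniform $L^\infty(0,T; L^2(\om))$ bound on the ``field'' $\nabla W^\delta \star \rho^\delta$;
\item[(iv)] the dissipation bounds $\int_0^T \md_1(f^\delta) + \md_2(f^\delta)\,ds < \infty$.
\end{itemize}

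Third, I would pass to the limit $\delta \to 0$. The uniform $L^\infty$ bound together with $L^1$-conservation gives weak-$\star$ compactness of $f^\delta$. The bounds on kinetic energy and on $\nabla_v f^\delta$ coming from $\md_1(f^\delta)$ (rewritten via $\md_1$ as an $L^2$ bound on $\sqrt{f^\delta}\,\nabla_v \log(f^\delta/M_{u^\delta})$) allow a classical velocity-averaging / Aubin--Lions argument to conclude strong compactness of the moments: $\rho^\delta \to \rho$ in $L^1_{\mathrm{loc}}(\om \times (0,T))$, and similarly for $\rho^\delta u^\delta$. With these compactness statements, the linear terms in Definition \ref{def_weak} pass to the limit straightforwardly, and the quadratic nonlinearity $F[f^\delta]f^\delta$ is handled by combining strong $\rho^\delta$-convergence with $L^\infty$-weak-$\star$ convergence of $f^\delta$, since $\phi \in L^\infty(\om)$. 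Finally, lower semicontinuity of the free energy and of the convex dissipations transfers the estimate of Proposition \ref{prop_energy} to the limit, yielding \eqref{entro_1}. Continuity in time $f \in \mc([0,T]; L^1)$ follows from the weak formulation and the uniform moment/entropy bounds.

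The main obstacle is passing to the limit in the singular Coulombian drift $(\nabla W^\delta \star \rho^\delta) f^\delta$ in the equation for $f^\delta$; this is where the interaction is genuinely nonlocal and not pointwise bounded. The key observation is that the free energy provides a uniform $L^\infty(0,T; L^2(\om))$ bound on $\nabla W^\delta \star \rho^\delta$. Combining this with the strong $L^1_{\mathrm{loc}}$ convergence of $\rho^\delta$ (and uniform second moments to handle the tails at infinity for $\om = \R^d$), one identifies $\nabla W^\delta \star \rho^\delta \to \nabla W \star \rho$ in $L^2_{\mathrm{loc}}(\om \times (0,T))$ by a truncation and Vitali argument; together with the uniform $L^\infty$ and decay-in-$v$ bounds on $f^\delta$ this suffices to take the limit in the distributional product. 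A secondary, but only technical, point is commuting the mollification $W^\delta$ back to $W$ in the potential energy itself, which is controlled by the uniform bound on $\|\nabla W^\delta \star \rho^\delta\|_{L^2}$.
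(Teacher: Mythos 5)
Your overall architecture (regularize the Coulomb kernel, solve the regularized problem by a fixed-point argument built on KMT13, extract uniform bounds from the free energy, pass to the limit via velocity averaging and a Vitali-type argument) matches the paper's strategy in Section~\ref{sec_global_kin}. Where you genuinely diverge is in how the singular Coulomb drift is passed to the limit: you rely on the $L^\infty_t L^2_x$ bound on the ``field'' $\nabla W^\delta\star\rho^\delta$ extracted from the interaction energy (closer to the relaxation-limit technique of Lattanzio--Tzavaras or Carrillo--Feireisl--Gwiazda--\'Swierczewska-Gwiazda), while the paper instead splits $\nabla W$ into a near-field $L^1$ piece and a far-field $L^{p'}$ piece and applies Young's convolution inequality directly, leveraging the strong $L^p$ compactness of the macroscopic moments. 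Both routes are viable; yours trades the kernel-splitting bookkeeping for a cleaner energy quantity, but requires slightly more care in commuting $W^\delta$ back to $W$, as you note.

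However, there are two concrete gaps in the estimate chain. First, step (iii) of your a priori bounds is not self-contained: in $\Omega=\R^d$ the quantity $\intor f^\delta\log f^\delta$ has no sign, so the free energy inequality by itself controls neither the entropy nor (hence) the interaction energy. You need to propagate the second spatial moment $\intor |x|^2 f^\delta\,dxdv$ and invoke the classical inequality bounding $\intor f\log_- f$ by the moments (the Cercignani--Illner--Pulvirenti-type estimate the paper recalls), before you can close the Gr\"onwall loop and claim the bounds (i)--(iv). Relatedly, your reduction of the interaction energy to $\|\nabla W\star(\rho^\delta\star\eta^\delta)\|_{L^2}^2$ is only a nonnegative quantity when $d\ge 3$; for $d=1,2$ the logarithmic/linear potential is sign-indefinite and your uniform energy control breaks without the additional moment/interpolation arguments of Section~5.4.

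Second, the fixed-point claim needs justification. Because the macroscopic velocity $u=\rho u/\rho$ is a quotient, the map from a frozen $\tilde u$ to the solution of the linearized Vlasov--Fokker--Planck equation does not exhibit an obvious Lipschitz dependence, so ``Banach on $L^\infty_t(L^1\cap L^\infty)$'' is not clearly a contraction even at the regularized level. The paper circumvents this by regularizing the quotient ($u^\eta_\e = \rho^\eta u^\eta/(\rho^\eta+\e)$) and truncating ($\chi_\zeta$), and then using compactness from the velocity-averaging lemma to invoke \emph{Schauder's} fixed-point theorem rather than Banach's; this compactness-based closure is what makes the argument go through. If you want to keep the Banach route you would have to exhibit the contraction constant explicitly, which does not appear to be straightforward here.
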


\subsection{Regularized kinetic equation}

For the existence of weak solutions to \eqref{main_eq}, we first regularize the system with respect to regularization parameters $\eta := (R, \zeta, \e)$ as follows: 
\begin{align}\label{E-1}
\begin{aligned}
&\pa_t f^\eta + v\cdot\nabla f^\eta -  \nabla_v \cdot \lt((v + \nabla V^R + \nabla W^\e \star \rho^\eta )f^\eta \rt)+ \nabla_v \cdot \lt(F[f^\eta]f^\eta\rt) \cr
&\hspace{2cm}=  \nabla_v \cdot ( (v - \chi_\zeta (u_\e^\eta))f^\eta + \sigma \nabla_v f^\eta),
\end{aligned}
\end{align}
subject to initial data:
\[
f_0^\eta = f^\eta(0,x,v) := f_0(x,v) \mathds{1}_{\{|v|\le \zeta\}},
\]
where 
\[
\rho^\eta := \int_{\R^d} f^\eta dv, \quad \rho^\eta u^\eta := \int_{\R^d} vf^\eta dv, \quad u_\e^\eta := \frac{\rho^\eta u^\eta}{\rho + \e},
\]
and $W^\e = W^\e(x)$ is given as
\[
W^\e(x) := c_d(\e + |x|^2)^{-(d-2)/2}
\]
with $d \geq 3$, where $c_d$ is a normalization constant. Moreover, $\chi_\zeta$ is given as
\[
\chi_\zeta(v) = v \mathds{1}_{\{ |v|\le \zeta \}},
\]
and $V^R$ is given as
\[
V^R(x) := V(x)M\left(\frac{x}{R}\right).
\]
Here $M(x) \in \mc^\infty_c(\R^d)$ is a smooth function given by
\[
M(x) = \left\{\begin{array}{cl}
1 & |x|\le 1, \\[1mm]
0 < M(x) <1 & 1 < |x| < 2,\\[1mm]
0 & |x|\ge 2.
\end{array}\right. 
\]
Now, we partially linearize \eqref{E-1} as follows:
\begin{align}\label{E-2}
\begin{aligned}
&\pa_t f^\eta + v\cdot\nabla f^\eta -  \nabla_v \cdot \lt((v + \nabla V^R + \nabla W^\e \star \rho^\eta )f^\eta \rt)+ \nabla_v \cdot \lt(F[f^\eta]f^\eta\rt) \cr
&\hspace{2cm}=  \nabla_v \cdot ( (v - \chi_\zeta (\tilde{u}))f^\eta + \sigma \nabla_v f^\eta),
\end{aligned}
\end{align}
where $ \tilde{u}$ is in $\mathcal{S} := L^2(\Omega \times (0,T))$. Once we note that $\nabla W^\e$ is bounded and Lipschitz continuous, the existence of weak solutions to \eqref{E-2} comes from almost the same argument in \cite[Theorem 6.3]{KMT13}. Moreover, we estimate
$$\begin{aligned}
\frac{d}{dt}\intor (f^\eta)^p\,dxdv &= (p-1) \intor (f^\eta)^p \nabla_v \cdot \lt( 2v + \nabla V^R + \nabla W^\e \star \rho^\eta - \chi_\zeta (\tilde{u}) - F[f^\eta]\rt) dxdv\cr
&\quad - \sigma p(p-1) \intor (f^\eta)^{p-2} |\nabla_v f^\eta|^2\,dxdv\cr
&=(p-1)\intor (f^\eta)^p (2d + d\phi \star\rho^\eta)\,dxdv - \frac{4\sigma (p-1)}{p}\intor |\nabla_v (f^\eta)^{p/2}|^2\,dxdv
\end{aligned}$$
for $p \in [1,\infty)$. This together with Gr\"onwall's lemma gives
\[
\begin{split}
\|f^\eta(\cdot,\cdot,t)\|_{L^p}^p &+ \frac{4\sigma(p-1)}{p}\int_0^t e^{d(p-1)(2 + \|\phi\|_{L^\infty}\|f^\eta_0\|_{L^1})(t-s)}\|\nabla_v (f^\eta)^{p/2}(\cdot,\cdot,s)\|_{L^2}^2\,ds \\
&\leq \|f^\eta_0\|_{L^p}^pe^{d(p-1)(2+\|\phi\|_{L^\infty}\|f_0^\eta\|_{L^1})t}.
\end{split}
\]
In particular, we have
\[
\|f^\eta(\cdot,\cdot,t)\|_{L^1} \leq \|f^\eta_0\|_{L^1} \le \|f_0\|_{L^1} = 1, \quad \|f^\eta(\cdot,\cdot,t)\|_{L^\infty} \le \|f_0^\eta\|_{L^\infty}e^{d(2+\|\phi\|_{L^\infty})t}
\]
for $t \in [0,T]$. \\

We next estimate higher-order velocity moments and entropy inequality of solutions to \eqref{E-2}.
\begin{lemma}\label{L5.1}
For a weak solution $f^\eta$ to \eqref{E-2}, its velocity moments satisfy the following boundedness condition:
\[
\sup_{t \in (0,T)}\int_{\Omega \times \R^d} |v|^N f dx dv \le C(d,\eta, N, T), \quad \forall \,N \ge 0. 
\]
\end{lemma}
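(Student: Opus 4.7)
The plan is to derive an ODE for $M_N(t) := \intor |v|^N f^\eta\,dx\,dv$ by testing \eqref{E-2} against $|v|^N$ (taking $N$ an even integer for smoothness of the weight; general $N\ge 0$ is recovered at the end by interpolation). Integration in $x$ annihilates the transport term $v\cdot\nabla_x f^\eta$ by periodicity or decay, while integration by parts in $v$ moves $\nabla_v$ onto $|v|^N$ using $\nabla_v|v|^N = N|v|^{N-2}v$ and $\Delta_v |v|^N = N(N+d-2)|v|^{N-2}$, yielding
\begin{align*}
\frac{d}{dt} M_N + 2N M_N &= -N\intor |v|^{N-2} v\cdot \bigl(\nabla V^R + \nabla W^\e\star\rho^\eta - F[f^\eta] - \chi_\zeta(\tilde u)\bigr)f^\eta\,dx\,dv \\
&\quad + \sigma N(N+d-2)\, M_{N-2}.
\end{align*}
Each drift is bounded in terms of the regularization parameters $\eta = (R,\zeta,\e)$: $\|\nabla V^R\|_{L^\infty}\le C(R)$ from the cutoff $M(\cdot/R)$, $\|\nabla W^\e\|_{L^\infty}\le C(\e)$ by direct differentiation of $c_d(\e+|x|^2)^{-(d-2)/2}$, $|\chi_\zeta(\tilde u)|\le \zeta$, and $|F[f^\eta](x,v)|\le \|\phi\|_{L^\infty}(|v|+M_1)$. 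Combining these with the $L^1$ bound $\|f^\eta\|_{L^1}\le 1$ gives
\[
\frac{d}{dt} M_N \le C(\eta,d,N)\bigl(M_N + M_{N-1} + M_{N-1} M_1 + M_{N-2}\bigr).
\]

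I would then close this by induction on $N$. The case $N=0$ follows from mass conservation ($M_0\le 1$). For $N=2$, interpolation $M_1\le M_2^{1/2} M_0^{1/2}$ and Young's inequality reduce the right-hand side to $C(\eta,d,T)(1+M_2)$; Gronwall, together with the initial bound $M_2(0)\le \zeta^2\|f_0\|_{L^1}$ from the $\{|v|\le\zeta\}$ cutoff on $f_0^\eta$, yields $M_2(t) \le C(\eta,d,T)$ and hence a uniform bound on $M_1$. For even $N\ge 4$, assume by induction that $M_{N-2}$ is bounded. Interpolation $M_{N-1} \le M_N^{(N-1)/N} M_0^{1/N}$ combined with Young absorbs $M_{N-1}$ into $\delta M_N + C_\delta$, and with $M_1$ already uniformly bounded the ODE reduces to $\frac{d}{dt} M_N \le C(\eta,d,N,T)(1 + M_N)$; Gronwall and $M_N(0)\le \zeta^N$ close the induction. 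Non-even $N\ge 0$ are recovered by Lyapunov interpolation between two adjacent even integers, using $M_0\le 1$.

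The main obstacle is the quadratic nonlinearity of $F[f^\eta]$, which produces the cross term $M_{N-1}M_1$: this cannot be absorbed without a prior bound on $M_1$, and thus dictates that the induction proceeds through $N=2$ before $N\ge 4$. A minor technical point is that $|v|^N$ fails to be smooth at $v=0$ for non-even integer $N$, which is sidestepped either by the interpolation above or by replacing $|v|^N$ throughout with the smooth weight $\langle v\rangle^N = (1+|v|^2)^{N/2}$, whose analysis is identical modulo additional lower-order moments absorbed into the same induction.
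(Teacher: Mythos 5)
Your proposal is correct and follows the paper's strategy step by step: test the regularized equation against $|v|^N$, integrate by parts in $v$ (annihilating the transport term), bound the regularized drifts $\nabla V^R$, $\nabla W^\e\star\rho^\eta$, $\chi_\zeta(\tilde u)$ by $\eta$-dependent constants, close with Gr\"onwall and induction on even $N$ starting from mass conservation, then recover non-even exponents by interpolation. The only difference is bookkeeping in the alignment term: you use the pointwise bound $|F[f^\eta]|\le\|\phi\|_{L^\infty}(|v|+M_1)$ and hence face an $M_{N-1}M_1$ cross term that must first be tamed via $M_2$, whereas the paper applies Young's inequality directly to the integrand $|w|\,|v|\,|v|^{N-2}$ to produce $m_2\,m_{N-2}$, which the induction hypothesis handles in one stroke once $m_2$ is controlled at the base step --- both close the same way.
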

\begin{proof}
For a weak solution $f^\eta$ to \eqref{E-2} and $N\ge2$, we let 
\[
m_N(f) := \int_{\Omega \times \R^d} |v|^N f\,dxdv.
\] 
Then, we estimate
\begin{align*}
\frac{d}{dt} m_N(f^\eta)&= -N \int_{\Omega \times \R^d} (v + \nabla V^R + \nabla W^\e\star\rho^\eta)\cdot v f^\eta |v|^{N-2}\,dxdv\\
&\quad + N\int_{\Omega \times \R^d} F[f^\eta]f^\eta \cdot v |v|^{N-2}dxdv - N\int_{\Omega\times\R^d} (v-\chi_\zeta(\tilde{u}))\cdot v f^\eta |v|^{N-2}\,dxdv\\
&\quad - \sigma N \int_{\Omega \times \R^d} \nabla_v f^\eta \cdot v |v|^{N-2}\,dxdv\\
&= -2N m_N(f^\eta) -N \int_{\Omega \times \R^d} (\nabla V^R + \nabla W^\e\star\rho^\eta)\cdot v f^\eta |v|^{N-2}\,dxdv\\
&\quad + N\intorr \phi(x-y)(w-v)\cdot v f^\eta(y,w)f^\eta(x,v)|v|^{N-2} \,dxdydvdw\\
&\quad + N\int_{\Omega \times \R^d}\chi_\zeta (\tilde{u}) \cdot v f^\eta |v|^{N-2} \,dxdv + \sigma N(N-2+d)m_{N-2}(f^\eta)\\
&\le -N m_N(f^\eta)+ \frac{N}{2} \int_{\Omega \times \R^d}(\nabla V^R + \nabla W^\e\star\rho^\eta)^2 f^\eta |v|^{N-2}\,dxdv\\
&\quad + N\phi_M \intorr |w||v| f^\eta(y,w)f^\eta(x,v)|v|^{N-2}\,dxdydvdw\\
&\quad + \frac{N}{2}\int_{\Omega \times \R^d}(\chi_\zeta (\tilde{u}))^2  f^\eta |v|^{N-2} \,dxdv + \sigma N(N-2+d)m_{N-2}(f^\eta)\\
&\le \frac{N}{2} \int_{\Omega \times \R^d}(\nabla_x V^R + \nabla W^\e\star\rho^\eta)^2 f^\eta |v|^{N-2}\,dxdv + \frac{N(\phi_M)^2}{2} m_2(f^\eta) m_{N-2}(f^\eta) \\
&\quad + \frac{N}{2}\int_{\Omega \times \R^d}(\chi_\zeta (\tilde{u}))^2  f^\eta |v|^{N-2} \,dxdv + \sigma N(N-2+d)m_{N-2}(f^\eta)\\
&\le C(m_N(f^\eta) + m_{N-2}(f^\eta)),
\end{align*}
where $C = C(d,\eta, N, T)$ is a positive constant and we used Young's inequality. Since $m_0(f^\eta)$ is just $\|f^\eta\|_{L^1} = \|f_0^\eta\|_{L^1}$, one uses Gr\"onwall's lemma and induction argument to conclude that
\[
\sup_{t \in (0,T)}\int_{\Omega \times \R^d} |v|^N f \,dx dv \le C(d,\eta, N, T), \quad \forall \, N=0,2,4, \cdots. 
\]
Moreover, for $N \in \R_+ \setminus\{0,2,4, \cdots\}$, we can find $l \in \bbn \cup \{ 0\}$ that satisfies $0 < N - 2l <2$, and this gives
\[
\int_{\Omega \times \R^d} |v|^{N-2l} f \,dxdv \le \left(\int_{\Omega \times \R^d} |v|^2 f \,dxdv \right)^{\frac{N-2l}{2}} \left( \int_{\Omega \times \R^d} f \, dxdv \right)^{\frac{2+2l-N}{2}} \le C(d,\eta, N, T).
\]
This asserts our desired result.
\end{proof}

\begin{proposition}\label{P5.1}
For a weak solution $f^\eta$ to \eqref{E-2}, it satisfies the following relation:
$$\begin{aligned}
\frac{d}{dt}&\left( \int_{\Omega \times \R^d} \left(\frac{|v|^2}{2} + V^R + \sigma \log f^\eta \right) f^\eta \,dxdv + \frac{1}{2}\intoo W^\e(x-y)\rho^\eta (x) \rho^\eta (y) \,dxdy\right)\\
&\qquad + \int_{\Omega\times\R^d}\frac{1}{f^\eta} \left|\sigma \nabla_v f^\eta - (v-\chi_\zeta(\tilde{u}))f^\eta\right|^2  \,dxdv + \int_{\Omega\times\R^d} |v|^2 f^\eta \,dxdv\\
&\qquad +\frac{1}{2}\intorr \phi(x-y)|w-v|^2 f^\eta (y,w)f^\eta(x,v)\,dxdydvdw\\
&= \int_{\Omega\times\R^d} (\chi_\zeta(\tilde{u})-v)\cdot \chi_\zeta(\tilde{u}) f^\eta \,dxdv  \\
&\quad + \sigma d \|f_0^\eta\|_{L^1} +\sigma d\intorr\phi(x-y)f^\eta(y,w)f^\eta(x,v)\,dxdydvdw.
\end{aligned}$$
\end{proposition}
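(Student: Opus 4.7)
The plan is to reproduce the computation of Lemma \ref{lem_energy} at the regularized level, paying attention to the fact that the local Maxwellian velocity $u$ has been replaced by the mollified/truncated field $\chi_\zeta(\tilde{u})$, the potentials by $V^R$ and $W^\e$, and that the $v$-integrals must be justified by the moment bounds of Lemma \ref{L5.1}. Since $f^\eta$ is a weak solution of \eqref{E-2} and, by Lemma \ref{L5.1}, has arbitrarily many bounded velocity moments, all of the integrations by parts in $v$ below are justified (the boundary terms at $|v|=\infty$ vanish). Here $\chi_\zeta(\tilde{u})$ is time-dependent but does not depend on $f^\eta$, so its derivative with respect to $t$ does not enter.

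First I would test the equation \eqref{E-2} against $\tfrac{|v|^2}{2}$. The transport term $v\cdot\nabla_x f^\eta$ vanishes after integration. The confinement and interaction drift $-\nabla_v\cdot((v+\nabla V^R+\nabla W^\e\star\rho^\eta)f^\eta)$ contributes $\int v\cdot(v+\nabla V^R+\nabla W^\e\star\rho^\eta)f^\eta\,dxdv$; the alignment term $\nabla_v\cdot(F[f^\eta]f^\eta)$ contributes $-\int v\cdot F[f^\eta]f^\eta\,dxdv$, which after symmetrizing in $(x,v)\leftrightarrow(y,w)$ yields $\tfrac12\intorr\phi(x-y)|v-w|^2 f^\eta(x,v)f^\eta(y,w)\,dxdvdydw$. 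Finally, $\nabla_v\cdot((v-\chi_\zeta(\tilde u))f^\eta+\sigma\nabla_v f^\eta)$ contributes $-\int v\cdot(v-\chi_\zeta(\tilde u))f^\eta\,dxdv-\sigma d\|f_0^\eta\|_{L^1}$, where the last term comes from $-\sigma\int v\cdot\nabla_v f^\eta\,dxdv=\sigma d\|f^\eta\|_{L^1}=\sigma d\|f_0^\eta\|_{L^1}$ by mass conservation.

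Next I would test against $V^R$: using the continuity equation for $\rho^\eta$ (obtained by integrating \eqref{E-2} in $v$), I get
\[
\frac{d}{dt}\into V^R\rho^\eta\,dx=-\into\nabla V^R\cdot(\rho^\eta u^\eta)\,dx=-\intor v\cdot\nabla V^R f^\eta\,dxdv,
\]
which cancels the corresponding term from the kinetic energy computation. For the interaction potential, using the symmetry $W^\e(-x)=W^\e(x)$ and the continuity equation once more,
\[
\frac12\frac{d}{dt}\intoo W^\e(x-y)\rho^\eta(x)\rho^\eta(y)\,dxdy=-\intor v\cdot(\nabla W^\e\star\rho^\eta)f^\eta\,dxdv,
\]
again cancelling exactly the contribution from the kinetic energy step. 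Then I would test against $\sigma\log f^\eta$ to produce the entropy identity
\[
\sigma\frac{d}{dt}\intor f^\eta\log f^\eta\,dxdv=-\sigma\intor\frac{\nabla_v f^\eta}{f^\eta}\cdot\bigl(\sigma\nabla_v f^\eta-(v-\chi_\zeta(\tilde u))f^\eta\bigr)dxdv+R,
\]
where $R$ collects contributions from the confinement/interaction drift and from the alignment term $F[f^\eta]$; the former gives $\sigma d\|f_0^\eta\|_{L^1}$ (since $\nabla_v\cdot(v+\nabla V^R+\nabla W^\e\star\rho^\eta)=d$) and the latter gives $\sigma d\intorr\phi(x-y)f^\eta(x,v)f^\eta(y,w)\,dxdvdydw$, which are exactly the two inhomogeneous terms on the right-hand side of the target identity.

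Summing the three contributions above, the drift-potential and interaction terms cancel pairwise, and the remaining $v$-dependent pieces recombine using the algebraic identity
\[
\frac{1}{f^\eta}\bigl|\sigma\nabla_v f^\eta-(v-\chi_\zeta(\tilde u))f^\eta\bigr|^2=\sigma^2\frac{|\nabla_v f^\eta|^2}{f^\eta}-2\sigma\nabla_v f^\eta\cdot(v-\chi_\zeta(\tilde u))+f^\eta|v-\chi_\zeta(\tilde u)|^2,
\]
together with $\int v\cdot(v-\chi_\zeta(\tilde u))f^\eta\,dxdv=\int|v|^2 f^\eta\,dxdv-\int v\cdot\chi_\zeta(\tilde u)f^\eta\,dxdv$ and the rewriting $|v-\chi_\zeta(\tilde u)|^2-v\cdot(v-\chi_\zeta(\tilde u))=(\chi_\zeta(\tilde u)-v)\cdot\chi_\zeta(\tilde u)$, which produces the right-hand side of the claimed identity. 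The only delicate step is justifying the use of $\sigma\log f^\eta$ as a test function: since $f^\eta$ is only a weak $L^1\cap L^\infty$ solution, strictly speaking one should first establish the identity for a smooth approximation (e.g. $\log(f^\eta+\delta)-\log\delta$) built from regularization of the initial datum and of the coefficient $\tilde u$, derive the corresponding identity with error terms, and pass to the limit using the uniform moment bounds of Lemma \ref{L5.1} together with the $L^\infty$ bound on $f^\eta$; this is the main technical obstacle and proceeds along the lines of \cite[Lemma 7.3]{KMT13}.
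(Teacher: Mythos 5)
Your proof follows essentially the same route as the paper's: test the regularized equation against $|v|^2/2$, derive the time derivatives of the $V^R$- and $W^\e$-potential energies from the continuity equation so that they cancel the corresponding drift contributions, test against $\sigma\log f^\eta$ for the entropy part, and sum, recombining via the quadratic identity into the Fisher-information-type dissipation $\frac{1}{f^\eta}|\sigma\nabla_v f^\eta-(v-\chi_\zeta(\tilde u))f^\eta|^2$. The only minor blemish is a sign slip where you write that the Fokker--Planck term ``contributes $\cdots-\sigma d\|f_0^\eta\|_{L^1}$'' to the kinetic-energy balance even though you then correctly compute $-\sigma\int v\cdot\nabla_v f^\eta\,dxdv=+\sigma d\|f_0^\eta\|_{L^1}$; this does not affect the final identity.
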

\begin{proof}
First, it directly follows from Lemma \ref{L5.1} that
$$\begin{aligned}
\frac12\frac{d}{dt}\left(\int_{\Omega \times \R^d} |v|^2 f^\eta \,dxdv\right) &= -\int_{\Omega\times\R^d}(v+ \nabla V^R + \nabla W^\e \star \rho^\eta)\cdot v f^\eta \,dxdv\\
&\quad + \intorr \phi(x-y)(w-v)\cdot v f^\eta(y,w)f^\eta(x,v)\,dxdydvdw\\
&\quad - \int_{\Omega\times\R^d} (v-\chi_\zeta(\tilde{u}))\cdot v f^\eta \,dxdv  - \sigma\int_{\Omega \times \R^d} v \cdot \nabla_v f^\eta \,dxdv\\
&=  -\int_{\Omega\times\R^d}(v+ \nabla V^R + \nabla W^\e \star \rho^\eta)\cdot v f^\eta \,dxdv\\
&\quad -\frac{1}{2}\intorr \phi(x-y)|w-v|^2 f^\eta (y,w)f^\eta(x,v)\,dxdydvdw\\
&\quad - \int_{\Omega\times\R^d} |v-\chi_\zeta(\tilde{u})|^2 f^\eta \,dxdv  - \int_{\Omega\times\R^d} (v-\chi_\zeta(\tilde{u}))\cdot \chi_\zeta(\tilde{u}) f^\eta \,dxdv \\
&\quad - \sigma\int_{\Omega \times \R^d} (v-\chi_\zeta(\tilde{u})) \cdot \nabla_v f^\eta \,dxdv.
\end{aligned}$$
On the other hand, we get
\[
\frac{d}{dt}\left(\intor V^R f^\eta \,dxdv\right) = \intor v \cdot \nabla_x V^R f^\eta \,dxdv
\]
and
$$\begin{aligned}
\frac{d}{dt}\left(\frac{1}{2}\intoo W^\e(x-y)\rho^\eta(x) \rho^\eta(y) \,dxdy\right) &= \intoo W^\e(x-y)\partial_t\rho^\eta(x) \rho^\eta(y) \,dxdy\\
&= -\intoo W^\e(x-y)\nabla \cdot (\rho^\eta u^\eta)(x) \rho^\eta(y)\,dxdy\\
&= \intoo \nabla W^\e(x-y) \rho^\eta(y) \cdot (\rho^\eta u^\eta)(x) \,dxdy\\
&= \int_\Omega (\nabla W^\e \star \rho^\eta) \cdot  (\rho^\eta u^\eta) \,dx\\
&= \int_{\Omega\times\R^d}(\nabla W^\e \star \rho^\eta) \cdot v f^\eta \,dxdv.
\end{aligned}$$
This yields
$$\begin{aligned}
&\frac{d}{dt}\left(\frac12\int_{\Omega \times \R^d} |v|^2 f^\eta \,dxdv + \intor V^R f^\eta \,dxdv + \frac{1}{2}\intoo W^\e(x-y)\rho^\eta(x) \rho^\eta(y) \,dxdy\right)\cr
&\quad = - \int_{\Omega \times \R^d} |v|^2 f^\eta \,dxdv -\frac{1}{2}\intorr \phi(x-y)|w-v|^2 f^\eta (y,w)f^\eta(x,v)\,dxdydvdw\cr
&\qquad - \int_{\Omega\times\R^d} |v-\chi_\zeta(\tilde{u})|^2 f^\eta \,dxdv  - \int_{\Omega\times\R^d} (v-\chi_\zeta(\tilde{u}))\cdot \chi_\zeta(\tilde{u}) f^\eta \,dxdv \\
&\qquad - \sigma\int_{\Omega \times \R^d} (v-\chi_\zeta(\tilde{u})) \cdot \nabla_v f^\eta \,dxdv.
\end{aligned}$$
We then combine the previous estimates with the following entropy estimate
$$\begin{aligned}
\frac{d}{dt}\left(\int_{\Omega\times\R^d} \sigma f^\eta \log f^\eta \,dxdv\right)&= \int_{\Omega\times\R^d} \sigma(\partial_t f^\eta) \log f^\eta\,dxdv\\
&= -\sigma \int_{\Omega\times\R^d} ( v +\nabla V^R + \nabla W^\e \star \rho^\eta)\cdot \nabla_v f^\eta \,dxdv\\
&\quad + \sigma \int_{\Omega\times\R^d}F[f^\eta]\cdot \nabla_v f^\eta \,dxdv\\
&\quad - \sigma \int_{\Omega\times\R^d} (v-\chi_\zeta(\tilde{u}))\cdot \nabla_v f^\eta -\sigma^2 \int_{\Omega\times\R^d} \frac{|\nabla_v f^\eta|^2}{f^\eta}\,dxdv\\
&= \sigma d \|f_0^\eta\|_{L^1} + \sigma d \intorr \phi(x-y)f^\eta (y,w)f^\eta(x,v)\,dxdydvdw\\
&\quad - \sigma \int_{\Omega\times\R^d} (v-\chi_\zeta(\tilde{u}))\cdot \nabla_v f^\eta -\sigma^2 \int_{\Omega\times\R^d} \frac{|\nabla_v f^\eta|^2}{f^\eta}\,dxdv
\end{aligned}$$
to conclude the desired result.
\end{proof}

\subsection{Existence of the regularized kinetic equation}
Now, we provide the existence of weak solutions to \eqref{E-1} and their energy estimates. Similarly as in \cite{CCK16, MV07}, we define the mapping $\mathcal{T}: \mathcal{S} \to \mathcal{S}$, where $\mathcal{S}=L^2(\Omega \times (0,T))$ by 
\[
\tilde{u} \mapsto \mathcal{T}(\tilde{u}) := u_\e^\eta = \frac{\rho^\eta u^\eta}{\rho^\eta + \e}.
\]
First, we prove that the operator $\mathcal{T}$ is well-defined.
\begin{lemma}\label{L5.2}
For a weak solution $f^\eta$ to \eqref{E-2}, the averaged quantities $(\rho^\eta, \rho^\eta u^\eta)$ satisfy
\[
\rho^\eta \in L^p(\Omega), \quad \rho^\eta u^\eta \in [L^p(\Omega)]^d, \quad \forall \, p \in [1,\infty),
\]
and as a consequence, $\mathcal{T}$ is well-defined. 
\end{lemma}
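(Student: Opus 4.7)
\textbf{Proof proposal for Lemma \ref{L5.2}.} The plan is to use the classical interpolation argument that combines the $L^\infty$ bound on $f^\eta$ established in the preceding subsection with the velocity moment bounds from Lemma \ref{L5.1}. Specifically, since $f^\eta \in L^\infty(\Omega \times \R^d \times (0,T))$ and $\sup_{t \in (0,T)} \int_{\Omega \times \R^d} |v|^N f^\eta \, dxdv < \infty$ for every $N \ge 0$, the standard truncation at radius $R$ in velocity yields, for any $x$ and any $N > 0$,
\[
\rho^\eta(x,t) \le \int_{|v| \le R} f^\eta(x,v,t)\,dv + R^{-N}\int_{|v| > R} |v|^N f^\eta(x,v,t)\,dv \le C R^d \|f^\eta(t)\|_{L^\infty} + R^{-N} \int_{\R^d} |v|^N f^\eta(x,v,t)\,dv.
\]
Optimizing over $R$ (balancing the two terms) produces the pointwise estimate
\[
\rho^\eta(x,t) \le C \|f^\eta(t)\|_{L^\infty}^{N/(d+N)}\left(\int_{\R^d} |v|^N f^\eta(x,v,t)\,dv\right)^{d/(d+N)}.
\]

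Taking the $L^p$ norm in $x$ and choosing $N = d(p-1)$ so that the exponent $pd/(d+N)$ becomes $1$, we obtain
\[
\|\rho^\eta(t)\|_{L^p(\Omega)}^p \le C \|f^\eta(t)\|_{L^\infty}^{p-1} \int_{\Omega \times \R^d}|v|^{d(p-1)} f^\eta(x,v,t)\,dxdv,
\]
and the right-hand side is uniformly bounded on $[0,T]$ by Lemma \ref{L5.1} and the $L^\infty$ bound on $f^\eta$. An entirely analogous truncation argument applied to $|\rho^\eta u^\eta(x,t)| \le \int |v| f^\eta \, dv$, producing the extra weight $|v|$ inside, gives $\rho^\eta u^\eta \in L^\infty(0,T;[L^p(\Omega)]^d)$ for every $p \in [1,\infty)$ (choosing $N = d(p-1) + p$ in the optimization).

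Finally, the well-posedness of the map $\mathcal{T}$ is immediate from the pointwise bound $|u_\e^\eta(x,t)| \le \e^{-1}|\rho^\eta u^\eta(x,t)|$, which together with the $L^\infty(0,T;L^2(\Omega))$ bound on $\rho^\eta u^\eta$ just established yields
\[
\|\mathcal{T}(\tilde u)\|_{L^2(\Omega \times (0,T))}^2 = \int_0^T \!\!\int_\Omega |u_\e^\eta|^2 \, dxdt \le \e^{-2} T \sup_{t \in [0,T]} \|\rho^\eta u^\eta(t)\|_{L^2}^2 < \infty,
\]
so $\mathcal{T}: \mathcal{S} \to \mathcal{S}$ is well-defined. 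There is no real obstacle here; the only subtlety is making sure to pick $N$ large enough in the truncation optimization so that the resulting moment exponent lands exactly on $1$ (or on a bounded quantity), and to observe that all moment bounds from Lemma \ref{L5.1} are uniform in $t \in (0,T)$, so the estimates transfer to $L^\infty(0,T;L^p(\Omega))$ for free.
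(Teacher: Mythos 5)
Your proof is correct and follows the same route as the paper; the only difference is that the paper outsources the $L^p$ bounds on $\rho^\eta$ and $\rho^\eta u^\eta$ to a citation of \cite{KMT13}, whereas you write out the standard velocity-truncation and interpolation argument (optimizing over the cutoff radius $R$, with the exponent choices $N=d(p-1)$ and $N=d(p-1)+p$) that underlies that cited result. The final step — the pointwise bound $|u_\e^\eta|\le\e^{-1}|\rho^\eta u^\eta|$ giving $\mathcal{T}(\tilde u)\in L^2(\Omega\times(0,T))$ — is identical to the paper's.
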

\begin{proof}
Since the proof for the first assertion can be found in \cite{KMT13}, we omit its proof. Since $\rho^\eta$ is bounded, it suffices to show the boundedness of $u_\e^\eta$. Obviously,
\[
|u_{\e}^\eta| = \left|\frac{\rho^\eta u^\eta}{\rho^\eta + \e}\right| \le \frac{1}{\e}|\rho^\eta u^\eta|,
\]
and since $\rho^\eta u^\eta$ is bounded, $\mathcal{T}$ is well-defined.
\end{proof}
Next, we discuss the compactness of $\mathcal{T}$. Here, we state the velocity averaging lemma from \cite{PS98}.
\begin{lemma}\label{L5.3}
Let $\{f^m\}$ be bounded in $L_{loc}^p(\R^{2d+1})$ with $1 < p < \infty$ and $\{G^m\}$ be bounded in $L_{loc}^p(\R^{2d+1})$. If $f^m$ and $G^m$ satisfy
\[
f_t^m + v \cdot \nabla f^m = \nabla_v^\alpha G^m, \quad f^m|_{t=0} = f_0 \in L^p(\R^{2d}) 
\]
for some multi-index $\alpha$ and $\varphi \in \mathcal{C}_c^{|\alpha|}(\R^{2d})$, then
\[ \left\{ \int_{\R^d} f^m \varphi dv \right\} \]
is relatively compact in $L_{loc}^p(\R^{d+1})$.
\end{lemma}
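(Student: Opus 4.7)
The plan is to reproduce the classical $L^p$ velocity averaging argument of Perthame--Souganidis \cite{PS98}. First, by a standard $(t,x)$-cutoff together with linearity of the transport equation, I would reduce matters to the situation where $\{f^m\}$ and $\{G^m\}$ are compactly supported in $\R^{2d+1}$; after this reduction the Fourier transform $\mathcal{F}$ in the $(t,x)$-variables is available on all of $\R^{d+1}$.

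Applying $\mathcal{F}$ to the transport equation turns it into the algebraic identity
\[
i(\tau + v\cdot\xi)\,\mathcal{F}f^m(\tau,\xi,v) = \nabla_v^\alpha \mathcal{F}G^m(\tau,\xi,v),
\]
so the analysis is driven by the invertibility of the symbol $i(\tau+v\cdot\xi)$. For a small parameter $\delta>0$ I would split the average $\rho^m_\varphi := \int_{\R^d} f^m \varphi\,dv$ into an \emph{elliptic} piece, localized on $\{|\tau+v\cdot\xi|\geq \delta \langle \tau,\xi\rangle\}$, and a complementary \emph{resonant} piece. On the elliptic piece the symbol is invertible, so I would solve for $\mathcal{F}f^m$ and integrate by parts $|\alpha|$ times in $v$, moving the derivatives onto $\varphi$ and onto the symbol; this yields a gain of $s\in (0,1)$ fractional derivatives in $(t,x)$ for that piece, uniform in $m$. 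On the resonant piece, for fixed $\xi\neq 0$ the $v$-measure of the set $\{v\in \operatorname{supp}\varphi : |\tau+v\cdot\xi| \leq \delta \langle \tau,\xi\rangle\}$ is at most $C\delta$, because $v\mapsto \tau+v\cdot\xi$ is linear and $\varphi$ is compactly supported; this gives an $L^p$-smallness bound of order $\delta^{1/p'}$, again uniform in $m$.

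Balancing the two pieces by choosing $\delta$ optimally produces a uniform bound for $\rho^m_\varphi$ in a fractional Sobolev space $W^{s,p}_{loc}(\R^{d+1})$ with some $s>0$, whence compactness in $L^p_{loc}(\R^{d+1})$ follows by the Rellich--Kondrachov theorem, or equivalently by the Fr\'echet--Kolmogorov criterion. The contribution of the initial datum $f_0\in L^p(\R^{2d})$ is absorbed into the source by viewing it as a boundary term $f_0(x,v)\otimes \delta_{t=0}$; since its free-transport average propagates compactness in an elementary way, it does not affect the argument. The \textbf{main obstacle} is passing from the Hilbertian $L^2$ Fourier estimate to $L^p$ for arbitrary $1<p<\infty$: Plancherel is unavailable, and the multiplier $1/i(\tau+v\cdot\xi)$ must be handled through real interpolation between $L^1$ and $L^\infty$ estimates, or through a Littlewood--Paley/H\"ormander--Mikhlin analysis carried out uniformly in the parameter $v$, which is the delicate technical ingredient supplied in \cite{PS98}.
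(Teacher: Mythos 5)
The paper does not prove this lemma at all: Lemma~\ref{L5.3} is stated as a recall of the classical $L^p$ velocity averaging result, with an explicit citation to Perthame--Souganidis \cite{PS98}, and no in-text argument is given. Your outline is therefore not being compared against a proof in the paper; it is a reconstruction of the cited result. As such, it is a fair high-level account of the \cite{PS98} strategy: Fourier transform in $(t,x)$, split along the size of the symbol $\tau+v\cdot\xi$ into an elliptic piece (invert the symbol, integrate by parts $|\alpha|$ times in $v$, gain fractional regularity in $(t,x)$) and a resonant piece (controlled by the $v$-measure of the slab $\{|\tau+v\cdot\xi|\lesssim\delta\langle\tau,\xi\rangle\}$), balance $\delta$, conclude $W^{s,p}_{loc}$ boundedness, and invoke Rellich--Kondrachov; and you correctly flag that the genuine difficulty is the $L^p$ (non-Hilbertian) setting, which \cite{PS98} handles by interpolation rather than Plancherel.

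Two small imprecisions worth noting if you were to flesh this out. First, the resonant slab has thickness $\sim\delta\langle\tau,\xi\rangle/|\xi|$, not simply $\delta$; the uniform $O(\delta)$ bound follows only after observing that when $|\xi|\ll\langle\tau,\xi\rangle$ the whole compact $v$-support already lies in the elliptic region, so the two cases must be treated jointly. Second, the fractional gain $s$ is not a free parameter in $(0,1)$: after balancing, it depends on $|\alpha|$ and $p$ (roughly $s\sim\bigl((|\alpha|+1)p'\bigr)^{-1}$), which is why the conclusion is compactness in $L^p_{loc}$ rather than any stronger smoothing. Neither point affects the validity of your sketch as a summary of \cite{PS98}, which is all the paper itself relies on.
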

We then use the previous lemma to obtain the following result, which is very similar to \cite[Lemma 2.7]{KMT13}.
\begin{lemma}\label{L5.4}
Let $\{f^m\}$  and $\{G^m\}$ be in Lemma \ref{L5.3} and assume that for $r \ge 2$,
\bq\label{av_est}
\sup_{m \in \bbn} \|f^m\|_{L^\infty(\R^{2d+1})} + \sup_{m \in \bbn}\| (|v|^{r} + |x|^2) f^m\|_{L^\infty(0,T;L^1(\R^{2d}))}  <\infty.
\eq
Then, for any $\varphi(v)$ satisfying $|\varphi(v)| \le c|v|$, the sequence
\[ \left\{ \int_{\R^d} f^m \varphi dv \right\} \]
is relatively compact in $L^q(\R^{d+1})$ for any $q \in \left(1, \frac{d+r}{d+1}\right)$.
\end{lemma}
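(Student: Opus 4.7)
The strategy is to reduce to the classical velocity averaging statement Lemma~\ref{L5.3} by truncating $\varphi$ in the velocity variable, and then to control the resulting velocity and spatial tails in $L^q(\R^{d+1})$ by moment interpolation.

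Fix a smooth cutoff $\chi_R\in\mathcal{C}_c^\infty(\R^d)$ with $\chi_R\equiv 1$ on $\{|v|\le R\}$ and $\chi_R\equiv 0$ on $\{|v|\ge 2R\}$, and set $u^m:=\int f^m\varphi\,dv$ and $u^m_R:=\int f^m(\varphi\chi_R)\,dv$. Since $\varphi\chi_R$ is bounded with compact support, after a routine mollification of $\varphi$ if necessary, Lemma~\ref{L5.3} applied to the equation for $f^m$ yields that $\{u^m_R\}_m$ is relatively compact in $L^q_{\mathrm{loc}}(\R^{d+1})$ for each fixed $R$.

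The main step is quantitative control of the velocity tail. Writing $\rho^m_r:=\int f^m|v|^r\,dv$, two pointwise bounds compete: the elementary inequality $|v|\le|v|^r/R^{r-1}$ on $\{|v|>R\}$ gives $|u^m-u^m_R|\le (c/R^{r-1})\,\rho^m_r$, while splitting $\{|v|\le S\}\cup\{|v|>S\}$ and optimizing in $S$ using $\|f^m\|_\infty<\infty$ yields the $R$-independent moment interpolation $|u^m-u^m_R|\le C\,(\rho^m_r)^{(d+1)/(d+r)}$. Using $\min(A,B)\le A^{1-\theta}B^{\theta}$ and choosing $\theta=(q-1)(d+r)/(q(r-1))$ so that the combined exponent of $\rho^m_r$ in the resulting $L^q$-estimate equals $1$, the assumed $L^1$-bound on $\rho^m_r$ gives
\[
\|u^m-u^m_R\|_{L^q(\R^{d+1})}^q \;\le\; C\,R^{-(r-1)(1-\theta)q}\;\longrightarrow\;0 \qquad (R\to\infty),
\]
uniformly in $m$. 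The admissibility $\theta\in(0,1)$ is algebraically equivalent to $q<(d+r)/(d+1)$, the sharp threshold of the lemma.

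To upgrade from $L^q_{\mathrm{loc}}$ to $L^q(\R^{d+1})$, I would finally establish uniform spatial tightness: the analogous moment interpolation puts $\rho^m:=\int f^m\,dv$ in $L^{(d+r)/d}(\R^d)$ uniformly, while Chebyshev applied to the $|x|^2$-moment makes $\rho^m$ tight in $L^1(\R^d)$, so log-convex $L^p$-interpolation gives tightness in $L^p(\R^d)$ for every $p<(d+r)/d$. Combined with the companion H\"older bound $|u^m|\le c\,(\rho^m)^{1-1/r}(\rho^m_r)^{1/r}$ and H\"older in $x$, this furnishes uniform $L^q$-smallness of $u^m$ on $\{|x|>N\}\times[0,T]$ as $N\to\infty$. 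A diagonal extraction combining these three ingredients closes the proof. The delicate point is the second step: the exponent $\theta$ has to be chosen so that the $R^{-(r-1)(1-\theta)}$ decay \emph{and} the $L^1$-integrability of $\rho^m_r$ hold simultaneously, and this single algebraic constraint is precisely what makes the threshold $q<(d+r)/(d+1)$ sharp.
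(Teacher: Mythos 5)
Your proof is correct and follows the standard argument: the paper itself gives no proof of Lemma~\ref{L5.4}, simply referring to \cite[Lemma~2.7]{KMT13}, and your route (truncate $\varphi$ in $v$, invoke the velocity-averaging Lemma~\ref{L5.3} for the truncated average, then remove the velocity and spatial tails by moment interpolation using the $|v|^r$- and $|x|^2$-moments together with the uniform $L^\infty$ bound) is exactly that argument. Your computation of the interpolation exponent $\theta=(q-1)(d+r)/(q(r-1))$, together with the companion spatial-tightness estimate via $|u^m|\le c(\rho^m)^{1-1/r}(\rho^m_r)^{1/r}$, correctly reproduces the sharp threshold $q<(d+r)/(d+1)$; the only points worth spelling out in a full write-up are that one also cuts $\varphi\chi_R$ off in $x$ before applying Lemma~\ref{L5.3} (which only gives $L^p_{\mathrm{loc}}$ compactness for the $p$ in which $G^m$ is bounded), and that this local $L^p$-compactness is upgraded to $L^q_{\mathrm{loc}}$-compactness for the stated range of $q$ by interpolating against the uniform $L^\infty_{\mathrm{loc}}$ bound on $u^m_R$.
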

Thanks to Lemma \ref{L5.4}, we can prove the compactness of $\mathcal{T}$.
\begin{corollary}\label{C5.1}
For a uniformly bounded sequence $\tilde{u}^m$ in $\mathcal{S}$, the sequence $\mathcal{T}(\tilde{u}^m) =  (u_\e^\eta)^m$ converges strongly in $\mathcal{S}$, up to a subsequence.
\end{corollary}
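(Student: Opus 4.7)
The plan is to apply the velocity averaging lemma (Lemma \ref{L5.4}) to the family $\{f^{\eta,m}\}$ of weak solutions of \eqref{E-2} driven by $\tilde{u}^m$, deduce strong local compactness of the moments $\rho^{\eta,m}$ and $(\rho^\eta u^\eta)^m$, and then pass to the quotient to obtain strong convergence of $\mathcal{T}(\tilde{u}^m) = (\rho^\eta u^\eta)^m /(\rho^{\eta,m}+\e)$ in $\mathcal{S}$. Throughout the argument the regularization parameters $(R,\zeta,\e)$ are fixed; the key point is that all previous a priori estimates on $f^\eta$ are uniform in the input $\tilde{u}^m$.

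First I would verify this uniformity. The $L^\infty$ bound on $f^{\eta,m}$ and the $L^2(0,T;H^1_v)$ bound on $(f^{\eta,m})^{1/2}$ recorded right after \eqref{E-2} involve only $\|f_0^\eta\|_{L^\infty}$, $d$, $T$ and $\|\phi\|_{L^\infty}$, so they are independent of $\tilde{u}^m$. Similarly, the moment bounds of Lemma \ref{L5.1} depend on $\tilde{u}^m$ only through $\chi_\zeta(\tilde{u}^m)$, which is pointwise bounded by $\zeta$, so those constants $C(d,\eta,N,T)$ are uniform in $m$. A simple additional estimate on $\tfrac{d}{dt}\int |x|^2 f^{\eta,m}\,dxdv$ combined with Gr\"onwall and the $|v|^2$-moment bound yields a uniform bound on $\||x|^2 f^{\eta,m}\|_{L^\infty(0,T;L^1)}$, so that hypothesis \eqref{av_est} of Lemma \ref{L5.4} is met uniformly in $m$.

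Next I would rewrite \eqref{E-2} as $\partial_t f^{\eta,m} + v\cdot \nabla_x f^{\eta,m} = \nabla_v \cdot G^m$, with
\[
G^m = \bigl(2v + \nabla V^R + \nabla W^\e\star \rho^{\eta,m} - F[f^{\eta,m}] - \chi_\zeta(\tilde u^m)\bigr) f^{\eta,m} + \sigma\nabla_v f^{\eta,m}.
\]
Using that $\nabla V^R$ is bounded and compactly supported, that $\nabla W^\e$ is bounded and Lipschitz so $\nabla W^\e\star\rho^{\eta,m}\in L^\infty$ uniformly, that $|F[f^{\eta,m}]|\leq \|\phi\|_{L^\infty}\|v f^{\eta,m}\|_{L^1_{x,v}}$ is uniformly bounded, and that $|\chi_\zeta(\tilde u^m)|\leq \zeta$, together with the moment bounds on $vf^{\eta,m}$ and the uniform $L^2_{t,x,v}$ bound on $\nabla_v f^{\eta,m}$, one sees that $G^m$ is bounded in $L^2_{\rm loc}(\R^{2d+1})$ uniformly in $m$. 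Lemma \ref{L5.4} applied with $\varphi(v)=v_j$ then gives relative compactness of $\{(\rho^\eta u^\eta)^m\}$ in $L^q_{\rm loc}(\Omega\times(0,T))$ for some $q>1$; the same argument with a smooth cutoff (or a direct application of Lemma \ref{L5.3} with $\varphi\in\mc^\infty_c(\R^d)$ and a tightness argument in $v$ using the velocity moments) yields relative compactness of $\{\rho^{\eta,m}\}$. Extracting a diagonal subsequence gives a.e.\ convergence of both moments, and since the denominator $\rho^{\eta,m}+\e\geq \e>0$, the pointwise limit of $\mathcal{T}(\tilde u^m)$ exists.

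The main obstacle is upgrading this local almost-everywhere convergence to strong convergence in $\mathcal{S}=L^2(\Omega\times(0,T))$ when $\Omega=\R^d$. For this, I would combine the a.e.\ limit with (i) a uniform pointwise bound $|\mathcal{T}(\tilde u^m)|\leq \e^{-1}|(\rho^\eta u^\eta)^m|$, (ii) uniform bounds on $(\rho^\eta u^\eta)^m$ in $L^p_{t,x}$ for every $p\in[1,\infty)$ from Lemma \ref{L5.2} (whose constants inherit uniformity in $m$ from step one), and (iii) $x$-tightness coming from the uniform $|x|^2$-moment bound. Vitali's convergence theorem then upgrades a.e.\ convergence to strong convergence in $L^2(\Omega\times(0,T))$. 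On $\T^d$ the argument is identical but tightness is automatic. This completes the compactness of $\mathcal{T}$.
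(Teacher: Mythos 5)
Your proposal is correct and follows the same route as the paper: rewrite \eqref{E-2} with source $G^m$, apply the velocity averaging lemma (Lemma \ref{L5.4}) with $\varphi(v)=1$ and $\varphi(v)=v$ to extract strong convergence of $(\rho^\eta)^m$ and $(\rho^\eta u^\eta)^m$ in $L^2(\Omega\times(0,T))$ (plus a.e.), and then pass to the quotient $(u_\e^\eta)^m=(\rho^\eta u^\eta)^m/((\rho^\eta)^m+\e)$ using the lower bound $(\rho^\eta)^m+\e\ge\e$. Your extra care — checking $m$-uniformity of the a priori bounds and supplying a Vitali argument for the quotient — simply fills in steps the paper records as "consequently"; one small remark is that Lemma \ref{L5.4} already gives global $L^q(\R^{d+1})$ compactness (the $|x|^2$-moment in \eqref{av_est} provides tightness), so the local-to-global upgrade is only genuinely needed for the quotient itself, not for the moments, and your observation that $\varphi(v)=1$ does not literally satisfy $|\varphi(v)|\le c|v|$ is a fair reading of the lemma's statement even if the proof clearly accommodates $|\varphi(v)|\le c(1+|v|)$.
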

\begin{proof}
For the convergence of $\{(u_\e^\eta)^m\}$, we set
\[ 
f^m := (f^\eta)^m , \quad G^m :=\lt(\sigma \nabla_v (f^\eta)^m + (2v + \nabla V^R + \nabla W^\e \star (\rho^\eta)^m - F[(f^\eta)^m] - \chi_\zeta(\tilde{u}))(f^\eta)^m \rt),
\]
then it is easy to see $G^m \in L^p_{loc}(\R^{2d+1})$. Let us choose $r$ appeared in \eqref{av_est} sufficiently large. Then, we set $\varphi(v) =1$ and $\varphi(v) = v$ in Lemma \ref{L5.4}, respectively, and obtain the following strong convergence up to a subsequence:
$$\begin{aligned}
&(\rho^\eta)^m \to \rho^\eta \quad \quad \mbox{in} \quad L^2(\Omega \times (0,T)) \quad \mbox{and a.e.},\\
&(\rho^\eta)^m (u^\eta)^m \to \rho^\eta u^\eta \quad \mbox{in} \quad L^2(\Omega \times (0,T)),
\end{aligned}$$
and consequently, it gives the convergence of $\{(u_\e^\eta)^m\}$ up to a subsequence.
\end{proof}

\begin{remark}
Lemma \ref{L5.2} and Corollary \ref{C5.1} imply that the operator $\mathcal{T}$ is well-defined, continuous and compact. Thus, we can use the Schauder's fixed point theorem to obtain a weak solution $f^\eta$ to \eqref{E-1}.
\end{remark}
From the previous fixed point argument, the following energy inequality associated with \eqref{E-1} is obvious.
\begin{corollary}\label{C5.2}
Let $f^\eta$ be a weak solution to \eqref{E-1}. Then, it satisfies the following energy inequality:
\[
\begin{aligned}
\frac{d}{dt}&\left( \int_{\Omega \times \R^d} \left(\frac{|v|^2}{2} + V^R + \sigma \log f^\eta \right) f^\eta \,dxdv + \frac{1}{2}\intoo W^\e(x-y)\rho^\eta (x) \rho^\eta (y) \,dxdy\right)\\
&\qquad + \int_{\Omega\times\R^d}\frac{1}{f^\eta} \left|\sigma \nabla_v f^\eta - (v-\chi_\zeta(u_\e^\eta))f^\eta\right|^2  \,dxdv + \int_{\Omega\times\R^d}|v|^2 f^\eta \,dxdv\\
&\qquad +\frac{1}{2}\intorr \phi(x-y)|w-v|^2 f^\eta (y,w)f^\eta(x,v)\,dxdydvdw\\
&\qquad \quad\le \sigma d \|f_0^\eta\|_{L^1} +\sigma d\int_{\Omega}(\phi\star\rho^\eta)\rho^\eta\,dx.
\end{aligned}
\]
\end{corollary}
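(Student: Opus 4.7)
The plan is to apply Proposition \ref{P5.1} with the specific choice $\tilde{u} = u_\e^\eta$; since \eqref{E-1} is precisely \eqref{E-2} with this substitution, and since by Lemma \ref{L5.2} this $u_\e^\eta$ belongs to $\mathcal{S}$, the substitution is legitimate. Comparing the conclusion of Proposition \ref{P5.1} with the inequality claimed in Corollary \ref{C5.2}, the only discrepancy is the single term
$$J := \intor (\chi_\zeta(u_\e^\eta) - v) \cdot \chi_\zeta(u_\e^\eta) f^\eta \,dxdv,$$
which appears on the right-hand side of Proposition \ref{P5.1} but is absent from Corollary \ref{C5.2}. The whole task therefore reduces to showing $J \le 0$.

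To establish this, first I would integrate out $v$ using $\int_{\R^d} v f^\eta \,dv = \rho^\eta u^\eta$ and rewrite
$$J = \into \chi_\zeta(u_\e^\eta) \cdot \lt( \chi_\zeta(u_\e^\eta)\rho^\eta - \rho^\eta u^\eta \rt) dx,$$
and then invoke the fixed-point identity $\rho^\eta u^\eta = (\rho^\eta + \e) u_\e^\eta$ built into the very definition of $u_\e^\eta$. Splitting $\om$ according to whether $|u_\e^\eta| \le \zeta$ or $|u_\e^\eta| > \zeta$: on the first set $\chi_\zeta(u_\e^\eta) = u_\e^\eta$ and the integrand equals $u_\e^\eta \cdot (u_\e^\eta\rho^\eta - (\rho^\eta+\e)u_\e^\eta) = -\e|u_\e^\eta|^2$, while on the second set $\chi_\zeta(u_\e^\eta) = 0$ kills the integrand entirely. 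Consequently
$$J = -\e\into |u_\e^\eta|^2 \mathds{1}_{\{|u_\e^\eta| \le \zeta\}} \, dx \le 0,$$
which combined with the equality of Proposition \ref{P5.1} gives exactly the inequality stated in Corollary \ref{C5.2}.

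The main potential obstacle is not the computation of $J$, which is algebraic, but rather the justification that Proposition \ref{P5.1} is applicable to the fixed-point solution $f^\eta$ with only the regularity delivered by Schauder's theorem, since Proposition \ref{P5.1} was proved under an unspecified ``sufficient regularity'' hypothesis. Should this cause trouble, the safer route is to apply Proposition \ref{P5.1} along the approximating sequence from the proof of Corollary \ref{C5.1} with $\tilde{u}^m \to u_\e^\eta$ in $\mathcal{S}$ and pass to the limit: all dissipation terms on the left-hand side are convex in $(f^\eta, \nabla_v f^\eta)$ and hence lower semi-continuous under the weak convergence obtained from the velocity averaging lemma, while the right-hand side passes to the limit thanks to the strong convergence in $\mathcal{S}$ already established in Corollary \ref{C5.1}. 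Crucially, this lower semi-continuity transforms the equality of Proposition \ref{P5.1} into precisely the inequality asserted in Corollary \ref{C5.2}.
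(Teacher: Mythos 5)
Your proposal is correct and follows essentially the same route as the paper: apply Proposition~\ref{P5.1} at the fixed point $\tilde u = u_\e^\eta$ and observe that the extra term $J$ reduces, via the identity $\rho^\eta u^\eta = (\rho^\eta+\e)u_\e^\eta$ and the case split on $\{|u_\e^\eta|\le\zeta\}$, to $-\e\int_\Omega |u_\e^\eta|^2\mathds{1}_{\{|u_\e^\eta|\le\zeta\}}\,dx\le 0$. The extra paragraph you add about justifying Proposition~\ref{P5.1} for the fixed-point solution (via lower semicontinuity along the approximating sequence) is a reasonable precaution, but the paper simply invokes the proposition directly without such a detour.
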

\begin{proof}
From the existence of the fixed point of $\mathcal{T}$ and Proposition \ref{P5.1}, it is obvious that
$$\begin{aligned}
\frac{d}{dt}&\left( \int_{\Omega \times \R^d} \left(\frac{|v|^2}{2} + V^R + \sigma \log f^\eta \right) f^\eta \,dxdv + \frac{1}{2}\intoo W^\e(x-y)\rho^\eta (x) \rho^\eta (y) \,dxdy\right)\\
&\qquad + \int_{\Omega\times\R^d}\frac{1}{f^\eta} \left|\sigma \nabla_v f^\eta - (v-\chi_\zeta(u_\e^\eta))f^\eta\right|^2  \,dxdv + \int_{\Omega\times\R^d}|v|^2 f^\eta \,dxdv\\
&\qquad +\frac{1}{2}\intorr \phi(x-y)|w-v|^2 f^\eta (y,w)f^\eta(x,v)\,dxdydvdw\\
&\qquad \quad =\int_{\Omega\times\R^d} (\chi_\zeta(u_\e^\eta)-v)\cdot \chi_\zeta(u_\e^\eta) f^\eta \,dxdv\\
&\qquad \qquad+ \sigma d \|f_0^\eta\|_{L^1} +\sigma d\intorr \phi(x-y)f^\eta(y,w)f^\eta(x,v)\,dxdydvdw.
\end{aligned}$$
Here, we note that
\[
\begin{split}
\int_{\R^d}(\chi_\zeta(u_\e^\eta)-v)\cdot \chi_\zeta(u_\e^\eta)f^\eta \,dv &= \rho^\eta(\chi_\zeta(u_\e^\eta)-u^\eta)\cdot \chi_\zeta(u_\e^\eta)\\
&= \rho^\eta \left(\frac{\rho^\eta u^\eta}{\rho^\eta +\e} - u^\eta\right)\cdot \frac{\rho^\eta u^\eta}{\rho^\eta +\e} \mathds{1}_{\{ |u_\e^\eta|\le \zeta\}}\\
&= -\e \left|\frac{\rho^\eta u^\eta}{\rho^\eta + \e}\right|\mathds{1}_{\{ |u_\e^\eta|\le \zeta\}} \le 0,
\end{split}
\]
which implies our desired estimate.
\end{proof}

\subsection{Proof of Theorem \ref{T2.1}}
Now, we provide the existence of a weak solution to \eqref{main_eq} based on the energy inequality, compactness argument and velocity averaging lemma.

\subsubsection{Convergences $R \to \infty$ and $\zeta \to \infty$}
First, we set $R=\zeta$, and we will tend $R$ to infinity. \\

\noindent $\bullet$ (Step A: Uniform bound estimates) As an initial step, we derive a upper-bound estimate which is uniform in $R$ and $\zeta$ from Corollary \ref{C5.2}. For technical reason, we also estimate
\[
\frac{d}{dt}\left(\int_{\Omega\times\R^d} \frac{|x|^2}{2} f^\eta \,dxdv\right) = \int_{\Omega \times \R^d} x \cdot v f^\eta \,dxdv \le \int_{\Omega \times \R^d} \left(\frac{|x|^2}{2} + \frac{|v|^2}{2}\right) f^\eta \,dxdv,
\]
and combine this with Corollary \ref{C5.2} to get
\begin{align}
\begin{aligned}\label{E-4}
&\int_{\Omega\times\R^d} \left(\frac{|v|^2}{2} + \frac{|x|^2}{2} + V^R+\sigma \log f^\eta \right) f^\eta \,dxdv + \frac{1}{2}\intoo W^\e(x-y)\rho^\eta (x) \rho^\eta (y) \,dxdy\\
& \quad+ \int_0^t\int_{\Omega\times\R^d}\frac{1}{f^\eta} \left|\sigma \nabla_v f^\eta - (v-\chi_\zeta(u_\e^\eta)) f^\eta\right|^2 \,dxdvds \\
& \quad+\frac{1}{2}\int_0^t\intorr \phi(x-y)|w-v|^2 f^\eta (y,w)f^\eta(x,v)\,dxdydvdwds\\
&\qquad \le \int_{\Omega\times\R^d} \left(\frac{|v|^2}{2} + \frac{|x|^2}{2} + V^R+\sigma \log f_0^\eta \right) f_0^\eta \,dxdv + \frac{1}{2}\intoo W^\e(x-y)\rho_0^\eta (x) \rho_0^\eta (y) \,dxdy\\
&\qquad \quad +\sigma d t\|f_0^\eta\|_{L^1} +\sigma d\int_0^t\int_{\Omega}(\phi\star\rho^\eta)\rho^\eta \,dxds +\int_0^t\intor \frac{|x|^2}{2}f^\eta\,dxdvds.
\end{aligned}
\end{align}
Here, we recall the following inequality from the classical result \cite{C-I-P}:
\[
2\int_{\om \times \R^d} f^\varepsilon \log_{-}f^\varepsilon \,dx dv  \le \int_{\om \times \R^d} f^\varepsilon \left( \frac{|x|^2}{2} + \frac{|v|^2}{2} \right) dx dv  + \frac{1}{e}\int_{\om \times \R^d} e^{-\frac{|v|^2}{4} - \frac{|x|^2}{4}} dx dv,
\]
where $\log_{-}g(x) := \max\{0, -\log g(x)\}$. We apply the aforementioned inequality to \eqref{E-4} and obtain
$$\begin{aligned}
&\int_{\Omega\times\R^d} \left(\frac{|v|^2}{2} + \frac{|x|^2}{2} + V^R+\sigma |\log f^\eta| \right) f^\eta \,dxdv + \frac{1}{2}\int_{\Omega \times \Omega} W^\e(x-y)\rho^\eta (x) \rho^\eta (y) \,dxdy\\
& \quad+ \int_0^t\int_{\Omega\times\R^d}\frac{1}{f^\eta} \left|\sigma \nabla_v f^\eta - (v-\chi_\zeta(u_\e^\eta))f^\eta\right|^2  \,dxdvds \\
& \quad+\frac{1}{2}\int_0^t\intorr \phi(x-y)|w-v|^2 f^\eta (y,w)f^\eta(x,v)\,dxdydvdwds\\
&\qquad \le \int_{\Omega\times\R^d}\left(\frac{|v|^2}{2} + \frac{|x|^2}{2} + V^R+\sigma |\log f_0^\eta| \right) f_0^\eta \,dxdv + \frac{1}{2}\int_{\Omega \times \Omega} W^\e(x-y)\rho_0^\eta (x) \rho_0^\eta (y) \,dxdy\\
&\qquad \quad +\sigma d t\|f_0^\eta\|_{L^1} +\sigma d\|\phi\|_{L^\infty}t \|f_0^\eta\|_{L^1}^2 +\int_0^t\int_{\Omega\times\R^d} \left(|v|^2 +|x|^2\right)f^\eta\,dxdvds + C,
\end{aligned}$$
where $C=C(T)$ is a positive constant independent of $\eta$. Thus, we use $\|f_0^\eta\|_{L^1} \le \|f_0\|_{L^1}$ and $W^\e(x) \le W(x)$, and apply Gr\"onwall's lemma to yield, for $t \in [0,T]$,
$$\begin{aligned}
\int_{\Omega\times\R^d}& \left(\frac{|v|^2}{2} + \frac{|x|^2}{2} \right) f^\eta \,dxdv \le C,
\end{aligned}$$
where $C=C(T)$ is a positive constant independent of $\eta$. Since $f^\eta$ satisfies
\[
\begin{split}
\|f^\eta(\cdot,\cdot,t)\|_{L^p}^p &+ \frac{4\sigma(p-1)}{p}\int_0^t e^{d(p-1)(2 + \|\phi\|_{L^\infty}\|f^\eta_0\|_{L^1})(t-s)}\|\nabla_v (f^\eta)^{p/2}(\cdot,\cdot,s)\|_{L^2}^2\,ds \\
&\leq \|f^\eta_0\|_{L^p}^pe^{d(p-1)(2+\|\phi\|_{L^\infty}\|f_0^\eta\|_{L^1})t}\\
&\leq \|f_0\|_{L^p}^pe^{d(p-1)(2+\|\phi\|_{L^\infty}\|f_0^\eta\|_{L^1})t}
\end{split}
\]
for every $p \in [1,\infty]$, these uniform bounds yield the following estimates:
\[
\|f^\eta\|_{L^\infty(0,T;L^p(\Omega\times\R^d))} + \|\rho^\eta\|_{L^\infty(0,T;L^{q_1}(\Omega))} + \|\rho^\eta u^\eta\|_{L^\infty(0,T;L^{q_2}(\Omega))} \le C(T),
\]
where $p \in [1,\infty]$, $q_1 \in [1,(d+2)/d)$, $q_2 \in [1,(d+2)/(d+1))$ and $C=C(T)$ is a positive constant independent of $\eta$. This uniform estimate implies the following weak convergence as $R\to\infty$ up to a subsequence:
\[
\begin{array}{lcll}
f^\eta \rightharpoonup f^\e  &\mbox{ in }&  L^\infty(0,T;L^p(\Omega\times\R^d)), & p\in[1,\infty],\\
\displaystyle \rho^\eta \rightharpoonup \rho^\e & \mbox{ in } & L^\infty(0,T;L^p(\Omega)), & p\in[1,(d+2)/d),\\
\displaystyle \rho^\eta u^\eta \rightharpoonup \rho^\e u^\e & \mbox{ in } & L^\infty(0,T;L^p(\Omega)), & p\in [1,(d+2)/(d+1)).
\end{array}
\]
Moreover, once we choose $p \in (1,(d+2)/(d+1))$ and write $G^\eta$ as
$$
G^\eta := \sigma \nabla_v f^\eta + (2v+\nabla V^R + \nabla W^\e \star \rho^\eta - F[f^\eta]-\chi_\zeta(u_\e^\eta))f^\eta,
$$
then we can see that $G^\eta \in L_{loc}^p(\Omega\times\R^d \times (0,T))$. Indeed, if we consider a bounded region $\mathcal{D} \subset \Omega\times\R^d$, the boundedness of $\nabla V^R f^\eta$ in $L^p(\mathcal{D})$ follows since
$$\begin{aligned}
|\nabla V^R f^\eta| &= \lt|(\nabla V)(x) M\left(\frac{x}{R}\right) f^\eta+\frac{1}{R} V(x) (\nabla M)\left(\frac{x}{R}\right)f^\eta\rt|\\
& \le |\nabla V (x) | f^\eta + \frac{\|\nabla M\|_{L^\infty}}{R} V(x) f^\eta,
\end{aligned}$$
and we will consider the regime $R \to \infty$. The boundedness of the others naturally follows. Thus, we let $G^\eta$ as above, set $r=2$ and apply them to Lemma \ref{L5.4} to obtain, for $p \in (1, (d+2)/(d+1))$,
\[
\begin{array}{lcl}
\displaystyle \rho^\eta \to \rho^\e & \mbox{ in } & L^p(\Omega \times (0,T)) \ \mbox{ and a.e.},\\
\displaystyle \rho^\eta u^\eta \to \rho^\e u^\e & \mbox{ in } & L^p(\Omega \times (0,T))
\end{array}
\]
as $R\to\infty$, up to a subsequence. \\

\noindent $\bullet$ (Step B: Existence of weak solutions and entropy inequality) Now, it remains to show that the limit $f^\e$ satisfies the following equation in distributional sense:
\begin{equation}\label{E-7}
\begin{split}
&\pa_t f^\e + v\cdot\nabla_x f^\e -  \nabla_v \cdot \lt((v + \lt(\nabla V + \nabla W^\e \star \rho^\e \rt))f^\e \rt)+ \nabla_v \cdot \lt(F[f^\e]f^\e\rt) \cr
&\hspace{2cm}=  \nabla_v \cdot ( (v -  u_\e^\e)f^\e + \sigma \nabla_v f^\e),
\end{split}
\end{equation}
and also the entropy inequality:
\begin{align}\label{E-8}
\begin{aligned}
&\int_{\Omega \times \R^d} \left(\frac{|v|^2}{2} + V + \sigma \log f^\e \right) f^\e \,dxdv + \frac{1}{2}\intoo W^\e(x-y)\rho^\e (x) \rho^\e (y) \,dxdy\\
&\quad + \int_0^t\int_{\Omega\times\R^d}\frac{1}{f^\e} \left|\sigma \nabla_v f^\e - (v-u_\e^\e)f^\e\right|^2  \,dxdvds + \int_0^t\int_{\Omega\times\R^d}|v|^2 f^\e \,dxdvds\\
&\quad +\frac{1}{2}\int_0^t\int_{\Omega^2\times\R^{2d}}\phi(x-y)|w-v|^2 f^\e (y,w)f^\e(x,v)\,dxdydvdwds\\
&\qquad \le \int_{\Omega \times \R^d} \left(\frac{|v|^2}{2} + V + \sigma \log f_0^\e \right) f_0^\e \,dxdv + \frac{1}{2}\intoo W^\e(x-y)\rho_0^\e (x) \rho_0^\e (y) \,dxdy \\
&\qquad \quad+\sigma dt \|f_0\|_{L^1} +\sigma d\int_0^t \int_{\Omega}(\phi \star \rho^\e)\rho^\e\,dxds.
\end{aligned}
\end{align}
For $f^\e$ to be a weak solution to \eqref{E-7}, it suffices to show the following convergence in distribution sense since the others are obvious:
\begin{equation}\label{E-9}
\begin{cases}
\mbox{(i)}~~\nabla W^\e \star \rho^\eta \to \nabla W^\e \star \rho^\e,\\[2mm]
\mbox{(ii)}~~F[f^\eta]f^\eta \to F[f^\e]f^\e,\\[2mm]
\mbox{(iii)}~~ \chi_R(u_\e^\eta) f^\eta \to u_\e^\e f^\e.\\
\end{cases}
\end{equation}
$\diamond$ (Step B-1: Convergence of \eqref{E-9} (i)) We choose $\Psi\in \mathcal{C}_c^\infty(\Omega \times\R^d\times [0,T]) $ and write 
\[
\rho^\eta_\Psi := \int_{\R^d} f^\eta(v)\Psi(v)\,dv.
\]
Then, we have
$$\begin{aligned}
\int_0^t& \int_{\Omega\times\R^d} \left[(\nabla W^\e \star \rho^\eta)f^\eta - (\nabla W^\e \star \rho^\e)f^\e\right]\Psi\,dxdvds\\
&=\int_0^t \int_{\Omega\times\R^d} \nabla W^\e \star (\rho^\eta -\rho^\e)\rho_\Psi^\eta\,dxds + \int_0^t \int_{\Omega\times\R^d} (\nabla W^\e \star\rho^\e )(f^\eta-f^\e) \Psi\,dxdvds\\
&=: K_1^1 +K_1^2.
\end{aligned}$$
For $K_1^1$, since $f^\eta$ is uniformly bounded in $L^\infty(\Omega \times\R^d\times (0,T))$ and $\Psi$ is compactly supported, it is obvious that
\[
\rho_\Psi^\eta \in L^p(0,T; L^q(\Omega)) \quad \mbox{for any } \ p,q \in [1,\infty], \quad \mbox{uniformly in } \eta.
\]
Note that $\rho_{|\Psi|}^\eta := \int_{\R^d} f^\eta(v)|\Psi|(v)\,dv$ also satisfies the above estimate. Thus, we have
$$\begin{aligned}
K_1^1 &= \int_0^t \int_\Omega (\nabla W^\e \mathds{1}_{\{ |\cdot | \le 1\}})\star(\rho^\eta - \rho^\e) \rho_\Psi^\eta \,dxds + \int_0^t \int_\Omega (\nabla W^\e \mathds{1}_{\{ |\cdot | > 1\}})\star(\rho^\eta - \rho^\e) \rho_\Psi^\eta \,dxds\\
&\le \big\||\nabla W(\cdot)|\mathds{1}_{\{|\cdot|\le 1\}} \big\|_{L^1(\Omega)} \|\rho^\eta - \rho^\e\|_{L^p(\Omega \times (0,T))} \|\rho_\Psi^\eta\|_{L^{p'}(\Omega \times (0,T))}\\
&\quad + \big\||\nabla W(\cdot)|\mathds{1}_{\{|\cdot|>1\}} \cdot \mathds{1}_{\mbox{supp}_x \Psi}\big\|_{L^{p'}(\Omega)} \|\rho^\eta - \rho^\e\|_{L^p(\Omega \times (0,T))}\|\rho_\Psi^\eta\|_{L^{p'}(0,T;L^1(\Omega))}, 
\end{aligned}$$
where $p \in (1,(d+2)/(d+1))$, $p'$ is the H\"older conjugate of $p$ (thus $p' > d+2$), $\mbox{supp}_x \Psi$ denotes the support of $\Psi$ in $x$ which is compact:
\[
\mbox{supp}_x \Psi := \overline{\{ x\in\Omega \ | \ \Psi(x,v) \neq 0 \ \mbox{ for some } \ v \in \R^d\}},
\]
and we used Young's convolution inequality:
\[
\into (f\star g)h \,dx \le \|f\|_{L^p}\|g\|_{L^q}\|h\|_{L^r}, \quad \frac{1}{p} + \frac{1}{q} + \frac{1}{r} = 2.
\]
Thus, thanks to the strong convergence of $\rho^\eta$, we have $K_1^1 \to 0$ as $R \to \infty$.\\

\noindent For $K_1^2$, it naturally follows from the compact support of $\Psi$ that
\[
(\nabla W^\e \star \rho^\e) \Psi \in L^1(0,T;L^p(\Omega)) \ \mbox{ for some } \ p \in (1,\infty). 
\]
Thus, due to the weak convergence of $f^\eta$, we get $K_1^2 \to 0$ as $R \to \infty$.\\

\noindent $\diamond$ (Step B-2: Convergence of \eqref{E-9} (ii)) We note that
\[
F[f^\eta](x,v)= \int_{\Omega\times\R^d}\phi(x-y)(w-v)f^\eta(y,w)\,dydw = \int_\Omega \phi(x-y)\lt((\rho^\eta u^\eta)(y) - v \rho^\eta(y)\rt)dy.
\]
Thus, for $\Psi\in \mathcal{C}_c^\infty([0,T]\times\Omega \times \R^d)$, we get
$$\begin{aligned}
&\int_0^t \int_{\Omega\times\R^d} (F[f^\eta]f^\eta - F[f^\e]f^\e)\Psi \,dxdvds\\
& =\int_0^t \int_{\Omega^2 \times \R^d} \phi(x-y)\left[ ((\rho^\eta u^\eta)(y) - v\rho^\eta(y)) - ((\rho^\e u^\e)(y) - v\rho^\e(y)) \right](f^\eta \Psi)(x,v)\,dxdydvds\\
&\quad + \int_0^t \int_{\Omega^2 \times \R^d}\phi(x-y)((\rho^\e u^\e)(y) - v\rho^\e(y))(f^\eta - f^\e)(x,v)\Psi(x,v)\,dxdydv\\
&=: K_2^1 + K_2^2. 
\end{aligned}$$
For $K_2^1$, we use Young's convolution inequality, the uniform boundedness of $\rho_\Psi^\eta$ and the compact support of $\Psi$ to obtain
$$\begin{aligned}
K_2^1&= \int_0^t \int_\Omega \phi\star(\rho^\eta u^\eta - \rho^\e u^\e) \rho_\Psi^\eta \,dxds + \int_0^t \int_{\Omega\times\R^d} \phi\star(\rho^\e - \rho^\eta)(vf^\eta)(x,v) \cdot \Psi(x,v)\,dxdvds\\
&\le \|\phi\|_{L^\infty} \|\rho^\eta u^\eta - \rho^\e u^\e\|_{L^p((0,T)\times\Omega)} \|\rho_\Psi^\eta\|_{L^{p'}((0,T)\times\Omega)}\\
&\quad +  |\mbox{supp}_v \Psi| \|\phi\|_{L^\infty} \|\rho^\eta - \rho^\e \|_{L^p((0,T)\times\Omega)} \|\rho_{|\Psi|}^\eta\|_{L^{p'}((0,T)\times\Omega)},
\end{aligned}$$
where $p \in (1,(d+2)/(d+1))$ and hence, $K_2^1 \to 0$ as $R\to \infty$.\\

\noindent For $K_2^2$, we use the compact support of $\Psi$ to get
\[
(\phi\star(\rho^\e u^\e) - v\phi\star \rho^\e) \Psi \in L^1(0,T;L^p(\Omega\times\R^d))
\]
for some $p \in (1,\infty)$, and we combine this with the weak convergence of $f^\eta$ to yield $K_2^2 \to 0$ as $R \to \infty$.\\

\noindent $\diamond$ (Step B-3: Convergence of \eqref{E-9} (iii)) Again, for $\Psi \in \mathcal{C}_c^\infty([0,T]\times\Omega \times \R^d)$, we estimate
$$\begin{aligned}
\int_0^t& \int_{\Omega\times\R^d} (\chi_R(u_\e^\eta)f^\eta - u_\e^\e f^\e)\Psi \,dxdvds\\
&=\int_0^t \int_{\Omega\times\R^d} (u_\e^\eta f^\eta - u_\e^\e f^\e)\Psi \,dxdvds + \int_0^t \int_{\Omega\times\R^d} u_\e^\eta f^\eta\mathds{1}_{\{|u_\e^\eta|>R\}} \Psi \,dxdvds\\
&=\int_0^t \int_\Omega (u_\e^\eta - u_\e^\e)\rho_\Psi^\eta \,dxds + \int_0^t \int_{\Omega\times\R^d} u_\e^\e (f^\eta-f^\e) \Psi \,dxdvds \\
&\quad + \int_0^t \int_{\Omega\times\R^d} u_\e^\eta f^\eta\mathds{1}_{\{|u_\e^\eta|>R\}} \Psi \,dxdvds\\
&=: K_3^1 + K_3^2 + K_3^3.
\end{aligned}$$
For $K_3^1$, we find
\[
K_3^1 = \int_0^t \int_\Omega \left[\left(\frac{1}{\rho^\eta + \e} - \frac{1}{\rho^\e + \e}\right)(\rho^\e u^\e) + \frac{1}{\rho^\eta +\e} (\rho^\eta u^\eta - \rho^\e u^\e) \right]\rho_\Psi^\eta\,dxds.
\]
Here, since $\rho^\eta \to \rho^\e$ a.e. and $\rho_\Psi^\eta \in L^\infty((0,T)\times\Omega)$, we have
\[
\left(\frac{1}{\rho^\eta + \e} - \frac{1}{\rho^\e + \e}\right)(\rho^\e u^\e)\rho_\Psi^\eta \to 0, \quad \mbox{a.e.}
\]
as $R \to \infty$, and since
\[
\left|\left(\frac{1}{\rho^\eta + \e} - \frac{1}{\rho^\e + \e}\right)(\rho^\e u^\e)\rho_\Psi^\eta\right| \le \frac{2\|\rho_\Psi^\eta\|_{L^\infty}}{\e}|\rho^\e u^\e| \le C(\e) |\rho^\e u^\e|,
\]
where $C = C(\e)$ is a positive constant indepedent of $\eta$, we use the dominated convergence theorem to get
\[
\int_0^t \int_\Omega \left(\frac{1}{\rho^\eta + \e} - \frac{1}{\rho^\e + \e}\right)(\rho^\e u^\e)\rho_\Psi^\eta \,dxds \to 0
\] 
as $R \to \infty$. Moreover, we estimate
\[
\int_0^t \int_\Omega  \frac{1}{\rho^\eta +\e} (\rho^\eta u^\eta - \rho^\e u^\e) \rho_\Psi^\eta\,dxds\le \frac{1}{\e}\|\rho^\eta u^\eta -\rho^\e u^\e\|_{L^p} \|\rho_\Psi^\eta\|_{L^{p'}},
\]
where $p \in (1,(d+2)/(d+1))$, and hence we can get $K_3^1 \to 0$ as $R \to \infty$. For the estimate of $K_3^2$, it is obvious that $u_\e^\e \Psi \in L^1(0,T;L^p(\Omega\times\R^d))$ for some $p\in(1,\infty)$. Thus, the weak convergence implies $K_3^2 \to 0$  as $R \to \infty$. Finally we use 
\[
|u^\eta| \le \left(\frac{\int_{\R^d} |v|^2 f^\eta \,dv}{\int_{\R^d} f^\eta \,dv}\right)^{1/2}
\] 
to get
\[
K_3^3 \le \frac{1}{R} \int_0^t \int_{\Omega\times\R^d}|u_\e^\eta|^2 f^\eta \Psi\,dxdvds\le \frac{\|\Psi\|_{L^\infty}}{R} \int_0^t\int_{\Omega\times\R^d} |v|^2 f^\eta \,dxdvds \to 0
\]
as $R \to \infty$. Hence we can find out that $f^\e$ becomes a weak solution to \eqref{E-7}.\\

\noindent $\diamond$ (Step B-4: Entropy inequality) For \eqref{E-8}, we first take the liminf on the left hand side of Corollary \ref{C5.2}, convexity of the entropy and use $\|f_0^\eta \|_{L^1} \le \|f_0\|_{L^1}$ to get
\[
\begin{split}
&\int_{\Omega \times \R^d} \left(\frac{|v|^2}{2} + V + \sigma \log f^\e \right) f^\e \,dxdv + \frac{1}{2}\int_{\Omega^2} W^\e(x-y)\rho^\e (x) \rho^\e (y) \,dxdy\\
&\quad + \int_0^t\int_{\Omega\times\R^d}\frac{1}{f^\e} \left|\sigma \nabla_v f^\e - (v-u_\e^\e)f^\e\right|^2  \,dxdvds + \int_0^t\int_{\Omega\times\R^d}|v|^2 f^\e \,dxdvds\\
&\quad +\frac{1}{2}\int_0^t\int_{\Omega^2\times\R^{2d}}\phi(x-y)|w-v|^2 f^\e (y,w)f^\e(x,v)\,dxdydvdwds\\
&\qquad \le \liminf_{R \to 0}\left(\int_{\Omega \times \R^d} \left(\frac{|v|^2}{2} + V + \sigma \log f_0^\eta \right) f_0^\eta \,dxdv + \frac{1}{2}\int_{\Omega^2} W^\e(x-y)\rho_0^\eta (x) \rho_0^\eta (y) \,dxdy\right)\\
&\qquad \quad +\sigma dt \|f_0\|_{L^1} +\sigma d\liminf_{R \to 0}\int_0^t \int_{\Omega}(\phi \star \rho^\eta)\rho^\eta\,dxds.
\end{split}
\] 
Here, we use the reverse Fatou's lemma and the pointwise convergences $\rho_0^\eta \to \rho_0^\e=\rho_0$ and $f_0^\eta \to f_0^\e=f_0$ to get
$$\begin{aligned}
&\liminf_{R \to 0}\left(\int_{\Omega \times \R^d} \left(\frac{|v|^2}{2} + V + \sigma \log f_0^\eta \right) f_0^\eta \,dxdv + \frac{1}{2}\int_{\Omega \times \Omega} W^\e(x-y)\rho_0^\eta (x) \rho_0^\eta (y) \,dxdy\right)\\
&\quad\le \limsup_{R \to 0}\left(\int_{\Omega \times \R^d} \left(\frac{|v|^2}{2} + V + \sigma \log f_0^\eta \right) f_0^\eta \,dxdv + \frac{1}{2}\int_{\Omega \times \Omega} W^\e(x-y)\rho_0^\eta (x) \rho_0^\eta (y) \,dxdy\right)\\
&\quad\le \int_{\Omega \times \R^d} \left(\frac{|v|^2}{2} + V + \sigma \log f_0 \right) f_0 \,dxdv + \frac{1}{2}\int_{\Omega \times \Omega} W^\e(x-y)\rho_0 (x) \rho_0 (y) \,dxdy,
\end{aligned}$$
and we claim that the following convergence holds:
\[
\lim_{R \to 0}\int_0^t \int_{\Omega}(\phi \star \rho^\eta)\rho^\eta\,dxds = \int_0^t \int_{\Omega}(\phi \star \rho^\e)\rho^\e\,dxds.
\]
For this, we present a theorem similar to Vitali convergence theorem whose proof is presented in Appendix B. Note that when $\Omega=\T^d$, the condition (ii) is unnecessary.
\begin{theorem}\label{thm_v}
A sequence $\{h_n\}$ in $L^1(\Omega)$ converges to $h\in L^1(\Omega)$ in $L^1(\Omega)$ if the following three conditions hold:
\begin{enumerate}
\item[(i)] $h_n$ converges to $h$ almost everywhere.

\item[(ii)] for every $\e>0$, there exists $L>0$ such that
\[
\sup_{n \in \N} \int_{|x|> L} |h_n|\,dx < \e.
\]
\item[(iii)] for every $\e>0$, there exists $\delta = \delta(\e)>0$ such that whenever $m(E)<\delta$,
\[
\sup_{n \in \N} \int_E |h_n| \,dx <\e.
\]
\end{enumerate}
\end{theorem}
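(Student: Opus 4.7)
The plan is to reduce everything to the classical Vitali convergence theorem by combining the tightness condition (ii) with uniform integrability (iii) and Egorov's theorem. First, I would observe that Fatou's lemma, applied to the a.e. convergence in (i), combined with the uniform bound that (iii) implicitly gives on $\|h_n\|_{L^1}$ (take $E = \Omega$ if $m(\Omega) < \infty$, or use (ii) together with a fixed $L$ to bound the mass on $|x|\le L$), transfers the corresponding tightness and uniform integrability properties to the limit $h$. In particular, for every $\e > 0$ there is $L$ with $\int_{|x|>L}|h|\,dx < \e$ and for every $\e > 0$ there is $\delta$ such that $m(E) < \delta$ implies $\int_E |h|\,dx < \e$.

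Next, I would fix $\e > 0$ and split
\[
\int_\Omega |h_n - h|\,dx = \int_{|x|\le L} |h_n - h|\,dx + \int_{|x|>L} |h_n - h|\,dx,
\]
choosing $L$ large enough (by (ii) and the analogous property for $h$) so that the second integral is bounded by $2\e$ uniformly in $n$. The first step then reduces the problem to showing convergence in $L^1$ on the set $B_L := \{|x|\le L\}$, which has finite Lebesgue measure. On this set, by Egorov's theorem, condition (i) furnishes a measurable subset $E \subset B_L$ with $m(B_L\setminus E) < \delta$ (where $\delta$ is chosen according to (iii) so that both $\int_{B_L\setminus E}|h_n|\,dx$ and $\int_{B_L\setminus E}|h|\,dx$ are less than $\e$) on which $h_n \to h$ uniformly. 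Then
\[
\int_{B_L} |h_n - h|\,dx \le \int_E |h_n - h|\,dx + \int_{B_L\setminus E}\bigl(|h_n| + |h|\bigr)\,dx,
\]
where the first term tends to $0$ as $n\to\infty$ by uniform convergence on $E$ (using $m(E) \le m(B_L) < \infty$), and the second term is bounded by $2\e$ uniformly in $n$. Combining the estimates yields $\limsup_n \|h_n - h\|_{L^1} \le 6\e$, and since $\e$ is arbitrary, the conclusion follows.

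The main technical step — and the only delicate one — is verifying that $h$ itself inherits the tightness and uniform integrability properties from $\{h_n\}$. This is handled by applying Fatou's lemma to $|h_n|\mathds{1}_{|x|>L}$ and $|h_n|\mathds{1}_E$, respectively; once these transfers are established, the rest of the argument is a routine $\e$-$\delta$ bookkeeping via Egorov's theorem. In the periodic case $\Omega = \T^d$, the entire splitting over $|x| > L$ is unnecessary and the proof reduces to the Egorov step alone, which is why condition (ii) can be dropped there.
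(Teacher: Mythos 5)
Your proof is correct and follows essentially the same route as the paper: split $\Omega$ into the tail $\{|x|>L\}$ (controlled by (ii)), a small bad set inside $\{|x|\le L\}$ supplied by Egorov and controlled by (iii), and the good Egorov set where $h_n\to h$ uniformly. The Fatou detour you take to establish tightness and absolute continuity of $h$ is unnecessary here since $h\in L^1(\Omega)$ is already a hypothesis of the statement (and the parenthetical ``take $E=\Omega$ if $m(\Omega)<\infty$'' does not match condition (iii), which requires $m(E)<\delta$); the paper simply invokes integrability of $h$ directly, which is cleaner.
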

In our case, since $f^\eta \rightharpoonup f^\e $ in $L^1(\om \times \R^d \times (0,T))$ and $\phi \in L^\infty(\om)$, we obtain
$$\begin{aligned}
\int_0^t \into \phi(x-y) \rho^\eta(y,t)\,dydt &=\int_0^t\intor \phi(x-y)f^\eta(y,w,t)\,dydwdt \cr
&\to \int_0^t\intor \phi(x-y)f^\e(y,w,t)\,dydwdt\cr
&=\int_0^t \into \phi(x-y) \rho^\e(y,t)\,dydt
\end{aligned}$$
for each $x \in \om$ and $t \in (0,T)$. This implies the convergence of $\phi\star\rho^\eta$ to $\phi\star \rho^\e$ almost everywhere in $\om \times (0,t)$. On the other hand, we also know that $\rho^\eta \to \rho^\e$ almost everywhere, and thus we have the convergence of $(\phi\star\rho^\eta)\rho^\eta$ to $(\phi\star \rho^\e)\rho^\e$ almost everywhere in $\om \times (0,t)$. 

For the condition (ii) in Theorem \ref{thm_v}, we let $L>0$ and
\[
\int_{|x|>L} (\phi\star\rho^\eta)\rho^\eta \,dx \le \frac{1}{L}\int_{|x|>L} |x| (\phi\star\rho^\eta)\rho^\eta\,dx \le \frac{\|\phi\star\rho^\eta\|_{L^\infty}}{L}\int_\Omega |x|\rho^\eta\,dx \le \frac{\|\phi\|_{L^\infty}}{L}\left(\int_\Omega |x|^2\rho^\eta \,dx\right)^{1/2},
 \]
and we can deduce the condition (ii) from the above. For the third condition (uniform integrability condition), we choose a measurable set $E \subset \Omega$ with $m(E)<\delta$. Then, we have
\[
\int_E (\phi\star\rho^\eta)\rho^\eta \,dx \le \|\phi\star\rho^\eta\|_{L^\infty} \|\rho^\eta\|_{L^p} m(E)^{1/p'} \le Cm(E)^{1/p'},
\]
where $p \in (1,(d+2)/d)$ and $C$ is a constant independent of $\eta$, and this implies the uniform integrability condition. This concludes our desired result.

\subsubsection{Convergence $\e \to 0$}Finally, it remains to prove the convergence as $\e \to 0$. We note that the weak convergence of $f^\eta$ to $f^\e$ implies the following uniform upper bound estimate:
\begin{equation}\label{E-10}
\|f^\e\|_{L^\infty(0,T;L^p(\Omega\times\R^d))}^p + \frac{4\sigma(p-1)}{p}\int_0^T \|\nabla_v (f^\e)^{p/2}(\cdot,\cdot,s)\|_{L^2}^2\,ds \leq \|f_0\|_{L^p}^pe^{d(p-1)(2+\|\phi\|_{L^\infty})t}
\end{equation}
for $p\in[1,\infty]$. Thus, we combine \eqref{E-10} with the entropy inequality \eqref{E-8} to get the following weak convergence up to a subsequence as before:
\[
\begin{array}{lcll}
f^\e \rightharpoonup f  &\mbox{ in }&  L^\infty(0,T;L^p(\Omega\times\R^d)), & p\in[1,\infty],\\
\displaystyle \rho^\e \rightharpoonup \rho & \mbox{ in } & L^\infty(0,T;L^p(\Omega)_, & p\in[1,(d+2)/d),\\
\displaystyle \rho^\e u^\e \rightharpoonup \rho u & \mbox{ in } & L^\infty(0,T;L^p(\Omega)), & p\in [1,(d+2)/(d+1)).
\end{array}
\]
Moreover, applying the velocity averaging lemma, Lemma \ref{L5.4}, asserts the strong convergence up to a subsequence for $p \in (1,(d+2)/(d+1))$:
\bq\label{st_conv}
\begin{array}{lcl}
\displaystyle \rho^\e \to \rho & \mbox{ in } & L^p((0,T)\times\Omega) \ \mbox{ and a.e.},\\
\displaystyle \rho^\e u^\e \to \rho u & \mbox{ in } & L^p((0,T)\times\Omega)
\end{array}
\eq
as $\e \to 0$. Now, to show that $f$ is a weak solution to \eqref{main_eq}, it suffices to show the following convergence in distribution sense, since the others are obvious or can be obtained in the same way from the previous argument:
\begin{equation}\label{C-11}
\begin{cases}
\mbox{(i)}~~(\nabla W^\e \star \rho^\e)f^\e \to (\nabla W \star \rho)f,\\[2mm]
\mbox{(ii)}~~ u_\e^\e f^\e  \to u f.
\end{cases}
\end{equation}

\noindent $\diamond$ (Convergence $\eqref{C-11}$ (i)) We choose again  $\Psi\in\mathcal{C}_c^\infty(\Omega \times \R^d \times [0,T])$ and get
$$\begin{aligned}
\int_0^t& \int_{\Omega\times\R^d} \left[(\nabla W^\e \star \rho^\e)f^\e - (\nabla W \star \rho)f\right]\Psi\,dxdvds\\
&= \int_0^t \int_{\Omega\times\R^d} (\nabla (W^\e - W)\star \rho^\e )\rho_\Psi^\e \,dxds\\
&\quad+\int_0^t \int_{\Omega\times\R^d} \nabla W \star (\rho^\e -\rho)\rho_\Psi^\e\,dxds + \int_0^t \int_{\Omega\times\R^d} (\nabla W \star\rho )(f^\e-f) \Psi\,dxdvds\\
&=: K_4^1 +K_4^2+K_4^3.
\end{aligned}$$
Since the estimates for $K_4^2$ and $K_4^3$ are similar to those for $K_1^1$ and $K_1^2$, respectively, we only need to show $K_4^1 \to 0$ as $\e \to 0$. Still, thanks to the uniform-in-$\e$ estimate for $f^\e$ in $L^\infty(\Omega\times\R^d\times(0,T))$ and the compact support of $\Psi$, we find
\[
\rho_\Psi^\e \in L^p(0,T; L^q(\Omega))
\]
for any $p,q \in [1,\infty]$ uniformly in $\e$. This gives
$$\begin{aligned}
K_4^1&= \int_0^t \int_{\Omega} (\nabla (W^\e - W)(\cdot)(\mathds{1}_{\{|\cdot|\le 1\}} +\mathds{1}_{\{|\cdot|> 1\}} ) \star \rho^\e )\rho_\Psi^\e \,dxds\\
&\le \|\nabla(W^\e - W)(\cdot) \mathds{1}_{\{|\cdot|\le 1\}}\|_{L^1(\Omega)} \|\rho^\e\|_{L^p(\Omega \times (0,T))}\|\rho_\Psi^\e\|_{L^{p'}(\Omega \times (0,T))}\\
&\quad + \|\nabla(W^\e-W)(\cdot)\mathds{1}_{\{|\cdot|>1\}}\mathds{1}_{\mbox{supp}_x \Psi}\|_{L^{p'}(\Omega)}\|\rho^\e\|_{L^p(\Omega \times (0,T))} \|\rho_\Psi^\e\|_{L^{p'}(0,T;L^1(\Omega))},
\end{aligned}$$
where $p \in (1,(d+2)/(d+1))$ and we used the uniform bound for $\rho^\e$, $\rho_\Psi^\e$ and the dominated convergence theorem for the convergence of the interaction potential term.\\

\noindent $\diamond$ (Convergence of $\eqref{C-11}$ (ii))  Although the proof is almost the same as that of \cite[Lemma 4.4]{KMT13}, we present here for readers' convenience. First, consider a test function $\Psi$ of the form $\Psi(x,v,t) := \psi(x,t)\varphi(v)$. Thus, $\varphi \in C_c^\infty (\R^d)$ and we similarly write $\rho_\varphi^\e := \int_{\R^d} f^\e \varphi(v)\,dv$. Then, we write
\[
\int_0^t \int_{\Omega\times\R^d} f^\e u_\e^\e \Psi\,dxdvds = \int_0^t \int_\Omega u_\e^\e \rho_\varphi^\e \psi\,dxds.
\]
Here, for $p \in (1,(d+2)/(d+1))$, we get
\[
\begin{split}
\|u_\e^\e \rho_\varphi^\e\|_{L^p} &\le \|\varphi\|_{L^\infty} \|\rho^\e\|_{L^{p/(2-p)}}^{1/2} \|\sqrt{\rho^\e} u_\e^\e\|_{L^2}\\
&\le \|\varphi\|_{L^\infty} \|\rho^\e\|_{L^{p/(2-p)}}^{1/2} \left(\int_{\Omega\times\R^d} |v|^2 f^\e \,dxdv\right)^{1/2},
\end{split}
\]
which gives the uniform bound for $u_\e^\e \rho_\varphi^\e$ in $L^p(\Omega)$, since $p/(2-p)\in (1,(d+2)/d)$, and we already have the uniform bound for $\rho^\e$ in $L^q$ with $q \in (1, (d+2)/d)$ and $|v|^2 f^\e$ in $L^1$. Thus, we can find $m$ such that, up to a subsequence,
\[
u_\e^\e \rho_\varphi^\e \rightharpoonup m \quad \mbox{ in } \ L^\infty(0,T;L^p(\Omega)),\quad  \forall \, p \in (1,(d+2)/(d+1)).
\]
It remains to show that $m = u \rho_\varphi$. For this, we let $h_1, h_2>0$ and define
\[
\mathcal{A}_{h_1}^{h_2} := \{(x,t) \in (B(0,h_1) \cap \Omega)\times (0,T) \ : \ \rho(x,t) > h_2\}.
\]
For each $h_1$ and $h_2$, we combine the pointwise convergence of $\rho^\e$ to $\rho$ with Egorov's theorem to deduce that for every $\delta>0$, we may choose $A_\delta \subset A_{h_1}^{h_2}$ satisfying
\[
|A_{h_1}^{h_2} \setminus A_\delta| < \delta \quad \mbox{and} \quad \rho^\e \to \rho \quad \mbox{as $\e \to 0$ uniformly on } \ A_\delta. 
\]
Then, for a sufficiently small $\e$, we have $\rho^\e >h_2/2$ on $A_\delta$ and thus we get
\[
 u_\e^\e \rho_\varphi^\e = \frac{\rho^\e u^\e}{\rho^\e +\e} \rho_\varphi^\e \to m = u\rho_\varphi, \quad \mbox{on }\ A_\delta.
\]
Since the choices of $h_1$, $h_2$ and $\delta$ were arbitrary, we now obtain
\[
m=u\rho_\varphi \quad \mbox{on} \quad \{\rho>0\},
\]
and therefore, we have
\[
\int_0^t \int_{\Omega\times\R^d} f^\e u_\e^\e \Psi\,dxdvds = \int_0^t \int_\Omega u_\e^\e \rho_\varphi^\e \psi\,dxds \to \int_0^t u \rho_\varphi \psi \,dxds = \int_0^t \int_{\Omega\times\R^d} uf \Psi \,dxdvds
\]
for all test functions $\Psi$ of the form $\Psi(x,v,t) = \psi(x,t)\varphi(v)$. Thus, we conclude that $f$ is a weak solution to \eqref{main_eq}. It remains to show that the weak solution obtained above satisfies the entropy inequality \eqref{entro_1}. Note that the regularized solutions $f^\e$ satisfies the entropy inequality \eqref{E-8} and the strong compactness of macroscopic fields $\rho^\e, \rho^\e u^\e$ are obtained in \eqref{st_conv} via the velocity averaging lemma.  Thus we can use a similar argument as in the previous step together with Fatou's lemma to have the following entropy inequality:
\[
\begin{split}
&\int_{\Omega \times \R^d} \left(\frac{|v|^2}{2} + V + \sigma \log f \right) f \,dxdv + \frac{1}{2}\int_{\Omega} (W\star\rho) \rho \,dx\\
&\qquad + \int_0^t\int_{\Omega\times\R^d}\frac{1}{f} \left|\sigma \nabla_v f - (v-u)f \right|^2  \,dxdvds + \int_0^t\int_{\Omega\times\R^d}|v|^2 f \,dxdvds\\
&\qquad +\frac{1}{2}\int_0^t\intorr \phi(x-y)|w-v|^2 f (y,w)f(x,v)\,dxdydvdwds\\
&\qquad \quad \le \int_{\Omega \times \R^d} \left(\frac{|v|^2}{2} + V + \sigma \log f_0 \right) f_0 \,dxdv + \frac{1}{2}\int_{\Omega} (W\star\rho_0) \rho_0 \,dx\\
& \qquad \qquad +\sigma dt \|f_0\|_{L^1} +\sigma d\int_0^t \int_{\Omega}(\phi \star \rho)\rho\,dxds.
\end{split}
\]

\subsection{Global-in-time existence of weak solutions for dimensions $d=1,2$} In this subsection, we investigate the global existence of weak solutions to \eqref{main_eq} for the case $d=1,2$. In these cases, the positivity of the interaction energy is not guaranteed, and as a result, it makes some problems in obtaining the uniform upper bound estimates for solutions. Once we can find a way to get the uniform bound, then other parts of the proof for the existence of weak solutions can follow from almost the same analysis as in the previous subsections. Since the results for $\Omega = \T^d$ is analogous to the case $\Omega = \R^d$, we only consider the $\Omega = \R^d$. Let us introduce the regularized fundamental solutions of the Laplace's equation in $d=1,2$:
\[
W^\e(x) := \left\{\begin{array}{lcl}
-\frac{1}{2}\sqrt{\e + |x|^2} & \mbox{ if } & d=1,\\[2mm]
-\frac{1}{4\pi}\log(\e + |x|^2) & \mbox{ if } & d=2.
\end{array}\right.
\]
Then, since we also have $\nabla W^\e$ is bounded and smooth, global-in-time existence of weak solutions to the equation \eqref{E-2} is clear. Moreover, we can also deduce that the entropy inequality \eqref{E-4} and the following upper bound estimate hold:
$$\begin{aligned}
&\int_{\R^d \times \R^d} \left(\frac{|v|^2}{2} + \frac{|x|^2}{2} +\sigma |\log f^\eta| \right) f^\eta \,dxdv + \frac{1}{2}\int_{\R^d \times \R^d} W^\e(x-y)\rho^\eta (x) \rho^\eta (y) \,dxdy\\
& \quad+ \int_0^t\int_{\R^d \times \R^d}\frac{1}{f^\eta} \left|\sigma \nabla_v f^\eta - (v-\chi_\zeta(u_\e^\eta))f^\eta\right|^2  \,dxdvds \\
& \quad+\frac{1}{2}\int_0^t\int_{\R^{2d} \times \R^{2d}}\phi(x-y)|w-v|^2 f^\eta (y,w)f^\eta(x,v)\,dxdydvdwds\\
&\qquad \le \int_{\R^d \times \R^d}\left(\frac{|v|^2}{2} + \frac{|x|^2}{2} +\sigma |\log f_0^\eta| \right) f_0^\eta \,dxdv + \frac{1}{2}\int_{\R^d \times \R^d} W^\e(x-y)\rho_0^\eta (x) \rho_0^\eta (y) \,dxdy\\
&\qquad \qquad +\sigma d t\|f_0^\eta\|_{L^1} +\sigma d\|\phi\|_{L^\infty}t \|f_0^\eta\|_{L^1}^2 +\int_0^t\int_{\R^d \times \R^d} \left(|v|^2 +|x|^2\right)f^\eta\,dxdvds + C,
\end{aligned}$$
where $C=C(T)$ is a positive constant independent of $\eta$. When $d=1$, one uses Young's inequality to get
$$\begin{aligned}
-\frac{1}{2}\int_{\R \times \R}W^\e(x-y)\rho^\eta(x)\rho^\eta(y)\,dxdy
&= \frac{1}{4}\int_{\R \times \R}\sqrt{\e + |x-y|^2} \rho^\eta(x)\rho^\eta(y)\,dxdy\\
&\le \frac{1}{4}\int_{\R \times \R}\left(1+ \frac{1}{4}(\e + |x-y|^2)\right)\rho^\eta(x)\rho^\eta(y)\,dxdy\\
&\le \frac{1}{16}(4 + \e) + \frac{1}{16}\int_{\R \times \R}(|x|^2 + |y|^2)\rho^\eta(x) \rho^\eta(y)\,dxdy\\
&\le C + \frac{1}{8}\int_{\R}|x|^2 \rho^\eta \,dx = C+\frac{1}{8}\int_{\R\times\R} |x|^2 f^\eta \,dxdv,  
\end{aligned}$$
and this gives
$$\begin{aligned}
&\frac{1}{2}\int_{\R\times\R} \left(\frac{|v|^2}{2} + \frac{|x|^2}{2} +\sigma |\log f^\eta| \right) f^\eta \,dxdv\\
& \quad+ \int_0^t\int_{\R\times\R}\frac{1}{f^\eta} \left|\sigma \nabla_v f^\eta - (v-\chi_\zeta(u_\e^\eta)) f^\eta\right|^2 \,dxdvds \\
& \quad+\frac{1}{2}\int_0^t\int_{\R^2\times\R^2}\phi(x-y)|w-v|^2 f^\eta (y,w)f^\eta(x,v)\,dxdydvdwds\\
&\qquad \le \int_{\R\times\R}\left(\frac{|v|^2}{2} + \frac{|x|^2}{2} +\sigma |\log f_0^\eta| \right) f_0^\eta \,dxdv +\sigma d t\|f_0^\eta\|_{L^1} +\sigma d\|\phi\|_{L^\infty}t \|f_0^\eta\|_{L^1}^2 \\
&\qquad \qquad +\int_0^t\int_{\R\times\R} \left(|v|^2 +|x|^2\right)f^\eta\,dxdvds + C,
\end{aligned}$$
where $C= C(T)$ is a constant independent of $\eta$, which implies the desired uniform upper bound estimate and this can be also used when $\e \to 0$. For $d=2$, we note that the following inequality holds:
$$\begin{aligned}
\e + |x-y|^2 &\le (1+\e)(1+|x-y|^2)\\
&\le (1+\e)(1+2|x|^2 + 2|y|^2)\\
&\le 2(1+\e)(1+|x|^2)(1+|y|^2), 
\end{aligned}$$
which subsequently gives
\[
\log(\e + |x-y|^2)\le \log 2(1+\e) + \log(1+|x|^2) + \log(1+|y|^2).
\]
We use the above inequality and $\log(1+x) \le x$ on $x\ge 0$  to get
$$\begin{aligned}
&\frac{1}{2}\int_{\R^2 \times \R^2}W^\e(x-y)\rho^\eta(x)\rho^\eta(y)\,dxdy\\
&\quad =-\frac{1}{8\pi} \int_{\R^2\times\R^2} \log(\e+|x-y|^2)\rho^\eta(x)\rho^\eta(y)\,dxdy\\
&\quad \ge -\frac{1}{8\pi}\int_{\R^2\times\R^2} \left[\log 2(1+\e) + \log(1+|x|^2) + \log(1+|y|^2)\right]\rho^\eta(x)\rho^\eta(y)\,dxdy\\
&\quad \ge -\frac{1}{8\pi}\log 2(1+\e)-\frac{1}{8\pi}\int_{\R^2\times\R^2}\left(\log(1+|x|^2) + \log(1+|y|^2)\right)\rho^\eta(x)\rho^\eta(y)\,dxdy\\
&\quad \ge -\frac{1}{8\pi}\log2(1+\e)- \frac{1}{4\pi} \int_{\R^2}\rho^\eta \log(1+|x|^2)\,dx\\
&\quad \ge -\frac{1}{8\pi}\log2(1+\e)- \frac{1}{4\pi} \int_{\R^2}|x|^2\rho^\eta \,dx.
\end{aligned}$$
Moreover, the integral of $|\log(\e+|x|)|^p$ on $|x|\le 1$ can be bounded uniformly in $\e$ for every $p\in[1,\infty)$ and thus we have
\begin{align}\label{d2_est}
\begin{aligned}
\left|\int_{\R^2 \times \R^2} W^\e(x-y)\rho_0^\eta(x)\rho_0^\eta(y)\,dxdy\right|
&\le C\left(\|W^\e(\cdot) \mathds{1}_{\{|\cdot| \le 1\}}\|_{L^q}\|\rho_0^\eta\|_{L^p}^2 +  \|W^\e(\cdot) \mathds{1}_{\{|\cdot|>1\}}\|_{L^\infty}\|\rho_0^\eta\|_{L^1}^2\right)\cr
&\le C,
\end{aligned}
\end{align}
where $C$ is a constant independent of $\eta$ and $1/q + 2/p = 2$ with $p \in (1,(d+2)/d)$. Hence, for $d=2$, we can obtain
$$\begin{aligned}
&\frac{1}{2}\int_{\R^2\times\R^2} \left(\frac{|v|^2}{2} + \frac{|x|^2}{2} +\sigma |\log f^\eta| \right) f^\eta \,dxdv\\
& \quad+ \int_0^t\int_{\R^2\times\R^2}\frac{1}{f^\eta} \left|\sigma \nabla_v f^\eta - (v-\chi_\zeta(u_\e^\eta))f^\eta\right|^2  \,dxdvds \\
& \quad+\frac{1}{2}\int_0^t\int_{\R^4\times\R^4}\phi(x-y)|w-v|^2 f^\eta (y,w)f^\eta(x,v)\,dxdydvdwds\\
&\qquad \le \int_{\R^2\times\R^2}\left(\frac{|v|^2}{2} + \frac{|x|^2}{2} +\sigma |\log f_0^\eta| \right) f_0^\eta \,dxdv +\sigma d t\|f_0^\eta\|_{L^1} +\sigma d\|\phi\|_{L^\infty}t \|f_0^\eta\|_{L^1}^2 \\
&\qquad \qquad +\int_0^t\int_{\R^2\times\R^2} \left(|v|^2 +|x|^2\right)f^\eta\,dxdvds + C,
\end{aligned}$$
which gives the desired uniform upper bound estimate.

For the free energy inequality, we first notice that the following inequality still holds for $d=1,2$:
$$\begin{aligned}
&\int_{\R^d \times \R^d} \left(\frac{|v|^2}{2} + V + \sigma \log f^\e \right) f^\e \,dxdv + \frac{1}{2}\liminf_{R \to \infty}\int_{\R^d \times \R^d} W^\e(x-y)\rho^\eta (x) \rho^\eta (y) \,dxdy\\
&\quad + \int_0^t\int_{\R^d\times\R^d}\frac{1}{f^\e} \left|\sigma \nabla_v f^\e - (v-u_\e^\e)f^\e\right|^2  \,dxdvds + \int_0^t\int_{\R^d\times\R^d}|v|^2 f^\e \,dxdvds\\
&\quad +\frac{1}{2}\int_0^t\int_{\R^{2d}\times\R^{2d}}\phi(x-y)|w-v|^2 f^\e (y,w)f^\e(x,v)\,dxdydvdwds\\
&\qquad \le \int_{\R^d \times \R^d} \left(\frac{|v|^2}{2} + V + \sigma \log f_0^\e \right) f_0^\e \,dxdv + \frac{1}{2}\limsup_{R \to \infty}\int_{\R^d \times \R^d} W^\e(x-y)\rho_0^\eta (x) \rho_0^\eta (y) \,dxdy \\
&\qquad \quad+\sigma dt \|f_0\|_{L^1} +\sigma d\int_0^t \int_{\R^d}(\phi \star \rho^\e)\rho^\e\,dxds.
\end{aligned}
$$
When $d\ge 3$, the interaction potential $W$ is positive, thus we used Fatou's Lemma to obtain the desired inequality. Although it is no longer possible to use Fatou's Lemma when $d=1,2$, we use Theorem \ref{thm_v} instead to show that
\[
\lim_{R \to \infty}\int_{\R^d} (W^\e\star\rho^\eta) \rho^\eta \,dx = \int_{\R^d} (W^\e\star\rho^\e) \rho^\e \,dx
\]  
and
\[
\lim_{\e \to 0}\int_{\R^d} (W^\e\star\rho^\e) \rho^\e \,dx = \int_{\R^d} (W\star\rho) \rho \,dx
\]  
for each $t \in [0,T]$. 
Since the proof for the $\e \to 0$ case is similar, we only consider the case $R \to \infty$.

First, we show that $W^\e \star \rho^\eta$ converges to $W^\e \star \rho^\e$ pointwise. Indeed, the pointwise convergence $\rho^\eta \to \rho^\e$ implies $W^\e(x-\cdot)\rho^\eta(\cdot)$ converges to $W^\e(x-\cdot)\rho^\e(\cdot)$ for each $x$. If $d=1$, for each $x$. 
\[
\begin{split}
\int_{|y|\ge L} W^\e(x-y)\rho^\eta(y)\, dy &\le \int_{|y|\ge L} \sqrt{\e + |x-y|^2}\rho^\eta(y)\, dy\\
& \le (|x|+ \sqrt{\e})\int_{|y|\ge L} \rho^\eta (y)\,dy + \int_{|y|\ge L} |y|  \rho^\eta(y)\, dy\\
&\le \left( \frac{|x|}{L^2} + \frac{1}{L} \right) \int_{\R^d} |y|^2 \rho^\eta \,dy \to 0, \quad \mbox{as} \quad L \to \infty.
\end{split}
\]
When $d=2$, one uses $|\log x| \le \max\{x, x^{-1}\}$, $x>0$, and chooses $L$ sufficiently large so that $L \gg |x|$ to get
\[
\begin{split}
\int_{|y|\ge L} W^\e(x-y)\rho^\eta(y)\,dy &\le \frac{1}{2\pi}\int_{|y|\ge L} \log\sqrt{\e + |x-y|^2}\rho^\eta(y)\,dy\\
&\le   \frac{1}{2\pi}\int_{|y|\ge L} \max\{\sqrt{\e + |x-y|^2}, (\e + |x-y|^2)^{-1/2}\} \rho^\eta(y)\,dy\\
&= \frac{1}{2\pi}\int_{|y|\ge L}\sqrt{\e + |x-y|^2} \rho^\eta(y)\,dy,
\end{split}
\]
which also gives the desired estimate Theorem \ref{thm_v} (ii). For the last condition (iii) in Theorem \ref{thm_v}, we choose a measurable set $E$. Then for $d=1$, we use H\"older's inequality and the uniform bounds for $\rho^\eta$ in $L^p$ with $p \in (1,2)$ to get 
\[
\begin{split}
\int_E W^\e(x-y)\rho^\eta(y)\,dy &\le   (|x|+ \sqrt{\e})\int_E \rho^\eta (y)\,dy + \int_E |y|  \rho^\eta(y)\, dy\\
&\le (|x|+\sqrt{\e}) \|\rho^\eta\|_{L^p} m(E)^{1/p'} + \left(\int_E |y|^2\rho^\eta(y)\,dy\right)^{1/2}\left(\int_E \rho^\eta(y)\,dy\right)^{1/2}\\
&\le C(|x|+\sqrt{\e})m(E)^{1/(2p')} \to 0 \quad \mbox{as } \ m(E) \to 0,
\end{split}
\]
where $p' = p/(p-1)$ is the H\"older conjugate of $p$. When $d=2$, we obtain
\[\begin{split}
\int_E W^\e(x-y)\rho^\eta(y)\,dy &\le \frac{1}{\pi}\int_E \max\{(\e + |x-y|^2)^{1/4}, (\e + |x-y|^2)^{-1/4}\} \rho^\eta(y)\,dy\\
&\le \int_{E\cap \{|x-y|\ge 1-\e\}} \sqrt{\e + |x-y|^2} \rho^\eta(y)\,dy + \int_{E\cap \{|x-y|< 1-\e\}}(\e + |x-y|^2)^{-1/4} \rho^\eta(y)\,dy\\
&\le  C(|x|+\sqrt{\e})m(E)^{1/(2p')} + \|(\e + |x|^2)^{-1/4}\mathds{1}_{\{|x|\le 1\}}\|_{L^{7/2}}\|\rho^\eta\|_{L^{7/4}} m(E)^{1/7},
\end{split}\]
which guarantees the condition (iii). Thus, we have the pointwise convergence  of $W^\e \star \rho^\eta$ to $W^\e \star \rho^\e$  and hence $(W^\e \star \rho^\eta)\rho^\eta$ converges to $(W^\e \star \rho^\e)\rho^\e$ almost everywhere.  To prove the desired convergence, we notice that
\[
|W^\e \star \rho^\eta | \le C(1+|x|),
\]
where $C$ is a constant independent of $\eta$. More precisely, we find
\[
|W^\e\star\rho^\eta | \le (1+\e)\int_{\R^d}(1 + |x| + |y|)\rho^\eta(y) \,dy \le C(1+|x|)
\]
for $d=1$ and 
\[\begin{split}
\int_{\R^d} W^\e(x-y)\rho^\eta(y)\,dy &\le \frac{1}{\pi}\int_{\R^d} \max\{(\e + |x-y|^2)^{1/4}, (\e + |x-y|^2)^{-1/4}\} \rho^\eta(y)\,dy\\
&\le \int_{\{|x-y|\ge 1-\e\}} \sqrt{\e + |x-y|^2} \rho^\eta(y)\,dy + \int_{ \{|x-y|< 1-\e\}}(\e + |x-y|^2)^{-1/4} \rho^\eta(y)\,dy\\
&\le  C(1+|x|) + \int_{ \{|x-y|< 1\}}|x-y|^{-1/2} \rho^\eta(y)\,dy\\
&\le C(1+|x|) + \|| \cdot|^{-1/2} \mathds{1}_{\{|\cdot | <1\}} \|_{L^{7/2}} \|\rho^\eta\|_{L^{7/5}} \le C(1+|x|)
\end{split}\]
for $d=2$. Thus, we use the above estimate to validate the conditions in Theorem \ref{thm_v}. For the condition (ii) in Theorem \ref{thm_v}, we choose $L>0$ to get
\[\begin{split}
\int_{|x|\ge L} (W^\e \star \rho^\eta) \rho^\eta\,dx &\le C\int_{|x|\ge L} (1+|x|)\rho^\eta\,dx\\
&\le C\left(\frac{1}{L} + \frac{1}{L^2}\right)\int_{|x|\ge L} |x|^2 \rho^\eta \,dx \to 0 \quad \mbox{as} \quad L \to \infty.
\end{split}\]
For the condition (iii) in Theorem \ref{thm_v}, we have
\[\begin{split}
\int_E (W \star\rho^\eta)\rho^\eta \,dx &\le C\int_E (1+|x|)\rho^\eta \,dx\\
&\le  C \|\rho^\eta\|_{L^p} m(E)^{1/p'} + C\left(\int_E |x|^2 \rho^\eta \,dx\right)^{1/2}\left(\int_E \rho^\eta \,dx\right)^{1/2}\\
&\le C (\|\rho^\eta\|_{L^p} m(E)^{1/p'})^{1/2} ( 1+  (\|\rho^\eta\|_{L^p} m(E)^{1/p'})^{1/2}).
\end{split}\]
Hence, we can obtain the entropy inequality similarly as in the case $d\ge 3$.

\section{Local-in-time existence of strong solutions to the systems \eqref{main_pE} and \eqref{main_npE}}\label{sec_local_pE}
In this section, we study the local-in-time existence and uniqueness of strong solutions to \eqref{main_pE} and \eqref{main_npE} in the periodic domain $\Omega = \T^d$. Since the proof for the system \eqref{main_npE} is similar to that for \eqref{main_pE}, we only provide the details of the proof for the system \eqref{main_pE}, see Section \ref{sec:npE} for the brief idea of the proof for the pressureless case. For the case with smooth interaction potential $W$, we briefly mention the existence result in Remark \ref{rmk_reg} below.

To be more specific, we are mainly interested in the local-in-time solvability of the following isothermal Euler--Poisson system with nonlocal forces:
\begin{align}\label{F-10}
\begin{aligned}
&\pa_t \rho + \nabla \cdot (\rho u) = 0, \quad (x,t) \in \T^d \times \R_+,\cr
&\pa_t (\rho u) + \nabla \cdot (\rho u \otimes u) + \nabla \rho = - \rho u -  \rho(\nabla V + \nabla W \star \rho) -   \rho \int _{\T^d}\phi(x-y)(u(x) - u(y))\rho(y)\,dy.
\end{aligned}
\end{align}
Here we set $\gamma = \lambda = \alpha =1$ for the sake of simplicity. We then reformulate the above system by setting $g := \log \rho$ and rewrite it as follows:
\begin{align}\label{F-1}
\begin{aligned}
&\pa_t g + \nabla g \cdot  u + \nabla \cdot u = 0, \quad (x,t) \in \T^d \times \R_+,\cr
&\pa_t u +  (u \cdot \nabla ) u + \nabla g = -  u - (\nabla V + \nabla W\star e^g) - (\phi\star e^g)u + \phi\star(e^g u),\\
\end{aligned}
\end{align}
subject to initial data:
\bq\label{ini_F-1}
(g(x,0), u(x,0)) = (g_0(x), u_0(x)), \quad x \in \T^d.
\eq

We now state the result on the well-posedness of the system \eqref{F-1}--\eqref{ini_F-1}.

\begin{theorem}\label{local_strong} Let $s > d/2+1$. Suppose that the confinement potential and communication weight satisfy $(\nabla V, \phi) \in H^s(\T^d) \times \mw^{s,\infty}(\T^d)$, and the initial data $(g_0, u_0) \in H^s(\T^d) \times H^s(\T^d)$ with $e^{g_0} > 0$. Then for any positive constants $\epsilon_0 < M_0$, there exists a positive constant $T^*$ such that if $\|g_0\|_{H^s} + \|u_0\|_{H^s} < \epsilon_0$, then the system \eqref{F-1}--\eqref{ini_F-1} admits a unique solution $(g,u) \in \mc([0,T^*]; H^s(\T^d)) \times \mc([0,T^*]; H^s(\T^d))$ satisfying
\[
\sup_{0 \leq t \leq T^*} \lt(\|g(\cdot,t)\|_{H^s} + \|u(\cdot,t)\|_{H^s} \rt) \leq M_0.
\]
\end{theorem}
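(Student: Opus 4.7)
The plan is to treat \eqref{F-1} as a quasi-linear symmetric hyperbolic system with nonlocal semilinear forcing and apply the classical Kato--Majda iteration scheme. Writing $U=(g,u)^{\top}$, the principal part takes the form $\partial_t U+\sum_{i=1}^{d}A_i(U)\partial_i U=0$ where
$$A_i(U)=\begin{pmatrix} u_i & e_i^{\top} \\ e_i & u_i\,\mathbb{I}_d \end{pmatrix}$$
is symmetric, so the system is symmetric hyperbolic. The right-hand side consists of the local terms $-u-\nabla V$ and the nonlocal convolutions $\nabla W\star e^g$, $(\phi\star e^g)u$ and $\phi\star(e^g u)$, each of which maps $H^s$ into $H^s$ since $s>d/2+1$ makes $H^s$ an algebra, and since $\phi\in\mw^{s,\infty}$ and $\nabla W$ gains one derivative under convolution in the Coulombian case (via $-\Delta W=\delta_0$ and Calderón--Zygmund estimates on $\T^d$).

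I would build a solution by Picard iteration. Set $(g^0,u^0)\equiv(g_0,u_0)$ and inductively define $(g^{n+1},u^{n+1})$ as the solution of the linear symmetric hyperbolic system obtained from \eqref{F-1} by freezing the transport coefficients and the nonlocal density at the previous iterate:
\begin{align*}
&\partial_t g^{n+1}+u^n\cdot\nabla g^{n+1}+\nabla\cdot u^{n+1}=0,\\
&\partial_t u^{n+1}+(u^n\cdot\nabla)u^{n+1}+\nabla g^{n+1}=-u^{n+1}-\nabla V-\nabla W\star e^{g^n}-(\phi\star e^{g^n})u^{n+1}+\phi\star(e^{g^n}u^n),
\end{align*}
with data $(g_0,u_0)$. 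Existence and $H^s$-regularity of each iterate is standard. Applying $\partial^\alpha$ for $|\alpha|\le s$, pairing with the symmetrizer, and using the Kato--Ponce commutator estimate on the transport terms together with the Moser estimate $\|e^{g^n}\|_{H^s}\le F(\|g^n\|_{H^s})$ and Young's inequality on the convolutions, I expect
$$\frac{d}{dt}\bigl(\|g^{n+1}\|_{H^s}^2+\|u^{n+1}\|_{H^s}^2\bigr)\le C\bigl(1+\Phi(\|g^n\|_{H^s}+\|u^n\|_{H^s})\bigr)\bigl(1+\|g^{n+1}\|_{H^s}^2+\|u^{n+1}\|_{H^s}^2\bigr)$$
for some continuous $\Phi$. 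By induction, if $\|g_0\|_{H^s}+\|u_0\|_{H^s}<\epsilon_0$, then choosing $T^*=T^*(\epsilon_0,M_0,V,W,\phi)$ sufficiently small yields $\sup_{[0,T^*]}(\|g^n\|_{H^s}+\|u^n\|_{H^s})\le M_0$ uniformly in $n$.

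Next I would obtain contraction at lower regularity. The differences $\delta g^{n+1}:=g^{n+1}-g^n$ and $\delta u^{n+1}:=u^{n+1}-u^n$ satisfy a linear symmetric hyperbolic system whose coefficients depend on $u^n,u^{n-1}$ and whose source involves $\delta g^n$, $\delta u^n$ and differences of the convolutions with $e^{g^n}-e^{g^{n-1}}$ (which are controlled by $\|\delta g^n\|_{L^2}$ via the $L^\infty$ bound on $g^n,g^{n-1}$ and the Lipschitz continuity of the exponential). A standard $L^2$ energy estimate, combined with the uniform $H^s$ bound, gives
$$\sup_{[0,T^*]}\bigl(\|\delta g^{n+1}\|_{L^2}+\|\delta u^{n+1}\|_{L^2}\bigr)\le \tfrac{1}{2}\sup_{[0,T^*]}\bigl(\|\delta g^n\|_{L^2}+\|\delta u^n\|_{L^2}\bigr)$$
after further shrinking $T^*$. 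Hence $(g^n,u^n)$ is Cauchy in $\mathcal{C}([0,T^*];L^2)$; interpolation with the uniform $H^s$ bound yields strong convergence in $\mathcal{C}([0,T^*];H^{s'})$ for every $s'<s$, which is enough to pass to the limit in every term (including the nonlinear composition $e^{g^n}$ and the convolutions). Weak-$*$ compactness preserves the $H^s$ bound, so the limit $(g,u)\in L^\infty(0,T^*;H^s)$ is a strong solution.

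Uniqueness would follow from the same $L^2$ energy estimate applied to the difference of two strong solutions. For the claimed continuity $(g,u)\in\mathcal{C}([0,T^*];H^s)$ (as opposed to merely weak continuity), I would use the Bona--Smith regularization: mollify the initial data, obtain a smooth solution by the scheme above, then pass to the limit using that the $H^s$ norm of the solution is controlled by the energy identity for the symmetrized system. The principal technical obstacle is the combined treatment of the Coulombian convolution $\nabla W\star e^{g^n}$ and the Moser estimate for $e^{g^n}$: one must exploit the derivative gain from $-\Delta W=\delta_0$ so that $\|\nabla W\star e^{g^n}\|_{H^s}\lesssim\|e^{g^n}\|_{H^{s-1}}\le F(\|g^n\|_{H^{s-1}})$, after which the top-order energy estimate closes. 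Everything else (damping term $-u$, smoothing convolutions with $\phi\in\mw^{s,\infty}$, and $\nabla V\in H^s$) contributes only benign lower-order terms.
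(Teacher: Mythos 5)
Your proposal follows essentially the same route as the paper: linearize \eqref{F-1} by freezing the transport velocity and the nonlocal density at the previous iterate, derive uniform-in-$n$ $H^s$ energy bounds via Moser-type commutator estimates, establish a contraction in $L^2$, and pass to the limit via interpolation with the uniform $H^s$ bound. The only technical divergence is your treatment of the Coulombian term $\nabla W\star e^{g^n}$ through the derivative gain from $(-\Delta)^{-1}$, whereas the paper instead applies Young's convolution inequality with $\nabla W\in L^1(\T^d)$ and shifts all derivatives onto $e^{\tilde g}$ via the Moser estimate; both are valid.
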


Note that the solution $(g,u)$ obtained above has $\mathcal{C}^1$-regularity and in particular $g$ is bounded, we can easily show that $(\rho, u)$ with $\rho := e^g$ is a strong solution to \eqref{F-10}. More precisely, we can have the equivalence relation between the classical solutions to the systems \eqref{F-10} and \eqref{F-1}.

\begin{proposition} For any fixed $T>0$, $(\rho, u) \in \mc^1(\T^d \times [0,T]) \times \mc^1(\T^d \times [0,T])$ solves the system \eqref{F-10} with $\rho > 0$ if and only if $(g,u) \in \mc^1(\T^d \times [0,T]) \times \mc^1(\T^d \times [0,T])$ solves the system \eqref{F-1} with $e^g > 0$.
\end{proposition}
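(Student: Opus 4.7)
The proof proposal is as follows. The statement is a routine equivalence of two systems under the change of variables $g = \log \rho$, so the plan is simply to verify the change-of-variables computation carefully in both directions, the only subtlety being the translation between conservative and non-conservative forms of the momentum equation, and the rewriting of the nonlocal alignment integral.

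First, I would set up the forward direction. Assume $(\rho,u) \in \mc^1 \times \mc^1$ solves \eqref{F-10} with $\rho > 0$. Since $\log : (0,\infty) \to \R$ is $\mc^\infty$, the composition $g := \log \rho$ is $\mc^1$ on $\T^d \times [0,T]$, with $\pa_t g = \pa_t \rho / \rho$ and $\nabla g = \nabla \rho /\rho$. Dividing the continuity equation in \eqref{F-10} by $\rho$ and expanding $\nabla \cdot(\rho u) = u \cdot \nabla \rho + \rho \nabla \cdot u$ gives directly $\pa_t g + u \cdot \nabla g + \nabla \cdot u = 0$, which is the first equation of \eqref{F-1}.

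For the momentum equation, I would use the identity
\[
\pa_t(\rho u) + \nabla \cdot(\rho u \otimes u) = \rho\bigl(\pa_t u + (u\cdot \nabla) u\bigr) + u\bigl(\pa_t \rho + \nabla \cdot(\rho u)\bigr),
\]
so the continuity equation reduces the left hand side to $\rho(\pa_t u + (u\cdot \nabla)u) + \nabla \rho$. Dividing by $\rho$ and using $\nabla \rho / \rho = \nabla g$, together with $\rho = e^g$ on the right hand side, yields all local terms in the second equation of \eqref{F-1}. For the nonlocal alignment contribution, the key identity is
\[
\rho(x)\int_{\T^d}\phi(x-y)(u(x)-u(y))\rho(y)\,dy = \rho(x)\bigl[(\phi \star e^g)(x)\,u(x) - (\phi \star(e^g u))(x)\bigr],
\]
which after dividing by $\rho(x)$ produces exactly the alignment terms in \eqref{F-1}.

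The reverse direction is essentially the same computation read backwards. Given $(g,u) \in \mc^1 \times \mc^1$ solving \eqref{F-1} with $e^g > 0$, set $\rho := e^g$, which is $\mc^1$ and strictly positive. Multiplying the first equation of \eqref{F-1} by $\rho$ and using $\rho \pa_t g = \pa_t \rho$ and $\rho \nabla g = \nabla \rho$ recovers the continuity equation in \eqref{F-10}. Multiplying the second equation by $\rho$, using $\rho \nabla g = \nabla \rho$ on the left, and then adding $u \cdot (\pa_t \rho + \nabla \cdot(\rho u)) = 0$ to reassemble the conservative form gives the momentum equation in \eqref{F-10}; the alignment term is converted by reversing the identity above. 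I do not foresee any real obstacle here: the only delicate point is making sure all manipulations are justified pointwise, which they are because $\rho > 0$ and both sides lie in $\mc^0$, so division by $\rho$ is legitimate everywhere on $\T^d \times [0,T]$.
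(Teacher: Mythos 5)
Your proof is correct: both the continuity-equation and momentum-equation manipulations, the use of the identity $\pa_t(\rho u)+\nabla\cdot(\rho u\otimes u)=\rho(\pa_t u+(u\cdot\nabla)u)+u(\pa_t\rho+\nabla\cdot(\rho u))$, and the splitting of the alignment integral into $(\phi\star e^g)u-\phi\star(e^g u)$ are exactly right, and the $\mc^1$-regularity plus positivity of $\rho$ (equivalently $e^g$) justify all pointwise divisions. The paper itself omits the proof, calling it an easy consequence of the change of variables $g=\log\rho$; your write-up is the standard computation the authors are implicitly invoking.
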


The well-posedness theory for the equation \eqref{F-10} has not been developed so far to the best of our knowledge. On the other hand, if the velocity alignment forces, the last term on the right hand side of the momentum equations in \eqref{F-10} and the confinement forces are ignored, the system \eqref{F-10} reduces to the damped isothermal Euler--Poisson system. For that system, the global existence of weak/strong solutions is studied in \cite{CW96, Cor90,Guo98,GP11,PRV95}. We refer to \cite{Chen05} for a general survey on the Euler equations and related conservation laws. Critical thresholds phenomena leading to a finite-time blow-up or a global regularity of strong solutions for the Euler-Poisson system are also investigated in \cite{CCZ16, ELT01}.

\subsection{Solvability for the linearized system} In this subsection, we linearize the system \eqref{F-1} and discuss the local-in-time estimates of solutions to that system. More precisely, for a given
\[
(\tilde{g}, \tilde u) \in \mathcal{C}([0,T];H^s(\T^d)) \times \mathcal{C}([0,T];H^s(\T^d)),
\]
we consider the associated linear system:
\begin{align}\label{F-2}
\begin{aligned}
&\pa_t g + \tilde{u} \cdot \nabla g   + \nabla \cdot u = 0, \quad (x,t) \in \T^d \times \R_+,\cr
&\pa_t u +  (\tilde{u} \cdot \nabla ) u + \nabla g = - u - (\nabla V + \nabla W \star e^{\tilde{g}}) - (\phi\star e^{\tilde{g}})\tilde{u} + \phi\star(e^{\tilde{g}} \tilde{u}),\\
\end{aligned}
\end{align}
with the initial data $(g_0, u_0) \in H^s(\T^d) \times H^s(\T^d)$.

\begin{lemma}\label{L6.1} Let $T>0$ and $s > d/2 +1$. For any positive constants $N<M$, if 
\bq\label{ini_gu}
\|g_0\|_{H^s}^2 + \|u_0\|_{H^s}^2 < N
\eq
and
\[
\sup_{0 \le t \le T} \left( \|\tilde{g}(\cdot,t)\|_{H^s}^2 + \|\tilde{u}(\cdot,t)\|_{H^{s}}^2\right) < M,
\]
then the Cauchy problem \eqref{F-2} has a unique classical solution $(g,u) \in \mc([0,T];H^s(\T^d)) \times \mc([0,T];H^s(\T^d))$ satisfying
\[
 \sup_{0 \le t \le T^*} \left( \|g(\cdot,t)\|_{H^s}^2 + \|u(\cdot,t)\|_{H^{s}}^2\right) < M
\]
for some $T^* \leq T$.
\end{lemma}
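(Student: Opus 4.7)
The linearized system \eqref{F-2} is a symmetric hyperbolic system in $(g,u)$ with coupling between $\nabla g$ and $\nabla\cdot u$ which is skew-adjoint after integration by parts, linear damping in $u$, a prescribed transport coefficient $\tilde u$, and an inhomogeneous source depending only on $(\tilde g,\tilde u)$. My plan is the standard energy method. For \textbf{existence}, I would regularize the transport part by Friedrichs mollifiers $J_\eta$ in space to obtain, for each $\eta>0$, a globally solvable ODE in $H^s(\T^d)\times H^s(\T^d)$; derive $\eta$-uniform $H^s$ bounds as in the a priori estimate below; and pass to the limit $\eta\to 0$ using weak-$*$ compactness. Time continuity in $H^s$ would then follow from the equation itself. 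On $\T^d$ one could equivalently use Galerkin truncation in trigonometric polynomials.

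\textbf{A priori $H^s$ estimate.} For each multi-index $|\alpha|\le s$, apply $\pa^\alpha$ to \eqref{F-2}, pair with $\pa^\alpha g$ and $\pa^\alpha u$ in $L^2(\T^d)$, and sum. The principal coupling cancels,
\[
\into \pa^\alpha(\nabla\cdot u)\,\pa^\alpha g\,dx + \into \pa^\alpha(\nabla g)\cdot\pa^\alpha u\,dx = 0,
\]
and the transport term is rewritten as
\[
\into \pa^\alpha(\tilde u\cdot\nabla g)\,\pa^\alpha g\,dx = \tfrac12\into(\nabla\cdot\tilde u)\,|\pa^\alpha g|^2\,dx + \into [\pa^\alpha,\,\tilde u\cdot\nabla]g\,\pa^\alpha g\,dx,
\]
the commutator being bounded by $C\|\tilde u\|_{H^s}\|g\|_{H^s}^2$ via the Kato--Ponce inequality (similarly for the $u$-transport); the damping contributes a nonpositive term. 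All source contributions are bounded by a constant $C(M)$ using Moser's composition estimate $\|e^{\tilde g}\|_{H^s}\le C(\|\tilde g\|_{L^\infty})\|\tilde g\|_{H^s}$ together with the embedding $H^s\hookrightarrow L^\infty$ (valid for $s>d/2$); elliptic regularity for the Coulombian convolution giving $\|\nabla W\star e^{\tilde g}\|_{H^s}\lesssim \|e^{\tilde g}\|_{H^{s-1}}$; the Young-type bound $\|\phi\star h\|_{\mw^{s,\infty}}\le\|\phi\|_{\mw^{s,\infty}}\|h\|_{L^1}$ for the alignment terms; and Moser's product estimate for the products with $\tilde u$. Collecting,
\[
\frac{d}{dt}\bigl(\|g\|_{H^s}^2+\|u\|_{H^s}^2\bigr) \le C(M)\bigl(1+\|g\|_{H^s}^2+\|u\|_{H^s}^2\bigr),
\]
so Gronwall together with \eqref{ini_gu} gives $\|g(t)\|_{H^s}^2+\|u(t)\|_{H^s}^2 \le (N+1)e^{C(M)t}-1$. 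Since $N<M$ strictly, I would select $T^*\le T$ small (depending only on $N,M,\|\nabla V\|_{H^s},\|\phi\|_{\mw^{s,\infty}}$) so that the right-hand side stays below $M$ on $[0,T^*]$. Uniqueness follows from the same energy argument applied to the difference of two solutions, which solves \eqref{F-2} with zero source and zero initial data.

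\textbf{Main obstacle.} The delicate point is controlling the Coulombian nonlocality $\nabla W\star e^{\tilde g}$ in $H^s$, since $\nabla W$ itself is singular. The remedy is to use that on $\T^d$ the (mean-subtracted) convolution $W\star e^{\tilde g}$ satisfies the Poisson equation, so elliptic regularity yields a gain of two derivatives and places $\nabla W\star e^{\tilde g}$ in $H^{s+1}\subset H^s$. The nonlinearity $e^{\tilde g}$ is then tamed by the composition estimate above thanks to $H^s\hookrightarrow L^\infty$, and all remaining nonlocal terms involving $\phi$ are strictly easier due to the hypothesis $\phi\in\mw^{s,\infty}(\T^d)$.
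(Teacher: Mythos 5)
Your proposal is correct and follows essentially the same route as the paper: $H^s$ energy estimates via $\partial^\alpha$-pairing, commutator and Moser-type bounds on the transport and composition terms, a Gronwall bound of the form $\|g\|_{H^s}^2+\|u\|_{H^s}^2\le (N+1)e^{C(M)t}-1$, and shrinking $T^*$ so the bound stays below $M$, plus uniqueness by the same estimate on differences. The only cosmetic departure is the Coulomb term: you invoke periodic elliptic regularity for $W\star e^{\tilde g}$ to gain derivatives, whereas the paper simply applies Young's convolution inequality $\|\nabla W\star\nabla^k e^{\tilde g}\|_{L^2}\le\|\nabla W\|_{L^1}\|\nabla^k e^{\tilde g}\|_{L^2}$ (legitimate since $\nabla W\in L^1(\T^d)$) together with an inductive bound on $a_k=\|\nabla^k e^{\tilde g}\|_{L^2}$ that replaces your one-shot Moser composition estimate; both are valid and lead to the same conclusion.
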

\begin{proof} We first easily obtain the existence and uniqueness of solutions to \eqref{F-2} by a standard linear theory of PDEs. Thus, it suffices to provide bound estimates for $g$ and $u$. A straightforward computation gives
\[
\frac12\frac{d}{dt}\|g\|_{L^2}^2 \le \frac{\|\nabla \cdot \tilde{u}\|_{L^\infty}}{2}\|g\|_{L^2}^2 - \int_{\T^d} g \nabla \cdot u \,dx
\]
and
$$\begin{aligned}
&\frac12\frac{d}{dt}\|u\|_{L^2}^2 \le \frac{\|\nabla \cdot \tilde{u}\|_{L^\infty}}{2}\|u\|_{L^2}^2  - \int_{\T^d} \nabla g \cdot u\,dx + \|\nabla V\|_{L^2}\|u\|_{L^2} + \|\nabla W \star e^{\tilde{g}}\|_{L^2}\|u\|_{L^2}\\
&\hspace{2cm} + C\|\phi\|_{L^\infty}(1+e^{\|\tilde{g}\|_{L^\infty}})\|u\|_{L^2}\|\tilde{u}\|_{L^2}\\
&\hspace{1.3cm}\le  \frac{\|\nabla \cdot \tilde{u}\|_{L^\infty}}{2}\|u\|_{L^2}^2  - \int_{\T^d} \nabla g \cdot u\,dx + \|\nabla V\|_{L^2}\|u\|_{L^2} + \|\nabla W\|_{L^1} e^{\|\tilde{g}\|_{L^\infty}}\|u\|_{L^2}\\
&\hspace{2cm} + C\|\phi\|_{L^\infty}(1+e^{\|\tilde{g}\|_{L^\infty}})\|u\|_{L^2}\|\tilde{u}\|_{L^2}.
\end{aligned}$$
Then we use Sobolev inequality and Young's inequality to get
\begin{equation}\label{F-3}
\frac{d}{dt}\left( \|g\|_{L^2}^2 +\|u\|_{L^2}^2 \right) \le Ce^{CM}(1+M)\left( \|g\|_{L^2}^2 +\|u\|_{L^2}^2 \right) +  Ce^{CM}(1+M),
\end{equation}
where $C > 0$ only depends on $s$, $d$, $\nabla V$ and $\phi$.\\

\noindent For higher-order estimates, we first recall the Moser-type inequality:
\[
\|\nabla^k (fg) - f\nabla^k g\|_{L^2} \le C \lt(\|\nabla f\|_{L^\infty} \|\nabla^{k-1} g\|_{L^2} + \|\nabla^k f\|_{L^2}\|g\|_{L^\infty}\rt).
\]
For $1 \le k \le s$, we estimate $\nabla^k g$ as
$$\begin{aligned}
\frac12\frac{d}{dt}\|\nabla^k g\|_{L^2}^2 &= -\int_{\T^d} \nabla (\nabla^k g) \cdot \tilde{u} \nabla^k g \,dx-\int_{\T^d} \left( \nabla^k (\nabla g \cdot \tilde{u}) - \nabla (\nabla^k g)\cdot \tilde{u}\right) \nabla^k g \,dx\\
&\quad -\int_{\T^d}(\nabla \cdot (\nabla^k u)) \nabla^k g \,dx\\
&\le \frac{\|\nabla \cdot \tilde{u}\|_{L^\infty}}{2}\|\nabla^k g\|_{L^2}^2 +C\|\nabla^k g\|_{L^2}\Big( \|\nabla \tilde{u}\|_{L^\infty} \|\nabla(\nabla^{k-1} g)\|_{L^2} + \|\nabla g\|_{L^\infty}\|\nabla^k \tilde{u}\|_{L^2}\Big)\\
&\quad -\int_{\T^d}(\nabla \cdot (\nabla^k u)) \nabla^k g \,dx\\
&\le CM\|\nabla^k g\|_{L^2} + CM\|\nabla^k g\|_{L^2}\|g\|_{H^s} -\int_{\T^d}(\nabla \cdot (\nabla^k u)) \nabla^k g \,dx,
\end{aligned}$$
where $C > 0$ depends only on $d$ and $s$. Similarly, we estimate $\nabla^k u$ as
$$\begin{aligned}
\frac12\frac{d}{dt}\|\nabla^k u\|_{L^2}^2 &= -\int_{\T^d}(\tilde{u} \cdot \nabla (\nabla^k u)) \cdot \nabla^k u \,dx - \int_{\T^d} \lt( \nabla^k (\tilde{u} \cdot \nabla u) - \tilde{u} \cdot \nabla (\nabla^k u)\rt) \cdot \nabla^k u \,dx\\
&\quad -\|\nabla^k u\|_{L^2}^2 - \int_{\T^d}(\nabla (\nabla^k V) + \nabla^k (\nabla W \star e^{\tilde{g}})) \cdot \nabla^k u \,dx -\int_{\T^d}\nabla(\nabla^k g)\cdot \nabla^k u \,dx\\
&\quad + \int_{\T^d \times \T^d} \nabla_x^k \phi(x-y)\tilde{u}(y) e^{\tilde{g}(y)} \nabla^k u(x) \,dydx\\
&\quad -\int_{\T^d \times \T^d}\nabla_x^k (\phi(x-y)\tilde{u}(x)) e^{\tilde{g}(y)}\nabla^k u(x) \,dydx\\
&\le \frac{\|\nabla \cdot \tilde{u}\|_{L^\infty}}{2}\|\nabla^k u\|_{L^2} + C\|\nabla^k u\|_{L^2}\Big( \|\nabla \tilde{u}\|_{L^\infty}\|\nabla (\nabla^{k-1} u)\|_{L^2} + \|\nabla u\|_{L^\infty} \|\nabla^k \tilde{u}\|_{L^2}\Big)\\
&\quad + \|\nabla V\|_{H^k}\|\nabla^k u\|_{L^2} + \|\nabla W \star (\nabla^k e^{\tilde{g}})\|_{L^2}\|\nabla^k u\|_{L^2} -\int_{\T^d}\nabla(\nabla^k g)\cdot \nabla^k u \,dx\\
&\quad + C\|\phi\|_{\mw^{k,\infty}}\|\tilde{u}\|_{H^k} e^{\|\tilde{g}\|_{L^\infty}}\|\nabla^k u\|_{L^2}\\
&\le CM\|\nabla^k u\|_{L^2}^2 + CM\|\nabla^k u\|_{L^2}\|u\|_{H^s}+ Ce^{CM}(1+M)\|\nabla^k u\|_{L^2} \\
&\quad + C\|\nabla^k (e^{\tilde{g}}) \|_{L^2} \|\nabla^k u\|_{L^2}-\int_{\T^d}\nabla(\nabla^k g)\cdot \nabla^k u \,dx,
\end{aligned}$$
where $C > 0$ only depends on $d$, $s$, $\nabla V$, $\|\nabla W\|_{L^1}$ and $\phi$. To estimate the Poisson interaction term, we let $a_k := \|\nabla^k (e^{\tilde{g}}) \|_{L^2}$. As shown previously, we have $a_0 \le e^{\|\tilde{g}\|_{L^\infty}} \le Ce^{CM}$. Then, we use the Moser-type inequality and Sobolev inequality to obtain
$$\begin{aligned}
a_k &=\|\nabla^{k-1} (e^{\tilde{g}} \nabla \tilde{g})\|_{L^2}\\
&\le \|e^{\tilde{g}} \nabla^k \tilde{g} \|_{L^2} + \|\nabla^{k-1}(e^{\tilde{g}}\nabla \tilde{g} ) - e^{\tilde{g}} \nabla^k \tilde{g}\|_{L^2}\\
&\le M e^{CM} + C(\|\nabla e^{\tilde{g}}\|_{L^\infty}\|\nabla^{k-1}\tilde{g}\|_{L^2} + \|\nabla^{k-1}(e^{\tilde{g}})\|_{L^2}\|\nabla \tilde{g}\|_{L^\infty})\\
&\le CMa_{k-1} + CMe^{CM}(1+M),
\end{aligned}$$
where $C>0$ only depends on $d$ and $k$, and inductively, we get
\[
a_k \le CM^k a_0 + CM^{k-1}e^{CM}(1+M) \le Ce^{CM}.
\]
Here $C>0$ only depends on $d$ and $k$. Now, we combine the estimates for $\nabla^k g$ and $\nabla^k u$ to yield
\begin{align}
\begin{aligned}\label{F-4}
\frac{d}{dt}&\left( \|\nabla^k g\|_{L^2}^2 + \|\nabla^k u\|_{L^2}^2 \right)\\
&\le CM\left( \|\nabla^k g\|_{L^2}^2 + \|\nabla^k u\|_{L^2}^2\right) +CM\Big( \|\nabla^k g\|_{L^2}\|g\|_{H^s} + \|\nabla^k u\|_{L^2}\|u\|_{H^s}\Big)\\
&\quad + Ce^{CM}(1+M)\|\nabla^k u\|_{L^2}.
\end{aligned}
\end{align}
We sum the relation \eqref{F-4} over $1 \le k \le s$ and combine this with \eqref{F-3} to get
\[
\frac{d}{dt}\left(\|g\|_{H^s}^2 + \|u\|_{H^s}^2\right)\le Ce^{CM}(1+M)\Big( \|g\|_{H^s}^2 + \|u\|_{H^s}^2\Big) +  Ce^{CM}(1+M).
\]
We write $h(M) := Ce^{CM}(1+M)$ and use Gr\"onwall's lemma to obtain
$$\begin{aligned}
\|g\|_{H^s}^2 + \|u\|_{H^s}^2 &\le (\|g_0\|_{H^s}^2 + \|u_0\|_{H^s}^2)e^{h(M)t} + e^{h(M)t}\lt(1-e^{-h(M)t}\rt)\\
&\le Ne^{h(M)t} +e^{h(M)t}\lt(1-e^{-h(M)t}\rt)\cr
&= N + (N+1)\lt(e^{h(M)t} - 1\rt).
\end{aligned}$$
Note that $N<M$ and $e^{h(M)t} - 1$ can be arbitrary small if $t \ll 1$. This allows us to find $T^*>0$ such that
\[
N + (N+1)\lt(e^{h(M)T^*} - 1\rt) < M.
\]
This concludes the desired result.
\end{proof}

\subsection{Construction of approximate solutions} Now, we construct a sequence that approximates a (unique) solution to \eqref{main_pE}. More precisely, we consider a sequence $(g^n, u^n)$ which is a solution to the following system:
\begin{align}\label{F-5}
\begin{aligned}
&\pa_t g^{n+1} + \nabla g^{n+1} \cdot  u^n + \nabla \cdot u^{n+1} = 0, \quad \quad (x,t) \in \T^d \times \R_+,\cr
&\pa_t u^{n+1} +  (u^n \cdot \nabla ) u^{n+1} + \nabla g^{n+1} \cr
&\hspace{1.5cm} = -  u^{n+1} - (\nabla V + \nabla W \star e^{g^n}) - \phi\star(e^{g^n})u^n + \phi\star(e^{g^n}u^n),
\end{aligned}
\end{align}
with the initial step and initial data defined by
\[
(g^0(x,t), u^0(x,t)) = (g_0(x), u_0(x)) \quad (x,t) \in \T^d \times \R_+
\]
and
\[
(g^n(x,0), u^n(x,0)) = (g_0(x), u_0(x)) \quad \forall \, n \in \N, \quad x \in \T^d,
\]
respectively. We notice that the approximation sequence $(g^n, u^n)$ is well-defined due to Lemma \ref{L6.1}. Moreover, by Lemma \ref{L6.1}, we have the following uniform-in-$n$ bound estimates for the approximation sequence.

\begin{corollary}\label{C6.1} Let $s > d/2 + 1$. For any $M>N$, there exists $T^*>0$ such that if the initial data $(g_0, u_0)$ satisfy \eqref{ini_gu}, then for each $n \in \N$ 
\[
(g^n, u^n) \in \mc([0,T^*]; H^s(\T^d)) \times \mc([0,T^*]; H^s(\T^d))
\] 
and
\[
\sup_{0 \le t \le T^*}\left( \|g^n(\cdot,t)\|_{H^s}^2 + \|u^n(\cdot,t)\|_{H^s}^2\right) < M, \quad \forall  \, n \in \N \cup \{0\}.
\]
\end{corollary}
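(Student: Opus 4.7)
The plan is to proceed by straightforward induction on $n$, invoking Lemma \ref{L6.1} at each step. The main (and only) point requiring care is verifying that the existence time can be chosen uniformly in $n$.

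For the base case $n=0$, the constant-in-time pair $(g^0,u^0)=(g_0,u_0)$ trivially belongs to $\mc([0,T];H^s)\times\mc([0,T];H^s)$ for any $T>0$, and by hypothesis $\|g^0(\cdot,t)\|_{H^s}^2+\|u^0(\cdot,t)\|_{H^s}^2<N<M$. For the inductive step, assuming $(g^n,u^n)\in\mc([0,T^*];H^s)\times\mc([0,T^*];H^s)$ obeys the uniform bound $<M$, I would observe that the system \eqref{F-5} for $(g^{n+1},u^{n+1})$ is exactly the linearized system \eqref{F-2} with the choice $(\tilde g,\tilde u)=(g^n,u^n)$. Since the initial data satisfy $\|g_0\|_{H^s}^2+\|u_0\|_{H^s}^2<N$ and the frozen coefficients satisfy $\|g^n(\cdot,t)\|_{H^s}^2+\|u^n(\cdot,t)\|_{H^s}^2<M$ on $[0,T^*]$, Lemma \ref{L6.1} applies verbatim and produces $(g^{n+1},u^{n+1})\in\mc([0,T^*];H^s)\times\mc([0,T^*];H^s)$ with the same bound $<M$.

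The crux is that the time $T^*$ furnished by Lemma \ref{L6.1} can be fixed once and for all, independently of $n$. Inspecting the proof of that lemma, $T^*$ is chosen to satisfy the inequality
\[
N + (N+1)\bigl(e^{h(M)T^*}-1\bigr) < M,
\]
where $h(M)=Ce^{CM}(1+M)$ and the constant $C$ depends only on $s$, $d$, $\nabla V$, $\|\nabla W\|_{L^1}$, and $\phi$. Neither $h(M)$ nor this inequality involves the particular frozen pair $(\tilde g,\tilde u)$, only the sizes $N$ and $M$ of initial data and linearization coefficients. Selecting one such $T^*$ at the outset (with a small margin so that the inequality is strict) therefore guarantees that every step of the induction succeeds on the common interval $[0,T^*]$, and the strict bound $<M$ propagates to $(g^{n+1},u^{n+1})$ without degradation.

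I do not anticipate a genuine obstacle: the argument is essentially bookkeeping on top of Lemma \ref{L6.1}. The only subtlety worth flagging explicitly is the preservation of the strict inequality $<M$ (rather than $\le M$), which is automatic because the Gr\"onwall estimate in Lemma \ref{L6.1} gives $\|g^{n+1}\|_{H^s}^2+\|u^{n+1}\|_{H^s}^2 \le N+(N+1)(e^{h(M)T^*}-1)$, and $T^*$ has been chosen to make this quantity strictly less than $M$.
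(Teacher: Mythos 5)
Your proof matches the paper's argument: induction on $n$, identifying \eqref{F-5} as \eqref{F-2} with $(\tilde g,\tilde u)=(g^n,u^n)$, and invoking Lemma \ref{L6.1} together with the observation that the time $T^*$ determined by $N+(N+1)(e^{h(M)T^*}-1)<M$ depends only on $N$, $M$, and fixed parameters, hence is uniform in $n$. Your explicit remark on the preservation of the strict inequality is a nice clarification that the paper leaves implicit, but the route is the same.
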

\begin{proof} For the proof, we use the inductive argument. Since the initial step $(n=0)$ is obvious, it suffices to consider the induction step. We recall from Lemma \ref{L6.1} that 
\[
Ne^{h(M)T^*} + e^{h(M)T^*}(1-e^{-h(M)T^*}) < M
\]
for some $T^*>0$. Then, by the induction hypothesis, we get
\[
\sup_{0 \le t \le T^*}\left( \|g^n(\cdot,t)\|_{H^s}^2 + \|u^n(\cdot,t)\|_{H^s}^2 \right) < M.
\]
This together with the same analysis in Lemma \ref{L6.1}, we have
$$\begin{aligned}
\|g^{n+1}\|_{H^s}^2 + \|u^{n+1}\|_{H^s}^2 &\le \lt(\|g_0\|_{H^s}^2 + \|u_0\|_{H^s}^2\rt)e^{h(M)t} + e^{h(M)t}\lt(1-e^{-h(M)t}\rt)\\
&\le Ne^{h(M)t} +e^{h(M)t}\lt(1-e^{-h(M)t}\rt) < M
\end{aligned}$$
for $0 \leq t \le T^*$. This completes the proof.
\end{proof}
In the lemma below, we show that the approximation sequence $(g^n, u^n)$ is a Cauchy sequence in $\mc([0,T^*];L^2(\T^d)) \times \mc([0,T^*];L^2(\T^d))$.
\begin{lemma}\label{L6.2} Let $(g^n, u^n)$ be a sequence of the approximated solutions with the initial data $(g_0, u_0)$ satisfying \eqref{ini_gu}. Then we have
$$\begin{aligned}
&\|(g^{n+1} -g^n)(\cdot,t)\|_{L^2}^2 + \|(u^{n+1} - u^n)(\cdot,t)\|_{L^2}^2\cr
&\quad \leq  C\int_0^t \lt(\|(g^n -g^{n-1})(\cdot,s)\|_{L^2}^2 + \|(u^n - u^{n-1})(\cdot,s)\|_{L^2}^2\rt) ds
\end{aligned}$$
for $0\leq t \le T^*$ and $n \in \N$, where $C>0$ is independent of $n$.
\end{lemma}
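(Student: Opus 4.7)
The plan is to obtain the Cauchy estimate by a standard energy argument on the differences $G^{n+1} := g^{n+1}-g^n$ and $U^{n+1} := u^{n+1}-u^n$, exploiting the skew-symmetric coupling between $\nabla g$ and $\nabla\cdot u$ and the uniform $H^s$-bounds provided by Corollary~\ref{C6.1}. Subtracting consecutive copies of \eqref{F-5} yields
\begin{align*}
&\pa_t G^{n+1} + u^n\cdot\nabla G^{n+1} + \nabla\cdot U^{n+1} = -U^n\cdot\nabla g^n,\\
&\pa_t U^{n+1} + (u^n\cdot\nabla)U^{n+1} + \nabla G^{n+1} + U^{n+1} \\
&\qquad = -(U^n\cdot\nabla)u^n - \nabla W\star(e^{g^n}-e^{g^{n-1}}) - \bigl[(\phi\star e^{g^n})u^n - (\phi\star e^{g^{n-1}})u^{n-1}\bigr] \\
&\qquad \quad + \bigl[\phi\star(e^{g^n}u^n) - \phi\star(e^{g^{n-1}}u^{n-1})\bigr].
\end{align*}

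Next I would multiply the first equation by $G^{n+1}$ and the second by $U^{n+1}$, integrate on $\T^d$, and exploit the standard cancellation
\[
\int_{\T^d}\nabla G^{n+1}\cdot U^{n+1}\,dx + \int_{\T^d}(\nabla\cdot U^{n+1})G^{n+1}\,dx = 0
\]
so that the stiff pressure coupling disappears. The transport terms $\int (u^n\cdot\nabla)G^{n+1}\,G^{n+1}\,dx$ and $\int (u^n\cdot\nabla)U^{n+1}\cdot U^{n+1}\,dx$ are controlled by $\|\nabla\cdot u^n\|_{L^\infty}\leq C\|u^n\|_{H^s}\leq CM$ after one integration by parts, which by Corollary~\ref{C6.1} is uniform in $n$. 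The damping term $-\|U^{n+1}\|_{L^2}^2$ is favourable and can simply be dropped.

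For the right-hand side forcing terms, I would use $\|\nabla g^n\|_{L^\infty}+\|\nabla u^n\|_{L^\infty}\leq C\|(g^n,u^n)\|_{H^s}\leq C\sqrt M$ to bound
\[
\Big|\int U^n\cdot\nabla g^n\, G^{n+1}\,dx\Big| + \Big|\int (U^n\cdot\nabla)u^n\cdot U^{n+1}\,dx\Big| \leq C\bigl(\|U^n\|_{L^2}^2 + \|G^{n+1}\|_{L^2}^2 + \|U^{n+1}\|_{L^2}^2\bigr).
\]
The exponential differences are handled by the mean value theorem together with the uniform $L^\infty$-bound $\|g^n\|_{L^\infty}\leq C\|g^n\|_{H^s}\leq C\sqrt M$, giving
\[
\|e^{g^n}-e^{g^{n-1}}\|_{L^2}\leq e^{C\sqrt M}\|G^n\|_{L^2}.
\]
This lets me control $\|\nabla W\star(e^{g^n}-e^{g^{n-1}})\|_{L^2}\leq \|\nabla W\|_{L^1}e^{C\sqrt M}\|G^n\|_{L^2}$ (in the smoother case) or the analogous estimate for the Coulombian case. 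The two bracket terms involving $\phi$ are split as, e.g.,
\[
(\phi\star e^{g^n})u^n - (\phi\star e^{g^{n-1}})u^{n-1} = \bigl(\phi\star(e^{g^n}-e^{g^{n-1}})\bigr)u^n + (\phi\star e^{g^{n-1}})U^n,
\]
and each piece is $L^2$-bounded by $C(\|G^n\|_{L^2}+\|U^n\|_{L^2})$ using $\phi\in L^\infty$ and the uniform bounds.

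Combining these estimates gives the differential inequality
\[
\frac{d}{dt}\bigl(\|G^{n+1}\|_{L^2}^2+\|U^{n+1}\|_{L^2}^2\bigr) \leq C\bigl(\|G^{n+1}\|_{L^2}^2+\|U^{n+1}\|_{L^2}^2\bigr) + C\bigl(\|G^n\|_{L^2}^2+\|U^n\|_{L^2}^2\bigr),
\]
with $C=C(M,T^*,V,W,\phi)$ independent of $n$. Since $G^{n+1}(0)=U^{n+1}(0)=0$, Gr\"onwall's lemma on $[0,T^*]$ produces exactly the claimed bound
\[
\|G^{n+1}(t)\|_{L^2}^2+\|U^{n+1}(t)\|_{L^2}^2 \leq C\int_0^t\bigl(\|G^n(s)\|_{L^2}^2+\|U^n(s)\|_{L^2}^2\bigr)ds.
\]
The only mildly delicate point is the Coulombian interaction potential: for $d\geq 3$ the convolution $\nabla W\star(e^{g^n}-e^{g^{n-1}})$ does not sit in $L^\infty$ a priori, but since the difference $e^{g^n}-e^{g^{n-1}}$ lies in $L^2\cap L^\infty$ uniformly, Hardy--Littlewood--Sobolev (or the fact that $\nabla W$ splits as an $L^1_{\rm loc}$ part plus a smooth far-field part) yields the required $L^2$-bound, and this is the step I would be most careful to verify in detail.
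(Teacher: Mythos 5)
Your proof is correct and follows essentially the same route as the paper: subtract consecutive iterates of \eqref{F-5}, exploit the pressure cancellation $\int_{\T^d}\bigl(\nabla G^{n+1}\cdot U^{n+1} + G^{n+1}\,\nabla\cdot U^{n+1}\bigr)\,dx = 0$, control the transport and forcing terms with the uniform $H^s$-bounds from Corollary~\ref{C6.1}, apply the mean value theorem to $e^{g^n}-e^{g^{n-1}}$, and close with Gr\"onwall's lemma from zero initial data. The Coulombian concern you flag at the end is resolved more simply than you anticipate: Section~\ref{sec_local_pE} works on $\T^d$, where $\nabla W\in L^1(\T^d)$ (the singularity $|x|^{1-d}$ is locally integrable and the torus is compact), so Young's convolution inequality immediately gives $\|\nabla W\star(e^{g^n}-e^{g^{n-1}})\|_{L^2}\le \|\nabla W\|_{L^1}\|e^{g^n}-e^{g^{n-1}}\|_{L^2}$, which is precisely the bound the paper uses and requires no Hardy--Littlewood--Sobolev argument.
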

\begin{proof}
First, it follows from \eqref{F-5} that
$$\begin{aligned}
\frac12&\frac{d}{dt}\|g^{n+1} - g^n\|_{L^2}^2 \cr
&\quad = -\int_{\T^d} u^n \cdot \nabla( g^{n+1} - g^n) (g^{n+1} - g^n) \,dx - \int_{\T^d} (u^n - u^{n-1})\cdot \nabla g^n (g^{n+1} - g^n) \,dx\\
&\qquad -\int_{\T^d} \nabla \cdot (u^{n+1} - u^n) (g^{n+1} - g^n) \,dx\\
&\quad \le \frac{\|\nabla \cdot u^n\|_{L^\infty}}{2}\|g^{n+1} - g^n\|_{L^2}^2 + \|\nabla g^n \|_{L^\infty} \|u^n - u^{n-1}\|_{L^2}\|g^{n+1} - g^n\|_{L^2}\\
&\qquad - \int_{\T^d} \nabla \cdot (u^{n+1} - u^n) (g^{n+1} - g^n) \,dx\\
&\quad \le C\left(\|g^{n+1}-g^n\|_{L^2}^2 + \|u^n - u^{n-1}\|_{L^2}^2\right) - \int_{\T^d} \nabla \cdot (u^{n+1} - u^n) (g^{n+1} - g^n) \,dx.
\end{aligned}$$
Next, we estimate
$$\begin{aligned}
\frac12&\frac{d}{dt}\|u^{n+1} - u^n\|_{L^2}^2\\
&= -\int_{\T^d} u^n \cdot \nabla(u^{n+1} - u^n) \cdot (u^{n+1} - u^n)\,dx - \int_{\T^d}(u^n - u^{n-1}) \cdot \nabla u^n \cdot (u^{n+1} - u^n)\,dx\\
&\quad -\int_{\T^d}\nabla(g^{n+1} - g^n) \cdot (u^{n+1} - u^n)\,dx -\|u^{n+1}-u^n\|_{L^2}^2 -\int_{\T^d} \nabla W \star(e^{g^n} - e^{g^{n-1}})\cdot (u^{n+1} - u^n)\,dx\\
&\quad -\int_{\T^d \times \T^d}\phi(x-y)\left( (u^n(x)-u^n(y)) - (u^{n-1}(x)-u^{n-1}(y)) \right) e^{g^n(y)} \cdot (u^{n+1} - u^n)(x)\,dydx\\
&\quad - \int_{\T^d \times \T^d}\phi(x-y)(u^{n-1}(x) - u^{n-1}(y))\left( e^{g^n(y)} - e^{g^{n-1}(y)}\right)\cdot (u^{n+1} - u^n)(x)\,dydx\\
&\le \frac{\|\nabla \cdot u^n\|_{L^\infty}}{2}\|u^{n+1} - u^n\|_{L^2}^2 + \|\nabla u^n\|_{L^\infty}\|u^{n+1} - u^n\|_{L^2}\|u^n - u^{n-1}\|_{L^2}\\
&\quad -\int_{\T^d}\nabla(g^{n+1} - g^n) \cdot (u^{n+1} - u^n)\,dx + \|\nabla W\|_{L^1}\|e^{g^n} - e^{g^{n-1}}\|_{L^2}\|u^{n+1} - u^n\|_{L^2}\\
&\quad +2\|\phi\|_{L^\infty}e^{\|g^n\|_{L^\infty}}\|u^n - u^{n-1}\|_{L^2}\|u^{n+1} - u^n\|_{L^2}\\
&\quad +2\|\phi\|_{L^\infty}\|u^{n-1}\|_{L^2} \left\|e^{g^n} - e^{g^{n-1}}\right\|_{L^2}\|u^{n+1} - u^n\|_{L^2}\\
&\le C\left(\|u^{n+1}-u^n\|_{L^2}^2 + \|u^n- u^{n-1}\|_{L^2}^2 +\|g^n - g^{n-1}\|_{L^2}^2 \right) -\int_{\T^d}\nabla(g^{n+1} - g^n) \cdot (u^{n+1} - u^n)\,dx,
\end{aligned}$$
where we used the mean value theorem to get
\[
\left\|e^{g^n} - e^{g^{n-1}}\right\|_{L^2} \le \exp\lt(\max\{ \|g^n\|_{L^\infty}, \|g^{n-1}\|_{L^\infty}\}\rt)\|g^n - g^{n-1}\|_{L^2}\le C\|g^n - g^{n-1}\|_{L^2}.
\]
Combining all of the above estimates yields
$$\begin{aligned}
\frac{d}{dt}&\left( \|(g^{n+1} -g^n)(\cdot,t)\|_{L^2}^2 + \|(u^{n+1} - u^n)(\cdot,t)\|_{L^2}^2\right)\\
&\le C\left(\|(g^{n+1} -g^n)(\cdot,t)\|_{L^2}^2 + \|(u^{n+1} - u^n)(\cdot,t)\|_{L^2}^2 + \|(g^n -g^{n-1})(\cdot,t)\|_{L^2}^2 + \|(u^n - u^{n-1})(\cdot,t)\|_{L^2}^2 \right)
\end{aligned}$$
for $0 \leq t \le T^*$, where $C>0$ is independent of $n$. We finally apply Gr\"onwall's lemma to conclude the desired result.
\end{proof}

\subsection{Proof of Theorem \ref{local_strong}}
Now, we prove the well-posedness of strong solutions to \eqref{main_pE}. First, Lemma \ref{L6.2} implies that
\[
g^n \to g \quad \mbox{in } \ \mathcal{C}([0,T];L^2(\T^d)) \quad \mbox{and} \quad u^n \to u \quad \mbox{in } \ \mathcal{C}([0,T];L^2(\T^d))
\]
as $n \to \infty$. Moreover, we can extend the convergence in $\mathcal{C}([0,T];L^2(\T^d))$ to the one in $\mathcal{C}([0,T];H^{s-1}(\T^d))$ by interpolating this with the uniform bound in $\mc([0,T];H^s(\T^d))$ from Corollary \ref{C6.1}:
\[
g^n \to g \quad \mbox{in } \ \mathcal{C}([0,T];H^{s-1}(\T^d))  \quad \mbox{and} \quad  u^n \to u \quad \mbox{in } \ \mathcal{C}([0,T];H^{s-1}(\T^d)).
\]
To obtain the $H^s$-regularity of $(g,u)$, we can use a standard argument from functional analysis. For detail, we refer to \cite{CCZ16}.

For the uniqueness, we consider two solutions $(g, u)$ and $(\hat g, \hat u)$ with the same initial data $(g_0, u_0)$. Then, the Cauchy estimate in Lemma \ref{L6.2} gives
\[
\|(g - \hat g)(\cdot,t)\|_{L^2}^2 + \|(u - \hat u)(\cdot,t)\|_{L^2}^2 \le C\int_0^t \lt(\|(g-\hat g)(\cdot,s)\|_{L^2}^2 +  \|(u - \hat u)(\cdot,s)\|_{L^2}^2\rt) ds
\]
for $t \le T^*$. Applying Gr\"onwall's lemma to the above concludes the uniqueness of solutions.

\subsection{Pressureless Euler--Poisson system}\label{sec:npE}

For the pressureless case, by setting $g := \rho - \rho_c$, we can reformulate the system \eqref{main_npE} as 
\begin{align}\label{re_npE}
\begin{aligned}
&\partial_t g + \nabla\cdot ((1+ g) u) = 0, \quad (x,t) \in \T^d \times \R_+,\\
&\partial_t u + (u \cdot \nabla) u = -  u - (\nabla V + \nabla W \star (1+g)) -(\phi\star (1+g)) u + \phi\star((1+g)u),
\end{aligned}
\end{align}
where we simply let $\rho_c=1$, since $\rho_c$ is preserved in time. Then, similarly as before, we construct an approximation sequence to the reformulated system:
$$\begin{aligned}
&\partial_t g^{n+1} + \nabla\cdot ((1+g^{n+1}) u^n) = 0,\\
&\partial_t u^{n+1} + (u^n \cdot \nabla) u^{n+1} = -  u^{n+1} - (\nabla V + \nabla W \star (1+g^n))  -(\phi\star (1+g^n)) u^n + \phi\star((1+g^n)u^n).
\end{aligned}$$
In this case, we use the following estimate for the Poisson interaction term:
\begin{align*}
\int_{\T^d} \nabla (\nabla W \star (1+g^n)) : \nabla u^{n+1} \, dx & = \sum_{i,j=1}^d  \int_{\T^d} \partial_{x_j} (\partial_{x_i}(W\star(1+g^n))) \partial_{x_j} u_i^{n+1}\,dx\\
&= \sum_{i,j=1}^d \int_{\T^d} \partial_{x_j}\partial_{x_j} (W\star(1+g^n))) \partial_{x_i} u_i^{n+1} \,dx\\
&=  \int_{\T^d} \Delta W \star (1+g^n) \nabla \cdot u^{n+1} \,dx = -\int_{\T^d} (1+g^n) \nabla \cdot u^{n+1}\,dx.
\end{align*}
Thus, under suitable assumptions on the confinement potential $\nabla V$ and the communication weight $\phi$, we can use the above estimate to get $H^{s+1}$-estimates for $u$, i.e., for any $M>N$, if
\[
\|g_0\|_{H^s}^2 + \|u_0\|_{H^{s+1}}^2 < N,
\]
then there exists $T^*>0$ such that
\[
\sup_{0 \le t \le T^*}\left( \|g^n(\cdot,t)\|_{H^s}^2 + \|u^n(\cdot,t)\|_{H^{s+1}}\right) < M, \quad \forall \, n \in \bbn.
\]
Then, the similar argument as the above provides the local-in-time existence and uniqueness of strong solutions to the pressureless Euler system \eqref{re_npE}.

\begin{theorem}\label{local_strong2} Let $s > d/2+1$. Suppose that the confinement potential and communication weight satisfy $(\nabla V, \phi) \in H^{s+1}(\T^d) \times \mw^{s+1,\infty}(\T^d)$, and the initial data $(g_0, u_0) \in H^s(\T^d) \times H^{s+1}(\T^d)$ with $g _0 + 1 > 0$. Then for any positive constants $\epsilon_0 < M_0$, there exists a positive constant $T^*$ such that if $\|g_0\|_{H^s} + \|u_0\|_{H^{s+1}} < \epsilon_0$, then the system \eqref{F-1}--\eqref{ini_F-1} admits a unique solution $(g,u) \in \mc([0,T^*]; H^s(\T^d)) \times \mc([0,T^*]; H^{s+1}(\T^d))$ satisfying
\[
\sup_{0 \leq t \leq T^*} \lt(\|g(\cdot,t)\|_{H^s} + \|u(\cdot,t)\|_{H^{s+1}} \rt) \leq M_0.
\]
\end{theorem}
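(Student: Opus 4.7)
The plan is to mirror the iteration scheme used in the proof of Theorem~\ref{local_strong}. Namely, I would linearize the reformulated system \eqref{re_npE} around a frozen profile $(\tilde g,\tilde u)$, obtain well-posedness and uniform energy bounds for the linearized problem in the asymmetric space $H^s\times H^{s+1}$, iterate to build an approximation sequence $(g^n,u^n)$ with a uniform $\mc([0,T^*];H^s\times H^{s+1})$ bound in the spirit of Corollary~\ref{C6.1}, verify the sequence is Cauchy in a weaker norm analogous to Lemma~\ref{L6.2}, and interpolate to upgrade convergence to the full regularity. Uniqueness will follow from the same Cauchy-type estimate applied to two candidate solutions.

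The heart of the argument, and the main obstacle, is controlling $\|\nabla^{s+1}u\|_{L^2}$, since at this highest order the interaction source $\nabla W\star(1+\tilde g)$ is not directly bounded by $\|\tilde g\|_{H^s}$. Following the observation recorded just before the theorem statement, I would integrate by parts in the top-order energy identity and invoke the Coulombian relation $\Delta W=-\delta_0$ to write
\begin{align*}
\int_{\T^d}\nabla^{s+1}\bigl(\nabla W\star(1+\tilde g)\bigr):\nabla^{s+1}u\,dx
= -\int_{\T^d}\nabla^s(1+\tilde g)\cdot\nabla^s(\nabla\cdot u)\,dx,
\end{align*}
which is dominated by $C\|\tilde g\|_{H^s}\|u\|_{H^{s+1}}$ and absorbed via Young's inequality. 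This trades one derivative on the density source for one extra derivative on the velocity, which is exactly why the asymmetric regularity $(H^s,H^{s+1})$ is the natural setting for the pressureless problem. All remaining contributions---transport commutators, the confinement $\nabla V$, and the alignment operators involving $\phi$---are tame under the hypotheses $(\nabla V,\phi)\in H^{s+1}\times \mw^{s+1,\infty}$ and are controlled by Moser-type estimates just as in Lemma~\ref{L6.1}.

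Summing the level-$k$ bounds over $0\le k\le s$ (for $g$) and $0\le k\le s+1$ (for $u$) yields a differential inequality of the form
\begin{align*}
\frac{d}{dt}\bigl(\|g\|_{H^s}^2+\|u\|_{H^{s+1}}^2\bigr)\le h(M)\bigl(1+\|g\|_{H^s}^2+\|u\|_{H^{s+1}}^2\bigr)
\end{align*}
whenever $\|\tilde g\|_{H^s}^2+\|\tilde u\|_{H^{s+1}}^2<M$, and the continuity argument of Lemma~\ref{L6.1} propagates the same bound to the linearized solution on an interval $[0,T^*]=[0,T^*(N,M)]$. For the Cauchy step, the differences $(g^{n+1}-g^n,u^{n+1}-u^n)$ satisfy a linear system whose genuinely new source $\nabla W\star(g^n-g^{n-1})$ is again treated by transferring one derivative via integration by parts---either pairing $\|g^{n+1}-g^n\|_{L^2}^2$ with $\|u^{n+1}-u^n\|_{H^1}^2$, or exploiting the $L^2$-boundedness of $W\star\cdot$ on the torus after a mean-zero reduction. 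Gronwall's lemma closes the contraction in this weaker norm; interpolation with the uniform $H^s\times H^{s+1}$ bound and standard weak-$\ast$ compactness then yield the limit in $\mc([0,T^*];H^s)\times \mc([0,T^*];H^{s+1})$ claimed in the theorem.
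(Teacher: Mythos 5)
Your proposal is correct and mirrors the paper's own (deliberately brief) argument: the paper likewise linearizes the reformulated pressureless system, runs the same iteration/Cauchy/interpolation scheme as for the isothermal case, and handles the Coulombian term via exactly the double integration-by-parts identity converting $\nabla(\nabla W\star(1+g)):\nabla u$ into $-(1+g)\,\nabla\cdot u$, which is the reason the asymmetric space $H^s\times H^{s+1}$ is used.
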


\begin{remark}\label{rmk_reg}
Our analysis can be naturally extended to the case when $\nabla W$ is sufficiently smooth. More precisely, if $\nabla W \in H^s(\T^d)$, then the same result can be obtained for the isothermal Euler system \eqref{main_pE} and $\nabla W \in H^{s+1}(\T^d)$ for the pressureless Euler system \eqref{main_npE}. Furthermore, when $\nabla W \in H^s(\T^d)$, we can also extend the well-posedness result to \eqref{main_pE} to the case when the domain is $\R^d$. For this, we refer to \cite{CCZ16, CH19}.
\end{remark}

%
%
%
%

\section*{Acknowledgments}
	JAC was partially supported by EPSRC grant number EP/P031587/1 and the Advanced Grant Nonlocal-CPD (Nonlocal PDEs for Complex Particle Dynamics: 
	Phase Transitions, Patterns and Synchronization) of the European Research Council Executive Agency (ERC) under the European Union's Horizon 2020 research and innovation programme (grant agreement No. 883363). YPC was supported by NRF grant (No. 2017R1C1B2012918), POSCO Science Fellowship of POSCO TJ Park Foundation, and Yonsei University Research Fund of 2019-22-021. JJ was supported by NRF grant (No. 2019R1A6A1A10073437).

\appendix

%
%
%
%

\section{Proof of Corollary \ref{cor_hydro1.5}: Convergence towards the local Maxwellian} \label{app_cor}
In this part, we provide the details on the proof of Corollary  \ref{cor_hydro1.5}. For this, we recall the definition of relative entropy:
$$\begin{aligned}
\mh(f^\e | M_{\rho, u}) &:= f^\e \log f^\e - M_{\rho, u} \log M_{\rho, u} - (\log M_{\rho,u} + 1)(f^\e - M_{\rho, u})\\
&= f^\e \log\left(\frac{f^\e}{M_{\rho, u}}\right) + M_{\rho, u} - f^\e \cr
&\ge \frac12\min\left\{\frac{1}{f^\e}, \frac{1}{M_{\rho, u}}\right\} |f^\e - M_{\rho, u}|^2.
\end{aligned}$$
Then, we use Cauchy-Schwartz inequality and 
\[
1 \le (x+y)\min\lt\{\frac1x, \frac1y\rt\} \quad \mbox{for}\quad x,y >0
\]
to get
$$\begin{aligned}
&\left(\int_{\Omega\times\R^d} |f^\e - M_{\rho, u}| \,dxdv\right)^2 \cr
&\quad \le \left(\int_{\Omega\times\R^d} (M_{\rho,u} + f^\e)\,dxdv\right) \left(\int_{\Omega\times\R^d} \min\left\{\frac{1}{f^\e}, \frac{1}{M_{\rho, u}}\right\} |f^\e - M_{\rho, u}|^2 \,dxdv\right)\\
&\quad \le 4\int_{\Omega\times\R^d}\mh(f^\e | M_{\rho, u})\,dxdv.
\end{aligned}$$
Thus, for the desired estimate, we investigate the relative entropy $\mh(f^\e | M_{\rho,u})$. We first notice that
\[
\int_{\Omega\times\R^d} \mh(f^\e | M_{\rho,u})\,dxdv = \int_{\Omega} \left(\int_{\R^d}\left(\log f^\e + \frac{|u-v|^2}{2}\right)f^\e\,dv\right)- \rho^\e\lt(\log \rho -\log(2\pi)^{d/2}\rt)dx. 
\]
We then estimate 
$$\begin{aligned}
\frac{d}{dt}\int_{\Omega\times\R^d}f^\e \log f^\e \,dxdv&= \int_{\Omega\times\R^d} \partial_t f^\e \log f^\e \,dxdv\\
&= d + d\int_{\Omega}(\phi\star\rho^\e)\rho^\e \,dx - \frac{1}{\e} \int_{\Omega\times\R^d}(\nabla_v f^\e - (u^\e - v)f^\e)\frac{\nabla_v f^\e}{f^\e}\,dxdv
\end{aligned}$$
and
$$\begin{aligned}
\frac{d}{dt}\int_{\Omega}\rho^\e \log \rho\,dx&= \int_{\Omega}\partial_t \rho^\e \log \rho\,dx + \int_\Omega \rho^\e \frac{\partial_t \rho}{ \rho}\,dx\\
&= \int_\Omega \rho^\e u^\e \cdot \frac{\nabla \rho}{\rho} \,dx + \int_\Omega \rho u \cdot \nabla \left(\frac{\rho^\e}{\rho}\right)\,dx\\
&= \int_\Omega \rho^\e (u^\e - u)\cdot \frac{\nabla \rho}{\rho}\,dx + \int_\Omega u \cdot \nabla \rho^\e \,dx.
\end{aligned}$$
We also find
$$\begin{aligned}
&\int_{\Omega\times\R^d}\frac{|u-v|^2}{2} f^\e \,dxdv - \int_{\Omega\times\R^d}\frac{|u_0-v|^2}{2} f_0^\e \,dxdv\cr
&\quad = \int_0^t\frac{d}{ds}\lt(\int_{\Omega\times\R^d}\frac{|u-v|^2}{2} f^\e \,dxdv\rt)ds\cr
&\quad = \int_0^t \int_\Omega \rho^\e (u-u^\e)\cdot \partial_s u\,dxds + \int_0^t \int_{\Omega\times\R^d}\frac{|u-v|^2}{2}\partial_s f^\e\,dxdvds\\
&\quad =: I + J,
\end{aligned}$$
where $I$ can be estimated as 
$$\begin{aligned}
I &=  \int_0^t\int_\Omega \rho^\e (u^\e - u)\cdot (u \cdot \nabla u)\,dxds +  \int_0^t\int_\Omega \rho^\e (u^\e -u)\cdot \frac{\nabla\rho}{\rho}\,dxds\\
&\quad+  \int_0^t\int_\Omega \rho^\e (u^\e -u)\cdot (\nabla V + u)\,dxds +  \int_0^t\int_\Omega \rho^\e (u^\e -u)\cdot \nabla W \star \rho \,dxds\\
&\quad +  \int_0^t\int_{\Omega \times \om}\phi(x-y)\rho(y)(u(x)-u(y)) \cdot [\rho^\e(u^\e -u)](x)\,dxdyds\\
&\le  \int_0^t\int_\Omega \rho^\e (u^\e -u)\cdot \frac{\nabla \rho}{\rho}\,dxds + C \int_0^t\int_\Omega \rho^\e |u^\e - u|\,dxds.
\end{aligned}$$
For $J$, we have
\begin{align*}
J&= - \int_0^t\int_{\Omega\times\R^d} \nabla_x \cdot (vf^\e)\frac{|u-v|^2}{2}\,dxdvds - \int_0^t\int_{\Omega\times\R^d} (v+\nabla V+\nabla W \star \rho)\cdot (v-u)f^\e\,dxdvds\\
&\quad +  \int_0^t\int_{\Omega\times\R^d} F[f^\e]\cdot (v-u)f^\e\,dxdvds -\frac{1}{\e} \int_0^t\int_{\Omega\times\R^d} (\nabla_v f^\e - (u^\e-v)f^\e)\cdot (v-u)\,dxdvds\\
&=  \int_0^t\int_{\Omega\times\R^d} vf^\e \otimes (u-v):\nabla u\,dxdvds -  \int_0^t\int_{\Omega\times\R^d}v\cdot (v-u)f^\e\,dxdvds \cr
&\quad -\int_0^t\int_\Omega \rho^\e(u^\e - u)\cdot(\nabla V+\nabla W \star\rho)\,dxds \\
&\quad +  \int_0^t\int_{\Omega^2\times\R^{2d}}\phi(x-y)(w-v)f^\e(y,w)\cdot (v-u(x))f^\e(x,v)\,dxdydvdwds\\
&\quad -\frac{1}{\e} \int_0^t\int_{\Omega\times\R^d} (\nabla_v f^\e - (u^\e-v)f^\e)\cdot (v-u)\,dxdvds\\
&= - \int_0^t\int_{\Omega\times\R^d} (u-v)\otimes(u-v)f^\e :\nabla u\,dxdvds + \int_0^t\int_\Omega u \otimes (u^\e - u)\rho^\e :\nabla u\,dxds\\
&\quad - \int_0^t\int_{\Omega\times\R^d} |u^\e-v|^2f^\e\,dxdvds -  \int_0^t\int_\Omega u^\e\cdot (u^\e - u)\rho^\e \,dxds \cr
&\quad- \int_0^t\int_\Omega \rho^\e(u^\e - u)\cdot(\nabla V+\nabla W \star\rho)\,dxds \\
&\quad -\frac{1}{2} \int_0^t\int_{\Omega^2 \times\R^{2d}} \phi(x-y)|v-w|^2f^\e(x,v)f^\e(y,w)\,dxdydvdwds\\
&\quad- \int_0^t\int_{\Omega^2 \times\R^{2d}}\phi(x-y)(w-v)f^\e(y,w)\cdot u(x)f^\e(x,v)\,dxdydvdwds\\
&\quad-\frac{1}{\e} \int_0^t\int_{\Omega\times\R^d}(\nabla_v f^\e - (u^\e-v)f^\e)\cdot (v-u)\,dxdvds\\
&= - \int_0^t\int_{\Omega\times\R^d} (u-v)\otimes(u-v)f^\e :\nabla u\,dxdvds +  \int_0^t\int_\Omega u \otimes (u^\e - u)\rho^\e :\nabla u\,dxds\\
&\quad - \int_0^t\int_{\Omega\times\R^d} |u^\e-v|^2f^\e\,dxdvds -  \int_0^t\int_\Omega u^\e\cdot (u^\e - u)\rho^\e \,dxds\cr
&\quad - \int_0^t\int_\Omega \rho^\e(u^\e - u)\cdot(\nabla V+\nabla W \star\rho)\,dx ds\\
&\quad-\frac{1}{2} \int_0^t\int_{\Omega^2 \times\R^{2d}} \phi(x-y)|v-w|^2f^\e(x,v)f^\e(y,w)\,dxdydvdwds\\
&\quad+\frac{1}{2} \int_0^t\int_{\Omega^2 }\phi(x-y)\rho^\e(x)\rho^\e(y)|u^\e(x)-u^\e(y)|^2\,dxdyds\\
&\quad-\frac{1}{\e} \int_0^t\int_{\Omega\times\R^d}(\nabla_v f^\e - (u^\e-v)f^\e)\cdot (v-u)\,dxdvds\\
&=:\sum_{i=1}^8 J_{i}.
\end{align*}
For $J_1$, we obtain
\begin{align*}
J_1 &= - \int_0^t\int_{\Omega\times\R^d}\left[ (u-u^\e) \otimes (u-u^\e) + (u^\e-v)\otimes (u^\e -v)\right] f^\e : \nabla u\,dxdvds\\
&\le \|\nabla u\|_{L^\infty} \int_0^t\int_\Omega \rho^\e |u^\e-u|^2\,dxds\\
&\quad - \int_0^t\int_{\Omega\times\R^d}((u^\e-v)\sqrt{f^\e} - 2\nabla_v \sqrt{f^\e})\otimes (u^\e-v)\sqrt{f^\e}:\nabla u\,dxdvds\\
&\quad - \int_0^t\int_{\Omega\times\R^d}2\nabla_v \sqrt{f^\e}\otimes(u^\e- v)\sqrt{f^\e}:\nabla u\,dxdvds\\
&\le C \int_0^t\int_\Omega \rho^\e |u^\e-u|^2\,dxds \cr
&\quad + \|\nabla u\|_{L^\infty}\left( \int_0^t\int_{\Omega\times\R^d} |u^\e-v|^2f^\e\,dxdvds\right)^{1/2}\left( \int_0^t\int_{\Omega\times\R^d}\frac{1}{f^\e}|\nabla_v f^\e - (u^\e-v)f^\e|^2\,dxdvds\right)^{1/2}\\
&\quad -  \int_0^t\int_{\Omega\times\R^d}\nabla_v f^\e \otimes (u^\e - v):\nabla u\,dxdvds\\
&\le C\left(\sqrt{\e} +  \int_0^t\int_\Omega \rho^\e |u^\e-u|^2\,dxds\right) +  \int_0^t\int_\Omega \nabla \rho^\e \cdot u\,dxds.
\end{align*}
For $J_2$ and $J_4$, it is easy to get
$$\begin{aligned}
J_2 + J_4&\le \||u||\nabla u|\|_{L^\infty}  \int_0^t\int_\om\rho^\e|u^\e - u|\,dxds + \lt( \int_0^t\int_\om \rho^\e |u^\e|^2\,dxds\rt)^{1/2}\lt( \int_0^t\int_\Omega \rho^\e |u^\e - u|^2\,dxds \rt)^{1/2}  \cr
&\leq C\left( \int_0^t\int_\Omega \rho^\e |u^\e - u|^2\,dxds\right)^{1/2}.
\end{aligned}$$
For $J_5$, we have
\[
\begin{split}
J_5 &\le \|\nabla W \star \rho\|_{L^\infty} \int_0^t\int_\Omega \rho^\e|u^\e-u|\,dxds + \left( \int_0^t\int_\Omega \rho^\e|u^\e-u|^2\,dxds\right)^{1/2}\left( \int_0^t\int_\Omega |\nabla V|^2\rho^\e\,dxds\right)^{1/2}\\
&\le C\left( \int_0^t\int_\Omega \rho^\e|u^\e-u|^2\,dxds\right)^{1/2}.
\end{split}
\]
Moreover, we use the relation from \cite[Proposition 2.1]{KMT15} or \cite[Lemma 7.3]{KMT13} to get
\[
J_6 + J_7 \le C\e + \frac{1}{2\e} \int_0^t\int_{\Omega\times\R^d}\frac{1}{f^\e}|\nabla_v f^\e - (u^\e-v)f^\e|^2\,dxdvds -d \int_0^t\int_\Omega (\phi\star\rho^\e)\rho^\e \,dxds.
\]
Thus, we combine all the previous estimates to get
\begin{align*}
&\int_{\Omega\times\R^d}\mh(f^\e|M_{\rho,u})\,dxdv - \int_{\Omega\times\R^d}\mh(f_0^\e|M_{\rho_0,u_0})\,dxdv\\
&\quad \le C\left(\sqrt{\e} + \left(\int_0^t\int_\Omega \rho^\e|u^\e-u|^2\,dxds\right)^{1/2} + \int_0^t\int_\Omega \rho^\e|u^\e-u|^2\,dxds\right)-\int_0^t\int_{\Omega\times\R^d}|u^\e-v|^2f^\e\,dxdvds\\
&\qquad +dt - \frac{1}{\e}\int_0^t\int_{\Omega\times\R^d}\frac{1}{f^\e}(\nabla_v f^\e - (u^\e-v)f^\e)(\nabla_v f^\e - (u-v)f^\e)\,dxdvds\\
&\qquad +\frac{1}{2\e}\int_0^t\int_{\Omega\times\R^d}\frac{1}{f^\e}|\nabla_v f^\e - (u^\e-v)f^\e|^2\,dxdvds.
\end{align*}
On the other hand, we find
\begin{align*}
dt &=-\int_0^t\int_{\Omega\times\R^d}(v-u^\e)\cdot \nabla_v f^\e\,dxdvds \\
&= -\int_0^t\int_{\Omega\times\R^d}(v-u^\e)\cdot (\nabla_v f^\e - (u^\e -v)f^\e)\,dxdvds + \int_0^t\int_{\Omega\times\R^d}|u^\e-v|^2f^\e\,dxdvds \\
&\le \left(\int_0^t\int_{\Omega\times\R^d} |u^\e-v|^2f^\e\,dxdvds\right)^{1/2}\left(\int_0^t\int_{\Omega\times\R^d} \frac{1}{f^\e}|\nabla_v f^\e - (u^\e -v)f^\e|^2\,dxdvds\right)^{1/2}\cr
&\quad + \int_0^t\int_{\Omega\times\R^d}|u^\e-v|^2f^\e\,dxdvds \\
&\le C\sqrt{\e} + \int_0^t\int_{\Omega\times\R^d}|u^\e-v|^2f^\e\,dxdvds 
\end{align*}
and
\begin{align*}
& -\frac{1}{\e}\int_0^t\int_{\Omega\times\R^d}\frac{1}{f^\e}(\nabla_v f^\e - (u^\e-v)f^\e)(\nabla_v f^\e - (u-v)f^\e)\,dxdvds\\
 &\quad = -\frac{1}{\e}\int_0^t\int_{\Omega\times\R^d}\frac{1}{f^\e}|\nabla_v f^\e - (u^\e-v)f^\e|^2\,dxdvds - \frac{1}{\e}\int_0^t\int_{\Omega\times\R^d}(\nabla_v f^\e - (u^\e-v)f^\e)\cdot (u^\e -u)\,dxdvds\\
 &\quad = -\frac{1}{\e}\int_0^t\int_{\Omega\times\R^d}\frac{1}{f^\e}|\nabla_v f^\e - (u^\e-v)f^\e|^2\,dxdvds.
\end{align*}
Therefore, we use Theorem \ref{thm_hydro1} to have
\begin{align*}
&\int_{\Omega\times\R^d}\mh(f^\e|M_{\rho,u})\,dxdv - \int_{\Omega\times\R^d}\mh(f_0^\e|M_{\rho_0,u_0})\,dxdv\cr
&\quad \le C\sqrt{\e} + C\left( \left(\int_0^t\int_\Omega \rho^\e|u^\e-u|^2\,dxds\right)^{1/2} + \int_0^t\int_\Omega \rho^\e|u^\e-u|^2\,dxds \right)\\
&\quad \le C\e^{1/4} + C\left( \left(\int_\Omega |\nabla W\star(\rho_0^\e - \rho_0)|^2\,dx\right)^{1/2} + \int_\Omega |\nabla W\star(\rho_0^\e - \rho_0)|^2\,dx\right).
\end{align*}
Note that for the case $\nabla W \in L^\infty(\Omega)$, the right-hand side is just $C\e^{1/4}$. This completes the proof.


\section{Proof of Theorem 5.2}\label{app.B}
In this part, we present the proof for Theorem \ref{thm_v}. Here, we only present the proof for the case $\Omega=\R^d$, since the case $\Omega=\T^d$ is analogous. First, the condition (ii) and the integrability of $h$ imply that for every $\e_0>0$, there exists $L>0$ such that
\[
\left(\sup_{n \in \N}\int_{\{|x|>L\}}|h_n|\,dx \right) + \int_{\{|x|>L\}} |h|\,dx < \frac{\e_0}{2}.
\]
Moreover, we can also choose $\delta>0$ such that
\[
\left(\sup_{n \in \N}\int_E |h_n|\,dx \right) + \int_E |h|\,dx < \frac{\e_0}{2}, \quad \mbox{whenever } \ m(E)<\delta.
\]
For those choices of $L$ and $\delta$, we use Egoroff's theorem to get a set $A_\delta$ such that $m(\{|x|\le L\}\setminus A_\delta)<\delta$ and
\[
h_n \to h \quad \mbox{uniformly on } \ A_\delta.
\]
Thus, we have
$$\begin{aligned}
\int_{\R^d} |h_n - h|\,dx &= \int_{\{|x|\le L\}} |h_n -h|\,dx + \int_{\{|x|>L\}} |h_n -h|\,dx\\
&\le \int_{A_\delta} |h_n-h|\,dx + \int_{\{|x|\le L\}\setminus A_\delta} (|h_n| + |h|)\,dx + \int_{\{|x|>L\}}(|h_n|+|h|)\,dx\\
&\le  \int_{A_\delta} |h_n-h|\,dx  + \e_0,
\end{aligned}$$
which implies
\[
\limsup_{n \to \infty} \int_{\R^d}|h_n-h|\,dx \le \e_0.
\]
Since the choice of $\e_0$ was arbitrary, we conclude the proof.



%
%
%
%

\end{document}